\newcommand{\sqin}{\mathrel{\vphantom{\sqsubset}\text{\mathsurround=0pt\ooalign{$\sqsubset$\cr$-$\cr}}}}
\newcommand{\sqni}{\mathrel{\vphantom{\sqsupset}\text{\mathsurround=0pt\ooalign{$\sqsupset$\cr$-$\cr}}}}
\newcommand{\sqnii}{\mathrel{\vphantom{\sqsupset}\text{\mathsurround=0pt\ooalign{$\sqsupset$\cr$=$\cr}}}}

\newcommand{\PP}{\mathbb{P}}
\newcommand*{\QQ}{\mathbb{Q}}
\newcommand*{\RR}{\mathbb{R}}
\newcommand*{\maps}{\colon}
\newcommand*{\im}[1]{[#1]}

\newcommand*{\Spec}[1]{\mathsf{S}#1}

\newcommand*{\id}{\operatorname{id}}

\documentclass[oneside]{amsart}

\usepackage[colorlinks=true, urlcolor=black, citecolor=black, linkcolor=black, hyperfootnotes=true]{hyperref}
\usepackage{amssymb}
\usepackage{aliascnt}
\usepackage{mathrsfs}
\usepackage{mathscinet}

\usepackage{enumitem} 
\setlist[enumerate, 1]{label=\upshape (\arabic*)}

\usepackage{graphicx}
\usepackage{tikz}
\usetikzlibrary{arrows.meta} 
\usetikzlibrary{graphs} 
\usetikzlibrary{decorations.pathreplacing} 

\numberwithin{equation}{section}

\newtheorem{thm}{Theorem}[section]

\newaliascnt{prp}{thm}
\newtheorem{prp}[prp]{Proposition}
\aliascntresetthe{prp}

\newaliascnt{lemma}{thm}
\newtheorem{lemma}[lemma]{Lemma}
\aliascntresetthe{lemma}

\newaliascnt{claim}{thm}

\aliascntresetthe{claim}

\newaliascnt{cor}{thm}
\newtheorem{cor}[cor]{Corollary}
\aliascntresetthe{cor}

\theoremstyle{definition}

\newaliascnt{dfn}{thm}
\newtheorem{dfn}[dfn]{Definition}
\aliascntresetthe{dfn}

\newaliascnt{xpl}{thm}
\newtheorem{xpl}[xpl]{Example}
\aliascntresetthe{xpl}

\newaliascnt{rmk}{thm}
\newtheorem{rmk}[rmk]{Remark}
\aliascntresetthe{rmk}

\frenchspacing

\author{Adam Barto\v{s}}
\address{Institute of Mathematics of the Czech Academy of Sciences, \v{Z}itn\'a 25, Prague}
\email{bartos@math.cas.cz}
\thanks{Research of Adam Bartoš was supported by GAČR project 20-31529X and RVO: 67985840.}

\author{Tristan Bice}
\thanks{Research of Tristan Bice was supported by GAČR project 22-07833K and RVO: 67985840.}
\address{Institute of Mathematics of the Czech Academy of Sciences, \v{Z}itn\'a 25, Prague}
\email{bice@math.cas.cz}

\author{Alessandro Vignati}
\address{
Institut de Math\'ematiques de Jussieu - Paris Rive Gauche (IMJ-PRG)\\
Universit\'e Paris Cit\'e\\
B\^atiment Sophie Germain\\
8 Place Aur\'elie Nemours \\ 75013 Paris, France}
\email{ale.vignati@gmail.com}
\urladdr{http://www.automorph.net/avignati}
\thanks{Research of Alessandro Vignati was supported by an Emergence en recherche grant from Universit\'e Paris Cit\'e and the ANR AGRUME}

\keywords{Posets, Filters, Bases, Compacta, Continua, Stone Duality}
\subjclass[2020]{06A07, 54D70, 54D80, 54E45, 54H10}


\title{Constructing Compacta from Posets}

\begin{document}

\begin{abstract}
We develop a simple method of constructing topological spaces from countable posets with finite levels, one which applies to all second countable $\mathsf{T}_1$ compacta.  This results in a duality amenable to building such spaces from finite building blocks, essentially an abstract analog of classical constructions defining compacta from progressively finer open covers.
\end{abstract}

\maketitle

\tableofcontents

\section*{Introduction}

\subsection*{Background}

Connections between topology and order theory have been central to a large body of mathematical research over the past century.  The idea behind much of this is to study abstract order structures like Boolean algebras, distributive lattices and semilattices etc. by representing them as families of subsets of topological spaces.  Stone was the first to initiate this line of research with his classic dualities in the 30's (see \cite{Stone1936} and \cite{Stone1938}) which have since been reformulated and extended in various ways by people such as Priestley \cite{Priestley1970}, Gr\"atzer \cite{Gratzer1978} and Celani--Gonzalez \cite{CelaniGonzalez2020}, just to name a few.  However, the spaces involved in these dualities typically have many compact open sets, which makes them quite different from the connected spaces more commonly considered in analysis.

In the opposite direction, other work has been motivated by the idea that topological spaces, particularly compacta, can be analysed from a more order theoretic perspective via (semi)lattices consisting of open sets.  This line of research was initiated by Wallman \cite{Wallman1938} and continued in various forms by people such as Shirota \cite{Shirota1952}, de Vries \cite{deVries1962}, Hofmann--Lawson \cite{HofmannLawson1978} and Jung--S\"underhauf \cite{JungSunderhauf1995}, with recent efforts to unify and extend these results also appearing in \cite{vanGool2012}, \cite{BiceStarling2018}, \cite{Bice2021GHLJS}, \cite{Kawai2021} and \cite{BiceKubis2020}.\footnote{In truth, Wallman worked lattices of closed sets, only in subsequent work did people consider lattices of open sets instead.  However, translating between open and closed sets is just a matter of reversing the order and taking complements where appropriate, as shown explicitly in \cite{BiceKubis2020}.}  In contrast to the work above, these dualites do encompass connected spaces.  However, so far they have not found many applications in actually building such spaces, like those considered in continuum theory.

One reason for this is that the order structures involved in these dualities are not so easily built from finite substructures.  In contrast, classical constructions of continua often proceed by building them up from finitary approximations, e.g. coming from simplicial complexes or finite open covers.  For example, the famous pseudoarc (see \cite{Bing1948} and \cite{Moise1948}) is usually built from successively finer chains of open subsets in $\mathbb{R}^2$, each chain being `crooked' in the previous chain.  Our work stems from the simple observation that the ambient space $\mathbb{R}^2$ here is essentially irrelevant, what really matters is just the poset arising from the inclusion relation between the links in the chains.  More precisely, the covers of the space are completely determined by the levels of the poset and these, in turn, determine the points of the space.  Indeed, points can be identified with their neighbourhood filters, which are nothing more than subsets of the poset `selecting' at least one element from each cover.

This leads us to consider a general class of posets formed from sequences of finite levels.  From any such poset, we construct a space of selectors, resulting in a $\mathsf{T}_1$ compactum on which the levels of the poset get represented as open covers.  Moreover, we will see that all second countable $\mathsf{T}_1$ compacta arise in this way.  Thus, at least in theory, it should be possible to construct any such space from a sequence of finite sets defining the levels of such a poset.  We further show that continuous functions between the resulting spaces can be completely described by certain relations between the original posets.  In this way we obtain a duality\footnote{Or rather an equivalence of categories, as we chose the direction of our relations so that the relevant functors are covariant (the term `duality' is often reserved for contravariant functors).} of a somewhat different flavour to those described above, one which has more potential applications to building spaces like the pseudoarc from finitary approximations.

\newpage
\subsection*{Outline}

To motivate our construction we first embark on a detailed analysis of bases of $\mathsf{T}_1$ compacta and the posets they form (when ordered by the usual inclusion relation $\subseteq$).  In particular, we examine special subsets of a poset as analogs of open covers, namely \emph{bands} and more general \emph{caps}.  On the one hand, caps are always covers, by \autoref{CapsCovers}.  Conversely, it is always possible to choose a basis of any second countable $\mathsf{T}_1$ compactum so that covers are caps.  We can also ensure that the basis forms an \emph{$\omega$-poset} where ranks and levels are always well-defined and finite.  Further order-topological properties of the resulting \emph{$\omega$-cap-bases} are also explored in \autoref{BasesAsPosets}, e.g. showing how they are simply characterised in metric compacta as the bases whose diameters converge to zero (see \autoref{OmegaCapBasesInMetricCompacta}).

In \autoref{TheSpectrum}, we show how to reverse this process, representing any $\omega$-poset $\mathbb{P}$ as an $\omega$-cap-basis $\mathbb{P}_\mathsf{S}$ of a suitably defined $\mathsf{T}_1$ compactum, namely its \emph{spectrum} $\mathsf{S}\mathbb{P}$ (which is then second countable, as $\mathbb{P}$ is countable).  Topological properties of $\mathsf{S}\mathbb{P}$ are thus determined by the order structure of $\mathbb{P}$.  Most notably, $\mathsf{S}\mathbb{P}$ is Hausdorff precisely when $\mathbb{P}$ is \emph{regular}, as shown in \autoref{RegularImpliesHausdorff}.  Subcompacta and subcontinua of $\mathsf{S}\mathbb{P}$ are also determined by special subsets of $\mathbb{P}$, as we show in \autoref{Subcompacta} and \autoref{Subcontinua}.  With an eye to our primary motivating example of the pseudoarc, we even show how to characterise hereditary indecomposability of $\mathsf{S}\mathbb{P}$ via \emph{tangled} refinements in $\mathbb{P}$ which, modulo regularity, generalise the original crooked refinements of Bing.

Finally, in \autoref{Functoriality}, we show how to encode continuous maps between spectra by certain relations between the posets we call \emph{refiners}.  A single continuous map can come from various different refiners and this flexibility yields homeomorphisms between spectra under some fairly general conditions explored in \autoref{Homeomorphisms}.  To obtain a more precise equivalence of categories, we turn our attention to \emph{strong refiners} in \autoref{StarComposition} under an appropriate \emph{star-composition}, thus yielding a combinatorial equivalent $\mathbf{S}$ of the category $\mathbf{K}$ of metrisable compacta.

\subsection*{Future Work}

Naturally, the next step would be to construct the posets themselves (as well as the refiners between them) in a more combinatorial way.  The basic idea would be to consider categories of finite graphs, much like in the work of Irwin--Solecki \cite{IrwinSolecki2006} and D\polhk{e}bski--Tymchatyn \cite{DebskiTymchatyn2018}, except with more general relational morphisms.  Sequences of such relations determine the levels of a graded $\omega$-poset, which then yield $\mathsf{T}_1$ compacta from the work presented here.  In particular, Fra\"iss\'e sequences in appropriate categories should yield canonical constructions of well-known compacta like the pseudoarc and Lelek fan.  Classical properties of these spaces relating to uniqueness and homogeneity could then be derived in a more canonical Fra\"iss\'e theoretic way, as we hope to demonstrate in future work.

\section{Bases as Posets}\label{BasesAsPosets}

Here we analyse bases of topological spaces, viewed as posets ordered by inclusion.  In particular, we explore how to characterise covers order theoretically and how to construct well-behaved bases satisfying certain order theoretic properties.

\subsection{Preliminaries}\label{Preliminaries}

We begin with some basic terminology and notation. We view any $\sqsubset\ \subseteq A\times B$ as a relation `from $B$ to $A$'.
We call $\sqsubset$
\begin{enumerate}
\item \emph{a function} if every $b\in B$ is related to exactly one $a\in A$.
\item \emph{surjective} if every $a\in A$ is related to at least one $b\in B$.
\item \emph{injective} if, for every $b\in B$, we have some $a\in A$ which is only related to $b$.
\end{enumerate}
These notions of surjectivity and injectivity for relations generalise the usual notions for functions.  The prefix `co' will be used to refer to the opposite/inverse relation $\sqsubset^{-1}\ =\ \sqsupset\ \subseteq B\times A$ (where $b\sqsupset a$ means $a\sqsubset b$), e.g. we say $\sqsupset$ is co-injective to mean that $\sqsubset$ is injective.  For example, one can note that every co-injective relation is automatically surjective, and the converse also holds for functions.

\begin{rmk}
    While this version of injectivity for relations may not be the most obvious generalisation from functions, it is the one we need for our work, being closely related to minimal covers -- see \autoref{prop:basicrefinement} below.  It is also natural from a categorical point of view, as the monic morphisms in the category of relations between sets are exactly those that are injective in this sense.  It also corresponds to injectivity of the image map $C\mapsto C^\sqsupset$ on subsets $C\subseteq B$ defined below, i.e. $\sqsubset$ is injective precisely when $C^\sqsupset=D^\sqsupset$ implies $C=D$, for all $C,D\subseteq B$.
\end{rmk}

 The motivating situation we have in mind is where $\sqsubset$ is the inclusion relation $\subseteq$ between covers $A$ and $B$ of a set $X$. In this case, $\sqsubset$ is surjective precisely when $A$ refines $B$ in the usual sense (we will also generalise refinement soon below). If $B$ is even a minimal cover, then $\sqsubset$ will also be injective, as we now show.

Let us denote the \emph{power set} of any set $X$ by
\[\mathsf{P}X=\{A:A\subseteq X\}.\]
To say $A\subseteq\mathsf{P}X$ \emph{covers} $X$ of course means $X=\bigcup A$.

\begin{prp}\label{prop:basicrefinement}
If $A,B\subseteq\mathsf{P}X$ cover $X$ and $\sqsubset\ =\ \subseteq$ on $A\times B$ then
\[\text{$B$ is minimal and $\sqsubset$ is surjective}\qquad\Rightarrow\qquad\text{$\sqsubset$ is injective}.\]
\end{prp}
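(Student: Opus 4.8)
The plan is to unpack the definition of injectivity for the relation $\sqsubset$ and to produce, for each $b\in B$, a witnessing element $a\in A$ that is related only to $b$. The single source of such witnesses will be the minimality of $B$: since $B$ covers $X$ but $B\setminus\{b\}$ does not, there must be a point of $X$ lying in $b$ and in no other member of $B$.

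First I would fix $b\in B$ and invoke minimality to choose $x\in X$ with $x\in b$ yet $x\notin b'$ for every $b'\in B\setminus\{b\}$; such an $x$ exists precisely because $B\setminus\{b\}$ fails to cover $X$. Next, since $A$ covers $X$, I would pick $a\in A$ with $x\in a$. This point $x$ then pins down every member of $B$ that can contain $a$: if $a\subseteq b'$ for some $b'\in B$, then $x\in a\subseteq b'$ forces $b'=b$ by the choice of $x$, so $a$ is related to at most $b$.

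The role of surjectivity is to guarantee that $a$ is related to something at all. Because $\sqsubset$ is surjective, $a\subseteq b''$ for some $b''\in B$; by the previous step this $b''$ must equal $b$, hence $a\subseteq b$, i.e.\ $a\sqsubset b$. Combined with the fact that $a\subseteq b'$ entails $b'=b$, this shows $a$ is related to exactly $b$ and to no other member of $B$. As $b\in B$ was arbitrary, every element of $B$ admits such a witness, so $\sqsubset$ is injective.

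I do not anticipate a serious obstacle here: the argument is short, and the only subtlety is bookkeeping about the direction of $\sqsubset$ and the precise meaning of injectivity for relations. The one place where care is genuinely needed is recognising that surjectivity (the refinement of $A$ into $B$) is exactly what upgrades the weak information ``$x\in a$ and $x\in b$'' to the containment $a\subseteq b$; without it, the candidate witness $a$ might fail to be related to $b$ at all.
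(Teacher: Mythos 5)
Your proposal is correct and follows essentially the same route as the paper's proof: use minimality of $B$ to pick $x\in b\setminus\bigcup(B\setminus\{b\})$, use the covering property of $A$ to pick $a\ni x$, and use surjectivity to get some $c\in B$ with $a\subseteq c$, which the choice of $x$ forces to equal $b$. Your explicit remark that surjectivity is what guarantees $a$ is related to $b$ at all (rather than to nothing) is a correct and slightly more careful articulation of the same step the paper compresses into one line.
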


\begin{proof}
If $B$ is a minimal cover of $X$ then every $b\in B$ must contain some $x\in X$ which is not in any other element of $B$, i.e. $x\in b\setminus\bigcup(B\setminus\{b\})$. If $A$ also covers $X$ then we must have some $a\in A$ containing $x$. If $\sqsubset$ is also surjective then we have some $c\in B$ with $x\in a\subseteq c$ and hence $c=b$. This shows that $a$ is only related to $b$, which in turn shows that $\sqsubset$ is injective.
\end{proof}

Again take a relation $\sqsubset\ \subseteq A\times B$. The \emph{preimage} of any $S\subseteq A$ is given by
\begin{align}
\tag{Preimage}S^\sqsubset=\mathop{\sqsupset}[S]=\{b\in B:\exists s\in S\ (s\sqsubset b)\}.\\
\intertext{Likewise, the \emph{image} of any $T\subseteq B$ is the preimage of the opposite relation $\sqsupset$, i.e.}
\tag{Image}T^\sqsupset=\mathop{\sqsubset}[T]=\{a\in A:\exists t\in T\ (a\sqsubset t)\}.
\end{align}
We say $S\subseteq A$ \emph{refines} $T\subseteq B$ if it is contained in its image, i.e. $S\subseteq T^\sqsupset$. Equivalently, $S$ refines $T$ when the restriction of $\sqsubset$ to $S\times T$ is surjective. The resulting refinement relation will also be denoted by $\sqsubset$, i.e. for any $S\subseteq A$ and $T\subseteq B$,
\[S\sqsubset T\qquad\Leftrightarrow\qquad S\subseteq T^\sqsupset\qquad\Leftrightarrow\qquad\forall s\in S\ \exists t\in T\ (s\sqsubset t).\]
Likewise, the \emph{corefinement} relation will also be denoted by $\sqsupset$, i.e.
\[T\sqsupset S\qquad\Leftrightarrow\qquad T\subseteq S^\sqsubset\qquad\Leftrightarrow\qquad\forall t\in T\ \exists s\in S\ (s\sqsubset t).\]
(so refinement and corefinement are not inverses, i.e. $S\sqsubset T$ does not mean $T\sqsupset S$).  Here again the motivating situation we have in mind is when $\sqsubset$ is the inclusion relation or, more generally, some partial order or even preorder (recall that a \emph{preorder} is a reflexive transitive relation, while a \emph{partial order} is an antisymmetric preorder).

Given a preorder $\leq$ on a set $\mathbb{P}$, we define ${<}={\leq}\cap{\neq}$, i.e.
\[p<q\qquad\Leftrightarrow\qquad p\leq q\quad\text{and}\quad p\neq q.\]
The \emph{antichains}\footnote{Note that these are more general than the \emph{strong antichains} usually considered by set theorists (which are defined to be subsets $A$ of $\mathbb{P}$ in which no pair in $A$ has a common lower bound in $\mathbb{P}$).} of $\mathbb{P}$ will be denoted by
\[\mathsf{A}\mathbb{P}=\{A\subseteq\mathbb{P}:\forall q,r\in A\ (q\nless r\text{ and }r\nless q)\}.\]

\begin{prp}
If $\leq$ is a preorder on $\mathbb{P}$ then so is the refinement relation on $\mathsf{P}\mathbb{P}$. If $\leq$ is a partial order then so is the refinement relation when restricted to $\mathsf{A}\mathbb{P}$.
\end{prp}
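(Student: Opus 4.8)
The plan is to treat the statement as two claims and, in each, simply unwind the definition $S \sqsubset T \Leftrightarrow \forall s \in S\ \exists t \in T\ (s \leq t)$ of the refinement relation induced by $\leq$, checking reflexivity, transitivity, and (for the partial order case) antisymmetry one at a time.

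For the first claim, I would verify the two preorder axioms directly. Reflexivity of refinement is immediate from reflexivity of $\leq$: for any $S \subseteq \mathbb{P}$ and any $s \in S$, the element $s$ itself witnesses $\exists t \in S\ (s \leq t)$, so $S \sqsubset S$. For transitivity, assuming $S \sqsubset T$ and $T \sqsubset U$, I would fix an arbitrary $s \in S$, use $S \sqsubset T$ to extract $t \in T$ with $s \leq t$, then use $T \sqsubset U$ to extract $u \in U$ with $t \leq u$, and finally invoke transitivity of $\leq$ to get $s \leq u$. As $s$ was arbitrary this yields $S \sqsubset U$. Neither step uses anything beyond the preorder axioms, so refinement is a preorder on all of $\mathsf{P}\mathbb{P}$.

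For the second claim, since refinement is already a preorder on $\mathsf{P}\mathbb{P}$ it remains one on the subset $\mathsf{A}\mathbb{P}$, so only antisymmetry is left to prove, and this is where I expect the real content to lie. Suppose $S, T \in \mathsf{A}\mathbb{P}$ satisfy $S \sqsubset T$ and $T \sqsubset S$. The key step is a sandwich argument: given $s \in S$, pick $t \in T$ with $s \leq t$ and then $s' \in S$ with $t \leq s'$, so that $s \leq s'$ by transitivity. Since $S$ is an antichain and $s, s' \in S$, the inequality $s \leq s'$ forces $s = s'$ --- for otherwise $s \neq s'$ would give $s < s'$, contradicting the antichain condition. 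The chain then collapses to $s \leq t \leq s$, and antisymmetry of the partial order $\leq$ upgrades this to $s = t \in T$. Hence $S \subseteq T$, and the symmetric argument gives $T \subseteq S$, so $S = T$.

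The one subtlety to keep straight is the interplay between the strict relation $<$ appearing in the definition of antichains and the non-strict $\leq$ driving refinement. Both uses of the partial-order hypothesis occur in the antisymmetry step: collapsing $s \leq s'$ to $s = s'$ uses that an antichain contains no strictly comparable pair, while collapsing $s \leq t \leq s$ to $s = t$ uses antisymmetry of $\leq$ directly. A mere preorder would not suffice for the latter, which is exactly why the partial-order hypothesis is needed to obtain antisymmetry of the restricted refinement relation.
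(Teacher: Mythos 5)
Your proof is correct and follows essentially the same route as the paper's: reflexivity and transitivity are verified by directly unwinding the definition, and antisymmetry on antichains is obtained by the same sandwich argument $s \leq t \leq s'$, using the antichain condition to collapse $s = s'$ and antisymmetry of $\leq$ to conclude $s = t$, giving $S \subseteq T$ with the reverse inclusion by symmetry. Your closing remark correctly isolates where the partial-order hypothesis (as opposed to a mere preorder) is actually used.
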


\begin{proof}
If $\leq$ is reflexive on $\mathbb{P}$ and $Q\subseteq\mathbb{P}$ then $q\leq q$, for all $q\in Q$, showing that $Q\leq Q$, i.e. $\leq$ is also reflexive on $\mathsf{P}\mathbb{P}$. On the other hand, if $Q\leq R\leq S$ then, for any $q\in Q$, we have $r\in R$ with $q\leq r$, which in turn yields $s\in S$ with $r\leq s$. If $\leq$ is transitive on $\mathbb{P}$ then $q\leq s$, showing that $Q\leq S$, i.e. $\leq$ is also transitive on $\mathsf{P}\mathbb{P}$.

Finally, say $\leq$ is also antisymmetric on $\mathbb{P}$ and $Q\leq R\leq Q$, for some antichains $Q,R\in\mathsf{A}\mathbb{P}$. For all $q\in Q$, we thus have $r\in R$ with $q\leq r$, which in turn yields $q'\in Q$ with $q\leq r\leq q'$. Thus $q=q'$, as $Q$ is an antichain, and hence $q=r$, as $\leq$ is antisymmetric on $\mathbb{P}$. This shows that $Q\subseteq R$, while $R\subseteq Q$ follows dually.
\end{proof}

We will also need to compose relations, which we do in the usual way, i.e. if $\sqsubset\ \subseteq A\times B$ and $\sqin\ \subseteq B\times C$ then $\sqsubset\circ\sqin\ \subseteq A\times C$ is defined by
\[\tag{Composition}\label{Composition}a\sqsubset\circ\sqin c\qquad\Leftrightarrow\qquad\exists b\in B\ (a\sqsubset b\sqin c).\]
Note this is consistent with the usual composition of functions as we are taking the domain of a function to correspond to the right coordinate not the left, i.e. a function $f:B\rightarrow A$ from $B$ to $A$ is a subset of $A\times B$ (not $B\times A$).

As in \cite{Bing1948} (see also \cite{Moise1948}), we say that $B$ \emph{consolidates} $A$ when $A$ refines $B$ and every $b\in B$ is a union of elements of $A$, i.e. $b=\bigcup(b^\supseteq\cap A)=\bigcup\{a\in A:a\subseteq b\}$.

\begin{prp}
Take $A,B,C\subseteq\mathsf{P}X$ with $\sqsubset\ \subseteq A\times B$ and $\sqin\ \subseteq B\times C$ defined to be restrictions of the inclusion relation $\subseteq$ on $\mathsf{P}X$. For any $a\in A$ and $c\in C$,
\[a\sqsubset\circ\sqin c\qquad\Rightarrow\qquad a\subseteq c.\]
Conversely, if $A$ is a minimal cover, $B$ consolidates $A$ and $C$ consolidates $B$ then
\[a\subseteq c\qquad\Rightarrow\qquad a\sqsubset\circ\sqin c.\]
\end{prp}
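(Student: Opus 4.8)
The forward implication is immediate and needs only transitivity of $\subseteq$: the hypothesis $a\sqsubset\circ\sqin c$ unpacks, by definition of composition, to the existence of some $b\in B$ with $a\sqsubset b\sqin c$, i.e.\ $a\subseteq b\subseteq c$, whence $a\subseteq c$. So all the work is in the converse, and my plan is to mimic the argument of \autoref{prop:basicrefinement}: use minimality of $A$ to extract a single point witnessing membership in $a$ and in no other element of $A$, then chase that point downward through the two consolidation hypotheses to locate the intermediate $b\in B$.

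Concretely, suppose $a\subseteq c$. First I would invoke minimality of $A$ to choose $x\in a\setminus\bigcup(A\setminus\{a\})$, so that $a$ is the \emph{only} member of $A$ containing $x$; since $a\subseteq c$ we have $x\in c$. Next I would feed $x$ through the consolidations from the top down. As $C$ consolidates $B$, we have $c=\bigcup\{b'\in B:b'\subseteq c\}$, so $x\in c$ produces some $b\in B$ with $x\in b\subseteq c$; this $b$ is the candidate witness, and it only remains to verify $a\subseteq b$. For this I would use that $B$ consolidates $A$, giving $b=\bigcup\{a'\in A:a'\subseteq b\}$, so $x\in b$ produces some $a'\in A$ with $x\in a'\subseteq b$. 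Since $a$ is the unique element of $A$ containing $x$, we are forced to have $a'=a$, and hence $a\subseteq b$. Combining, $a\subseteq b\subseteq c$, that is $a\sqsubset b\sqin c$, which is exactly $a\sqsubset\circ\sqin c$.

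The step I expect to carry the whole argument is the reuse of the single witness point $x$ at both consolidation stages. Minimality of $A$ is precisely what makes this legitimate: it is what pins the element $a'$ unearthed at the last step down to $a$ itself. Without minimality one could still find \emph{some} $a'\subseteq b$ through $x$, but there would be no reason for it to equal $a$, and the chain $a\subseteq b\subseteq c$ would fail to close up. The consolidation hypotheses, by contrast, enter only as the two union identities that let the chosen point descend from $c$ into a suitable $b$ and then certify $a\subseteq b$, so I do not anticipate any difficulty there.
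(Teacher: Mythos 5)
Your proof is correct and follows essentially the same route as the paper's: the forward direction by transitivity, and for the converse, a witness point $x\in a\setminus\bigcup(A\setminus\{a\})$ from minimality of $A$, descent via the consolidation identity $c=\bigcup\{b'\in B:b'\subseteq c\}$ to find $b$, then the identity $b=\bigcup\{a'\in A:a'\subseteq b\}$ together with uniqueness of $a$ at $x$ to close the chain $a\subseteq b\subseteq c$. No gaps; this matches the paper's argument step for step.
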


\begin{proof}
Certainly $a\subseteq b\subseteq c$ implies $a\subseteq c$. Conversely, say $a\subseteq c$ and $A$ is a minimal cover so we have $x\in a\setminus\bigcup(A\setminus\{a\})$. If $c=\bigcup c^{\sqni}$ then we have $b\in B$ with $x\in b\subseteq c$. If $b=\bigcup b^\sqsupset$ too then we have $a'\in A$ with $x\in a'\subseteq b$ and hence $a=a'$, i.e. $a\subseteq b\subseteq c$ and hence $a\sqsubset\circ\sqin c$.
\end{proof}

\subsection{Bands and Caps}

Let us denote the finite subsets of a set $X$ by
\[\mathsf{F}X=\{F\subseteq X:|F|<\infty\}.\]

The following special subsets of our poset $\mathbb{P}$ form the key order theoretic analogs of open covers that are fundamental to our work.

\begin{dfn}\label{BandsAndCaps}
Take a poset $(\mathbb{P},\leq)$.
\begin{enumerate}
\item We call $B\in\mathsf{F}\mathbb{P}$ a \emph{band} if each $p\in\mathbb{P}$ is comparable to some $b\in B$.
\item We call $C\in\mathsf{P}\mathbb{P}$ a \emph{cap} if $C$ is refined by some band.
\end{enumerate}
\end{dfn}

\begin{rmk}
    There is also the related notion of a \emph{cutset} from \cite{SauerWoodrow1984}, which is a subset $C$ of $\mathbb{P}$ overlapping (i.e. intersecting) every maximal chain in $\mathbb{P}$.  Put another way, these are precisely the transversals of maximal cliques of the comparability graph of $\mathbb{P}$, as studied in \cite{BellGinsburg1984}. Similarly, bands are the finite dominating subsets of the comparability graph.  By Kuratowski--Zorn, every element of a poset is contained in a maximal chain and hence every finite cutset is a band.  However, the converse can fail, e.g. if $\mathbb{P}=\{a,b,c,d\}$ with $<\ =\{(a,c),(b,c),(b,d)\}$ then $\{a,d\}$ is a band but not a cutset, as it fails to overlap the maximal chain $\{b,c\}$.  Although in graded $\omega$-posets, every level is a cutset and so in this case every band and hence every cap is at least refined by a finite cutset, thanks to \autoref{CapsRefineLevels} below.
\end{rmk}

We denote the bands and caps of $\mathbb{P}$ by
\begin{align}
\tag{Bands}\mathsf{B}\mathbb{P}&=\{B\in\mathsf{F}\mathbb{P}:\mathbb{P}=B^\leq\cup B^\geq\}.\\
\tag{Caps}\mathsf{C}\mathbb{P}&=\{C\in\mathsf{P}\mathbb{P}:\exists B\in\mathsf{B}\mathbb{P}\ (B\leq C)\}.
\end{align}
The primary example we have in mind is when $\mathbb{P}$ is a basis of some topological space $X$ ordered by inclusion $\subseteq$.  In this case, caps are meant to correspond to covers of the space $X$.  More precisely, we have the following.

\begin{prp}\label{CapsCovers}
If $\mathbb{P}$ is a basis of non-empty open sets of some $\mathsf{T}_1$ topological space $X$ ordered by inclusion $($i.e. $\leq\ =\ \subseteq)$ then every cap covers $X$, i.e.
\begin{equation}\label{CapsAreCovers}
C\in\mathsf{C}\mathbb{P}\qquad\Rightarrow\qquad\bigcup C=X.
\end{equation}
\end{prp}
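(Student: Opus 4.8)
The plan is to first reduce the claim about caps to the special case of bands, and then to prove the band case by contradiction. Since $C$ is a cap, there is by definition a band $B \in \mathsf{B}\mathbb{P}$ with $B \leq C$, i.e. $B$ refines $C$, so every $b \in B$ is contained in some $c \in C$. This immediately gives $\bigcup B \subseteq \bigcup C$, so it is enough to show that every band covers $X$.

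To handle bands, I would suppose for contradiction that a band $B$ omits some point $x \in X \setminus \bigcup B$, and aim to produce a basic open set $p \in \mathbb{P}$ incomparable to every member of $B$ --- this contradicts the band property, which demands that each element of $\mathbb{P}$ be comparable to some element of $B$. To construct $p$, I would use non-emptiness of the basic sets to pick a point $y_b \in b$ for each $b \in B$ (noting $y_b \neq x$, since $y_b \in \bigcup B$ whereas $x \notin \bigcup B$), then invoke $\mathsf{T}_1$ to realise each $X \setminus \{y_b\}$ as an open neighbourhood of $x$, and finally use finiteness of $B$ to intersect them into a single open neighbourhood $U = \bigcap_{b \in B}(X \setminus \{y_b\})$ of $x$ that misses every $y_b$. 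The basis property then supplies $p \in \mathbb{P}$ with $x \in p \subseteq U$.

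The verification that $p$ works is routine: $p \not\subseteq b$ for each $b$ because $x \in p \setminus b$, while $b \not\subseteq p$ because $y_b \in b \setminus p$ (as $p \subseteq U$ avoids $y_b$). Hence $p$ is comparable to no $b \in B$, which is the desired contradiction and forces $\bigcup B = X$.

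The only substantive step --- and the place where I expect any difficulty to concentrate --- is the simultaneous construction of the incomparable witness $p$, since this is exactly where all three hypotheses must be combined: $\mathsf{T}_1$ separates $x$ from each $y_b$, finiteness of the band keeps the intersection $U$ open, and non-emptiness of the basic sets is what lets one choose the $y_b$ in the first place. Dropping any one of these would invalidate the construction.
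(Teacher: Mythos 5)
Your proof is correct and follows essentially the same route as the paper's: reduce caps to bands via refinement, then derive a contradiction by picking a point $y_b$ in each $b\in B$, using $\mathsf{T}_1$ and finiteness of $B$ to find a basic open $p\ni x$ avoiding all the $y_b$, which is then incomparable to every member of $B$. The only cosmetic difference is that the paper forms the single open set $X\setminus\{x_b:b\in B\}$ directly (finite sets are closed in $\mathsf{T}_1$ spaces) where you intersect the sets $X\setminus\{y_b\}$, which is the same set.
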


\begin{proof}
Note that if $B$ refines $C$ and $\bigcup B=X$ then $\bigcup C=X$. Thus it is enough to show that $\bigcup B=X$ whenever $B$ is a band. Take a band $B$ and suppose that we have $x\in X\setminus\bigcup B$. For each $b\in B$, let $x_b$ be a point in $b$. As $X$ is $\mathsf{T}_1$, $c=X\setminus\{x_b:b\in B\}$ is an open set containing $x$. As $\mathbb P$ is a basis, there is $d\in\mathbb P$ such that $x\in d\subseteq c$. For each $b\in B$, note $x_b\in b\setminus d$ so $b\nsubseteq d$ while $x\in d\setminus b$ so $d\nsubseteq b$. This shows that $B$ is not a band, a contradiction.
\end{proof}

The converse of \eqref{CapsAreCovers}, however, can fail. We can even show that there is no way to identify the covers of a space purely from the inclusion order on an arbitrary basis. Indeed, in the following two examples we have bases of different compact Hausdorff spaces which are isomorphic as posets but have different covers.  Specifically, the bases are both isomorphic to the unique countable atomless pseudo-Boolean algebra (the \emph{atoms} of a poset $\mathbb{P}$ are its minimal elements and $\mathbb{P}$ is \emph{atomless} and if it has no atoms, while a \emph{pseudo-Boolean algebra} is a poset $\mathbb{P}$ formed from a Boolean algebra $\mathbb{B}$ minus its bottom element $0$, i.e. $\mathbb{P}=\mathbb{B}\setminus\{0\}$).

\begin{xpl}
The interval $X=[0,1]$ in its usual topology has a basis $\mathbb{P}$ consisting of non-empty regular open sets which are unions of finitely many intervals with rational endpoints (note regularity disqualifies sets like $(0,\frac{1}{2})$ and $(\frac{1}{4},\frac{1}{2})\cup(\frac{1}{2},\frac{3}{4})$, only the interior of their closures $[0,\frac{1}{2})$ and $(\frac{1}{4},\frac{3}{4})$ lie in $\mathbb{P}$).  One immediately sees that $\mathbb{P}$ is then a countable atomless pseudo-Boolean algebra with respect to the inclusion ordering.  We also see that $p,q\in\mathbb{P}$ are disjoint precisely when they have no lower bound in $\mathbb{P}$, and no such $p$ and $q$ cover $X$.
\end{xpl}

\begin{xpl}
The Cantor space $X=\{0,1\}^\omega$ has a basis $\mathbb{P}$ consisting of all non-empty clopen sets. Again $\mathbb{P}$ is a countable atomless pseudo-Boolean algebra and $p,q\in\mathbb{P}$ are disjoint precisely when they have no lower bound in $\mathbb{P}$.  However, this time there are many disjoint $p,q\in\mathbb{P}$ that cover $X$.
\end{xpl}

In fact, if $\mathbb{P}$ is the countable atomless pseudo-Boolean algebra then its bands and caps are all trivial in that they must contain the top element. This poset does, however, have a subposet isomorphic to the full countable binary tree $2^{<\omega}$, which is still isomorphic to a basis of the Cantor space (but not the unit interval anymore).  In this case, caps of $2^{<\omega}$ do indeed correctly identify the covers of the Cantor space.  This suggests that we might be able to ensure covers of other spaces are also caps by choosing the basis more carefully.  In other words, we might be able to find `cap-bases' or even `band-bases' in the following sense.

\begin{dfn}\label{CapBasis}
We call a basis $\mathbb{P}$ of a topological space $X$ a
\begin{enumerate}
    \item \emph{band-basis} if $\mathsf{B}\mathbb{P}=\{B\in\mathsf{F}\mathbb{P}:X=\bigcup B\}$.
    \item \emph{cap-basis} if $\mathsf{C}\mathbb{P}=\{C\in\mathsf{P}\mathbb{P}:X=\bigcup C\}$.
\end{enumerate}
\end{dfn}

Note every element of a cap-basis or band-basis $\mathbb{P}$ of a non-empty space $X$ must also be non-empty -- otherwise $\emptyset$ would be a minimum of $\mathbb{P}$ and hence a band of $\mathbb{P}$ which does not cover $X$, contradicting the definition.  Further observe that, as every cap contains a finite subcap, every space with a cap-basis is automatically compact.  And every band-basis of a compact space is a cap-basis, as every cover has a finite subcover which is then a band and hence a cap.  Also, to verify that a basis of non-empty open sets of a $\mathsf{T}_1$ space is a band/cap-basis, it suffices to show that covers are bands/caps, as the converse follows from \eqref{CapsAreCovers}.

\begin{prp}\label{CoversAreCaps}
Every second countable compact $\mathsf{T}_1$ space has a cap-basis.
\end{prp}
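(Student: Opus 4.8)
The plan is to construct a suitable basis by hand. By the remark following \autoref{CapBasis}, since $X$ is $\mathsf{T}_1$ and I will only ever use non-empty basic open sets, it suffices to produce a basis $\mathbb{P}$ in which every cover is a cap; the reverse inclusion is exactly \eqref{CapsAreCovers}. So I would fix a countable basis $\mathcal{U}$ of non-empty open sets and let $\mathcal{D}_0,\mathcal{D}_1,\dots$ enumerate all finite subfamilies of $\mathcal{U}$ that cover $X$. Since $\mathcal{U}$ is a basis, every finite open cover of $X$ is refined by some $\mathcal{D}_i$ (refine each member of the cover by basic sets around its points and extract a finite subcover by compactness), so the family $(\mathcal{D}_i)$ is cofinal under refinement.

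Next I would build, by recursion, finite covers $\mathcal{C}_0,\mathcal{C}_1,\dots$ by non-empty basic open sets satisfying, for every $n\ge 1$: \emph{(a)} $\mathcal{C}_n$ refines both $\mathcal{C}_{n-1}$ and $\mathcal{D}_n$ (each member of $\mathcal{C}_n$ lies inside a member of each); and \emph{(b)} every member of $\mathcal{C}_{n-1}$ contains some member of $\mathcal{C}_n$. For the recursive step I would first cover $X$ by basic sets, each contained in $p\cap d$ for some $p\in\mathcal{C}_{n-1}$ and $d\in\mathcal{D}_n$, and pass to a finite subcover (this secures \emph{(a)} and the covering property); then, for each $p\in\mathcal{C}_{n-1}$, I would throw in one extra basic set $V_p$ with $\emptyset\neq V_p\subseteq p$ and $V_p$ inside some member of $\mathcal{D}_n$ (this secures \emph{(b)}). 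Starting from a finite basic refinement $\mathcal{C}_0$ of $\mathcal{D}_0$, I set $\mathbb{P}=\bigcup_n\mathcal{C}_n$.

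Condition \emph{(a)} gives, by transitivity of refinement, that each member of $\mathcal{C}_m$ lies inside a member of $\mathcal{C}_n$ whenever $m\ge n$; condition \emph{(b)}, chained through the intermediate levels, gives that each member of $\mathcal{C}_m$ contains a member of $\mathcal{C}_n$ whenever $m\le n$. Hence every $p\in\mathbb{P}$ is comparable to a member of $\mathcal{C}_n$, so each (finite) set $\mathcal{C}_n$ is a band. To see $\mathbb{P}$ is a basis, I would take $x\in U$ with $U$ open and apply cofinality to the finite open cover $\{U,\ X\setminus\{x\}\}$ (which is a cover since $X$ is $\mathsf{T}_1$): some $\mathcal{D}_i$, and hence $\mathcal{C}_i$, refines it, and any member of $\mathcal{C}_i$ containing $x$ must lie inside $U$. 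Finally, if $C\subseteq\mathbb{P}$ covers $X$, then compactness yields a finite subcover $C'$, cofinality yields $\mathcal{D}_i$ refining $C'$, and then $\mathcal{C}_i$ refines $\mathcal{D}_i$, hence $C'$, hence $C$; as $\mathcal{C}_i$ is a band this exhibits $C$ as a cap. Together with \eqref{CapsAreCovers}, this makes $\mathbb{P}$ a cap-basis.

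The hard part will be the recursive step, where the refinement requirement \emph{(a)} and the ``thickness'' requirement \emph{(b)} pull in opposite directions: \emph{(a)} forces the members of $\mathcal{C}_n$ to be small, while \emph{(b)} must still fit an entire member of $\mathcal{C}_n$ inside each (possibly already small) member of $\mathcal{C}_{n-1}$. Maintaining both, along with finiteness, is exactly what makes every level simultaneously fine enough to refine arbitrary covers and coarse enough to be comparable to everything below it, and so a band. The only place I expect to use $\mathsf{T}_1$-ness is the basis verification via the cover $\{U,\ X\setminus\{x\}\}$, which is precisely why the hypothesis cannot be dropped.
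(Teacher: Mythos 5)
Your proof is correct, and while it shares the paper's overall skeleton (a chain of finite basic covers $\mathcal{C}_0,\mathcal{C}_1,\dots$, each refining its predecessor, cofinal under refinement among all covers, whose union is the poset and whose levels are bands), the mechanism securing bandhood is genuinely different. The paper enumerates only the finite \emph{minimal} covers from the given countable basis and lets minimality do the work twice: each member $p$ of a minimal cover owns a point $x\in p\setminus\bigcup(B_k\setminus\{p\})$, which makes each level corefine the next automatically, and the same trick (a minimal cover of $X\setminus b$ by sets avoiding $x$) drives the verification that $\mathbb{P}$ is a basis. You instead enumerate \emph{all} finite basic covers and enforce corefinement by hand, padding each $\mathcal{C}_n$ with a witness $V_p\subseteq p\cap d$ for every $p\in\mathcal{C}_{n-1}$ --- this step is legitimate since $p\neq\emptyset$ and $\mathcal{D}_n$ covers $X$, and it does not disturb your condition \emph{(a)} --- and you verify the basis property more directly via the two-element cover $\{U,\,X\setminus\{x\}\}$, which is arguably cleaner than the paper's route. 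The trade-off is structural: minimal covers are antichains (a member properly contained in another could be deleted), so the paper's construction automatically yields a Noetherian poset, as remarked immediately after its proof, and that extra rigidity is the germ of the later strengthenings (\autoref{WeaklyGradedCapBasis}, \autoref{PredeterminedSubBasis}, \autoref{PredeterminedGradedBasis}); your padded levels need be neither minimal covers nor antichains, so your $\mathbb{P}$ may fail to be Noetherian. For the proposition as stated this is immaterial --- a cap-basis is all that is asked --- but it explains why the paper prefers minimality. Your delegation of the converse inclusion to \eqref{CapsAreCovers} via the remark after \autoref{CapBasis} is exactly how the paper handles it as well.
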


\begin{proof}
To start with, take any countable basis $B$ of a compact $\mathsf{T}_1$ space $X$ and let $(C_n)_{n\in\omega}$ enumerate all finite minimal covers of $X$ from $B$.  Recursively define $(n_k)_{k\in\omega}$ as follows. Let $n_0$ be arbitrary. If $n_k$ has been defined then note that, for any $x\in X$, we have $p\in C_{n_k}$ and $q\in C_k$ with $x\in p\cap q$. As $B$ is a basis, we thus have $b\in B$ with $x\in b\subseteq p\cap q$. By compactness, $X$ has a finite minimal cover of such $b$'s. This means we have $n_{k+1}\in\omega$ such that $C_{n_{k+1}}$ refines both $C_{n_k}$ and $C_k$.

Set $B_k=C_{n_k}$ and $\mathbb{P}=\bigcup_{k\in\omega}B_k$. First note that $\mathbb{P}$ is still a basis for $X$. Indeed, if $x\in b\in B$ then, as $X$ is $\mathsf{T}_1$, we can cover $X\setminus b$ with elements of $B$ avoiding $x$. Compactness then yields a finite minimal subcover, i.e. we have some $k\in\omega$ with $b\in C_k$ and $x\notin\bigcup(C_k\setminus\{b\})$. Taking $c\in B_{k+1}$ with $x\in c$, it follows that $c\subseteq b$, as $B_{k+1}$ refines $C_k$ and $b$ is the only element of $C_k$ containing $x$. In particular, we have found $c\in\mathbb{P}$ with $x\in c\subseteq b$, showing that $\mathbb{P}$ is a basis for $X$.

By definition, $B_{k+1}$ refines $B_k$. We claim $B_k$ also corefines $B_{k+1}$, i.e. $B_k\subseteq B_{k+1}^\subseteq$. Indeed, as $B_k$ is a minimal cover, for every $p\in B_k$, we have $x\in p\setminus\bigcup(B_k\setminus\{p\})$. Taking $q\in B_{k+1}$ with $x\in q$, we see that $q\subseteq p$, as $B_{k+1}$ refines $B_k$ and no other element of $B_k$ contains $x$. This proves the claim and hence each $B_k$ is a band of $\mathbb{P}$. As every cover of $X$ from $B$ (and, in particular, $\mathbb{P}$) is refined by some $B_k$, it follows that every cover of $X$ from $\mathbb{P}$ is a cap of $\mathbb{P}$, i.e. $\mathbb{P}$ is a cap-basis.
\end{proof}

Note the cap-bases in the above proof are Noetherian, which have also been studied independently (see e.g. \cite{Grabner1983}). In general, we call a poset $\mathbb{P}$ \emph{Noetherian} if every subset of $\mathbb{P}$ has a maximal element or, equivalently, if $\mathbb{P}$ has no strictly increasing sequences. Put another way, this is saying that $>$ (where $a>b$ means $a\geq b\neq a$) is \emph{well-founded} in the sense of \cite[Definition I.6.21]{Kunen2011}. Like in \cite[\S I.9]{Kunen2011}, we then recursively define the \emph{rank} $\mathsf{r}(p)$ of any $p\in\mathbb{P}$ as the ordinal given by
\[\mathsf{r}(p)=\sup_{q>p}(\mathsf{r}(q)+1).\]
So maximal elements of $\mathbb{P}$ have rank $0$, maximal elements among the remaining subset have rank $1$ and so on. For any ordinal $\alpha$, we denote the $\alpha^\mathrm{th}$ \emph{cone} of $\mathbb{P}$ by
\[\mathbb{P}^\alpha=\{p\in\mathbb{P}:\mathsf{r}(p)\leq\alpha\}.\]
The atoms of the $\alpha^\mathrm{th}$ cone form the $\alpha^\mathrm{th}$ \emph{level} of $\mathbb{P}$, denoted by
\[\mathbb{P}_\alpha=\{p\in\mathbb{P}^\alpha:p^>\cap\mathbb{P}^\alpha=\emptyset\}\]
Note $\mathsf{r}^{-1}\{\alpha\}\subseteq\mathbb{P}_\alpha$, i.e. the $\alpha^\mathrm{th}$ level contains all elements of rank $\alpha$. But this inclusion can be strict, i.e. the $\alpha^\mathrm{th}$ level can also contain elements of smaller rank (e.g. the $\alpha^\mathrm{th}$ level always contains all atoms of $\mathbb{P}$ of rank smaller than $\alpha$).

If some level $\mathbb{P}_\alpha$ of a Noetherian poset $\mathbb{P}$ has finitely many elements then it is immediately seen to be a band.  If $\mathbb{P}$ here is a basis of non-empty sets of a $\mathsf{T}_1$ space $X$ then it follows that $\mathbb{P}_\alpha$ covers $X$, by \autoref{CapsCovers}.  In fact, even if $\mathbb{P}_\alpha$ has infinitely many elements, it will still cover $X$ as long as $\alpha$ is finite.
 
\begin{prp}\label{LevelsCovers}
    If $\mathbb{P}$ is a Noetherian basis for a $\mathsf{T}_1$ space $X$ then $X=\bigcup\mathbb{P}_n$, for all $n\in\omega$.
\end{prp}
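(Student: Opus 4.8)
The plan is to fix an arbitrary $x\in X$ and $n\in\omega$ and to produce a single element of $\mathbb{P}_n$ containing $x$; since $x$ is arbitrary this yields $X=\bigcup\mathbb{P}_n$. Throughout I would use that the order is inclusion, so passing to a proper subset strictly increases rank (if $q\subsetneq p$ then $p>q$, whence $\mathsf{r}(q)\geq\mathsf{r}(p)+1$), together with the two facts recorded just after the definition of levels: every element of rank exactly $n$ lies in $\mathbb{P}_n$ (i.e. $\mathsf{r}^{-1}\{n\}\subseteq\mathbb{P}_n$), and every atom of rank $<n$ lies in $\mathbb{P}_n$. As is standard here I assume the basis elements are non-empty. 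I would then split into two cases according to whether or not $x$ has a basic neighbourhood of rank at least $n$.

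In the first case, where some $q\in\mathbb{P}$ with $x\in q$ has $\mathsf{r}(q)\geq n$, the key step is a ``rank realisation'' argument showing that $x$ must then have a neighbourhood of rank \emph{exactly} $n$. I would let $\rho$ be the least ordinal of the form $\mathsf{r}(q)$ with $x\in q\in\mathbb{P}$ and $\mathsf{r}(q)\geq n$, pick a witness $V$, and argue $\rho=n$: if $\rho>n$ then, since $\mathsf{r}(V)=\rho=\sup_{q\supsetneq V}(\mathsf{r}(q)+1)>n$, some $q\supsetneq V$ has $\mathsf{r}(q)\geq n$; as $q\supsetneq V$ forces $\mathsf{r}(q)<\mathsf{r}(V)=\rho$ while $x\in V\subseteq q$, this $q$ contradicts the minimality of $\rho$. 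Hence $\mathsf{r}(V)=n$, so $V\in\mathsf{r}^{-1}\{n\}\subseteq\mathbb{P}_n$ with $x\in V$, as desired.

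In the second case, where every $q\in\mathbb{P}$ with $x\in q$ has $\mathsf{r}(q)<n$, the ranks of neighbourhoods of $x$ are natural numbers bounded by $n$ and so attain a maximum $k<n$; I would fix a neighbourhood $V$ of $x$ with $\mathsf{r}(V)=k$. Any $W\in\mathbb{P}$ with $x\in W\subsetneq V$ would have $\mathsf{r}(W)>k$, contradicting maximality, so $V$ admits no smaller basic neighbourhood of $x$; a short intersection argument (any $W\ni x$ contains a basic $W'\subseteq V\cap W$ with $x\in W'$, forcing $W'=V\subseteq W$) upgrades this to: $V$ is the least neighbourhood of $x$. Then $\mathsf{T}_1$-ness forces $V=\{x\}$, since any $y\in V$ with $y\neq x$ would make $V\setminus\{y\}$ an open neighbourhood of $x$ and hence contain a basic neighbourhood strictly inside $V$. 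Thus $\{x\}\in\mathbb{P}$ is an atom of rank $k<n$, so $\{x\}\in\mathbb{P}_n$ and $x\in\{x\}$.

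The main obstacle, and the reason a naive argument fails, is that $\mathbb{P}_n$ consists of the \emph{minimal} elements of the cone $\mathbb{P}^n$: one is tempted to start from a low-rank neighbourhood of $x$ and shrink it to a minimal element of $\mathbb{P}^n$, but shrinking can leave the neighbourhood filter of $x$, so the minimal element reached need not contain $x$. The rank-realisation trick circumvents this by \emph{enlarging} to hit rank exactly $n$ (which automatically lands in $\mathbb{P}_n$) rather than shrinking, and the only points left over are the isolated ones of the second case, where $\mathsf{T}_1$-ness is precisely what identifies the least neighbourhood as a singleton atom. I expect the delicate part to be the ordinal/supremum bookkeeping in the rank-realisation step, though this is routine given the strict monotonicity of rank under inclusion; note this approach never needs $\mathbb{P}_n$ to be finite, which is exactly the improvement the proposition is after.
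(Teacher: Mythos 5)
Your proof is correct, but it takes a genuinely different route from the paper's. The paper proceeds by induction on $n$: assuming $\mathbb{P}_n$ covers $X$, it picks $p\in\mathbb{P}_n$ containing $x$ and either $p=\{x\}$ is an atom (hence persists into $\mathbb{P}_{n+1}$) or $\mathsf{T}_1$-ness lets one shrink to some $q\in\mathbb{P}$ with $x\in q\subsetneqq p$, forcing $\mathsf{r}(q)>n$, and then climb back up to an element of $\mathbb{P}_{n+1}$ above $q$; so the $\mathsf{T}_1$ shrinking is deployed at every inductive step. You instead argue directly for each fixed $n$, splitting on whether $x$ has a basic neighbourhood of rank at least $n$. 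Your rank-realisation step --- minimise $\rho=\mathsf{r}(V)\geq n$ over basic neighbourhoods $V\ni x$ and, if $\rho>n$, use the supremum definition of rank to \emph{enlarge} $V$ to a neighbourhood of rank in $[n,\rho)$, contradicting minimality --- replaces the induction entirely, works verbatim for arbitrary ordinal ranks, and lands in $\mathbb{P}_n$ via $\mathsf{r}^{-1}\{n\}\subseteq\mathbb{P}_n$; meanwhile $\mathsf{T}_1$-ness is confined to the residual case, where your least-neighbourhood argument forces $V=\{x\}$, an atom of rank $k<n$ and hence a member of $\mathbb{P}_n$. The trade-off: the paper's induction is shorter and stays close to its level-climbing machinery (as in \autoref{CapsRefineLevels}), whereas your argument isolates precisely where each hypothesis matters --- $\mathsf{T}_1$ only at isolated points, finiteness of $n$ only to extract the maximum $k<n$ in the second case --- and makes explicit that finiteness of the levels themselves is never needed, which is exactly the point of the proposition. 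One detail you were right to flag: both proofs need the basis elements to be non-empty (otherwise $\{x\}$ need not be an atom and the statement can fail, e.g.\ if $\emptyset\in\mathbb{P}$); the paper assumes this tacitly from the surrounding discussion, and your explicit assumption matches that convention.
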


\begin{proof}
    Every $x\in X$ lies in some $p\in\mathbb{P}$, as $\mathbb{P}$ is a basis.  As $\mathbb{P}$ is Noetherian, we then have $p_0\in\mathbb{P}_0$ with $p\subseteq p_0$ and hence $x\in p_0$ too.  This shows that $\mathbb{P}_0$ covers $X$.  Now say $\mathbb{P}_n$ covers $X$.  This means any $x\in X$ lies in some $p\in\mathbb{P}_n$.  If $p=\{x\}$ then $p$ is an atom of $\mathbb{P}$ and hence $p\in\mathbb{P}_{n+1}$ too.  Otherwise, we have $y\in p\setminus\{x\}$ and hence $p\setminus\{y\}$ is an open neighbourhood of $x$, as $X$ is $\mathsf{T}_1$.  Then we have $q\in\mathbb{P}$ with $x\in q\subseteq p\setminus\{y\}$, as $\mathbb{P}$ is a basis, necessarily with $\mathsf{r}(q)>\mathsf{r}(p)=n$.  Thus we have $r\in\mathbb{P}_{n+1}$ with $x\in q\subseteq r$, showing that $\mathbb{P}_{n+1}$ also covers $X$.  By induction, $\mathbb{P}_n$ thus covers $X$, for all $n\in\omega$.
\end{proof}

\subsection{\texorpdfstring{$\omega$}{Omega}-Posets}

We call a poset $\mathbb{P}$ an \emph{$\omega$-poset} if every principal filter $p^\leq$ is finite and the number of principal filters of size $n$ is also finite, for any $n\in\omega$.  Equivalently, an $\omega$-poset is a Noetherian poset in which both the rank of each element of $\mathbb{P}$ and the size of each level (or cone) of $\mathbb{P}$ is finite.  For example, taking any $\omega$-tree in the sense of \cite[III.5.7]{Kunen2011} and replacing $\leq$ with $\geq$ yields an $\omega$-poset.

The nice thing about $\omega$-posets is that their levels determine the caps, specifically caps are precisely the subsets refined by some level.  Put another way, the levels are coinitial with respect to refinement within the family of all caps (and even bands).

\begin{prp}\label{CapsRefineLevels}
If $\mathbb{P}$ is an $\omega$-poset then its levels $(\mathbb{P}_n)$ are coinitial in $\mathsf{B}\mathbb{P}$.
\end{prp}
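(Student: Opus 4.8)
The plan is to verify two things: that each level $\mathbb{P}_n$ genuinely belongs to $\mathsf{B}\mathbb{P}$, and that every band is refined by some level. For the first point I would simply invoke the observation already recorded in the text that a \emph{finite} level of a Noetherian poset is a band; since $\mathbb{P}$ is an $\omega$-poset, each level $\mathbb{P}_n$ is finite, so indeed $(\mathbb{P}_n)\subseteq\mathsf{B}\mathbb{P}$. The substance of the proposition is therefore the coinitiality: given an arbitrary band $B\in\mathsf{B}\mathbb{P}$, I must produce an index $n$ with $\mathbb{P}_n\leq B$, i.e.\ with every element of $\mathbb{P}_n$ lying below some element of $B$.

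The natural choice is $n=\max\{\mathsf{r}(b):b\in B\}$, which is a well-defined finite number because $B$ is finite and every rank in an $\omega$-poset is finite; this choice guarantees $B\subseteq\mathbb{P}^n$. Now fix $p\in\mathbb{P}_n$. Since $B$ is a band, $p$ is comparable to some $b\in B$, so either $p\leq b$ or $b\leq p$. The first alternative is exactly what we want. To handle the second I would argue that the strict inequality $b<p$ is impossible: it would place $b$ in $p^>\cap\mathbb{P}^n$ (as $b\in p^>$ and $\mathsf{r}(b)\leq n$ gives $b\in\mathbb{P}^n$), contradicting the defining property $p^>\cap\mathbb{P}^n=\emptyset$ of an element of the level $\mathbb{P}_n$. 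Hence $b\leq p$ forces $b=p$, so in every case $p\leq b$. Letting $p$ range over $\mathbb{P}_n$ then yields $\mathbb{P}_n\leq B$, as desired.

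The one place to be careful -- and the step I expect to be the crux -- is precisely this comparability dichotomy: the whole point is that an element of $B$ cannot sit \emph{strictly} below a level element once $n$ is at least its rank, and this is exactly where minimality of $p$ within the cone $\mathbb{P}^n$ (the definition of a level) does the work. It is also worth double-checking the direction of both the refinement relation and the word ``coinitial'' here, to confirm that we are being asked for a level \emph{below} each band rather than above. Everything else -- finiteness of $n$, and the fact that each level is itself a band -- is routine bookkeeping flowing from the $\omega$-poset hypotheses and the earlier remarks.
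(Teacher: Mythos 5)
Your proof is correct and follows essentially the same route as the paper's: take $n=\max_{b\in B}\mathsf{r}(b)$ so that $B\subseteq\mathbb{P}^n$, then use minimality of level elements in the cone $\mathbb{P}^n$ to rule out $b<p$ and conclude $\mathbb{P}_n\leq B$ from comparability. The only cosmetic differences are that you cite the earlier remark that finite levels of Noetherian posets are bands where the paper re-proves it inline, and you spell out the comparability dichotomy slightly more explicitly; both are fine.
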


\begin{proof}
First note that each level $\mathbb{P}_n$ is a band. Indeed, if $\mathsf{r}(p)\leq n$ then $p$ must be above some minimal element of $\mathbb{P}^n$, i.e. some element of $\mathbb{P}_n$. On the other hand, if $\mathsf{r}(p)\geq n$ then $p$ is below some element of rank $n$, which must again lie in $\mathbb{P}_n$.

Conversely, say $B\subseteq\mathbb{P}$ is a band and let $n=\max_{b\in B}\mathsf{r}(b)$ so $B\subseteq\mathbb{P}^n$. It follows that no atom of $\mathbb{P}^n$ can be strictly above any element of $B$. Thus every element of $\mathbb{P}_n$ must be below some element of $B$, as $B$ is a band, i.e. $\mathbb{P}_n\leq B$.
\end{proof}

In particular, the bands and caps of any $\omega$-poset are (downwards) directed with respect to refinement.  The fact that the levels here are finite is crucial, i.e. there are simple examples of Noetherian posets for which this fails.

\begin{xpl}
Take a poset $\mathbb{P}$ consisting of two incomparable $q,r\in\mathbb{P}$ together with infinitely many incomparable elements which all lie below both $q$ and $r$, i.e. $\mathbb{P}\setminus\{q,r\}=q^>=r^>$ is infinite and $s\nleq t$, for all distinct $s,t\in\mathbb{P}\setminus\{q,r\}$.  This poset is Noetherian with two levels, although only the top level is finite. Note $\{q,r\}$ is a band of $\mathbb{P}$ but the only other bands of $\mathbb{P}$ contain at least one element of both $\{q,r\}$ and $\mathbb{P}\setminus\{q,r\}$, while the caps of $\mathbb{P}$ are precisely those subsets containing $q$ and/or $r$. In particular, no cap refines both the singleton caps $\{q\}$ and $\{r\}$.
\end{xpl}

Another simple observation about caps in $\omega$-posets is the following.

\begin{prp}\label{CapAntichains}
    If $\mathbb{P}$ is an $\omega$-poset then no infinite cap is an antichain, i.e.
    \[\mathsf{A}\mathbb{P}\cap\mathsf{C}\mathbb{P}\subseteq\mathsf{F}\mathbb{P}.\]
\end{prp}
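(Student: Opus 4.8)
The plan is to show that any $C\in\mathsf{A}\mathbb{P}\cap\mathsf{C}\mathbb{P}$ is contained in a finite set, specifically a finite union of principal filters, from which $C\in\mathsf{F}\mathbb{P}$ is immediate. Since $C$ is a cap, its definition directly hands us a band $B\in\mathsf{B}\mathbb{P}$ with $B\leq C$, and bands are finite by definition. The $\omega$-poset hypothesis guarantees that every principal filter $b^\leq$ is finite, so $F=\bigcup_{b\in B}b^\leq$ is a finite set. My goal is then to prove the inclusion $C\subseteq F$.

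To establish $C\subseteq F$, I would take an arbitrary $c\in C$ and use that $B$ is a band, so $c$ is comparable to some $b\in B$. If $b\leq c$, then $c\in b^\leq\subseteq F$ and we are done. The remaining case $c\leq b$ is where the antichain and refinement hypotheses must combine: since $B\leq C$, there is $c'\in C$ with $b\leq c'$, whence $c\leq b\leq c'$ gives $c\leq c'$. As $C$ is an antichain this forces $c=c'$, and then $c\leq b\leq c$ together with antisymmetry of $\leq$ yields $c=b\in B\subseteq F$. Thus in both cases $c\in F$.

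I expect the downward case $c\leq b$ to be the main obstacle, since it is the only place where all three ingredients---the refinement $B\leq C$, the antichain condition on $C$, and antisymmetry of the partial order---are needed simultaneously; an attempt using only the band property stalls exactly here. Once both cases are handled, the inclusion $C\subseteq F$ is complete and finiteness of $C$ follows from finiteness of $F$. (One could alternatively route through \autoref{CapsRefineLevels} to replace $B$ by a level $\mathbb{P}_n$, but invoking the band supplied by the cap definition directly is more economical.)
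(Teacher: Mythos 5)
Your proof is correct, and it takes a genuinely different route from the paper's. The paper argues by contraposition via the rank machinery: since the cones of an $\omega$-poset are finite, an infinite cap $C$ must contain some $c$ with $\mathsf{r}(c)>\max_{b\in B}\mathsf{r}(b)$ for a band $B\leq C$; comparability with the band then forces $c<b$ for some $b\in B$, and refinement yields $c'\in C$ with $c<b\leq c'$, exhibiting a strictly comparable pair in $C$. You instead prove directly that $C\subseteq\bigcup_{b\in B}b^\leq$, which is finite; your downward case $c\leq b$ uses the same key refinement step ($b\leq c'\in C$), but you extract $c=c'=b$ from the antichain property together with antisymmetry, rather than deriving a contradiction. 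Your argument is more elementary -- it never mentions ranks, cones or levels -- and in fact slightly more general: it only uses the first half of the $\omega$-poset definition (every principal filter $b^\leq$ is finite) plus the definitional finiteness of bands, so it applies verbatim to any poset with finite principal filters even when levels are infinite, such as the Noetherian example following \autoref{CapsRefineLevels}, where the paper's rank-counting step would fail. What the paper's route buys in exchange is economy within its own framework (the rank apparatus is already set up and reused throughout) and a proof that directly pinpoints the comparable pair witnessing the failure of the antichain condition.
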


\begin{proof}
    If $C$ is an infinite cap then $B\leq C$, for some band $B$.  In particular, $B$ is finite so we must have $c\in C$ with $\mathsf{r}(c)>\max_{b\in B}\mathsf{r}(b)$.  As $B$ is a band, we then have $b\in B\cap c^<$.  As $B\leq C$, we then have $c'\in C$ with $c<b\leq c'$, showing that $C$ is not an antichain.
\end{proof}

We are particularly interested in $\omega$-posets arising from bases.

\begin{dfn}
    A (band/cap-)basis that is also an $\omega$-poset (w.r.t. inclusion $\subseteq$) will be called an \emph{$\omega$-$($band/cap-$)$basis}.
\end{dfn}

The proof of \autoref{CoversAreCaps} shows that every second countable $\mathsf{T}_1$ compactum has an $\omega$-cap-basis.  Further note that if the space there is Hausdorff then it is metrisable.  In compact metric spaces, we can actually characterise $\omega$-cap-bases as precisely the countable bases whose diameters converge to zero.

\begin{prp}\label{OmegaCapBasesInMetricCompacta}
If $X$ is a compact metric space with a countable basis $\mathbb{P}$ of non-empty open sets then, for any enumeration $(p_n)$ of $\mathbb{P}$,
\[\mathbb{P}\text{ is an $\omega$-cap-basis}\qquad\Leftrightarrow\qquad\mathrm{diam}(p_n)\rightarrow0.\]
\end{prp}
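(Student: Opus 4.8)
The plan is to rephrase the right-hand side in an enumeration-free way and then prove the two implications separately. Note first that for a sequence of non-negative reals, $\mathrm{diam}(p_n)\to 0$ holds for one (equivalently every) enumeration precisely when $D_\epsilon:=\{p\in\mathbb{P}:\mathrm{diam}(p)\geq\epsilon\}$ is finite for every $\epsilon>0$. So it suffices to show that $\mathbb{P}$ is an $\omega$-cap-basis if and only if every $D_\epsilon$ is finite; this also makes the equivalence manifestly independent of the enumeration. Throughout I would use that a compact metric space is $\mathsf{T}_1$, that $p\subseteq q$ forces $\mathrm{diam}(p)\leq\mathrm{diam}(q)$, and that an isolated point $x$ comes with an \emph{isolation radius} $\eta>0$ with $B(x,\eta)=\{x\}$, so that any non-singleton set containing $x$ has diameter $\geq\eta$.

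For the forward implication, suppose $\mathbb{P}$ is an $\omega$-cap-basis and fix $\epsilon>0$. The family $C\subseteq\mathbb{P}$ of all basis sets of diameter $<\epsilon$ is a cover (every point has a small basis neighbourhood), hence a cap, so by \autoref{CapsRefineLevels} some level $\mathbb{P}_N$ refines $C$; thus every element of $\mathbb{P}_N$ has diameter $<\epsilon$. The key step is then purely order-theoretic: since $\mathbb{P}_N$ is a band, any $p$ with $\mathsf{r}(p)>N$ is comparable to some $b\in\mathbb{P}_N$, and the rank inequality rules out $p\supsetneq b$ (and $p=b$), forcing $p\subseteq b$ and hence $\mathrm{diam}(p)<\epsilon$. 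Consequently $D_\epsilon\subseteq\mathbb{P}^N$, which is finite because $\mathbb{P}$ is an $\omega$-poset; so $D_\epsilon$ is finite.

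For the converse, assume every $D_\epsilon$ is finite. I would first check that $\mathbb{P}$ is an $\omega$-poset. Noetherianness and finiteness of each principal filter $p^\leq$ (hence of each rank) come quickly from the diameter bounds: a strictly increasing chain eventually consists of sets of a fixed positive diameter and so lies in a finite $D_\delta$, while $p^\leq\subseteq D_{\mathrm{diam}(p)}$ when $\mathrm{diam}(p)>0$, the singleton case being settled by the isolation radius. The substantial point is that every cone $\mathbb{P}^n$ is finite. Here I would take the least $n$ with $\mathbb{P}^n$ infinite, so $\mathbb{P}^{n-1}$ is finite and infinitely many $p$ have rank exactly $n$; each such $p$ then has all its proper supersets inside the finite set $\mathbb{P}^{n-1}$, so by pigeonhole infinitely many of them share the same superset-set $S\subseteq\mathbb{P}^{n-1}$. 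Picking $x_p\in p$, compactness gives $x_p\to x^*$ while $\mathrm{diam}(p)\to 0$ (only finitely many lie in any $D_\delta$), and a ball estimate shows $p\subseteq W$ for every fixed basis neighbourhood $W$ of $x^*$ and all large $p$, forcing $W\in S$ (as $p\subsetneq W$). Hence $x^*$ has only finitely many basis neighbourhoods; the degenerate case $S=\emptyset$ is immediate, and otherwise $x^*$ is isolated with least neighbourhood $\{x^*\}$, whereupon the diameter estimate collapses all large $p$ to $\{x^*\}$, contradicting their distinctness.

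It then remains to see $\mathbb{P}$ is a cap-basis, for which (as caps are covers by \autoref{CapsCovers}) it suffices to show every cover $C$ is a cap. Fixing a Lebesgue number $\lambda>0$ for $C$, every set of diameter $<\lambda$ lies in some member of $C$, so I just need a level contained in $\{p:\mathrm{diam}(p)<\lambda\}$. Each of the finitely many $q\in D_\lambda$ is a non-singleton and hence has a proper subset in $\mathbb{P}$ ($\mathsf{T}_1$-ness plus the basis property); letting $N$ exceed the ranks of one such chosen subset of each $q$, no $q\in D_\lambda$ can be minimal in $\mathbb{P}^N$, so $\mathbb{P}_N\cap D_\lambda=\emptyset$ and the band $\mathbb{P}_N$ refines $C$. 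I expect the main obstacle to be the finiteness of cones in the converse: the recursive definition of rank forces a least-counterexample (inductive) argument, and the real difficulty is controlling isolated points, where low-rank sets may have zero diameter; the isolation-radius trick is what ultimately rescues the compactness argument there.
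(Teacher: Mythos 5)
Your proof is correct, and while your enumeration-free reformulation (finiteness of $D_\varepsilon=\mathbb{P}\setminus\mathbb{P}_\varepsilon$) and your forward implication essentially match the paper -- the paper argues the contrapositive, that an infinite $D_\varepsilon$ forces every level, hence by \autoref{CapsRefineLevels} every cap, to contain a set of diameter at least $\varepsilon$, whereas you argue directly that $D_\varepsilon\subseteq\mathbb{P}^N$ once some level refines the cover by small sets; these are mirror images -- your converse genuinely diverges at its hardest point, the finiteness of levels/cones. The paper first proves that every level $\mathbb{P}_n$ covers $X$ (treating isolated points via the rank of the singleton $\{x\}$), then kills a hypothetical infinite level by taking a finite subcover $F\subseteq\mathbb{P}_n$, applying the Lebesgue number lemma, and producing $p\in\mathbb{P}_n\setminus F$ with $p\subseteq f\in F$, contradicting incomparability within a level. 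You instead take a least infinite cone, extract from the rank-$n$ elements a convergent sequence of witness points whose sets have diameters tending to $0$, and observe that every basis neighbourhood $W$ of the limit $x^*$ eventually properly contains these sets, so that $\mathsf{T}_1$ forces a minimum basis neighbourhood $\{x^*\}$ and the isolation radius then collapses infinitely many distinct sets to $\{x^*\}$, a contradiction; note that your pigeonhole step is actually redundant, since $p^<\subseteq\mathbb{P}^{n-1}$ already confines all basis neighbourhoods of $x^*$ to a finite set without fixing a common superset-set $S$. Your final step also differs, and is arguably cleaner: where the paper builds an ad hoc band $E\cup F\subseteq\mathbb{P}_\varepsilon$ (a finite subcover $F$ together with a finite set $E$ of small sets corefined by $\mathbb{P}\setminus\mathbb{P}_\delta$), you note that each of the finitely many members of $D_\lambda$ has a proper basis subset by $\mathsf{T}_1$, so a sufficiently deep level $\mathbb{P}_N$ misses $D_\lambda$ entirely and, by the Lebesgue number, refines the given cover directly. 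What the paper's route buys is a reusable covering fact for levels (cf.\ \autoref{LevelsCovers}) and purely cover-based compactness; what yours buys is a single application of the Lebesgue lemma and no auxiliary band construction, at the cost of a sequential-compactness argument in the cone step.
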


\begin{proof}
For $\varepsilon\in(0,1)$, let $\mathbb{P}_\varepsilon=\{p\in\mathbb{P}:\mathrm{diam}(p)<\varepsilon\}$, so what we want to show is
\[\mathbb{P}\text{ is an $\omega$-cap-basis}\qquad\Leftrightarrow\qquad\mathbb{P}\setminus\mathbb{P}_\varepsilon\text{ is finite, for all }\varepsilon>0.\]
First say $\mathbb{P}\setminus\mathbb{P}_\varepsilon$ is infinite, for some $\varepsilon>0$.  Assuming $\mathbb{P}$ is an $\omega$-poset (otherwise we are already done), this means that every level of $\mathbb{P}$ contains a set with diameter at least $\varepsilon$.  By \autoref{CapsRefineLevels}, the same is true of all caps.  This means that the cover $\mathbb{P}_\varepsilon$ of $X$ can not be a cap and hence $\mathbb{P}$ is not a cap-basis.

Conversely, say $\mathbb{P}\setminus\mathbb{P}_\varepsilon$ is finite, for all $\varepsilon>0$.  As $p\subseteq q$ implies $\mathrm{diam}(p)\leq\mathrm{diam}(q)$, $\mathbb{P}$ is Noetherian and the rank of each element is finite.

We claim every level $\mathbb{P}_n$ of $\mathbb{P}$ covers $X$.  To see this, take any $x\in X$.  If $x$ is not isolated then we must have a sequence in $\mathbb{P}$ of neighbourhoods of $x$ which is strictly decreasing with respect to inclusion.  There are then sets in $\mathbb{P}$ of arbitrary rank containing $x$, in particular we have some $p_x\in\mathbb{P}$ with $x\in p_x$ and $\mathsf{r}(p_x)=n$ and hence $p_x\in\mathbb{P}_n$.  On the other hand, if $x$ is isolated then either $\{x\}\in\mathbb{P}^n$ and hence we may take $p_x=\{x\}\in\mathbb{P}_n$, or $\mathsf{r}(\{x\})>n$ and hence we again have $p_x\in\mathbb{P}$ with $x\in p_x$ and $\mathsf{r}(p_x)=n$ so $p_x\in\mathbb{P}_n$.  Then $\{p_x:x\in X\}\subseteq\mathbb{P}_n$ covers $X$, as claimed.

If $\mathbb{P}$ were not an $\omega$-poset then $\mathbb{P}$ would have some infinite level $\mathbb{P}_n$.  By the claim just proved, $\mathbb{P}_n$ would then cover $X$ and hence have some finite subcover $F\subseteq\mathbb{P}_n$.  By the Lebesgue number lemma (see \cite[Lemma 27.5]{Munkres2000}), any cover of a compact metric space is uniform, i.e. we have some $\varepsilon>0$ such that every subset of diameter at most $\varepsilon$ is contained in some set in the cover.  In particular, we have some $\varepsilon>0$ such that $\mathbb{P}_\varepsilon$ refines $F$.  As $\mathbb{P}_n$ is infinite, we can take some $p\in\mathbb{P}_n\setminus F$ with $\mathrm{diam}(p)<\varepsilon$.  But then $p\subseteq f$, for some $f\in F$, contradicting the fact that elements in the same level are incomparable.  Thus $\mathbb{P}$ is indeed an $\omega$-poset.

For any $\varepsilon>0$, we next claim that $\mathbb{P}_\varepsilon$ contains a band.  To see this first note that $\mathbb{P}_\varepsilon$ is still a basis for $X$.  In particular, $\mathbb{P}_\varepsilon$ covers $X$ and hence we have a finite subcover $F\subseteq\mathbb{P}_\varepsilon$.  Again, we have some $
\delta>0$ such that $\mathbb{P}_\delta$ refines $F$, i.e. $\mathbb{P}_\delta\leq F$.  As $\mathbb{P}\setminus\mathbb{P}_\delta$ is finite, we also have finite $E\subseteq\mathbb{P}_\varepsilon$ with $\mathbb{P}\setminus\mathbb{P}_\delta\geq E$.  Thus $E\cup F$ is a band of $\mathbb{P}$ contained in $\mathbb{P}_\varepsilon$, proving the claim.

Now take any cover $C\subseteq\mathbb{P}$ of $X$.  Again $C$ is uniform and is thus refined by $\mathbb{P}_\varepsilon$, for some $\varepsilon>0$, and hence by some band $B\subseteq\mathbb{P}_\varepsilon$, i.e. $C$ is a cap.  Conversely, caps are covers, by \eqref{CapsAreCovers}, so $\mathbb{P}$ is indeed a cap-basis.
\end{proof}

Note that if $U$ is an up-set of an $\omega$-poset $\mathbb{P}$, i.e. $U^\leq\subseteq U$, then $U$ is again an $\omega$-poset in the induced ordering $\leq_U\ =\ \leq\cap\ (U\times U)$.  Indeed, $U$ being an up-set implies that the rank within $U$ of any element of $U$ is the same as its rank within the original $\omega$-poset $\mathbb{P}$.  As long as $U$ does not contain any extra atoms, the caps of $U$ will also all come from caps of $\mathbb{P}$ in a canonical way.

\begin{prp}\label{UpsetCaps}
If $\mathbb{P}$ is an $\omega$-poset then, for all $U\subseteq\mathbb{P}$,
\begin{equation}\label{CUsub}
\mathsf{C}U\subseteq\{C\cap U:C\in\mathsf{C}\mathbb{P}\}.
\end{equation}
Moreover, equality holds if $U$ is an up-set whose atoms are all already atoms in $\mathbb{P}$.
\end{prp}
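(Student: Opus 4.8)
The plan is to prove the two inclusions separately, the reverse inclusion (equality under the hypotheses) being the substantial part.

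For the forward inclusion \eqref{CUsub}, valid for arbitrary $U$, I would start from a cap $D\in\mathsf{C}U$ together with a band $B$ of $U$ witnessing $B\leq D$. Setting $n=\max_{b\in B}\mathsf{r}(b)$, so that $B\subseteq\mathbb{P}^n$, I propose the cap
\[C=D\cup(\mathbb{P}_n\setminus U).\]
Since $D\subseteq U$ one immediately gets $C\cap U=D$, so it only remains to check $C\in\mathsf{C}\mathbb{P}$, for which I would show that the level $\mathbb{P}_n$ (a band of $\mathbb{P}$) refines $C$. Take $p\in\mathbb{P}_n$: if $p\notin U$ then $p\in\mathbb{P}_n\setminus U\subseteq C$; if $p\in U$ then $p$ is comparable to some $b\in B$, and since $b\in\mathbb{P}^n$ while $p$ is minimal in $\mathbb{P}^n$, we cannot have $b<p$, forcing $p\leq b\leq d$ for some $d\in D\subseteq C$. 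Either way $p\in C^\geq$, so $\mathbb{P}_n\leq C$ and $C$ is a cap.

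For the reverse inclusion, suppose $U$ is an up-set all of whose atoms are atoms of $\mathbb{P}$, and take $C\in\mathsf{C}\mathbb{P}$; recall $U$ is itself an $\omega$-poset with the same ranks. By \autoref{CapsRefineLevels} I may fix a level $\mathbb{P}_n$ with $\mathbb{P}_n\leq C$. I want a band of $U$ refining $C\cap U$; since $U$ is an up-set, any $c\geq u\in U$ lies in $U$, so it suffices to produce a band $B_U$ of $U$ with $B_U\subseteq C^\geq$, i.e. one avoiding the set
\[V=U\setminus C^\geq=\{u\in U:u\not\leq c\text{ for all }c\in C\}.\]
Note $V$ is an up-set of $U$, being the intersection of $U$ with the up-set $\mathbb{P}\setminus C^\geq$.

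The two key observations, and the crux of the argument, are that $V$ is finite and contains no atom of $U$. For finiteness: if $u\in V$ then $u$ is comparable to some $b\in\mathbb{P}_n$, and $u\leq b$ would give $u\leq b\leq c\in C$; hence $b<u$, so $V\subseteq\bigcup_{b\in\mathbb{P}_n}b^<$, a finite union of finite principal filters. For the absence of atoms: an atom $a$ of $U$ is an atom of $\mathbb{P}$ by hypothesis, so comparability with a band element $b\in\mathbb{P}_n$ forces $a\leq b\leq c$ for some $c\in C$, whence $a\in C^\geq$ and $a\notin V$. Consequently every $v\in V$ admits some $u_v\in U$ with $u_v<v$; taking $m$ above the finitely many ranks $\mathsf{r}(u_v)$, each such $v$ is non-minimal in $U^m$, so $U_m\cap V=\emptyset$. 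Then the level $U_m$ is a band of $U$ contained in $C^\geq$, giving $U_m\leq C$ and hence $U_m\leq C\cap U$ inside $U$, so that $C\cap U\in\mathsf{C}U$.

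The main obstacle is the reverse inclusion, and within it the realization that the obstruction set $V$ is \emph{finite} — this is exactly what allows a single sufficiently deep level $U_m$ of $U$ to do the job, rather than having to track how $\mathbb{P}_n\cap U$ fails to be a band. The atoms hypothesis enters precisely to guarantee that $V$ contains no atom of $U$; without it an atom of $U$ could sit above the missing part $\mathbb{P}_n\setminus U$ and keep every level of $U$ meeting $V$, and indeed equality can then genuinely fail.
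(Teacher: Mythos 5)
Your proposal is correct and takes essentially the same route as the paper's own proof: the forward inclusion manufactures the very same cap $D\cup(\mathbb{P}_n\setminus U)$, merely selecting the level via $n=\max_{b\in B}\mathsf{r}(b)$ (the trick from \autoref{CapsRefineLevels}) where the paper instead takes a level disjoint from $B^<$. In the reverse direction your obstruction set $V=U\setminus C^\geq\subseteq\mathbb{P}_n^<$ is finite and free of atoms of $U$ for exactly the reasons the paper's set $B^<$ is, and your choice of $m$ above the ranks of the witnesses $u_v$ simply inlines the paper's preliminary claim that some level of the $\omega$-poset $U$ meets a given finite set only in atoms, after which both arguments finish identically using the up-set hypothesis to push the refinement into $C\cap U$.
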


\begin{proof}
First note that, for any finite $F\subseteq\mathbb{P}$, we can find a level $\mathbb{P}_n$ whose overlap with $F$ consists entirely of atoms.  Indeed, as $F$ is finite, we can find a cone $\mathbb{P}^n$ overlapping $f^>$, for each $f\in F$ that is not an atom.  This means non-atomic elements of $F$ are never minimal in $\mathbb{P}^n$ and hence $\mathbb{P}_n$ is the required level.  In particular, if $F$ contains no atoms at all then it is disjoint from $\mathbb{P}_n$.

Now take any $C\in\mathsf{C}U$, which is refined by some $B\in\mathsf{B}U$.  We thus have a level $\mathbb{P}_n$ disjoint from $B^<$.  As $B$ is a band of $U$, for any $u\in\mathbb{P}_n\cap U$, we have some comparable $b\in B$.  As $u\notin B^<$, it follows that $u\leq b$.  This shows that $\mathbb{P}_n\cap U$ refines $B$ and hence $C$.  Thus $\mathbb{P}_n$ refines $C\cup(\mathbb{P}_n\setminus U)$, which is thus a cap of $\mathbb{P}$ whose intersection with $U$ is the original $C$.  This proves \eqref{CUsub}.

Conversely, take $C\in\mathsf{C}\mathbb{P}$, which is refined by some $B\in\mathsf{B}\mathbb{P}$.  If $U$ is an up-set and hence an $\omega$-poset in its own right then we have some level $U_n$ of $U$ such that $B^<\cap U_n$ consists entirely of atoms of $U$.  However, $B^<$ does not contain any atoms of $\mathbb{P}$.  If all atoms of $U$ are already atoms of $\mathbb{P}$, this implies $B^<\cap U_n=\emptyset$.  As $B$ is a band of $\mathbb{P}$, for each $u\in U_n$, we have some comparable $b\in B$.  As $u\notin B^<$, this means $u\leq b$ and hence $u\leq c$, for some $c\in C$, which is necessarily also in $U$.  This shows that $U_n$ refines $C\cap U$, which is thus a cap of $U$, i.e. $\{C\cap U:C\in\mathsf{C}\mathbb{P}\}\subseteq\mathsf{C}U$.
\end{proof}

The following result and its corollary show how to identify levels of an $\omega$-poset.

\begin{prp}\label{PosetLevels}
    The levels of a Noetherian poset $\mathbb{P}$ in which each element has finite rank are the unique antichains $(A_n)\subseteq\mathsf{A}\mathbb{P}$ covering $\mathbb{P}$ such that, for all $n\in\omega$, $A_{n+1}\setminus A_n$ refines $A_n\setminus A_{n-1}$ $($taking $A_{-1}=\emptyset)$ and $A_n$ corefines $A_{n+1}$.
\end{prp}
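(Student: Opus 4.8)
The statement has two halves: that the levels $(\mathbb{P}_n)$ do satisfy the four listed conditions, and that they are the \emph{only} sequence of antichains that does. For the first half, the plan is to reduce everything to the basic identity $\mathbb{P}_n\setminus\mathbb{P}_{n-1}=\mathsf{r}^{-1}\{n\}$ (with $\mathbb{P}_{-1}=\emptyset$), which holds because an element of rank $\leq n-1$ that is minimal in the cone $\mathbb{P}^n$ is already minimal in $\mathbb{P}^{n-1}$. Each $\mathbb{P}_n$ is an antichain, being the set of minimal elements of $\mathbb{P}^n$; the family covers $\mathbb{P}$ since every $p$ has finite rank $m$ and hence lies in $\mathsf{r}^{-1}\{m\}\subseteq\mathbb{P}_m$; the refinement clause becomes ``every rank-$(n{+}1)$ element lies below a rank-$n$ element'', which holds because the supremum defining $\mathsf{r}$ is attained at successor ranks; and the corefinement clause follows by taking, for $a\in\mathbb{P}_n$, a minimal element of $\{x\leq a:\mathsf{r}(x)\leq n+1\}$, which is then minimal in $\mathbb{P}^{n+1}$ and so lies in $\mathbb{P}_{n+1}$.

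For uniqueness, suppose $(A_n)\subseteq\mathsf{A}\mathbb{P}$ covers $\mathbb{P}$ and satisfies both clauses. The first tool I would establish is a refinement-chain lemma: for any $p\in A_n\setminus A_{n-1}$, iterating the clause ``$A_i\setminus A_{i-1}$ refines $A_{i-1}\setminus A_{i-2}$'' downward produces a strictly increasing chain $p=p^{(n)}<p^{(n-1)}<\dots<p^{(0)}$ with $p^{(i)}\in A_i\setminus A_{i-1}$, the strictness coming from $p^{(i)}\notin A_{i-1}$ while $p^{(i-1)}\in A_{i-1}$. Since ranks strictly decrease going up a chain, this forces $\mathsf{r}(p)\geq n$. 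Combined with covering, taking $n$ to be the first stage containing a given $p$, every element first appears at a stage at most its rank; equivalently $\mathbb{P}^m\subseteq\bigcup_{k\leq m}A_k$ for all $m$.

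The heart of the argument is a strong induction proving $A_n=\mathbb{P}_n$. Assuming $A_k=\mathbb{P}_k$ for all $k<n$, so that $\bigcup_{k<n}A_k=\mathbb{P}^{n-1}$, I would first show $A_n\setminus A_{n-1}=\mathsf{r}^{-1}\{n\}$. The chain lemma gives $\mathsf{r}(p)\geq n$ for new $p$; for the reverse inequality, if some new $p$ had $\mathsf{r}(p)>n$, I would choose $q>p$ with $\mathsf{r}(q)=n$, note that $q$ appears by stage $n$ (by the previous paragraph) but not earlier (everything earlier has rank $\leq n-1$ by the inductive hypothesis), whence $q\in A_n$, so that $p<q$ with $p,q\in A_n$ contradicts the antichain property. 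This also places every rank-$n$ element in $A_n\setminus A_{n-1}$. For the persistent part, given $a\in A_{n-1}=\mathbb{P}_{n-1}$ the corefinement clause supplies $b\in A_n$ with $b\leq a$, and a case split on whether $a$ has a predecessor of rank $n$ shows $a\in A_n$ iff $a\in\mathbb{P}_n$: if it has no such predecessor then $b$ (whose rank must lie in $(\mathsf{r}(a),n]$ yet avoid $\{1,\dots,n-1\}$ by minimality of $a$ in $\mathbb{P}^{n-1}$) must equal $a$, forcing $a\in A_n$; if it does, that rank-$n$ predecessor already lies in $A_n$, so $a\in A_n$ would once more violate the antichain property. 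Combining the two parts yields $A_n=\mathbb{P}_n$.

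The main obstacle is exactly the upper bound $\mathsf{r}(p)\leq n$ on newly appearing elements. The refinement clause only ever pushes ranks upward, so it delivers the lower bound but nothing more; the upper bound is therefore not a local consequence and must be wrung out of the covering hypothesis and the inductive identification of the earlier stages, played against the antichain property through the rank-$n$ witness $q>p$. Once this is in place, the rest is routine bookkeeping with the rank function and the identity $\mathbb{P}_n\setminus\mathbb{P}_{n-1}=\mathsf{r}^{-1}\{n\}$.
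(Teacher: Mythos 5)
Your proof is correct, and while it shares the paper's overall skeleton (identify the newly appearing part $A_n\setminus A_{n-1}$ with $\mathsf{r}^{-1}\{n\}$, then pin down the persistent elements via corefinement and the antichain property), the pivotal mechanisms differ. The paper first isolates the standalone implication $A_m\ni p<q\in A_n\Rightarrow m>n$, proved purely from iterated refinement plus the antichain property, which together with covering yields the rank upper bound $A_n\subseteq\mathbb{P}^n$ for \emph{all} $n$ before any stage has been identified; this makes the subsequent inductions light. Your upper bound on newly appearing elements is instead extracted inside a single interleaved strong induction, via the rank-$n$ witness $q>p$ that covering and the inductive hypothesis force into $A_n$ -- as you note yourself, this is the step that cannot be read off locally from the refinement clause, and your argument trades the paper's up-front lemma for a heavier reliance on covering and the prior identification $A_k=\mathbb{P}_k$ ($k<n$). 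Your chain lemma giving $\mathsf{r}(p)\geq n$ for $p\in A_n\setminus A_{n-1}$ is the same content as the paper's induction showing $A_n\setminus A_{n-1}\subseteq\mathsf{r}^{-1}\{n\}$, where the matching upper bound comes for free from $A_{n+1}\subseteq\mathbb{P}^{n+1}$. For the persistent part the paper argues globally: since $\mathbb{P}^{n+1}=\mathbb{P}^n\cup A_{n+1}$ corefines $A_{n+1}$, every minimal element of $\mathbb{P}^{n+1}$ must itself lie in $A_{n+1}$, giving $\mathbb{P}_{n+1}\subseteq A_{n+1}$ in one stroke; your element-wise case split on whether $a\in A_{n-1}$ has a rank-$n$ element strictly below it reaches the same conclusion more locally, at the cost of a finer case analysis but with the benefit of showing exactly which persistent elements survive and why. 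Finally, you also verify the existence half -- that the levels themselves are antichains satisfying the four conditions -- which the paper's proof leaves implicit; the one spot there deserving an explicit word is your corefinement step: a minimal element of $\{x\leq a:\mathsf{r}(x)\leq n+1\}$ exists not because $\mathbb{P}$ is Noetherian (that hypothesis only bars ascending chains) but because every chain inside the cone $\mathbb{P}^{n+1}$ has at most $n+2$ elements, ranks being strictly increasing as one descends, so minimal elements of nonempty subsets of $\mathbb{P}^{n+1}$ do exist.
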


\begin{proof}
    If $A_{n+1}\setminus A_n$ refines $A_n\setminus A_{n-1}$ then, in particular, $A_{n+1}$ refines $A_n$ and hence $A_m$ refines $A_n$, for all $m\geq n$.  From this we can already show that $A_n\subseteq\mathbb{P}^n$, for all $n\in\omega$.  Indeed, this follows immediately from the fact that
    \begin{equation}\label{mpqn}
        A_m\ni p<q\in A_n\qquad\Rightarrow\qquad m>n.
    \end{equation}
    To see this, just note if $A_m\ni p<q\in A_n$ then $m\leq n$ would imply $A_n\leq A_m$ and hence we would have $p'\in A_m$ with $p<q\leq p'$, contradicting $A_m\in\mathsf{A}\mathbb{P}$.

    Returning to the fact that $A_{n+1}\setminus A_n$ refines $A_n\setminus A_{n-1}$, it now follows by induction that $A_n\setminus A_{n-1}\subseteq\mathsf{r}^{-1}\{n\}$, for all $n\in\omega$.  Indeed, $A_0\subseteq\mathbb{P}^0=\mathsf{r}^{-1}\{0\}$ is immediate from what we just showed.  And if every $p\in A_{n+1}\setminus A_n$ is (strictly) below some $q\in A_n\setminus A_{n-1}\subseteq\mathsf{r}^{-1}\{n\}$ then $n+1\geq\mathsf{r}(p)>\mathsf{r}(q)=n$ and hence $\mathsf{r}(p)=n+1$, showing that $A_{n+1}\setminus A_n\subseteq\mathsf{r}^{-1}\{n+1\}$.  If the $(A_n)$ cover $\mathbb{P}$ then so do the sets $(A_n\setminus A_{n-1})$ and hence the inclusion must actually be an equality, i.e. for all $n\in\omega$,
    \[\mathsf{r}^{-1}\{n\}=A_n\setminus A_{n-1}.\]
    For all $n\in\omega$, it follows that $\mathbb{P}^n=\bigcup_{k\leq n}A_k$ and hence $A_n\subseteq\mathbb{P}_n$, by \eqref{mpqn}.

    If $A_n$ also corefines $A_{n+1}$, for all $n\in\omega$, then it again follows by induction that $A_n=\mathbb{P}_n$.  Indeed, we already know $\mathbb{P}_0=\mathbb{P}^0=A_0$.  And if $\mathbb{P}^n\geq A_n\geq A_{n+1}$ then $A_{n+1}$ must contain all atoms of $\mathbb{P}^n\cup A_{n+1}=\mathbb{P}^{n+1}$, i.e. $\mathbb{P}_{n+1}\subseteq A_{n+1}\subseteq\mathbb{P}_{n+1}$.
\end{proof}

Let us call an $\omega$-poset $\mathbb{P}$ \emph{weakly graded} if consecutive levels share only atoms of $\mathbb{P}$.  Equivalently, this is saying that every non-atomic $p\in\mathbb{P}$ has a lower bound $q$ with $\mathsf{r}(q)=\mathsf{r}(p)+1$.  Moreover, we immediately see that the following are equivalent.
\begin{enumerate}
    \item $\mathbb{P}$ is atomless and weakly graded.
    \item Every $p\in\mathbb{P}$ has a lower bound $q$ with $\mathsf{r}(q)=\mathsf{r}(p)+1$.
    \item The levels of $\mathbb{P}$ are disjoint.
    \item $\mathbb{P}_n=\mathsf{r}^{-1}\{n\}$, for all $n\in\omega$.
\end{enumerate}
\autoref{PosetLevels} has the following corollary for weakly graded $\omega$-posets.

\begin{cor}\label{AtomicPosetLevels}
    If $\mathbb{P}$ is a poset covered by finite antichains $(A_n)_{n\in\omega}\subseteq\mathsf{A}\mathbb{P}$ such that $A_{n+1}$ refines $A_n$, $A_n$ corefines $A_{n+1}$ and $A_n\cap A_{n+1}$ contains only atoms of $\mathbb{P}$, for all $n\in\omega$, then $\mathbb{P}$ is a weakly graded $\omega$-poset with levels $\mathbb{P}_n=A_n$, for all $n\in\omega$.
\end{cor}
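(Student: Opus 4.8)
The plan is to reduce everything to \autoref{PosetLevels}, which already characterises the levels of a Noetherian poset with finite ranks as the unique covering antichains satisfying two refinement conditions. The subtlety is that \autoref{PosetLevels} presupposes that $\mathbb{P}$ is Noetherian with finite ranks, whereas here we are only handed the combinatorial data $(A_n)$. So the preliminary task is to extract these standing hypotheses from the $(A_n)$, after which the two conditions of \autoref{PosetLevels} are routine to verify.

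First I would establish the key inequality
\[
A_m \ni p < q \in A_n \qquad\Rightarrow\qquad m > n,
\]
exactly as in the proof of \autoref{PosetLevels}: since $A_{n+1}$ refines $A_n$ and refinement is transitive (shown earlier), $A_n$ refines $A_m$ whenever $n\geq m$, so if we had $m\leq n$ then $q\leq p'$ for some $p'\in A_m$, giving $p<p'$ with both in the antichain $A_m$, a contradiction. With this in hand, Noetherianness and finiteness of ranks come cheaply: if $p\in A_s$ then every $q>p$ lies in some $A_t$ with $t<s$, so the principal filter $p^\leq$ is contained in the finite set $\{p\}\cup A_0\cup\dots\cup A_{s-1}$. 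Finite principal filters preclude infinite ascending chains, so $\mathbb{P}$ is Noetherian, and they bound each rank, so every element has finite rank.

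Next I would verify the hypotheses of \autoref{PosetLevels}. Corefinement of $A_{n+1}$ by $A_n$ is assumed outright. For the refinement of differences, take $p\in A_{n+1}\setminus A_n$; since $A_{n+1}$ refines $A_n$ we get $q\in A_n$ with $p\leq q$, and $p\notin A_n$ forces $p<q$, so $q$ is not an atom. The hypothesis that $A_{n-1}\cap A_n$ contains only atoms then rules out $q\in A_{n-1}$, so $q\in A_n\setminus A_{n-1}$; hence $A_{n+1}\setminus A_n$ refines $A_n\setminus A_{n-1}$ (the case $n=0$ being trivial, as $A_{-1}=\emptyset$). Applying \autoref{PosetLevels} now yields $\mathbb{P}_n=A_n$ for all $n\in\omega$.

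Finally, since each level $\mathbb{P}_n=A_n$ is finite, $\mathbb{P}$ is an $\omega$-poset, and since $\mathbb{P}_n\cap\mathbb{P}_{n+1}=A_n\cap A_{n+1}$ contains only atoms, $\mathbb{P}$ is weakly graded by definition. I expect the only genuine obstacle to be the bootstrap in the second paragraph: one cannot invoke \autoref{PosetLevels} until Noetherianness and finite ranks have been secured from the $(A_n)$, so the key inequality must be derived first, before any appeal to ranks is legitimate. Once that inequality is available, the remainder is a short verification.
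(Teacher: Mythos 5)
Your proposal is correct and follows essentially the same route as the paper: derive the key inequality \eqref{mpqn} from the refinement hypothesis, conclude Noetherianness and finite ranks, verify that $A_{n+1}\setminus A_n$ refines $A_n\setminus A_{n-1}$ using the atoms-only condition on $A_n\cap A_{n+1}$, and then invoke \autoref{PosetLevels}. Your only addition is to spell out the Noetherian bootstrap (via the containment $p^\leq\subseteq\{p\}\cup A_0\cup\dots\cup A_{s-1}$), which the paper leaves implicit but which is exactly the intended argument.
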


\begin{proof}
    As above, we obtain \eqref{mpqn} from the fact that each $A_{n+1}$ refines $A_n$, showing that $\mathbb{P}$ is a Noetherian poset in which each element has finite rank.  To show that $\mathbb{P}_n=A_n$ and hence that $\mathbb{P}$ is an $\omega$-poset, it thus suffices to show that $A_{n+1}\setminus A_n$ refines $A_n\setminus A_{n-1}$, for all $n\in\omega$.  But if $A_{n+1}$ refines $A_n$ then, in particular, for every $p\in A_{n+1}\setminus A_n$, we have some $q\in A_n$ with $p<q$.  If $A_n\cap A_{n-1}$ contains only atoms of $\mathbb{P}$ then this implies that $q\in A_n\setminus A_{n-1}$ so we are done.
\end{proof}

\subsection{Level-Injectivity}

Here we look at order properties related to minimal caps.  First let us denote the order relation between levels $m$ and $n$ of an $\omega$-poset $\mathbb{P}$ by
\[{\leq^m_n}={\leq}\cap(\mathbb{P}_n\times\mathbb{P}_m).\]

\begin{prp}
    For any $\omega$-poset $\mathbb{P}$, the following are equivalent.
    \begin{enumerate}
        \item\label{MinimalCaps} Each level $\mathbb{P}_n$ is a minimal cap.
        \item\label{InjectiveLevels} $\leq^m_n$ is injective whenever $m\leq n$.
        \item\label{CofinalInjectivity} $\{n:{\leq^m_n}\text{ is injective}\}$ is cofinal in $\omega$, for each $m\in\omega$.
    \end{enumerate}
\end{prp}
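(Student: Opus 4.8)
The plan is to reduce all three conditions to a single combinatorial fact about when a level, with one element deleted, fails to be refined by a lower level, and then to exploit the monotonicity of refinement between levels. Throughout I will use two standing observations. First, each level $\mathbb{P}_n$ is itself a band (as shown inside \autoref{CapsRefineLevels}), hence a cap. Second, for $m\leq n$ the level $\mathbb{P}_n$ refines $\mathbb{P}_m$: any $q\in\mathbb{P}_n$ is comparable to some $p\in\mathbb{P}_m$ since $\mathbb{P}_m$ is a band, and $p<q$ is impossible because then $p\in\mathbb{P}^m\subseteq\mathbb{P}^n$ would be strictly below $q$, contradicting $q$ being an atom of $\mathbb{P}^n$; hence $q\leq p$. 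In relational terms this says exactly that $\leq^m_n$ is surjective whenever $m\leq n$.

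The key lemma I would isolate is a reformulation of injectivity: for $m\leq n$, the relation $\leq^m_n$ is injective if and only if for every $p\in\mathbb{P}_m$ the level $\mathbb{P}_n$ fails to refine $\mathbb{P}_m\setminus\{p\}$. The forward direction is immediate from the relational definition of injectivity, since a witness $q\in\mathbb{P}_n$ related only to $p$ is precisely a $q$ lying below $p$ but below no other element of $\mathbb{P}_m$, and such a $q$ shows $\mathbb{P}_n\not\leq\mathbb{P}_m\setminus\{p\}$. The converse is where surjectivity enters: given $q\in\mathbb{P}_n$ below no element of $\mathbb{P}_m\setminus\{p\}$, surjectivity of $\leq^m_n$ forces $q\leq p'$ for some $p'\in\mathbb{P}_m$, and this $p'$ can only be $p$, so $q$ witnesses injectivity at $p$.

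With this lemma the cycle $(1)\Rightarrow(2)\Rightarrow(3)\Rightarrow(1)$ is short. For $(1)\Rightarrow(2)$, minimality of $\mathbb{P}_m$ says no $\mathbb{P}_m\setminus\{p\}$ is a cap; since $\mathbb{P}_n$ is a band this forces $\mathbb{P}_n\not\leq\mathbb{P}_m\setminus\{p\}$, so $\leq^m_n$ is injective by the lemma. The step $(2)\Rightarrow(3)$ is trivial, as $(2)$ makes $\{n:\leq^m_n\text{ injective}\}$ contain the whole final segment $\{n\geq m\}$. The interesting implication is $(3)\Rightarrow(1)$, and the main obstacle there is upgrading ``cofinally many injective'' to ``minimal for every $m$''. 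I would handle this as follows. By \autoref{CapsRefineLevels}, $\mathbb{P}_m\setminus\{p\}$ is a cap precisely when some level $\mathbb{P}_n$ with $n\geq m$ refines it, and the set $R=\{n\geq m:\mathbb{P}_n\leq\mathbb{P}_m\setminus\{p\}\}$ is upward closed, since $n\in R$ and $n'\geq n$ give $\mathbb{P}_{n'}\leq\mathbb{P}_n\leq\mathbb{P}_m\setminus\{p\}$ by transitivity of refinement. By $(3)$ together with the lemma, the complement of $R$ is cofinal, and an upward closed subset of $\omega$ with cofinal complement must be empty; hence $\mathbb{P}_m\setminus\{p\}$ is not a cap. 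As $p$ and $m$ were arbitrary, every level is a minimal cap. The only genuinely delicate points are the surjectivity input in the converse half of the lemma and this final-segment argument closing the cycle; everything else is bookkeeping with the definitions of band, cap, and refinement.
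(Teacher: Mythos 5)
Your proof is correct and follows essentially the same route as the paper: both hinge on the observation that, given the surjectivity of $\leq^m_n$ for $m\leq n$, injectivity of $\leq^m_n$ is equivalent to $\mathbb{P}_n$ failing to refine any punctured level $\mathbb{P}_m\setminus\{p\}$, combined with \autoref{CapsRefineLevels} and the fact that later levels refine earlier ones. You merely package this as an explicit lemma and argue $(3)\Rightarrow(1)$ directly (via the upward-closed set $R$ with cofinal complement) where the paper argues the contrapositive, so the mathematical content is the same.
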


\begin{proof}\
    \begin{itemize}
        \item[\ref{MinimalCaps}$\Rightarrow$\ref{InjectiveLevels}] If $\leq^m_n$ fails to be injective for some $m\leq n$ then we have some $p\in\mathbb{P}_m$ such that $q^\leq\cap\mathbb{P}_m\neq\{p\}$, for all $q\in\mathbb{P}_n$.  But then $\mathbb{P}_n$ refines $\mathbb{P}_m\setminus\{p\}$ and hence $\mathbb{P}_m\setminus\{p\}$ is a cap, showing that $\mathbb{P}_m$ is not a minimal cap.

        \item[\ref{InjectiveLevels}$\Rightarrow$\ref{CofinalInjectivity}] Immediate.

        \item[\ref{CofinalInjectivity}$\Rightarrow$\ref{MinimalCaps}] If $\mathbb{P}_m$ is not a minimal cap then it has a proper subcap $C\subseteq\mathbb{P}_m$, which is necessarily refined by $\mathbb{P}_n$, for some $n\geq m$, by \autoref{CapsRefineLevels}.  But then $\mathbb{P}_k\leq C$ and hence $\leq^m_k$ is not injective, for all $k\geq n$, showing that $\{n:{\leq^m_n}\text{ is injective}\}$ is not cofinal in $\omega$. \qedhere
    \end{itemize}
\end{proof}

Accordingly, let us call an $\omega$-poset $\mathbb{P}$ satisfying any/all of the above conditions \emph{level-injective}.  When $\mathbb{P}$ is atomless, we could also replace $\leq^m_n$ above with ${<^m_n}={\leq}\cap(\mathbb{P}_n\times\mathbb{P}_m)$, when $m<n$, which is a consequence of the following.

\begin{prp}
    Every level-injective $\omega$-poset $\mathbb{P}$ is weakly graded.
\end{prp}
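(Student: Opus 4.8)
The plan is to argue by contradiction, playing off the pointwise characterisations of the two properties. Recall that weak gradedness is equivalent to every non-atomic $p\in\mathbb{P}$ having a lower bound $q$ with $\mathsf{r}(q)=\mathsf{r}(p)+1$, while level-injectivity gives, by condition \ref{InjectiveLevels}, that $\leq^n_{n'}$ is injective for all $n\le n'$. So I would assume $\mathbb{P}$ is level-injective but not weakly graded and fix a non-atomic $p$ admitting no lower bound of rank $\mathsf{r}(p)+1$. Writing $n=\mathsf{r}(p)$, note $p\in\mathbb{P}_n$ since $\mathsf{r}^{-1}\{n\}\subseteq\mathbb{P}_n$, and every $q<p$ has $\mathsf{r}(q)>n$ and hence $\mathsf{r}(q)\ge n+2$ by the choice of $p$.

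The heart of the argument is the claim that every $q<p$ lies strictly below some element of $\mathbb{P}_n\setminus\{p\}$. To see this, I would first record that in an $\omega$-poset each principal filter is finite, so the supremum defining the rank is always attained; consequently every element of positive rank $j$ has an element of rank $j-1$ strictly above it. Starting from $q$ and climbing, this yields an element $r>q$ with $\mathsf{r}(r)=n+1$ and then an element $t>r$ with $\mathsf{r}(t)=n$, so that $t\in\mathbb{P}_n$ and $q<t$. Crucially $t\ne p$: were $t=p$, then $r<p$ would be a lower bound of $p$ of rank $n+1$, contradicting the choice of $p$. Thus $t\in\mathbb{P}_n\setminus\{p\}$ sits strictly above $q$, as claimed. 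This rank-ascent is the step I expect to be the main obstacle, since it is where finiteness of principal filters and the rank arithmetic must be deployed carefully.

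It then remains to close the contradiction. Since $p$ is non-atomic I would pick any $q_0<p$ and set $n'=\mathsf{r}(q_0)\ge n+2$; as $q_0$ witnesses a lower bound of rank $\le n'$, we have $p\notin\mathbb{P}_{n'}$. Injectivity of $\leq^n_{n'}$ applied to $p\in\mathbb{P}_n$ then supplies an $a\in\mathbb{P}_{n'}$ with $a\le p$ that is below no other element of $\mathbb{P}_n$; since $p\notin\mathbb{P}_{n'}$ forces $a\ne p$, we get $a<p$. But the key claim hands us some $t\in\mathbb{P}_n\setminus\{p\}$ with $a<t$, contradicting the exclusivity of $a$. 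Hence no such $p$ can exist, and $\mathbb{P}$ is weakly graded.
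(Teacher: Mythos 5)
Your proof is correct, but it takes a genuinely different route from the paper's. The paper argues via the minimal-cap formulation of level-injectivity: assuming $\mathbb{P}$ is not weakly graded, it picks $p\in\mathbb{P}_n$ with $n>\mathsf{r}(p)$ and $n$ maximal, then observes that all lower bounds of $p$ in $\mathbb{P}_{n+1}$ route back below rank-$n$ elements different from $p$, so $\mathbb{P}_{n+1}$ refines the punctured level $\mathbb{P}_n\setminus\{p\}$ and $\mathbb{P}_n$ fails to be a minimal cap -- a short argument at the consecutive pair $(n,n+1)$, but one leaning on the maximality device (whose existence itself implicitly uses non-atomicity of $p$). You instead anchor at $n=\mathsf{r}(p)$, invoke the injectivity formulation at the distant pair $(\mathsf{r}(p),\mathsf{r}(q_0))$, and make the routing fully explicit via a rank-ascent lemma: finiteness of principal filters means the supremum defining $\mathsf{r}$ is attained, so any $q<p$ (necessarily of rank $\geq\mathsf{r}(p)+2$ by the choice of $p$) climbs through an element of rank $\mathsf{r}(p)+1$ -- which by hypothesis cannot lie below $p$ -- to some $t\in\mathbb{P}_n\setminus\{p\}$ strictly above $q$; the exclusive witness $a\leq p$ supplied by injectivity of $\leq^n_{n'}$ then closes the contradiction, since $p\notin\mathbb{P}_{n'}$ forces $a<p$. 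I checked the delicate points and they all hold: the pointwise reformulation of weak gradedness, the inclusion $\mathsf{r}^{-1}\{n\}\subseteq\mathbb{P}_n$, the attainment of the rank supremum, the fact that $q_0<p$ with $\mathsf{r}(q_0)=n'$ excludes $p$ from $\mathbb{P}_{n'}$, and the directionality of injectivity (an element of $\mathbb{P}_{n'}$ related only to $p$ among $\mathbb{P}_n$) all conform to the paper's conventions. What your route buys is self-containedness -- no appeal to caps, bands or refinement, only the definition of rank and finite principal filters, and no need to choose a maximal bad level; what it costs is the explicit climbing lemma and a somewhat longer closing, where the paper gets its contradiction in one stroke from the punctured-level refinement.
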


\begin{proof}
    If $\mathbb{P}$ is an $\omega$-poset that is not weakly graded then we have some $p\in\mathbb{P}_n$, where $n>\mathsf{r}(p)$.  Choosing $n$ maximal with this property, it follows that $p\notin\mathbb{P}_{n+1}$, even though all the lower bounds of $p$ in $\mathbb{P}_{n+1}$ have rank $n+1$ and are thus below some element of rank $n$, necessarily different from $p$.  Thus $\mathbb{P}_{n+1}$ refines $\mathbb{P}_n\setminus\{p\}$, showing that $\mathbb{P}_n$ is not a minimal cap and hence $\mathbb{P}$ is not level-injective.
\end{proof}

With a little extra care, we can also choose the cap-basis in \autoref{CoversAreCaps} to be a level-injective $\omega$-poset with levels among some prescribed family of covers.  

\begin{prp}\label{WeaklyGradedCapBasis}
Any countable family $\mathcal{C}$ of minimal open covers of a compact $\mathsf{T}_1$ space $X$ that is coinitial $($w.r.t. refinement$)$ among all covers of $X$ has a subfamily that forms the levels of a level-injective $\omega$-cap-basis.
\end{prp}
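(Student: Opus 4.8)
The plan is to adapt the recursive construction from the proof of \autoref{CoversAreCaps}, but to select the levels directly out of $\mathcal{C}$ and then invoke \autoref{AtomicPosetLevels} to recognise them as the levels of the poset they generate. First I would record two elementary facts about the members of $\mathcal{C}$: since $X$ is compact, every minimal cover is finite, and minimality forces it to be an antichain with respect to $\subseteq$ (a comparable pair would let us discard the smaller set) consisting of non-empty sets. Enumerate $\mathcal{C}=(C_k)_{k\in\omega}$ and recursively choose $B_k=C_{n_k}\in\mathcal{C}$, starting from $B_0=C_0$. Given $B_k$, the extra care is to first build an auxiliary open cover $D_k$ refining $B_k$ strictly off the isolated points: for each $x\in X$ pick $p\in B_k$ with $x\in p$, setting $d_x=\{x\}$ when $p=\{x\}$ and otherwise $d_x=p\setminus\{y\}$ for some $y\in p\setminus\{x\}$ (open by $\mathsf{T}_1$-ness, with $x\in d_x\subsetneq p$). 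Compactness leaves a finite subcover $D_k$. I would then take a common refinement $E_k$ of $D_k$ and $C_k$ and, using coinitiality of $\mathcal{C}$, choose $B_{k+1}\in\mathcal{C}$ refining $E_k$, hence refining both $D_k$ and $C_k$. Set $\mathbb{P}=\bigcup_k B_k$.

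Next I would verify the hypotheses of \autoref{AtomicPosetLevels} for the finite antichains $(B_k)$, which cover $\mathbb{P}$ by construction. Refinement $B_{k+1}$ refines $B_k$ is immediate, since $B_{k+1}$ refines $D_k$ which refines $B_k$. Corefinement $B_k$ corefines $B_{k+1}$ follows exactly as in \autoref{CoversAreCaps}: given $p\in B_k$, minimality yields $x\in p\setminus\bigcup(B_k\setminus\{p\})$, and any $q\in B_{k+1}$ containing $x$ satisfies $q\subseteq p$, as $B_{k+1}$ refines $B_k$ and $p$ is the only member of $B_k$ containing $x$. For the atom condition, suppose some $p\in B_k\cap B_{k+1}$ is not a singleton; as $B_{k+1}$ refines $D_k$ we get $d\in D_k$ with $p\subseteq d$, and $d$ cannot be a singleton (that would force $p$ to be one), so $d=p'\setminus\{y\}\subsetneq p'\in B_k$, whence $p\subseteq d\subsetneq p'$ gives $p\subsetneq p'$ with $p,p'\in B_k$, contradicting the antichain property. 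Thus $B_k\cap B_{k+1}$ consists only of singletons, which are automatically atoms of $\mathbb{P}$ since $\mathbb{P}$ has no empty member. So \autoref{AtomicPosetLevels} applies and gives that $\mathbb{P}$ is a weakly graded $\omega$-poset with $\mathbb{P}_n=B_n$.

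It remains to see that $\mathbb{P}$ is a cap-basis. By threading coinitiality through the recursion, the subfamily $(B_k)$ is itself coinitial among all covers of $X$: any cover is refined by some $C_k\in\mathcal{C}$ by hypothesis, and $C_k$ is refined by $B_{k+1}$. From this $\mathbb{P}$ is a basis, since for $x\in U$ open the two-element cover $\{U,\,X\setminus\{x\}\}$ (using $\mathsf{T}_1$-ness) is refined by some $B_k$, and the member of $B_k$ containing $x$ must then lie inside $U$. Being an $\omega$-poset, each level $\mathbb{P}_n=B_n$ is a band by \autoref{CapsRefineLevels}, so every cap is a cover by \eqref{CapsAreCovers}; conversely any cover from $\mathbb{P}$ is refined by some $B_k=\mathbb{P}_k$ and is therefore a cap. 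Hence $\mathbb{P}$ is an $\omega$-cap-basis.

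Finally, level-injectivity is then immediate: each level $\mathbb{P}_n=B_n$ is a minimal cover of $X$, and in a cap-basis caps coincide with covers, so $B_n$ is a minimal cap; this is exactly condition \ref{MinimalCaps}, making $\mathbb{P}$ level-injective, with $(B_n)$ the promised subfamily. I expect the main obstacle to be precisely the atom condition of \autoref{AtomicPosetLevels} — guaranteeing that consecutive chosen covers share no non-trivial open set — which is what the strict auxiliary refinement $D_k$ is engineered to force; everything else is a matter of propagating coinitiality so that $\mathbb{P}$ is simultaneously a basis and has $(B_k)$ coinitial among its own covers.
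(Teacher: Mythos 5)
Your proof is correct and follows essentially the same route as the paper's: recursively select covers from $\mathcal{C}$ via coinitiality, shrink strictly (off the singletons) so that consecutive chosen covers share only singletons, invoke \autoref{AtomicPosetLevels} to identify the chosen covers as the levels, and use minimality of the covers to get level-injectivity. The only cosmetic differences are that the paper shrinks inside the intersections $p\cap q$ with $q\in C_k$ in a single step where you separate the strict shrink $D_k$ from the common refinement with $C_k$, and you justify the singleton-intersection condition via the antichain property of minimal covers rather than the paper's observation that no member of $C_{n_k}$ lies inside any $b_x$.
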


\begin{proof}
    Like in the proof of \autoref{CoversAreCaps}, let $(C_n)_{n\in\omega}$ enumerate $\mathcal{C}$ and define $(n_k)_{k\in\omega}$ as follows.  Let $n_0$ be arbitrary. If $n_k$ has been defined then note that, for any $x\in X$, we have $p\in C_{n_k}$ and $q\in C_k$ with $x\in p\cap q$.  If $p\cap q=\{x\}$ then set $b_x=\{x\}$.  Otherwise we may take away a point of $p\cap q$ (as $X$ is $\mathsf{T}_1$) to obtain open $b_x$ with $x\in b_x\subsetneqq p\cap q$.  As $C_{n_k}$ is minimal, this implies that no subset of $b_x$ lies in $C_{n_k}$.  Now $(b_x)_{x\in X}$ is an open cover of $X$ which must then have a refinement $C_{n_{k+1}}$, for some $n_{k+1}$.  Thus $C_{n_{k+1}}$ refines both $C_{n_k}$ and $C_k$, with the additional property that $C_{n_{k+1}}\cap C_{n_k}$ consists only of singletons.  As in the proof of \autoref{CoversAreCaps}, this implies that $\mathbb{P}=\bigcup_{k\in\omega}C_{n_k}$ is a cap-basis for $X$ and that each $C_{n_k}$ also corefines $C_{n_{k+1}}$.  Moreover, each $C_{n_k}$ is a minimal cover and hence a minimal cap in $\mathbb{P}$.  Thus $\mathbb{P}$ is a level-injective $\omega$-poset with levels $\mathbb{P}_k=C_{n_k}$, by \autoref{AtomicPosetLevels}.
\end{proof}

If we want the levels of an $\omega$-poset to determine not just the caps but even the bands then we need a slight strengthening of level-injectivity.  To describe this, let us introduce some more terminology and notation.

Take a poset $(\mathbb{P},\leq)$. The intervals defined by any $p,q\in\mathbb{P}$ will be denoted by
\begin{align*}
    (p,q)&=p^<\cap q^>=\{r\in\mathbb{P}:p<r<q\},\\
    [p,q]&=p^\leq\cap q^\geq=\{r\in\mathbb{P}:p\leq r\leq q\}.
\end{align*}
We call $p$ a \emph{predecessor} of $q$ (and $q$ a \emph{successor} of $p$) if $p$ is a maximal element strictly below $q$. The resulting predecessor relation will be denoted by $\lessdot$, i.e.
\[p\lessdot q\qquad\Leftrightarrow\qquad p<q\text{ and }(p,q)=\emptyset\qquad\Leftrightarrow\qquad p\neq q\text{ and }[p,q]=\{p,q\}.\]

\begin{dfn}
We call $\mathbb{P}$ \emph{predetermined} if, for all $p\in\mathbb{P}$,
\[\tag{Predetermined}p^>\neq\emptyset\qquad\Rightarrow\qquad\exists q<p\ (q^<\subseteq p^\leq).\]
\end{dfn}

Equivalently, $q<p$ and $q^<\subseteq p^\leq$ could be written just as $q^<=p^\leq$.  Also note this implies $(q,p)=\emptyset$ and hence $q\lessdot p$, i.e. $q$ is necessarily a predecessor of $p$.  In other words, $\mathbb{P}$ is predetermined precisely when every non-atomic element of $\mathbb{P}$ has a `predecessor which determines its upper bounds'.

Predetermined $\omega$-posets can also be characterised as follows.

\begin{prp}
    If $\mathbb{P}$ is an $\omega$-poset then the following are equivalent.
    \begin{enumerate}
        \item\label{PreItem} $\mathbb{P}$ is predetermined.
        \item\label{BandItem} Every non-minimal $p\in\mathbb{P}$ is a band of $q^<$, for some $q\in\mathbb{P}$.
        \item\label{LevelItem} For every $p\in\mathbb{P}$ and $n\geq\mathsf{r}(p)$, we have $q\in\mathbb{P}_n$ with $q^\leq=[q,p]\cup p^<$.
        \item\label{CapItem} Every finite cap is a band, i.e. $\mathsf{B}\mathbb{P}=\mathsf{C}\mathbb{P}\cap\mathsf{F}\mathbb{P}$.
    \end{enumerate}
\end{prp}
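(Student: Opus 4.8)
The plan is to prove the cycle (\ref{PreItem})$\Rightarrow$(\ref{LevelItem})$\Rightarrow$(\ref{BandItem})$\Rightarrow$(\ref{PreItem}) together with (\ref{LevelItem})$\Rightarrow$(\ref{CapItem})$\Rightarrow$(\ref{PreItem}). The workhorse throughout is a single \emph{climbing} observation: if $q<p$ and every element of $q^<$ is comparable to $p$, then replacing $q$ by a maximal element $q'$ of the finite interval $[q,p)$ produces a predecessor $q'\lessdot p$ with $q'^<\subseteq q^<$; since $(q',p)=\emptyset$, comparability to $p$ forces $q'^<\subseteq p^\leq$, and as $p^\leq\subseteq q'^<$ whenever $q'<p$, we get $q'^<=p^\leq$. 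By the reformulation recorded just after the definition, this says exactly that $\mathbb{P}$ is predetermined at $p$. I would isolate this first, as it yields (\ref{BandItem})$\Rightarrow$(\ref{PreItem}) at once (a band $\{p\}$ of $q^<$ is precisely a $q<p$ with all of $q^<$ comparable to $p$) and, by contraposition, the \emph{escape property}: if $\mathbb{P}$ is \emph{not} predetermined at a non-atom $p$, then every $q<p$ admits some $r>q$ incomparable to $p$.

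For (\ref{PreItem})$\Rightarrow$(\ref{LevelItem}) I would fix $p$ and induct on $n\geq\mathsf{r}(p)$. The base case $n=\mathsf{r}(p)$ is witnessed by $q=p$, since $p^\leq=\{p\}\cup p^<=[p,p]\cup p^<$ and $p\in\mathbb{P}_{\mathsf{r}(p)}$. For the step, note first that predetermination forces weak gradedness: the predecessor $q$ supplied by the definition has $q^<=p^\leq$, on which rank is maximised only at $p$, so $\mathsf{r}(q)=\mathsf{r}(p)+1$; hence a non-atom lying in $\mathbb{P}_n$ has rank exactly $n$. Given $q\in\mathbb{P}_n$ with $q^\leq=[q,p]\cup p^<$ I pass to level $n+1$ by taking $q'=q$ if $q$ is an atom (atoms persist to all deeper levels) or the predetermined predecessor $q'\lessdot q$ of rank $n+1$ otherwise, and then a routine check gives $q'^\leq=[q',p]\cup p^<$. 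Next, (\ref{LevelItem})$\Rightarrow$(\ref{BandItem}) is direct: applying (\ref{LevelItem}) to $p$ at level $n=\min\{\mathsf{r}(s):s<p\}$ yields $q\in\mathbb{P}_n$ with $q<p$ (as $p\notin\mathbb{P}_n$) and $q^<\subseteq q^\leq=[q,p]\cup p^<$ consisting of elements comparable to $p$, i.e.\ $\{p\}$ is a band of $q^<$.

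For (\ref{LevelItem})$\Rightarrow$(\ref{CapItem}), recall a finite cap $C$ is refined by some level $\mathbb{P}_n$ by \autoref{CapsRefineLevels}, and deeper levels refine shallower ones, so for any $p$ I may take $N\geq\max(n,\mathsf{r}(p))$ with $\mathbb{P}_N\leq C$. Condition (\ref{LevelItem}) gives $q\in\mathbb{P}_N$ with every element of $q^\leq$ comparable to $p$; since $q\leq c$ for some $c\in C$ and $c\in q^\leq$, this $c$ is comparable to $p$, so $C$ is a band. Combined with the automatic inclusion $\mathsf{B}\mathbb{P}\subseteq\mathsf{C}\mathbb{P}\cap\mathsf{F}\mathbb{P}$ (bands are finite and refine themselves) this gives (\ref{CapItem}).

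The crux is (\ref{CapItem})$\Rightarrow$(\ref{PreItem}), which I would prove contrapositively by exhibiting, from a non-atom $p$ at which predetermination fails, a finite cap having \emph{no} comparability to $p$, so that $p$ itself witnesses the failure of the band condition. Set $I_p=\{r:r\text{ incomparable to }p\}$ and let $n\geq\min\{\mathsf{r}(s):s<p\}$. The key point is that at such a depth no level element lies above $p$: any $r>p$ has the lower bound $p$ of rank $<n$ and so is not minimal in $\mathbb{P}^n$, while $p$ itself has a lower bound of rank $\leq n$; hence every $x\in\mathbb{P}_n$ is either incomparable to $p$ (so $x\leq x\in I_p$) or strictly below $p$ (so $x\leq r\in I_p$ for an escape $r$, by the escape property). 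Thus $\mathbb{P}_n$ refines $I_p$, and choosing one such $r$ for each of the finitely many $x\in\mathbb{P}_n$ yields a finite $W\subseteq I_p$ refined by the band $\mathbb{P}_n$; so $W$ is a finite cap every element of which is incomparable to $p$, whence $W\notin\mathsf{B}\mathbb{P}$. I expect this implication to be the main obstacle: the natural guess of consolidating the cone below $p$ into $\{p\}$ fails, since such caps can remain bands, and the decisive insight is instead to build the cap out of the \emph{escape targets}, leaving $p$ as the missing point.
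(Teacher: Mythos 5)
Your proposal is correct and takes essentially the same route as the paper's proof: the same predecessor-climbing argument (maximal element of the finite interval below $p$) for (2)$\Rightarrow$(1), the identical rank-by-rank recursion for (1)$\Rightarrow$(3), the same level-refinement argument for (3)$\Rightarrow$(4), and for (4)$\Rightarrow$(1) the same construction of a finite cap, refined by a suitable level, all of whose elements are incomparable with $p$ (the paper takes $n$ minimal with $\mathbb{P}_n\cap p^>\neq\emptyset$ to rule out escapes below $p$, while you reuse the contrapositive of your climbing lemma to get incomparable escape targets directly---a cosmetic variation). The only other difference is trivial bookkeeping: you route (3)$\Rightarrow$(2) instead of the paper's direct (1)$\Rightarrow$(2).
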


\begin{proof}\
    \begin{itemize}
        \item[\ref{PreItem}$\Rightarrow$\ref{BandItem}]  If $\mathbb{P}$ is predetermined then, for any non-minimal $p\in\mathbb{P}$, we have $q\lessdot p$ with $q^<=p^\leq$.  In particular, $p$ is a band of $q^<$.

        \item[\ref{BandItem}$\Rightarrow$\ref{PreItem}]  Say every non-minimal $p\in\mathbb{P}$ is a band of $q^<$, for some $q\in\mathbb{P}$.  If $\mathbb{P}$ is also Noetherian then $(q,p)$ has a maximal element $q'$, necessarily with $q'^<=p^\leq$.  This shows that $\mathbb{P}$ is predetermined.

        \item[\ref{PreItem}$\Rightarrow$\ref{LevelItem}]  If $\mathbb{P}$ is a predetermined $\omega$-poset then, for any $p\in\mathbb{P}$ and $n\geq\mathsf{r}(p)$, we can recursively define $q_k\in\mathbb{P}_k$ with $q_k^\leq=[q_k,p]\cup p^<$ as follows, for $k\geq n$.  First set $q_n=p$.  Now assume $q_k$ has been defined.  If $q_k$ is an atom then we may simply set $q_{k+1}=q_k$.  Otherwise, we have $q_{k+1}$ with $q_{k+1}^<=q_k^\leq$ and hence $\mathsf{r}(q_{k+1})=\mathsf{r}(q_k)+1$.  Thus $q_{k+1}\in\mathbb{P}_{k+1}$, as $q_k\in\mathbb{P}_k$, and
        \[q_{k+1}^\leq=\{q_{k+1}\}\cup q_k^\leq=\{q_{k+1}\}\cup[q_k,p]\cup p^<=[q_{k+1},p]\cup p^<.\]

        \item[\ref{LevelItem}$\Rightarrow$\ref{CapItem}]  Assume \ref{LevelItem} holds and take some finite cap $C\subseteq\mathbb{P}$.  By \autoref{CapsRefineLevels}, $\mathbb{P}_n\leq C$, for some $n\in\omega$, and hence $\mathbb{P}\setminus\mathbb{P}^n\leq\mathbb{P}_n\leq C$ too.  On the other hand, if $p\in\mathbb{P}^n\setminus\mathbb{P}_n$ then \ref{LevelItem} yields $q\in\mathbb{P}_n$ with $q^\leq=[q,p]\cup p^<$.  As $\mathbb{P}_n\leq C$, we have some $c\in C\cap q^\leq\subseteq p^\leq\cup p^\geq$ and hence $p\in C^\leq\cup C^\geq$.  This shows that $C$ is a band.

        \item[\ref{CapItem}$\Rightarrow$\ref{PreItem}]  Assume $\mathbb{P}$ is an $\omega$-poset which is not predetermined, so we have some non-atomic $p\in\mathbb{P}$ with $q^<\nsubseteq p^\leq$, for all $q<p$.  Then we can take minimal $n\in\omega$ such that $\mathbb{P}_n\cap p^>\neq\emptyset$.  For every $q\in\mathbb{P}_n\cap p^>$, pick $q'\in q^<\setminus p^\leq$ and note that $q'\notin p^\geq$ too, by the minimality of $n$.  Thus $C=(\mathbb{P}_n\setminus p^>)\cup\{q':q\in\mathbb{P}_n\setminus p^\leq\}$ is a finite cap, as $\mathbb{P}_n\leq C$, but not a band, as $p\notin C^\leq\cup C^\geq$.  \qedhere
    \end{itemize}
\end{proof}

\begin{cor}\label{LevelMinimalCap}
    Every predetermined $\omega$-poset is level-injective.
\end{cor}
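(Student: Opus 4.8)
The plan is to verify level-injectivity through its characterisation in terms of the injectivity of the level relations $\leq^m_n$, feeding into this the description of predetermination in terms of levels. Concretely, recall that an $\omega$-poset is level-injective as soon as $\leq^m_n$ is injective for all $m \leq n$, and that a predetermined $\omega$-poset has the property that for every $p \in \mathbb{P}$ and every $n \geq \mathsf{r}(p)$ there is some $q \in \mathbb{P}_n$ with $q^\leq = [q,p] \cup p^<$. These are exactly two of the equivalences established just above, so both are available.

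So I would fix levels $m \leq n$ and an element $p \in \mathbb{P}_m$, and aim to produce a witness in $\mathbb{P}_n$ for the injectivity of $\leq^m_n$ at $p$, i.e. some $q \in \mathbb{P}_n$ whose only upper bound lying in $\mathbb{P}_m$ is $p$ itself. Since $p \in \mathbb{P}_m$ forces $\mathsf{r}(p) \leq m \leq n$, the predetermination hypothesis applies and yields $q \in \mathbb{P}_n$ with $q^\leq = [q,p] \cup p^<$ (in particular $q \leq p$). The key observation is then that every element of $q^\leq$ is comparable to $p$: those in $[q,p]$ lie below $p$, and those in $p^<$ lie strictly above $p$.

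It remains only to intersect with $\mathbb{P}_m$. As each level is an antichain, the sole element of $\mathbb{P}_m$ comparable to $p$ is $p$ itself, whence $q^\leq \cap \mathbb{P}_m = \{p\}$. This is precisely the statement that $q$ is related, under $\leq^m_n$, to $p$ and to no other element of $\mathbb{P}_m$, so $\leq^m_n$ is injective. As $m \leq n$ were arbitrary, $\mathbb{P}$ satisfies the injective-levels condition and is therefore level-injective.

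I do not anticipate a genuine obstacle here; the one point requiring care is the bookkeeping around the relational notion of injectivity — namely that a witness for injectivity of $\leq^m_n$ at $p$ is an element of the finer level $\mathbb{P}_n$ that selects $p$ uniquely from $\mathbb{P}_m$ — and checking that the upper set $q^\leq$ supplied by predetermination indeed meets $\mathbb{P}_m$ only in $p$. All of this reduces to the comparability remark above together with the antichain property of levels.
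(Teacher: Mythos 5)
Your proof is correct, but it takes a different route through the two equivalence propositions than the paper does. The paper verifies condition (1) of level-injectivity (each level is a minimal cap): a level is both a band and an antichain, hence minimal among bands, and since predetermination makes every finite cap a band (item \ref{CapItem} of the characterisation of predetermined $\omega$-posets), a proper subcap of a level would be a smaller band, which is impossible; so levels are minimal even among caps. You instead verify condition (2) directly, using item \ref{LevelItem}: for $p\in\mathbb{P}_m$ and $n\geq m\geq\mathsf{r}(p)$ you produce $q\in\mathbb{P}_n$ with $q^\leq=[q,p]\cup p^<$, observe that everything in $q^\leq$ is comparable to $p$, and conclude $q^\leq\cap\mathbb{P}_m=\{p\}$ from the antichain property of levels, which is exactly an injectivity witness for $\leq^m_n$. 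Your argument is more explicit -- it exhibits the witness demanded by the relational notion of injectivity and avoids any mention of caps and bands beyond the characterisation itself -- while the paper's is shorter and directly delivers the minimal-cap formulation, which is the form actually used later (e.g.\ in \autoref{PrimeMinimalCover}). One small point worth a line in your write-up: the parenthetical ``in particular $q\leq p$'' does need justification, since $q^\leq=[q,p]\cup p^<$ with $[q,p]=\emptyset$ is not formally excluded by the bare statement; it follows either from the recursive construction in the proof of \ref{PreItem}$\Rightarrow$\ref{LevelItem} (which starts at $q_n=p$ and descends), or directly, because $q\in q^\leq$ and $q\in p^<$ would give $p\in q^>\cap\mathbb{P}^n$, contradicting $q\in\mathbb{P}_n$. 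With that line added, the proof is complete.
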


\begin{proof}
    Every level of an $\omega$-poset $\mathbb{P}$ is a minimal band, being both a band and an antichain.  If $\mathbb{P}$ is also predetermined then any smaller cap would also be a band, by \ref{CapItem} above, and hence each level is even minimal among all caps.
\end{proof}

\begin{cor}\label{BandCapBases}
    If $X$ is a $\mathsf{T}_1$ compactum and $\mathbb{P}\subseteq\mathsf{P}X$ then
    \[\mathbb{P}\text{ is an $\omega$-band-basis}\qquad\Leftrightarrow\qquad\mathbb{P}\text{ is a predetermined $\omega$-cap-basis}.\]
\end{cor}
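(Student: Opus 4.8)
The plan is to extract everything from the preceding proposition, whose crucial clause is the equivalence of predeterminedness (condition \ref{PreItem}) with the identity $\mathsf{B}\mathbb{P}=\mathsf{C}\mathbb{P}\cap\mathsf{F}\mathbb{P}$ (condition \ref{CapItem}). The key observation is that, once $\mathbb{P}$ is known to be an $\omega$-cap-basis of $X$, both the band-basis condition and predeterminedness say exactly the same thing, namely that the bands are precisely the finite caps. So the whole corollary reduces to checking that $\mathbb{P}$ is an $\omega$-cap-basis in each direction and then reading off this common reformulation.

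First I would settle that $\mathbb{P}$ is an $\omega$-cap-basis in both directions. In the reverse implication this is part of the hypothesis. In the forward implication, $\mathbb{P}$ is an $\omega$-band-basis, hence in particular a band-basis of the compact space $X$; as observed in the paragraph following \autoref{CapBasis}, every band-basis of a compact space is automatically a cap-basis (any cover has a finite subcover, which is a band and therefore a cap), so $\mathbb{P}$ is an $\omega$-cap-basis here as well.

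Now, with $\mathbb{P}$ an $\omega$-cap-basis, the cap-basis condition says $\mathsf{C}\mathbb{P}$ is exactly the family of covers of $X$, so $\mathsf{C}\mathbb{P}\cap\mathsf{F}\mathbb{P}$ is exactly the family of finite covers. By definition $\mathbb{P}$ is a band-basis precisely when $\mathsf{B}\mathbb{P}$ equals this same family of finite covers, i.e. precisely when $\mathsf{B}\mathbb{P}=\mathsf{C}\mathbb{P}\cap\mathsf{F}\mathbb{P}$. Invoking the equivalence of \ref{PreItem} and \ref{CapItem} from the preceding proposition, this holds if and only if $\mathbb{P}$ is predetermined. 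Combined with the previous paragraph this yields both implications of the corollary at once: an $\omega$-band-basis is an $\omega$-cap-basis satisfying \ref{CapItem}, hence predetermined; and a predetermined $\omega$-cap-basis satisfies \ref{CapItem}, hence is a band-basis.

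There is essentially no obstacle beyond bookkeeping, since the substantive content is already packaged into the preceding proposition; the only genuine care needed is to confirm, via compactness of $X$, that the forward direction actually delivers a cap-basis before the clean reformulation can be applied. For completeness one should also record the trivial inclusion $\mathsf{B}\mathbb{P}\subseteq\{B\in\mathsf{F}\mathbb{P}:X=\bigcup B\}$ that the band-basis definition demands: every band is reflexively a cap and hence covers $X$ by \eqref{CapsAreCovers}, so bands are automatically finite covers.
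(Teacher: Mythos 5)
Your proposal is correct and takes essentially the same route as the paper: the paper's one-line proof likewise reduces the corollary to the equivalence of \ref{PreItem} and \ref{CapItem} in the preceding proposition, relying on the earlier observation (after \autoref{CapBasis}) that a band-basis of a compactum is automatically a cap-basis, so that in both directions $\mathbb{P}$ is an $\omega$-cap-basis and the band-basis condition becomes exactly $\mathsf{B}\mathbb{P}=\mathsf{C}\mathbb{P}\cap\mathsf{F}\mathbb{P}$. Your write-up simply makes explicit the bookkeeping the paper compresses into a single sentence.
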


\begin{proof}
    If $\mathbb{P}$ is an $\omega$-cap-basis of $X$ then, by \ref{CapItem} above, $\mathbb{P}$ is predetermined if and only if every finite cover is a band, i.e. if and only if $\mathbb{P}$ is actually a band-basis.
\end{proof}

Using this, we can improve on \autoref{CoversAreCaps} by showing that every $\mathsf{T}_1$ compactum even has an $\omega$-band-basis (unlike the improvement in \autoref{WeaklyGradedCapBasis}, however, we can not specify the potential levels of the $\omega$-band-basis in advance).

First we need the following preliminary result.

\begin{lemma} \label{PredeterminedLemma}
    For any basis $B$ and finite open family $C$ of a $\mathsf{T}_1$ compactum $X$, there is a minimal cover $D\subseteq B$ of $X$ and $(x_d)_{d\in D}\subseteq X$ such that, for all $d\in D$,
    \begin{equation}\label{dbigcap}
        d=\bigcap\{e\in C\cup D:x_d\in e\}\qquad\text{and}\qquad d\neq\{x_d\}\ \Rightarrow\ d\notin C.
    \end{equation}
\end{lemma}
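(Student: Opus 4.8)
The plan is to reduce condition \eqref{dbigcap} to two local requirements and then to build $D$ so that both can be met. For $x\in X$ write $x^C=\bigcap\{e\in C:x\in e\}$ (with $x^C=X$ when no $e\in C$ contains $x$), an open neighbourhood of $x$ since $C$ is finite. Because $d\in D$ and $x_d\in d$ already put $d$ into the family $\{e\in C\cup D:x_d\in e\}$, the intersection on the left of \eqref{dbigcap} is automatically contained in $d$; hence the first equation of \eqref{dbigcap} holds exactly when $d\subseteq e$ for every $e\in C\cup D$ with $x_d\in e$. This splits into (i) $d\subseteq e$ for all $e\in D$ through $x_d$, which is guaranteed as soon as $x_d$ is a \emph{private} point of $d$, i.e. $x_d\in d\setminus\bigcup(D\setminus\{d\})$; and (ii) $d\subseteq x^C_d$, which I will call $x_d$ being \emph{generic} for $d$. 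I record three elementary facts: if $y\in x^C$ then $y^C\subseteq x^C$; the centre $x$ of any $d\subseteq x^C$ is generic for that $d$; and if $e\in C$ satisfies $d\not\subseteq e$ for some $d\subseteq x^C$ then $x\notin e$, whence $x^C\not\subseteq e$ and so every non-generic point $z\in d$ has $z^C\subsetneq x^C$. In words, the non-generic points of a set sit in strictly smaller cells.

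For the construction I first fix, for each $x\in X$, a basis set $d_x\in B$ with $x\in d_x\subseteq x^C$, choosing $d_x\subsetneq x^C$ whenever $x^C\neq\{x\}$ (delete one point of $x^C$, using $\mathsf{T}_1$). Then $x$ is generic for $d_x$, and the \emph{second} part of \eqref{dbigcap} is already secured: if $d_x$ is not a singleton then $x^C\supseteq d_x$ is not a singleton, so the deletion was performed and $d_x\subsetneq x^C$; were $d_x\in C$, then $x\in d_x$ would force $x^C\subseteq d_x\subsetneq x^C$, which is absurd, so $d_x\notin C$. The sets $\{d_x\}$ cover $X$, so compactness yields a finite subcover, and discarding redundant members leaves a minimal cover $D\subseteq B$; as in \autoref{prop:basicrefinement}, each $d\in D$ then has a private point. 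It remains to choose, for each $d\in D$, a witness $x_d$ that is simultaneously private and generic, since then (i) and (ii) above give \eqref{dbigcap}.

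The main obstacle is precisely this last reconciliation: the centre of $d$ is generic but may fail to be private, while a private point supplied by minimality may fail to be generic (it can lie on the boundary of some $e\in C$). My plan to overcome it exploits the third fact: the non-generic points of a cell-$V$ set lie in strictly smaller cells. I would therefore build the cover \emph{stratified} over the finitely many distinct cells, processed in an order refining $\subseteq$ with smaller cells first, adding at each stage basis sets centred at uncovered points of the current cell and arranging that earlier (smaller-cell) sets cover all points of smaller cells. Then the non-generic part of each cell-$V$ set is swallowed by earlier sets, so it is disjoint from the private part of that set, forcing every private point to be generic; one may then take $x_d$ to be any private point of $d$. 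Equivalently, one can aim to make the chosen \emph{centres} private and use $x_d=$ the centre of $d$: deleting the other (finitely many, at each stage) centres from each set keeps it a basis set centred correctly, and the key book-keeping is that this removes from the union only the centres themselves, each of which is retained in its own set.

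The delicate technical point, which I expect to absorb most of the work, is that a stratum $A_V=\{x:x^C=V\}$ need not be compact, so covering it may require infinitely many centres, and passing to a finite subcover at the end can resurrect non-generic private points or leave centres no longer private. I would manage this by interleaving the stratified scheme with compactness: add centres one uncovered point at a time, so that at each step only the finitely many centres already chosen must be avoided (each new centre being automatically missed by all previously chosen sets, since those sit in cells that do not contain the new, strictly-smaller-or-incomparable cell), and only invoke a finite subcover once $X$ is covered, checking that the ``non-generic points lie in smaller cells, which are covered by other retained sets'' property survives the reduction. Establishing that this bookkeeping can be carried out so as to keep every retained $d\in D$ equipped with a private generic point, while staying inside the basis $B$, is the crux of the proof; everything else is the routine verification of (i), (ii) and the singleton clause recorded above.
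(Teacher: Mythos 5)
Your reduction is correct as far as it goes: a witness $x_d$ that is both private (in $d\setminus\bigcup(D\setminus\{d\})$) and generic ($d\subseteq x_d^C$) does yield the first part of \eqref{dbigcap}, your puncturing trick $d_x\subsetneqq x^C$ secures the second part, and all three of your elementary facts about cells are valid. But the proof has a genuine gap, and it is exactly the step you yourself label the crux: you never establish that the stratified construction can be completed to a \emph{finite minimal} cover while every retained set keeps a private generic point. Your chosen processing order (smaller cells first) is the direction in which this cannot be localised by compactness: the stratum $A_V=\{x:x^C=V\}$ is only locally closed (with $C=\{e\}$ a single set, the small-cell stratum is the open set $e$ itself), so covering it "one uncovered point at a time" need not terminate after finitely many steps, and once an infinite family has been produced, extracting a finite subcover can discard precisely the sets that swallowed the non-generic points of a retained $d$, destroying the "private $\Rightarrow$ generic" invariant. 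The sketch of interleaving "checking that the property survives the reduction" is not an argument; no mechanism is given that makes it survive, and the obstruction is structural: being contained in \emph{at most} the members of $F\subseteq C$ is a closed condition, being contained in \emph{at least} them is open, so the cumulative covered regions in your order are never compact.

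The paper's proof resolves this by running the recursion in the opposite direction, over all $F\subseteq C$ ordered by $\subseteq$ (fewer covering sets, i.e. \emph{larger} cells, first). It works with the closed sets $L_F=X\setminus\bigcup(C\setminus F)$ rather than the strata: the uncovered remainder $K_F=L_F\setminus\bigcup\bigcup_{G\subsetneqq F}D_G$ is closed-minus-open, hence compact, and the inductive guarantee $L_G\subseteq\bigcup\bigcup_{H\subseteq G}D_H$ for $G\subsetneqq F$ forces $K_F\subseteq X_F$, so one can choose a finite minimal cover $D_F\subseteq B$ of $K_F$ inside $\bigcap F$ (punctured via $\mathsf{T}_1$ exactly as in your singleton argument, with the exceptional case $K_F=\bigcap F=\{x\}$). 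This makes your reconciliation problem disappear: the witness is taken as $x_d\in d\cap K_F\setminus\bigcup(D_F\setminus\{d\})$, which lies in the stratum $X_F$ and is therefore generic \emph{automatically}, while the disjointness $K_F\cap\bigcup D_G=\emptyset$ for $G\neq F$ upgrades privacy within $D_F$ to privacy within all of $D=\bigcup_{G\subseteq C}D_G$; no "non-generic points swallowed by earlier sets" mechanism is needed. Since there are only $2^{|C|}$ stages and each stage is finite by compactness of $K_F$, no terminal subcover extraction ever occurs. To repair your write-up you would essentially have to reverse your ordering and introduce the closed sets $L_F$, at which point you have reconstructed the paper's proof.
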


\begin{proof}
    For all $F\subseteq C$, let us define
    \begin{align*}
        L_F&=\{x \in X: x^\in\subseteq F\}=X \setminus \bigcup(C \setminus F).\\
        X_F&=\{x \in X: x^\in = F\}=L_F\setminus\bigcup_{G\subsetneqq F}L_G.
    \end{align*}
    Note each $L_F$ is closed and the $(X_F)_{F\subseteq C}$ are disjoint subsets covering $X$.  We will recursively define further closed subsets $K_F \subseteq X_F$ with minimal covers $D_F\subseteq B$ of $K_F$ such that $\bigcup D_F \subseteq \bigcap F$, $L_F\subseteq\bigcup\bigcup_{G\subseteq F}D_G$ and
    \begin{equation}\label{GsubsetneqqF}
        G\subsetneqq F\qquad\Rightarrow\qquad K_F\cap\bigcup D_G=\emptyset.
    \end{equation}
    (Incidentally, it is quite possible for $K_F$ to be empty for many $F\subseteq C$, but this just means that $D_F$ will also be empty.)  If $G\nsubseteq F$ then, taking any $g\in G\setminus F$, we see that $K_F\cap\bigcup D_G\subseteq L_F\cap g=\emptyset$ too so \eqref{GsubsetneqqF} can automatically be strengthened to
    \begin{equation*}
        F\neq G\qquad\Rightarrow\qquad K_F\cap\bigcup D_G=\emptyset.
    \end{equation*}
    
    Once we have constructed these sets we see that, whenever $d\in D_F$, minimality means we have $x_d\in d\cap K_F\setminus\bigcup(D_F\setminus\{d\})$.  As $K_F\subseteq X\setminus\bigcup_{G\neq F}D_G$, it follows that $x_d\in d\setminus\bigcup(D\setminus\{d\})$, where $D=\bigcup_{G\subseteq C}D_G$.  As $X=L_C\subseteq\bigcup\bigcup_{G\subseteq C}D_G=\bigcup D$, this shows that $D$ is a minimal cover of $X$ and that $x_d\in e\in D$ implies $e=d$.  Moreover, $x_d\in e\in C$ implies that $e\in F$, as $x_d\in K_F\subseteq X_F\subseteq L_F$, and hence $d\subseteq\bigcup D_F\subseteq\bigcap F\subseteq e$.  This proves the first part of \eqref{dbigcap}.
        
    To perform the recursive construction, first let $D_\emptyset\subseteq B$ be any minimal cover of $K_\emptyset=L_\emptyset=X_\emptyset=X\setminus\bigcup C$.  Once $K_G$ and $D_G$ have been defined, for $G\subsetneqq F$, we set $K_F=L_F \setminus\bigcup\bigcup_{G\subsetneqq F}D_G\subseteq L_F\setminus\bigcup\bigcup_{G\subsetneqq F}L_G=X_F\subseteq\bigcap F$.  By compactness, we then have a minimal cover $D_F\subseteq B$ of $K_F$ with
    \[\bigcup D_F\ \subseteq\ \bigcap F\ \subseteq\ X\setminus\bigcup_{G\subsetneqq F}X_G\ \subseteq\ X\setminus\bigcup_{G\subsetneqq F}K_G.\]
    As $X$ is $\mathsf{T}_1$, we can further ensure that $d\subsetneqq\bigcap F$ and hence $d\notin C$, for each $d\in D_F$, unless $K_F=\bigcap F=\{x\}$, for some $x\in X$, in which case the only option is $D_F=\{\{x\}\}$.  This ensures that the second part of \eqref{dbigcap} also holds.  Now just note
    \[L_F\ \subseteq\ K_F\cup\bigcup\bigcup_{G\subsetneqq F}D_G\ \subseteq\ \bigcup D_F\cup\bigcup\bigcup_{G\subsetneqq F}D_G\ =\ \bigcup\bigcup_{G\subseteq F}D_G\]
    so the recursive construction may continue.
\end{proof}

\begin{thm}\label{PredeterminedSubBasis}
Any countable basis of a $\mathsf{T}_1$ compactum contains an $\omega$-band-basis.
\end{thm}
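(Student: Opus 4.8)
The plan is to combine \autoref{BandCapBases} with a level-by-level construction driven by \autoref{PredeterminedLemma}. By \autoref{BandCapBases}, it suffices to find, inside the given countable basis $B$, a subfamily $\mathbb{P}$ that is a \emph{predetermined} $\omega$-cap-basis. So I would build $\mathbb{P}=\bigcup_{k\in\omega}B_k$ as an increasing union of finite minimal covers, much as in \autoref{CoversAreCaps}, but calling \autoref{PredeterminedLemma} at each stage to control intersections. Fix an enumeration $(C_k)$ of the finite minimal covers of $X$ drawn from $B$. Having built $\mathbb{P}^k=\bigcup_{j\le k}B_j$, apply \autoref{PredeterminedLemma} with the finite family $C=\mathbb{P}^k$ and the basis $\{b\in B:b\subseteq c\text{ for some }c\in C_k\}$ (still a basis, since $C_k$ covers $X$), obtaining a minimal cover $D\subseteq B$ refining $C_k$, together with points $(x_d)_{d\in D}$ satisfying $d=\bigcap\{e\in C\cup D:x_d\in e\}$ and $d\neq\{x_d\}\Rightarrow d\notin C$. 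Set $B_{k+1}=D$.

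First I would verify the hypotheses of \autoref{AtomicPosetLevels}. Each $B_k$ is a finite antichain (minimal covers are $\subseteq$-antichains) and $\bigcup_k B_k=\mathbb{P}$ by construction. The identity $d=\bigcap\{e\in C\cup D:x_d\in e\}$ forces every $d\in D$ to lie below each member of $C$ (in particular of $B_k$) containing $x_d$; as $B_k$ covers $X$, this makes $B_{k+1}$ refine $B_k$, and minimality of $B_k$ then gives, via private points, that $B_k$ corefines $B_{k+1}$, exactly as in \autoref{CoversAreCaps}. Finally the second clause $d\neq\{x_d\}\Rightarrow d\notin C\supseteq B_k$ means $B_k\cap B_{k+1}$ consists only of singletons, i.e.\ atoms. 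Thus \autoref{AtomicPosetLevels} applies and $\mathbb{P}$ is a weakly graded $\omega$-poset with $\mathbb{P}_k=B_k$. Refining each $C_k$ guarantees $\mathbb{P}$ is a basis and that the levels are cofinal among covers, so every cover is a cap and $\mathbb{P}$ is a cap-basis, using \autoref{CapsCovers} and \autoref{CapsRefineLevels}.

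The heart of the argument is predeterminedness, and here the intersection identity acquires a clean order-theoretic meaning. Writing the \emph{cell} of a point $x$ as $\bigcap\{e\in\mathbb{P}^k:x\in e\}$, the identity forces each non-atomic $d\in B_{k+1}$ to sit inside the cell of its point $x_d$; since proper supersets of $d$ have rank at most $k$ and all contain $x_d$, its strict up-set is exactly $d^<=\{e\in\mathbb{P}^k:x_d\in e\}$. Consequently, should the cell of $x_d$ equal an element $p$, then $d^<=p^\leq$ and $d$ is a predecessor of $p$ determining its upper bounds. So I must show every non-atomic $p\in B_k$ is realized as such a cell at the very next level. Tracing $p$ back to the step that created it shows its defining point $x_p$ already has cell equal to $p$ relative to $\mathbb{P}^k$ (for $p\in B_0$ this is immediate from minimality), so the region $R_p=\{x:\text{cell}(x)=p\}$ is non-empty.

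The main obstacle is to rule out that $R_p$ is entirely swallowed by the coarser pieces of the cell-by-cell construction inside \autoref{PredeterminedLemma}, i.e.\ to show that the associated set $K_{F_p}$ there (for $F_p=p^\leq$) remains non-empty. I expect this to follow automatically from the lemma's own structure. Indeed, if $x_p$ lay in some $d$ belonging to the minimal cover $D_G$ of a strictly smaller cell $G\subsetneq F_p$, then $d$ refines $B_k$ and, as $x_p$ is private to $p$ in $B_k$, this forces $d\subseteq p$; but then the designated point $x_d\in d\subseteq p$ would belong to every $e^*\in F_p\setminus G$ (each such $e^*$ being a superset of $p$), contradicting that the cell of $x_d$ is $\bigcap G$. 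Hence $x_p\in K_{F_p}$, the lemma produces some $d\in B_{k+1}$ whose cell is $p$, and $p$ acquires the required predecessor (the same computation covers the atomic case, since an atom first appearing at level $k+1$ still has all its proper supersets in $\mathbb{P}^k$). With predeterminedness established, \autoref{BandCapBases} identifies $\mathbb{P}$ as the desired $\omega$-band-basis contained in $B$.
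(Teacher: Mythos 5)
Your proof is correct and shares the paper's overall skeleton: enumerate the finite minimal covers $(C_k)$ from $B$, iterate \autoref{PredeterminedLemma} to produce minimal covers $B_k$, verify the hypotheses of \autoref{AtomicPosetLevels} to get an $\omega$-poset with levels $\mathbb{P}_k=B_k$, argue as in \autoref{CoversAreCaps} that $\mathbb{P}$ is a cap-basis, and conclude via \autoref{BandCapBases}. (Your bookkeeping differs harmlessly: you feed the lemma $C=\mathbb{P}^k$ and force refinement of $C_k$ by shrinking the ambient basis, where the paper instead takes $C=C_k\cup\bigcup_{j<k}B_j$ so that refinement of $C_k$ falls out of the intersection identity.) The genuine divergence is the predeterminedness step. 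The paper extracts it in two lines from the \emph{statement} of \autoref{PredeterminedLemma}: for non-atomic $b\in B_k$ with designated point $x_b$, pick any $c\in B_{k+1}$ with $x_b\in c$; every $a>c$ lies in $\mathbb{P}^k$ by \eqref{mpqn} and contains $x_b\in c\subseteq a$, so $b\leq a$ by the intersection identity for $x_b$ from the stage that created $B_k$, while refinement of $B_k$ plus privacy of $x_b$ gives $c<b$; hence $c^<=b^\leq$. You instead re-enter the lemma's internal recursion and prove $K_{F_p}\neq\emptyset$ for $F_p=p^\leq$ -- via the correct observation that $x_p\in d\in D_G$ with $G\subsetneqq F_p$ would force $d\subseteq p$ and hence every member of $F_p\setminus G$ into $x_d^\in=G$ -- thereby manufacturing an element of $B_{k+1}$ whose designated point has cell exactly $p$. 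This is valid, but note two things: it buys nothing beyond the two-line argument, since both routes produce a predecessor $d<p$ with $d^<=p^\leq$; and as written it invokes the proof-internal objects $K_F$ and $D_F$, which the stated conclusion \eqref{dbigcap} of the lemma does not expose, so strictly you are using a strengthened form of the lemma. Either package that strengthening into the lemma's statement or switch to the shortcut above, which works verbatim in your setup since your $C\cup D$ at the stage creating $B_k$ contains $\mathbb{P}^k$.
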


\begin{proof}
    As in the proof of \autoref{CoversAreCaps}, let $(C_n)_{n\in\omega}$ enumerate all finite minimal covers of a $\mathsf{T}_1$ compactum $X$ coming from any given countable basis $B$.  Recursively define finite minimal covers $(B_n)_{n\in\omega}$ as follows.  Let $D_k = C_k \cup \bigcup_{j < k} B_j$.
    By \autoref{PredeterminedLemma} we have a minimal cover $B_k \subseteq B$, such that $B_k \cap D_k$ contains only singletons, as well as $(x_b)_{b\in B_k}\subseteq X$ such that $b=\bigcap\{e\in B_k\cup D_k:x_b\in e\}$, for all $b\in B_k$.
    In other words, $x_b\in b\subseteq e$, for any $e\in B_k\cup D_k$ with $x_b\in e$, so $B_k$ refines $C_k$ and $B_j$, for all $j<k$.  As in the proof of \autoref{CoversAreCaps}, this implies $\mathbb{P}=\bigcup_{k\in\omega}B_k$ is a cap-basis and each $B_k$ also corefines $B_{k+1}$.  By construction, $B_{k+1}\cap B_k$ contains only singletons, which are atoms in $\mathbb{P}$.  Thus $\mathbb{P}$ is an $\omega$-poset with levels $\mathbb{P}_k=B_k$, by \autoref{AtomicPosetLevels}.  Also, for every $b\in B_k$, we have $c\in B_{k+1}$ with $x_b\in c$.  Then $c < a$ implies $x_b\in a \in B_j$, for some $j\leq k$, and hence $b\leq a$.  This shows that $c^< \subseteq b^\leq$ and hence $c^<=b^\leq$, as long as $b$ is not an atom.  Thus $\mathbb{P}$ is also predetermined and hence an $\omega$-band-basis, by \autoref{BandCapBases}.
\end{proof}

\subsection{Graded Posets}

We call a Noetherian poset $\mathbb{P}$ \emph{graded} if the rank function maps intervals to intervals, i.e. for all $p,q\in\mathbb{P}$,
\[p<q\qquad\Rightarrow\qquad\mathsf{r}[(p,q)]=(\mathsf{r}(q),\mathsf{r}(p)).\]
In particular, this means the rank function turns predecessors into successors, i.e.
\[p\lessdot q\qquad\Rightarrow\qquad\mathsf{r}(p)=\mathsf{r}(q)+1.\]
In fact, if every element of $\mathbb{P}$ has finite rank then $\mathbb{P}$ is graded precisely when this happens.  This also makes it clear that every graded $\omega$-poset is indeed weakly graded.

\begin{rmk}
Hasse diagrams of atomless graded $\omega$-posets can thus be viewed as Bratteli diagrams (see \cite[Definition 3.1]{Putnam2018}) where the levels $(\mathbb{P}_n)$ form the vertex sets and the edges come from the predecessor relation $\lessdot$.  Indeed, any Bratteli diagram with at most one edge between distinct vertices arises as the Hasse diagram of some atomless graded $\omega$-poset.  Whether one chooses to work with diagrams or posets is thus a matter of taste, although the diagram picture will be particularly instructive in future work when we construct graded $\omega$-posets associated to interesting compacta (e.g. see \autoref{ArcExample}).
\end{rmk}

Graded $\omega$-posets are completely determined by the order relation between consecutive levels.  As such, they are the strongest interpretation of what it means for a poset to be built from a sequence of finite levels.  Naturally, we would like to construct bases of this special form.  First we begin with some simple observations.

\begin{prp}\label{GradedSummary}
Let $\PP$ be a graded $\omega$-poset.
\begin{enumerate}
    \item $\PP$ is level-injective if and only if $\PP$ is predetermined.
    \item The levels are pairwise disjoint if and only if $\PP$ is atomless.
    \item  If $\PP$ is a basis of a $\mathsf T_1$ space, then every level $\PP_n$ consolidates $\PP_{n + 1}$.
\end{enumerate}
\end{prp}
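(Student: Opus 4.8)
The first statement is a biconditional one half of which comes for free: predetermined $\omega$-posets are always level-injective by \autoref{LevelMinimalCap}, with no gradedness needed, so the work is in the converse. Suppose $\mathbb{P}$ is graded and level-injective, and let $p\in\mathbb{P}$ be non-atomic with $n=\mathsf{r}(p)$, so that $p\in\mathbb{P}_n$. Level-injectivity applied to the levels $n\leq n+1$ produces $q\in\mathbb{P}_{n+1}$ whose only upper bound inside $\mathbb{P}_n$ is $p$, i.e. $q^\leq\cap\mathbb{P}_n=\{p\}$; in particular $q\leq p$. Since $\mathbb{P}$, being graded, is weakly graded, consecutive levels share only atoms, and as $p$ is non-atomic this forces $q<p$ and hence $p^\leq\subseteq q^<$. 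The reverse inclusion $q^<\subseteq p^\leq$ is where gradedness is essential: for any $r>q$ we have $\mathsf{r}(r)\leq n$, and gradedness realises every rank in $(\mathsf{r}(r),n+1)$ along $(q,r)$, so either $\mathsf{r}(r)=n$ and then $r\in\mathbb{P}_n\cap q^\leq=\{p\}$, or some $s\in(q,r)$ has $\mathsf{r}(s)=n$ and then $s\in\mathbb{P}_n\cap q^\leq=\{p\}$ gives $p=s<r$. Either way $p\leq r$, so $q^<=p^\leq$, which is exactly predeterminedness.

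The second statement requires almost no work, as it is read off from the equivalences already established for weakly graded $\omega$-posets. A graded $\omega$-poset is weakly graded, so the condition \emph{atomless and weakly graded} simplifies to \emph{atomless}, and this was listed as equivalent to the levels being pairwise disjoint. Invoking that equivalence finishes the claim.

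For the third statement I would unwind the definition of consolidation into its two requirements: that $\mathbb{P}_{n+1}$ refines $\mathbb{P}_n$, and that each $p\in\mathbb{P}_n$ equals $\bigcup\{q\in\mathbb{P}_{n+1}:q\subseteq p\}$. The refinement is order-theoretic and needs no topology: each level is a band (as noted in the proof of \autoref{CapsRefineLevels}), so any $q\in\mathbb{P}_{n+1}$ is comparable to some $p\in\mathbb{P}_n$; but $q$ is minimal in $\mathbb{P}^{n+1}$ and $\mathbb{P}_n\subseteq\mathbb{P}^n\subseteq\mathbb{P}^{n+1}$, so $p$ cannot lie strictly below $q$, leaving $q\leq p$. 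For the union, fix $p\in\mathbb{P}_n$ and $x\in p$ and seek $q\in\mathbb{P}_{n+1}$ with $x\in q\subseteq p$. If $p=\{x\}$ then $p$ is an atom, hence already lies in $\mathbb{P}_{n+1}$, and $q=p$ works. Otherwise there is $y\in p\setminus\{x\}$, and since $X$ is $\mathsf{T}_1$ the open set $p\setminus\{y\}$ contains $x$, so the basis yields $b\in\mathbb{P}$ with $x\in b\subsetneq p$ and therefore $\mathsf{r}(b)\geq n+1$. When $\mathsf{r}(b)=n+1$ we take $q=b$; when $\mathsf{r}(b)>n+1$, gradedness interpolates $c\in(b,p)$ of rank $n+1$, and then $x\in b\subseteq c\subseteq p$ with $c\in\mathbb{P}_{n+1}$. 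Ranging over $x\in p$ gives the desired equality.

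The main obstacle is the converse half of (1): converting the single minimality witness $q$ handed over by level-injectivity into the global identity $q^<=p^\leq$. This is precisely the step that uses gradedness, forcing any upper bound of $q$ of rank below $n$ to pass through an element of rank $n$, which level-injectivity then identifies with $p$. The same rank-interpolation powers the union half of (3), whereas (2) is pure bookkeeping over the earlier equivalences.
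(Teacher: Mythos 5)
Your proof is correct and takes essentially the same route as the paper: (2) invokes the same list of equivalences for weakly graded $\omega$-posets, (3) is the paper's $\mathsf{T}_1$-plus-rank-interpolation argument (you additionally spell out the refinement half, which the paper leaves implicit via \autoref{CapsRefineLevels}), and in (1) your direct construction of the predecessor from injectivity of $\leq^n_{n+1}$ is just the contrapositive-free rephrasing of the paper's argument that a failure of predeterminedness at $p$ lets $\mathbb{P}_{n+1}$ refine $\mathbb{P}_n\setminus\{p\}$, both hinging on gradedness interpolating a rank-$n$ element above the witness $q$ which level-injectivity then pins to $p$.
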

\begin{proof}
    The `if' part of (1) follows from \autoref{LevelMinimalCap}.  Conversely, suppose that $\PP$ is not predetermined  so we have non-atomic $p \in \PP_n$ such that $q^< \setminus p^\leq\neq\emptyset$, for every $q \in \PP_{n + 1} \cap p^>$.  Since $\PP$ is graded, we then have $r\in\PP_n\cap q^<\setminus p^\leq$.  It follows that $\PP_n \setminus \{p\}$ is refined by $\PP_{n + 1}$, and so $\PP_n$ is not a minimal cap.
    
    As $\mathbb{P}$ is graded and hence weakly graded, (2) is immediate.
    
    To prove (3), take $b \in \PP_n$ and let $B = \{c \in \PP_{n + 1}: c \subseteq b\}$.  If we have $x \in b \setminus \bigcup B$, then we have $u \in \PP$ such that $x \in u \subseteq b$ as $\PP$ is a basis.  We may further assume that $u \not\subseteq c$ for every $c \in B$ as the space is $\mathsf T_1$.  Hence, $u \in \PP_m$ for some $m > n$.  But since $\PP$ is graded, we get $u \subseteq v \subseteq b$ for some $v \in \PP_{n + 1}$.  Hence, $x \in v \in B$, which is a contradiction.
\end{proof}

To ensure the cap-bases in \autoref{CoversAreCaps} are graded, we need the following.

\begin{lemma}\label{GradedLemma}
 Let $(C_n)$ be a sequence of minimal covers of a set $X$ with each $C_n$ consolidating $C_{n+1}$ and $C_{n+1}\cap C_n$ only containing singletons $\{\{x\}:x\in X\}$.  Further let $\mathbb P=\bigcup_{n\in\omega}C_n$, considered as a poset with ${\leq}={\subseteq}$. Then
 \begin{enumerate}
     \item\label{GradedLemma1}  $\mathbb{P}$ is a predetermined graded poset with $n^\mathrm{th}$ level $\mathbb{P}_n=C_n$, and
     \item\label{GradedLemma2} if $\mathbb{P}$ is a basis for a compact topology then $\mathbb{P}$ is also an $\omega$-cap-basis.
\end{enumerate}
\end{lemma}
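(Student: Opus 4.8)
The plan is to identify the levels via \autoref{PosetLevels} and then to wring both gradedness and the predetermined property, as well as the cap-basis conclusion, out of one observation combining minimality (private points) with consolidation.

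First I would note that each $C_n$, being a minimal cover, is an antichain of non-empty sets, since a proper containment inside $C_n$ would let one discard the smaller set. Consolidation includes the refinement $C_{n+1}\leq C_n$, from which one gets the basic inequality: if $p\subsetneq q$ with $p\in C_m$ and $q\in C_k$, then $m>k$ --- otherwise transitivity of refinement gives $q\subseteq p'\in C_m$ with $p\subsetneq p'$, against the antichain property. Hence ascending chains have strictly decreasing birth-levels, so $\mathbb{P}$ is Noetherian with all ranks finite. I would then check the hypotheses of \autoref{PosetLevels} for $A_n=C_n$: the $C_n$ are covering antichains; $C_n$ corefines $C_{n+1}$ because consolidation writes each element of $C_n$ as a union of (hence as containing) elements of $C_{n+1}$; and $C_{n+1}\setminus C_n$ refines $C_n\setminus C_{n-1}$ because a refining superset $q\supsetneq p$ of some $p\in C_{n+1}\setminus C_n$ cannot lie in $C_{n-1}$ (that would put $q$ in $C_{n-1}\cap C_n$, forcing $q$ to be a singleton and $p$ empty). \autoref{PosetLevels} then gives $\mathbb{P}_n=C_n$; moreover a non-singleton element lies in a unique level, equal to its rank.

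The technical core is the claim: if $a\in C_j$ and $a\subseteq b\in C_m$ with $m<j$, then $a\subseteq g\subseteq b$ for some $g\in C_{j-1}$. Since consolidation composes, $b=\bigcup\{g\in C_{j-1}:g\subseteq b\}$; picking a private point $x_a$ of $a$ in the minimal cover $C_j$, we get $x_a\in g_0$ for one such $g_0$, and as $g_0$ is itself a union of $C_j$-members, $x_a$ lands in one of them, which privacy forces to equal $a$; hence $a\subseteq g_0\subseteq b$. The intermediates this produces are proper supersets of a non-empty set, hence non-singletons, so they occupy the expected unique levels; iterating the claim down the levels therefore realises every intervening rank between $p$ and $q$ whenever $p\subsetneq q$, which is exactly gradedness. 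The same claim yields the predetermined property: for non-atomic $p\in C_n$ with private point $x_p$, the level-$(n+1)$ child $d_0\subseteq p$ through $x_p$ is a predecessor of $p$, and for any $r\supsetneq d_0$ the claim supplies $g\in C_n$ with $d_0\subseteq g\subseteq r$, whence privacy gives $g=p$ and so $p\subseteq r$; thus $d_0^<=p^\leq$. This private-point/consolidation mechanism is where all the work lives and is the main obstacle.

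For the second statement, compactness is used only to finitise: a minimal open cover of a compact space has a finite subcover, which minimality forces to be the whole cover, so every level $C_n=\mathbb{P}_n$ is finite and $\mathbb{P}$ is an $\omega$-poset. That caps are covers is then immediate from \autoref{CapsRefineLevels}, since every cap is refined by a level and the levels are covers by hypothesis. For the converse it suffices, given a cover, to show its finite subcover $F$ is refined by some level (a cap follows, and then so does the original cover). Suppose not: the elements not contained in any member of $F$ form, at each level, a non-empty finite set closed under passing to parents, so König's lemma produces a strictly descending chain $q_0\supsetneq q_1\supsetneq\cdots$ with $q_n\in C_n$, none contained in a member of $F$. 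Taking private points $x_n\in q_n$, a cluster point $x^\ast$, a member $f\in F$ with $x^\ast\in f$, and a basic $p$ with $x^\ast\in p\subseteq f$, we have $x_{n_i}\in p$ for large $i$ along the convergent subsequence; decomposing $p=\bigcup\{g\in C_{n_i}:g\subseteq p\}$ by consolidation and invoking privacy once more forces the piece of $p$ through $x_{n_i}$ to be $q_{n_i}$, so $q_{n_i}\subseteq p\subseteq f$ --- a contradiction. Hence some level refines $F$, and $\mathbb{P}$ is a cap-basis.
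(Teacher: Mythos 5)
Your proof is correct, and it splits naturally into a same-in-spirit half and a genuinely different half. For part (1), your ``technical core claim'' (if $a\in C_j$ and $a\subseteq b\in C_m$ with $m<j$, then some $g\in C_{j-1}$ interpolates) is the same private-point/consolidation mechanism the paper uses, just packaged as a one-step interpolation that you iterate, where the paper builds the whole chain $c_m\geq\cdots\geq c_{n+1}$ at once to prove $(C_{n+1}\setminus C_n)^\lessdot\subseteq C_n\setminus\{\{x\}:x\in X\}\subseteq\mathsf{r}^{-1}\{n\}$ directly; your routing of the level identification through \autoref{PosetLevels} (correctly chosen over \autoref{AtomicPosetLevels}, which needs finite antichains not available in part (1)) is a mild reorganisation rather than a new idea, and your predetermined argument coincides with the paper's. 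Part (2) is where you genuinely diverge: the paper stays purely combinatorial, repeatedly replacing the non-atomic elements of smallest rank in a finite subcover by their consolidation pieces until everything sits in a single level $C_n$, whereupon minimality forces the pushed-down cover to equal $C_n$; you instead argue by contradiction, running K\H{o}nig's lemma through the parent-closed sets of ``bad'' elements and then using topological compactness of $X$ (a cluster point of the private points, plus the basis property) to produce a level element trapped below a member of $F$. The paper's route buys a proof that uses compactness only to finitise covers and levels; yours buys a shorter, more vivid argument at the cost of invoking the topology of $X$ a second time. Two cosmetic points: cluster points in a general compact (not necessarily metrisable or first countable) space need not yield convergent subsequences, but your argument only ever uses that $x_n\in p$ for infinitely many $n$, which is exactly what a cluster point provides; and the strict descent of your K\H{o}nig chain deserves the one-line justification that every element of each bad set $B_n$ is a non-singleton (any singleton $\{x\}$ lies inside some $f\in F$ since $F$ covers $X$), so equality of consecutive chain entries, which would force membership in $C_n\cap C_{n+1}$ and hence singletonhood, cannot occur --- though strictness is not actually load-bearing for your contradiction.
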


\begin{proof}\
\begin{enumerate}
\item  First note $C_n^\leq=\bigcup_{k\leq n}C_k$, for all $n\in\omega$. Indeed, if we had $c<d\in C_k$, for some $k>n$ then, as $C_k\leq C_n$, we would have some $c'\in C_n$ with $c<d\leq c'$, contradicting the minimality of $C_n$. In particular, as $C_0=C_0^\leq$ is a minimal cover of $X$, it consists entirely of maximal elements of $\mathbb{P}$, i.e. elements of rank $0$.
    
We claim that, for all $n\in\omega$,
\[(C_{n+1}\setminus C_n)^\lessdot\quad\subseteq\quad C_n\setminus\{\{x\}:x\in X\}\quad\subseteq\quad\mathsf{r}^{-1}\{n\}.\]
For the first inclusion, take $c\in(C_{n+1}\setminus C_n)^\lessdot$, which means we have $d\in C_{n+1}\setminus C_n$ with $d\lessdot c$. In particular, $c\in C_{n+1}^\lessdot$ so we must have $m\leq n$ with $c\in C_m$. By minimality, we can choose some $x\in d\setminus\bigcup(C_{n+1}\setminus\{d\})$. As each cover consolidates the next, we have $c_m\geq\ldots\geq c_{n+1}$ with $c_m=c$ and $x\in c_k\in C_k$, for all $k$ between $m$ and $n+1$. By our choice of $x$, we must have $c_{n+1}=d$ and hence $c_n>d$ because $d\in C_{n+1}\setminus C_n$. In particular, $c_n$ is not a singleton so other inequalities must be strict too, i.e. $c=c_m>\ldots>c_{n+1}=d$. The only way we could have $d\lessdot c$ then is if $m=n$. This proves the first inclusion. The second now follows by induction -- the $n=0$ case was observed above, while all successors of elements of $C_{n+1}\setminus\{\{x\}:x\in X\}\subseteq C_{n+1}\setminus C_n$ must lie in $C_n\setminus\{\{x\}:x\in X\}$ and hence have rank $n$, so all elements of $C_{n+1}\setminus\{\{x\}:x\in X\}$ have rank $n+1$.

In particular, each $p\in\mathbb{P}$ has finite rank and all its successors $p^\lessdot$ have the same rank, proving that $\mathbb{P}$ is graded. Also note that singletons persist as soon as they appear, i.e. if $\{x\}\in C_n$ then $\{x\}\in C_{n+1}$, again because each cover consolidates the next. Thus each $C_n$ consists precisely of the elements of rank $n$ together with singletons (and hence minimal elements of $\mathbb{P}$) of smaller rank, i.e. $C_n=\mathbb{P}_n$. Finally, for any $p\in\mathbb{P}$ we can again take $x\in p\setminus\bigcup(C_{\mathsf{r}(p)}\setminus p)$. If $p$ is not minimal, we can then take $q\in C_{\mathsf{r}(p)+1}$ with $x\in q<p$ and show that $q^<=p^\leq$, which means that $\mathbb{P}$ is predetermined.

\item Now assume $\mathbb{P}$ is also a basis for a compact topology.  In particular, each minimal cover $C_n$ must be finite and hence $\mathbb{P}$ is an $\omega$-poset.  We claim that, moreover, every cover $C\subseteq\mathbb{P}$ must be refined by some level $C_n$.  Indeed, by compactness, we can replace $C$ with a finite subset if necessary.  As each level is a consolidation of the next, we can further replace each non-atomic element of $C$ having smallest rank with elements in a level below.  Continuing in this manner, we eventually obtain a new cover $D$ refining the original cover $C$ whose elements are all contained in a single level $C_n$.  As $C_n$ is a minimal cover, $D$ must then be the entirety of $C_n$, proving the claim.  As levels are caps, this shows that $\mathbb{P}$ is a cap-basis.\qedhere
\end{enumerate}

\end{proof}

Note for $\mathbb{P}$ to be graded here, not just Noetherian, it is crucial that each cover is not only refined by the next cover but also consolidates it, as the following shows.

\begin{xpl}
Let $X=[0,1]$ and define
\begin{align*}
C_1&=\{[0,\tfrac{3}{4}),(\tfrac{1}{4},1]\},\\
C_2&=\{[0,\tfrac{2}{3}),(\tfrac{1}{3},1]\},\\
C_3&=\{[0,\tfrac{1}{2}),(\tfrac{1}{4},\tfrac{2}{3}),(\tfrac{1}{3},\tfrac{3}{4}),(\tfrac{1}{2},1]\}.
\end{align*}
The Hasse diagram of the resulting poset $(C_1\cup C_2\cup C_3,\subseteq)$ looks like this:
\begin{center}
\begin{tikzpicture}[y={(0, 2em)}]
		\node (00) at (0,0) [align=center]{$[0,\tfrac{3}{4})$};
		\node (30) at (3,0) [align=center]{$(\tfrac{1}{4},1]$};
		\node (0-2) at (0,-2) [align=center]{$[0,\tfrac{2}{3})$};
		\node (3-2) at (3,-2) [align=center]{$(\tfrac{1}{3},1]$};
		\node (0-4) at (0,-4) [align=center]{$[0,\tfrac{1}{2})$};
		\node (1-4) at (1,-4) [align=center]{$(\tfrac{1}{4},\tfrac{2}{3})$};
		\node (2-4) at (2,-4) [align=center]{$(\tfrac{1}{3},\tfrac{3}{4})$};
		\node (3-4) at (3,-4) [align=center]{$(\tfrac{1}{2},1]$};
		\draw (00)--(0-2)--(0-4);
		\draw (30)--(3-2)--(3-4);
		\draw (0-2)--(1-4);
		\draw (30)--(1-4);
		\draw (3-2)--(2-4);
		\draw (00)--(2-4);
\end{tikzpicture}
\end{center}
Note that $C_3$ refines $C_2$ which in turn refines $C_1$. However, $C_3\ni(\tfrac{1}{4},\tfrac{2}{3})\subseteq(\tfrac{1}{4},1]\in C_1$ even though there is no element of $C_2$ in between, i.e. $C_1\cup C_2\cup C_3$ is not graded.
\end{xpl}

Using \autoref{GradedLemma} we can construct graded $\omega$-band-bases.

\begin{thm}\label{PredeterminedGradedBasis}
Every second countable $\mathsf{T}_1$ compactum has a graded $\omega$-band-basis.
\end{thm}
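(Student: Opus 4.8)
The plan is to reduce everything to \autoref{GradedLemma}. Combining parts (1) and (2) of \autoref{GradedLemma} with \autoref{BandCapBases}, it suffices to build, inside any prescribed countable basis of the given compactum $X$, a sequence $(C_n)_{n\in\omega}$ of finite minimal covers of $X$ such that each $C_n$ consolidates $C_{n+1}$, each $C_{n+1}\cap C_n$ contains only singletons, and $\mathbb{P}=\bigcup_n C_n$ is a basis for $X$. Indeed, the first two conditions make $\mathbb{P}$ a predetermined graded poset with levels $\mathbb{P}_n=C_n$, the basis condition together with compactness of $X$ makes $\mathbb{P}$ an $\omega$-cap-basis, and a predetermined $\omega$-cap-basis is an $\omega$-band-basis by \autoref{BandCapBases}; gradedness then comes for free. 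So the entire content is the construction of such a consolidating tower of minimal covers.

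I would construct $(C_n)$ by the same bookkeeping recursion used in \autoref{CoversAreCaps} and \autoref{PredeterminedSubBasis}: enumerate the finite minimal covers available from the basis and recursively pass to a subsequence, at each step choosing $C_{n+1}$ to refine $C_n$ together with the $n$-th enumerated cover. Exactly as before, this guarantees that $\mathbb{P}$ remains a basis for $X$ and that each $C_n$ corefines $C_{n+1}$, while the singleton condition on $C_{n+1}\cap C_n$ is arranged as in \autoref{PredeterminedSubBasis}. The only genuinely new requirement over \autoref{PredeterminedSubBasis} is \emph{consolidation}: I must be able to choose the refining minimal cover $C_{n+1}$ so that, in addition, every $b\in C_n$ is the union of the members of $C_{n+1}$ contained in it.

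Accordingly, the crux is a strengthening of \autoref{PredeterminedLemma}: for a basis $B$ and a finite minimal cover $C$ of a $\mathsf{T}_1$ compactum $X$ (together with any further finite families to be refined), I want a minimal cover $D\subseteq B$, with points $(x_d)$ as in \eqref{dbigcap}, which additionally has $C$ consolidating $D$, i.e. $b=\bigcup\{d\in D:d\subseteq b\}$ for every $b\in C$. To produce it I would cover each point $x$ by a basic set contained in $\bigcap\{c\in C:x\in c\}$, so that such a set lies inside every member of $C$ through $x$, and then run the recursive region-by-region construction of \autoref{PredeterminedLemma} over the subsets $F\subseteq C$, arranging that each region $\{x:\{c\in C:x\in c\}=F\}$ is covered from the inside of $\bigcap F$ before passing to a finite minimal subcover by compactness.

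The main obstacle is precisely this consolidation step, and it is localized at the boundary. The danger is that a point $w\in b$ survives in the finite minimal cover only inside some $d$ that pokes out of $b$, so that no member of $D$ contained in $b$ captures $w$; this happens exactly at points on the topological boundary of some $c\in C$ shared with other members of the cover, where the regions fail to be compactly contained in their intersections. Resolving it is where the $\mathsf{T}_1$ and compactness hypotheses do real work — much as in the consolidation argument of \autoref{GradedSummary}, one uses that a $\mathsf{T}_1$ compactum has enough basic sets reaching toward such boundary points from the inside of each $b$ to cover $b$ with finitely many of them, together with the freedom of using \emph{different} inside-sets to witness consolidation of the different members of $C$ through a common point. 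Once this consolidating refinement lemma is in hand, the recursion runs just as in \autoref{PredeterminedSubBasis}, and \autoref{GradedLemma} delivers the graded $\omega$-band-basis.
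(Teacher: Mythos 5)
Your reduction — \autoref{GradedLemma} plus \autoref{BandCapBases}, so that everything comes down to building a consolidating tower of finite minimal covers whose consecutive levels share only singletons — is exactly the paper's skeleton. The genuine gap is your insistence that the tower live \emph{inside a prescribed countable basis}. The strengthened version of \autoref{PredeterminedLemma} you propose (a minimal cover $D\subseteq B$ with the given cover $C$ consolidating $D$) is false for general $B$, and the paper proves this in the proposition immediately following the theorem: for any Hausdorff compactum not homeomorphic to an ordinal $\alpha<\omega^2$ (so already for $X=[0,1]$) there is a countable basis, refined so that strict containment implies closed containment and so that every member containing a suitably chosen point $y$ is non-closed, which contains \emph{no} graded $\omega$-basis whatsoever. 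Indeed, in a graded $\omega$-basis each level must consolidate the next by \autoref{GradedSummary}, so the member $c$ of a level containing $y$ would be a finite union of strictly smaller members of the next level, hence a union of their closures, hence closed — a contradiction. Since your recursion uses nothing beyond the claimed lemma plus bookkeeping, prescribing that basis defeats it outright; no argument about "reaching toward boundary points from the inside" can rescue a statement that is false. The paper flags this explicitly: unlike \autoref{PredeterminedSubBasis}, the graded $\omega$-band-basis cannot be chosen within a basis given in advance, which is why your "only genuinely new requirement is consolidation" step is precisely where the approach breaks.

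The paper's actual proof escapes the prescribed basis by manufacturing new open sets. Given $C_n$ and the $n$-th finite cover $B_n$ from an ambient countable basis, it sets $d_x=\bigcap\{a\in B_n\cup C_n:x\in a\}$ for each $x$, chooses one witness point $x_d$ for each of the finitely many distinct sets $d$, and deletes from each $d$ the finite set $f_d$ of the other witnesses (open since $X$ is $\mathsf{T}_1$), obtaining a minimal cover $E=\{d\setminus f_d:d\in D\}$ refining $B_n$ and $C_n$; the computation $y\in d_y\setminus f_{d_y}\subseteq c$ for $y\in c\in C_n$ shows $C_n$ consolidates $E$. Any non-singleton $c\in E\cap C_n$ is then split into $c\setminus\{y_c\}$ and $c\setminus\{z_c\}$ to keep consecutive levels sharing only singletons. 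None of these sets need lie in the original basis — that is the whole point. To repair your argument, drop the requirement $D\subseteq B$ and prove your consolidating refinement lemma for sets of the form (finite intersections of current-level and basis members) minus finite point sets, which is exactly what the paper does.
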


\begin{proof}
We modify the proof of \autoref{CoversAreCaps} so we can use \autoref{GradedLemma}. To start with, again take any countable basis $B$ for $\mathsf{T}_1$ compactum $X$ and let $(B_n)_{n\in\omega}$ enumerate all finite covers of $X$ from $B$. Recursively define another sequence of finite open covers $(C_n)$ as follows. Let $C_0=\{X\}$. If $C_n$ has been defined then, for each $x\in X$, let
\[d_x=\bigcap\{a\in B_n\cup C_n:x\in a\}.\]
As $B_n$ and $C_n$ are finite, so is $D=\{d_x:x\in X\}$. For each $d\in D$, choose some $x_d$ such that $d=d_{x_d}$ and denote the set of all the other chosen points by
\[f_d=\bigcup_{e\in D\setminus\{d\}}\{x_e\}.\]
We then have a minimal open cover refining both $B_n$ and $C_n$ given by
\[E=\{d\setminus f_d:d\in D\}.\]
Also note that if $y\in c\in C_n$ then $y\neq x_d$ for any $d\neq d_y$ (because $y=x_d$ implies $d_y=d_{x_d}=d$) so $y\in d_y\setminus f_{d_y}\subseteq c$. This shows that $c=\bigcup(E\cap c^\geq)$, for all $c\in C_n$, i.e. $C_n$ consolidates $E$. At this stage it is possible that there could be some non-singleton $c\in C_n\cap E$. However, this can only happen when $c$ is contained in some $b\in B_n$ and disjoint from all other subsets in $(B_n\setminus\{b\})\cup C_n$ and hence $E$ -- otherwise we would have some $d\in D$ with $d\subsetneqq c$ and so certainly $d\setminus f_d\subsetneqq c$, while all other elements of $E$ would avoid $x_d\in d\subseteq c$. For any non-singleton $c\in C_n\cap E$, we can thus pick arbitrary distinct $y_c,z_c\in c$ and replace $c$ with $c\setminus\{y_c\}$ and $c\setminus\{z_c\}$ without destroying the minimality of $E$. In other words, to ensure consecutive covers can only contain singletons, we define $C_{n+1}$ by
\[C_{n+1}=E\setminus C_n\cup\bigcup_{c\in E\cap C_n}\{c\setminus\{y_c\},c\setminus\{z_c\}\}.\]

This completes the recursion and the poset $\mathbb{P}=\bigcup_{n\in\omega}C_n$ is then a predetermined graded $\omega$-poset, by \autoref{GradedLemma}.  As $X$ is compact, every cover of $X$ from $\mathbb{P}$ is refined by $B_n$ and hence $C_{n+1}$, for some $n\in\omega$.  As in the proof of \autoref{CoversAreCaps}, $\mathbb{P}$ is then an $\omega$-cap-basis and hence an $\omega$-band-basis, by \autoref{BandCapBases}.
\end{proof}

Unlike \autoref{PredeterminedSubBasis}, we can not choose the graded $\omega$-band-bases above to lie within some basis given in advance.  Indeed, the following result shows that most Hausdorff compacta have bases which do not contain any graded $\omega$-basis.

As usual, we view any ordinal $\alpha$ as a topological space with respect to the interval topology, i.e. generated by subbasic sets $\beta^<$ and $\beta^>$, for all $\beta<\alpha$. 

\begin{prp}
    For any Hausdorff compactum $X$, the following are equivalent.
    \begin{enumerate}
        \item $X$ is homeomorphic to $\alpha$, for some $\alpha<\omega^2$.
        \item Every basis for $X$ contains a graded $\omega$-$($cap-$)$basis.
    \end{enumerate}
\end{prp}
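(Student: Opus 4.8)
The plan is to identify condition (1) with a purely topological property and then prove the equivalence through that. Among the ordinals $\alpha<\omega^2$, the ones carrying a \emph{compact} order topology are the successor ordinals $\omega m+n+1$, and their spaces $[0,\omega m+n]$ are exactly the countable compacta whose Cantor--Bendixson derivative $X'$ is finite (of size $m$), all other ordinals below $\omega^2$ being either empty or non-compact. Since an infinite closed subset of a compactum always has an accumulation point, for compact $X$ one has $X'$ finite precisely when $X''=\varnothing$. So I would first record, via the classification of countable compacta, that (1) is equivalent to the statement that $X$ is second countable and $X''=\varnothing$ (equivalently, metrisable and scattered of Cantor--Bendixson rank at most $2$ with finite first derivative). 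The whole task then reduces to proving that (2) holds if and only if $X$ is second countable with $X''=\varnothing$.

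For (1)$\Rightarrow$(2) I would argue directly using \autoref{GradedLemma}. Given an arbitrary basis $\mathcal{B}$, note first that the singleton of every isolated point lies in $\mathcal{B}$, and that for each of the finitely many limit points $z_1,\dots,z_m$ I can choose from $\mathcal{B}$ a strictly decreasing sequence $U_{i,0}\supseteq U_{i,1}\supseteq\cdots$ with $\bigcap_k U_{i,k}=\{z_i\}$, pairwise disjoint across different $i$. Because the isolated points accumulate only at the $z_i$ and every neighbourhood of $z_i$ contains a tail of the sequence converging to it, I can thin the $U_{i,k}$ so that each difference $U_{i,k}\setminus U_{i,k+1}$ is finite. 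Then $C_k=\{U_{i,k}:i\le m\}\cup\{\{y\}:y\notin\bigcup_i U_{i,k}\}$ is a finite minimal cover, $C_k$ consolidates $C_{k+1}$, and $C_{k+1}\cap C_k$ consists only of singletons; moreover $\bigcup_k C_k\subseteq\mathcal{B}$ is a basis of $X$. \autoref{GradedLemma} then hands me a graded $\omega$-cap-basis inside $\mathcal{B}$, as required.

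For (2)$\Rightarrow$(1) second countability is immediate, since any graded $\omega$-cap-basis guaranteed by (2) is countable. The substance is to show $X''=\varnothing$, which I would prove contrapositively: assuming a point $z\in X''$, together with limit points $z_k\to z$ and points converging to each $z_k$, I would build a single basis $\mathcal{B}$ containing no graded $\omega$-cap-basis. The guiding idea is that in a graded $\omega$-poset the levels are finite covers with one-step consolidation (\autoref{CapsRefineLevels}, \autoref{GradedSummary}), so a graded sub-basis amounts to a sequence of finite covers drawn from $\mathcal{B}$, each consolidating the next and shrinking to points. I would engineer the neighbourhoods of $z$ available in $\mathcal{B}$ so that any such consolidating chain shrinking to $z$ is forced, at a single step, either to expose points from infinitely many of the $z_k$-sequences at once, which cannot be covered by finitely many elements and so contradicts finiteness of levels, or to realise two saturated chains of different lengths between a neighbourhood of $z$ and a deep neighbourhood of some $z_k$, contradicting grading.

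The crux, and the main obstacle, is to choose $\mathcal{B}$ so that it still generates the topology of $X$, hence remains a genuine basis, while simultaneously defeating \emph{every} possible graded selection. Keeping the basis honest is exactly what rules out the naive attempts: arbitrarily small clean neighbourhoods of each $z_k$ are forced into any basis, so the badness cannot be local at a single limit point and must instead obstruct the \emph{global} reconciliation of the internal resolution depths of the $z_k$ with the depth at which the shrinking neighbourhoods of $z$ release them. Making this global clash unavoidable in every sub-basis is where the real work lies. Once $X''=\varnothing$ is secured, the classification recalled in the first paragraph yields $X\cong\alpha$ for some $\alpha<\omega^2$, completing the equivalence.
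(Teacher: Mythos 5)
Your reduction of (1) to ``second countable with $X''=\varnothing$'' is correct, and your (1)$\Rightarrow$(2) argument is essentially the paper's construction repackaged through \autoref{GradedLemma}. One soft spot there: pairwise disjointness of the systems $U_{i,\cdot}$ is not what you actually need. The finite-difference property you worry about is in fact automatic, since $X\setminus\bigcup_i U_{i,k+1}$ is closed and contains no limit point, hence is finite, and $U_{i,k}\setminus U_{i,k+1}$ sits inside it by disjointness. What can genuinely fail is that $\{U_{i,k}\}_k$ be a neighbourhood base at $z_i$: each $U_{i,k}$ may retain a tail of a sequence of isolated points converging to some \emph{other} $z_j$, in which case $\bigcup_k C_k$ is not a basis at all. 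The repair is to first fix pairwise disjoint open $W_i\ni z_i$ with $\mathrm{cl}(W_i)\cap X'=\{z_i\}$ and choose $U_{i,0}\in\mathcal{B}$ inside $W_i$; then every $U_{i,k}$ is clopen and compact, and compactness of the decreasing sequence forces the base property. This confinement is exactly the paper's requirement that each basic set be contained in $(\omega n,\omega(n+1)+1)$.

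The genuine gap is (2)$\Rightarrow$(1), where your proof stops at a plan: you announce a basis that defeats every graded selection via a ``global clash'' of resolution depths, but you concede that making the clash unavoidable ``is where the real work lies'' --- and that work is never done. Neither of your two envisaged contradictions is derived from a construction, and nothing in the sketch rules out a sub-basis that releases the $z_k$-sequences at staggered but mutually consistent depths, so the hard direction remains entirely open. The paper shows no such global bookkeeping is needed; a one-step local obstruction suffices. Take $y$ a limit of non-isolated points (this exists whether or not the perfect kernel is empty). Every open $U\ni y$ contains a \emph{non-closed} open neighbourhood of $y$: pick a non-isolated $z\in U\setminus\{y\}$ and use $U\setminus\{z\}$. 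Let $B$ consist of these punctured neighbourhoods together with all open sets avoiding $y$, and refine within $B$, using regularity as in \autoref{CoversAreCaps}, to a basis $\mathbb{P}$ in which $p\subsetneqq q$ implies $\mathrm{cl}(p)\subseteq q$. If $\mathbb{P}$ contained a graded $\omega$-basis $\mathbb{Q}$, then each level of $\mathbb{Q}$ would consolidate the next by \autoref{GradedSummary}(3); since $\mathbb{Q}$ must contain arbitrarily small neighbourhoods of $y$, any basic $p\ni y$ is non-atomic in $\mathbb{Q}$ and hence equals a \emph{finite} union of strictly smaller elements, so $p$ equals the union of their closures and is closed --- contradicting that every member of $B$ containing $y$ is non-closed. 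This single device, non-closed neighbourhoods plus consolidation forcing closedness, is the idea your proposal is missing, and it replaces the global depth-reconciliation analysis you correctly identified as the obstacle but did not carry out.
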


\begin{proof}
    If $X=\alpha<\omega^2$ then any basis for $X$ contains a basis $\mathbb{P}$ such that each $p\in\mathbb{P}$ is either a singleton or contains a unique non-zero limit ordinal $\omega(n+1)$ such that $p\subseteq(\omega n,\omega(n+1)+1)$.
    Taking a further subset if necessary, we can ensure that the neighbourhoods of any fixed non-zero limit ordinal $\omega n$ are linearly ordered and hence $T_n=\{p\in\mathbb{P}:p\subseteq(\omega n,\omega(n+1)+1)\}$ consists of atoms together with at most one decreasing sequence.  In particular, each $T_n$ and is graded and hence $\mathbb{P}=\{\{0\}\}\cup\bigcup_{\omega n+1\in\alpha}T_n$ is a graded $\omega$-cap-basis.

    Conversely, say $X$ is not homeomorphic to any $\alpha<\omega^2$.  We can further assume that $X$ is second countable (otherwise $X$ certainly could not have any $\omega$-basis and we would be done).  We claim that the non-isolated points of $X$ have some limit point $y\in X$.  Indeed, $X=Y\cup S$, for (unique) perfect $Y$ and countable scattered $S$.  If $Y\neq\emptyset$ then just take any $y\in Y$.  If $Y=\emptyset$ then $X=S$ must be homeomorphic to some ordinal $\alpha>\omega^2$ and we can just take $y$ to be (the point identified with) $\omega^2$, which is the limit of $(\omega(n+1))_{n\in\omega}$.  This proves the claim and it follows that $y$ has a neighbourhood basis consisting of non-closed open sets -- if $O$ is a clopen neighbourhood of $y$ just take any non-isolated $z\in O\setminus\{y\}$ and note that $O\setminus\{z\}$ is still open but no longer closed.  These neighbourhoods of $y$ together with all open sets avoiding $y$ thus form a basis $B$ for $X$.  As $X$ is Hausdorff and hence regular, we can argue as in the proof of \autoref{CoversAreCaps} to obtain another basis $\mathbb{P}\subseteq B$ such that strict containment implies closed containment (just choose each $b\ni x$ there so that $\mathrm{cl}(b)\subseteq\bigcap\{c\in\bigcup_{j\leq k}C_{n_j}:x\in c\}$), i.e.
    \[p\subsetneqq q\qquad\Rightarrow\qquad\mathrm{cl}(p)\subseteq q.\]
    As each $p\in\mathbb{P}$ containing $y$ is not closed, $p$ can never be the union of a finite subset of $\mathbb{P}\setminus\{p\}$ (as $p$ would then be the union of their closures too and hence itself closed).  In particular, $\mathbb{P}$ can not contain a graded $\omega$-basis, as each level would then have to consolidate the next, by \autoref{GradedSummary}.
\end{proof}

The following summarises \autoref{WeaklyGradedCapBasis}, \autoref{PredeterminedSubBasis}, and \autoref{PredeterminedGradedBasis}.
\begin{thm}\label{BasesSummary}
Every second countable $\mathsf T_1$ compactum $X$ has an $\omega$-cap-basis $\PP$.
Moreover, we can arrange any of the following $($but not any two simultaneously$)$.
\begin{enumerate}
    \item $\PP$ is level-injective and the levels $\PP_n$ are members of a given co-initial family of minimal open covers.
    \item $\PP$ is predetermined and its elements are members of a given countable basis.
    \item $\PP$ is predetermined and graded.
\end{enumerate}
\end{thm}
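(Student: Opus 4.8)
The plan is to assemble the statement from the three constructions already established, treating the opening sentence as subsumed by any of the refinements. The bare existence of an $\omega$-cap-basis needs no separate argument: it follows directly from the construction in the proof of \autoref{CoversAreCaps}, where the basis $\PP=\bigcup_k B_k$ is built as a union of finite minimal covers with $B_{k+1}$ refining $B_k$ and $B_k$ corefining $B_{k+1}$, so that \autoref{AtomicPosetLevels} recognises $\PP$ as a (weakly graded) $\omega$-poset and hence an $\omega$-cap-basis. Of course it also follows a posteriori from each of the three enhanced constructions below, so one could equally just cite whichever of them is proved first.

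For item (1) I would simply invoke \autoref{WeaklyGradedCapBasis}: given a co-initial family $\mathcal{C}$ of minimal open covers, that result extracts a subfamily forming the levels of a level-injective $\omega$-cap-basis, which is exactly the assertion of (1). For items (2) and (3) the essential bridge is \autoref{BandCapBases}, which identifies $\omega$-band-bases with predetermined $\omega$-cap-bases. With that identification in hand, (2) is immediate from \autoref{PredeterminedSubBasis}, since a given countable basis contains an $\omega$-band-basis, necessarily a predetermined $\omega$-cap-basis whose elements lie in that basis; and (3) is immediate from \autoref{PredeterminedGradedBasis}, which produces a graded $\omega$-band-basis, hence a predetermined and graded $\omega$-cap-basis.

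The only part of the statement that is not a direct citation is the parenthetical incompatibility, and this is where I expect the genuine work to lie. For the conflict between (2) and (3) I would point to the preceding proposition: since most Hausdorff compacta admit a basis containing no graded $\omega$-basis at all, one cannot in general fix the basis in advance (as in (2)) and simultaneously demand gradedness (as in (3)). The harder obstacle is showing that (1) is likewise incompatible with (2) and with (3): this requires exhibiting a co-initial family of minimal covers whose levels can never be made graded, respectively never be drawn from a prescribed basis. I would treat these as a short sequence of explicit counterexamples rather than attempt to fold them into the summary proof, since the positive content of the theorem is entirely a matter of quoting \autoref{WeaklyGradedCapBasis}, \autoref{PredeterminedSubBasis}, \autoref{PredeterminedGradedBasis}, and \autoref{BandCapBases}, and it is only the mutual-exclusivity caveat that demands any real construction.
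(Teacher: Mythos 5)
Your proposal matches the paper's treatment exactly: \autoref{BasesSummary} appears there as a pure summary with no separate proof, justified precisely by citing \autoref{WeaklyGradedCapBasis} for (1), \autoref{PredeterminedSubBasis} together with \autoref{BandCapBases} for (2), and \autoref{PredeterminedGradedBasis} for (3), with bare existence already noted after \autoref{CoversAreCaps}. The mutual-exclusivity caveat is likewise not formally proved in the paper but only supported by surrounding remarks --- the proposition on compacta homeomorphic to ordinals below $\omega^2$ for the conflict of (2) with (3), and the parenthetical comment before \autoref{PredeterminedLemma} that the levels of an $\omega$-band-basis cannot be prescribed in advance --- so your reading of where any genuine extra work would lie is consistent with the paper.
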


\subsection{Additional Properties}\label{ss:AdditionalProps}

Before moving on, let us examine some other simple order properties possessed by all cap-bases of $\mathsf{T}_1$ spaces. Specifically, let us call a poset $\mathbb{P}$ \emph{branching} if no principal down-set $p^>$ has a singleton band, i.e.
\[\tag{Branching}p<q\qquad\Rightarrow\qquad\exists r<q\ (p\nleq r\text{ and }r\nleq p).\]
In particular, this implies no $p\in\mathbb{P}$ has a unique predecessor, so the Hasse diagram of $\mathbb{P}$ does indeed branch as much as possible. This even characterises branching posets among $\omega$-posets or, more generally, posets which only have finite intervals.

\begin{prp}\label{BranchingBasis}
Any basis of non-empty open sets of a $\mathsf{T}_1$ space is branching.
\end{prp}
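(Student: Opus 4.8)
The plan is to prove branching directly, by exhibiting for each pair $p<q$ in $\mathbb{P}$ an explicit witness $r\in\mathbb{P}$. Working in the inclusion order, $p<q$ means $p\subsetneq q$, and since every element of $\mathbb{P}$ is a \emph{non-empty} open set, I may fix a point $y\in p$ and, using $p\subsetneq q$, a point $x\in q\setminus p$. Note $x\neq y$, since $y\in p$ while $x\notin p$; these two points are exactly what I will use to break both comparabilities simultaneously.

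The key idea is to produce an $r$ that captures $x$ but omits $y$. Since $X$ is $\mathsf{T}_1$, the singleton $\{y\}$ is closed, so $q\setminus\{y\}$ is an open neighbourhood of $x$. As $\mathbb{P}$ is a basis, I then obtain $r\in\mathbb{P}$ with $x\in r\subseteq q\setminus\{y\}$. It remains only to verify the three conditions in the definition of branching. First, $r<q$: since $r\subseteq q\setminus\{y\}$ and $y\in q$, we have $y\in q\setminus r$, so $r\subsetneq q$. Next, $r\nleq p$, i.e.\ $r\not\subseteq p$, because $x\in r$ but $x\notin p$. Finally, $p\nleq r$, i.e.\ $p\not\subseteq r$, because $y\in p$ but $y\notin r$. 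Thus $r$ is strictly below $q$ and incomparable to $p$, exactly as required.

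I do not expect a serious obstacle here; the whole argument is essentially a one-step application of the $\mathsf{T}_1$ separation axiom together with the defining property of a basis. If anything, the only point requiring care is the choice of \emph{which} point to delete: removing a point $y$ of $p$ (rather than a point outside $p$) is what simultaneously guarantees $p\not\subseteq r$ while leaving $x\in r\setminus p$ to force $r\not\subseteq p$. Deleting a point of $q\setminus p$ instead would keep $p\subseteq r$ and fail to separate, so pinning down this choice is the crux of the (otherwise routine) construction.

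Finally, I should flag that the statement concerns arbitrary bases, so no Noetherianity or finiteness of intervals is assumed; the surrounding remark about branching being \emph{characterised} among posets with finite intervals is a separate matter, and the proof above uses only that $\mathbb{P}$ consists of non-empty open sets forming a basis of a $\mathsf{T}_1$ space.
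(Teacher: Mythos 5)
Your proof is correct and is essentially identical to the paper's: the paper also picks a point in $p$ and a point in $q\setminus p$, uses $\mathsf{T}_1$ to delete the point of $p$ from $q$, and takes a basic $r$ around the point of $q\setminus p$ inside the punctured set (with variable names $x$ and $y$ swapped relative to yours). Your explicit verification that $r<q$ is a small plus, as the paper leaves that step implicit.
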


\begin{proof}
Take a basis $\mathbb{P}$ of non-empty sets of a $\mathsf{T}_1$ space $X$. For any $p,q\in\mathbb{P}$ with $p<q$, we can take $x\in p$ and $y\in q\setminus p$. We then have some $r\in\mathbb{P}$ with $y\in r\subseteq q\setminus\{x\}$. Note $p\nleq r$, as $x\in p\setminus r$, and $r\nleq p$, as $y\in r\setminus p$. This shows that $\mathbb{P}$ is branching.
\end{proof}

In particular, every poset arising in \autoref{PredeterminedSubBasis} is branching. It is natural to wonder if this is the only extra restriction, i.e. does every branching predetermined $\omega$-poset arise as a cap-basis of some (necessarily compact) $\mathsf{T}_1$ space? In fact, this will even hold under a certain weaker assumption which we now describe.

First let us define the \emph{cap-order} relation $\precsim$ on $\mathsf{P}\mathbb{P}$ by
\[\tag{Cap-Order}\label{CapOrder}Q\precsim R\qquad\Leftrightarrow\qquad\forall F\subseteq\mathbb{P}\ (F\cup Q\in\mathsf{C}\mathbb{P}\ \Rightarrow\ F\cup R\in\mathsf{C}\mathbb{P}).\]
Note it suffices to consider finite $F$ here, as every cap has a finite subcap. Further note that $\precsim$ is a preorder containing refinement as a subrelation. In particular, on singletons it contains the original order $\leq$. Let us call a poset $\mathbb{P}$ \emph{cap-determined} if it actually agrees with $\leq$ on singletons, i.e. for all $p,q\in\mathbb{P}$,
\[\tag{Cap-Determined}p\precsim q\qquad\Rightarrow\qquad p\leq q.\]
More explicitly this means that, whenever $p\nleq q$, we have some $F\subseteq\mathbb{P}$ (which we can take to be finite) such that $F\cup\{p\}$ is a cap but $F\cup\{q\}$ is not.

\begin{prp}\label{CapBasesAreCapDetermined}
Every cap-basis of a $\mathsf{T}_1$ space is cap-determined.
\end{prp}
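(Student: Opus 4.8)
The plan is to prove the contrapositive: assuming $p\not\leq q$ in a cap-basis $\mathbb{P}$ of a $\mathsf{T}_1$ space $X$, I will produce a (finite) $F\subseteq\mathbb{P}$ witnessing $p\not\precsim q$, i.e. with $F\cup\{p\}\in\mathsf{C}\mathbb{P}$ but $F\cup\{q\}\notin\mathsf{C}\mathbb{P}$. Since $\mathbb{P}$ is a cap-basis, $\mathsf{C}\mathbb{P}$ consists of exactly the covers of $X$, so this amounts to finding $F$ with $\bigcup F\cup p=X$ but $\bigcup F\cup q\neq X$. As $\leq$ is inclusion and $p\not\subseteq q$, I fix a point $x\in p\setminus q$. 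The idea is to build $F$ so that it covers everything outside $p$ while avoiding $x$, making $p$ indispensable for covering $x$ so that $q$ cannot be substituted for it.

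First I would construct, for each $y\in X\setminus p$, a basis element $b_y\in\mathbb{P}$ with $y\in b_y$ and $x\notin b_y$. This is exactly where the $\mathsf{T}_1$ hypothesis is used: $\{x\}$ is closed, so $X\setminus\{x\}$ is an open neighbourhood of $y$ (note $y\neq x$ as $x\in p$), and the basis property yields $b_y$ with $y\in b_y\subseteq X\setminus\{x\}$. The family $\{b_y:y\in X\setminus p\}\cup\{p\}$ then covers $X$, and is therefore a cap.

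Next I would pass to a finite subcover. Since every space with a cap-basis is compact (equivalently, every cap contains a finite subcap), the cover above has a finite subfamily that still covers $X$. The crucial point is that each $b_y$ avoids $x$, so no subfamily of the $b_y$ alone can cover $x$; hence $p$ must belong to this finite subcover. Writing it as $F\cup\{p\}$ with $F$ a finite subset of $\{b_y:y\in X\setminus p\}$ (with $F=\emptyset$ in the degenerate case $p=X$), we obtain $\bigcup F\cup p=X$ together with $x\notin\bigcup F$.

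Finally, since $x\notin\bigcup F$ and $x\notin q$, the point $x$ lies outside $\bigcup(F\cup\{q\})$, so $F\cup\{q\}$ is not a cover and hence not a cap. Thus $F$ witnesses $p\not\precsim q$, completing the contrapositive. I do not expect a serious obstacle here; the only step requiring genuine care is the observation that the $b_y$ cannot collectively cover $x$, which forces $p$ into the finite subcover and is what makes the whole argument close.
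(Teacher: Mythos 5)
Your proof is correct and takes essentially the same route as the paper's: fix $x\in p\setminus q$, use the $\mathsf{T}_1$ property together with the basis property to cover $X\setminus p$ by elements of $\mathbb{P}$ avoiding $x$, so that $F\cup\{p\}$ is a cover (hence a cap) while $F\cup\{q\}$ misses $x$ and is therefore not a cap. Your additional compactness step to make $F$ finite is harmless but unnecessary, since the definition of $\precsim$ does not require a finite witness (the paper only remarks that $F$ \emph{can} be taken finite).
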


\begin{proof}
Take a cap-basis $\mathbb{P}$ of a $\mathsf{T}_1$ space $X$. Whenever $p\nleq q$, we have some $x\in p\setminus q$. As $X$ is $\mathsf{T}_1$ and $\mathbb{P}$ is a basis, we can cover $X\setminus p$ with a subcollection $F\subseteq\mathbb{P}$ whose elements all avoid $x$. Thus $F\cup\{p\}$ is a cover of $X$ and hence a cap of $\mathbb{P}$.  On the other hand, no member of $F\cup\{q\}$ contains $x$ so it can not be a cover of $X$ and is thus not a cap of $\mathbb{P}$, by \eqref{CapsAreCovers}. This shows that $\mathbb{P}$ is cap-determined.
\end{proof}

The relationship between these various notions can be summarised as follows.

\begin{prp}\label{prp:branchingcapdet}
If $\mathbb{P}$ is an $\omega$-poset then
\[\mathbb{P}\text{ is branching and predetermined}\ \ \Rightarrow\ \ \mathbb{P}\text{ is cap-determined}\ \ \Rightarrow\ \ \mathbb{P}\text{ is branching}.\]
\end{prp}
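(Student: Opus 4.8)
The statement splits into two implications, which I will establish separately, both by contraposition.

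\emph{Second implication} ($\mathbb{P}$ cap-determined $\Rightarrow$ $\mathbb{P}$ branching). I argue the contrapositive: if $\mathbb{P}$ is not branching then it is not cap-determined. Failure of branching yields $p<q$ with $p$ comparable to every element of $q^>$; that is, $\{p\}$ is a singleton band of $q^>$. I will show this forces $q\precsim p$, which together with $q\nleq p$ contradicts cap-determinacy. To verify $q\precsim p$, suppose $F\cup\{q\}\in\mathsf{C}\mathbb{P}$. By \autoref{CapsRefineLevels} some level refines it, and passing to a deeper level (using that $\mathbb{P}_n\leq\mathbb{P}_m$ whenever $n\geq m$) I may assume $\mathbb{P}_n\leq F\cup\{q\}$ for some $n\geq\mathsf{r}(p)$. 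The two key observations are that no element of $\mathbb{P}_n$ lies strictly above $p$ (such an element would fail to be minimal in $\mathbb{P}^n$, since $p\in\mathbb{P}^n$) and that $q\notin\mathbb{P}_n$ (as $q$ is not minimal in $\mathbb{P}^n$). Hence any $b\in\mathbb{P}_n$ refined through $q$ satisfies $b<q$, so $b$ is comparable to $p$ by the branching failure, and since $b\not>p$ we get $b\leq p$. Thus $\mathbb{P}_n\leq F\cup\{p\}$, so $F\cup\{p\}\in\mathsf{C}\mathbb{P}$, establishing $q\precsim p$.

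\emph{First implication} ($\mathbb{P}$ branching and predetermined $\Rightarrow$ $\mathbb{P}$ cap-determined). Again I argue contrapositively, directly producing, for any $p\nleq q$, a finite $F$ with $F\cup\{p\}\in\mathsf{C}\mathbb{P}$ but $F\cup\{q\}\notin\mathsf{C}\mathbb{P}$, so that $p\not\precsim q$. The engine is the predetermined characterisation \ref{LevelItem}, which supplies deep elements with tightly controlled up-sets. The plan is to locate $a\in\mathbb{P}_n$ (for some large $n$) with $a\leq p$ that is incomparable both to $q$ and to every other member of its level. If $p$ and $q$ are incomparable, \ref{LevelItem} applied to $p$ gives $a\in\mathbb{P}_n$ with $a^\leq=[a,p]\cup p^<$, and the incomparability of $p,q$ then forces $a$ incomparable to $q$. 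If instead $q<p$, I first use branching to obtain $r<p$ incomparable to $q$, then apply \ref{LevelItem} to $r$ to get $a\in\mathbb{P}_n$ with $a\leq r<p$ and $a^\leq=[a,r]\cup r^<$, from which $a$ is again incomparable to $q$. In either case I set $F=\mathbb{P}_n\setminus\{a\}$. Since $a\leq p$, the level $\mathbb{P}_n$ (a band) refines $F\cup\{p\}$, so the latter is a cap. On the other hand $a$ is incomparable to every element of $F$ (as $\mathbb{P}_n$ is an antichain) and to $q$, so $F\cup\{q\}$ is not a band; being finite, it is not a cap either, by \ref{CapItem}.

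The main obstacle is the first implication, and specifically the case $q<p$: there one must produce a deep element below $p$ that genuinely avoids $q$, and this is exactly where branching is indispensable, since without it every element below $p$ could be dragged below $q$. The subtlety is that I need $a$ \emph{incomparable} to $q$, not merely $a\nleq q$, and this relies on the precise form of $a^\leq$ furnished by \ref{LevelItem}; so care is needed to invoke that characterisation for the correct element ($p$ in the incomparable case, the auxiliary $r$ when $q<p$). The remaining ingredients, namely that levels are bands refining $F\cup\{p\}$ and that finiteness together with predeterminedness promotes ``not a band'' to ``not a cap'', are routine consequences of the results already available.
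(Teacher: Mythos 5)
Your proof is correct, and it is essentially the paper's argument with two mild repackagings. For the first implication you make the identical case split — $p,q$ incomparable versus $q<p$, with branching supplying an auxiliary $r<p$ incomparable to $q$ — and your deep element $a$ furnished by item \ref{LevelItem} plays exactly the role of the sequence $(p_n)$ that the paper builds by hand (the recursion behind \ref{LevelItem} is the same construction, producing elements of every level whose upper bounds are all comparable to $p$, resp.\ $r$). The genuine divergence is the finishing move: you take $F=\mathbb{P}_n\setminus\{a\}$ and certify that $F\cup\{q\}$ is not a cap by observing it is a finite non-band and invoking item \ref{CapItem}, whereas the paper takes $F=\mathbb{P}_{\mathsf{r}(p)}\setminus\{p\}$ and shows no level beyond $\mathsf{r}(p)$ refines $F\cup\{q\}$, appealing to \autoref{CapsRefineLevels}; your route is slightly slicker at the harmless cost of using predeterminedness a second time through \ref{CapItem}. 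For the second implication you give the paper's proof contraposed: where the paper, given $p<q$, uses $q\not\precsim p$ to obtain $F$ and then extracts from a level refining $F\cup\{q\}$ an element below $q$ incomparable with $p$, you show dually that if every element of $q^>$ is comparable with $p$ then every level element below $q$ is forced below $p$, yielding $q\precsim p$ directly — and the same two rank observations (no element of a deep level lies strictly above $p$, and $q$ itself cannot belong to such a level) underpin both versions. All your intermediate steps check out, including the passage to a level $\mathbb{P}_n\leq F\cup\{q\}$ with $n\geq\mathsf{r}(p)$ and the verification that $a$ is incomparable to $q$ from the exact form $a^\leq=[a,p]\cup p^<$ (resp.\ $[a,r]\cup r^<$).
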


\begin{proof}
For the first implication, assume $\mathbb{P}$ is predetermined and take any $p\in\mathbb{P}$. We claim that we can recursively construct $(p_n)_{n\geq\mathsf{r}(p)}$ such that $p_n\in\mathbb{P}_n$ and $p$ is a band of $p_n^\leq$, for all $n\geq\mathsf{r}(p)$. First set $p_{\mathsf{r}(p)}=p$. Now assume $p_n$ has been already been constructed. If $p_n$ is already minimal in $\mathbb{P}$ then it must lie in all levels beyond $n$ too and we may simply set $p_{n+1}=p_n$. Otherwise, we can take $p_{n+1}\lessdot p_n$ such that $p_{n+1}^<=p_n^\leq$, as $\mathbb{P}$ is predetermined, noting that this implies $\mathsf{r}(p_{n+1})=\mathsf{r}(p_n)+1$ (otherwise we would have $q\gtrdot p_{n+1}$ with $\mathsf{r}(q)=\mathsf{r}(p_{n+1})-1>\mathsf{r}(p_n)$ so $q\ngeq p_n$, a contradiction). As $p$ is a band for $p_n^\leq=p_{n+1}^<$, it is also a band for $p_{n+1}^\leq=p_{n+1}^<\cup\{p_{n+1}\}$. This completes the recursion.

Now say that $p\nleq q$. First consider the case where $q\nleq p$ as well. Let $F=\mathbb{P}_{\mathsf{r}(p)}\setminus\{p\}$ so certainly $F\cup\{p\}$ is a cap. However, $F\cup\{q\}$ is not refined by $\mathbb{P}_n$, for any $n>\mathsf{r}(p)$, because $\mathbb{P}_n$ contains the $p_n$ constructed above, which can not be below any element of $F\cup\{q\}$, as none of these are comparable with $p$. Thus $F\cup\{q\}$ is not a cap, by \autoref{CapsRefineLevels}. On the other hand, if $q<p$ then, as long as $\mathbb{P}$ is branching, we can take $r<p$ which is incomparable with $q$. The argument just given then yields $F$ such that $F\cup\{r\}$ and and hence $F\cup\{p\}$ is a cap while $F\cup\{q\}$ is not. This shows that $\mathbb{P}$ is cap-determined.

For the second implication, assume $\mathbb{P}$ is cap-determined. So if $p<q$ then we have $F\subseteq\mathbb{P}$ such that $F\cup\{q\}$ is a cap but $F\cup\{p\}$ is not. Take any $n>\mathsf{r}(p)$ such that $\mathbb{P}_n$ refines $F\cup\{q\}$. As $F\cup\{p\}$ is not a cap, we have $r\in\mathbb{P}_n\setminus(F\cup\{p\})^\geq$. In particular, $r\nleq p$ but also $p\nleq r$, as $\mathsf{r}(p)<n\leq\mathsf{r}(r)$. Moreover, $r\nleq f$, for all $f\in F$, and hence $r\leq q$, as $\mathbb{P}_n$ refines $F\cup\{q\}$. This shows that $\mathbb{P}$ is branching.
\end{proof}

Even when $\mathbb{P}$ is not cap-determined, $B\precsim C$ is meant to signify that $B$ is covered by $C$ in a certain sense, which we will make more precise in \eqref{AprecsimB} below.  For the moment, let us just note a few further properties of $\precsim$.  Firstly, as one would expect, the caps of $\mathbb{P}$ are precisely the maximal elements with respect to $\precsim$, i.e.
\begin{equation}\label{MaximalCaps}
C\in\mathsf{C}\mathbb{P}\qquad\Leftrightarrow\qquad\mathbb{P}\precsim C.
\end{equation}
Indeed, if $C$ is a cap then $B\precsim C$, for any $B\subseteq\mathbb{P}$, as every superset of a cap is a cap (in particular, we can take $B=\mathbb{P}$).  On the other hand, if $C=C\cup\emptyset$ is a cap and $C\precsim A$ then $A=A\cup\emptyset$ is also a cap (in particular, we can take $C=\mathbb{P}$).

We also immediately see that the empty set $\emptyset$ is minimal with respect to $\precsim$, although in general there can be elements of $\mathbb{P}$ that are minimal too.  However,
\begin{equation}\label{CapDeterminedPrime}
\mathbb{P}\text{ is cap-determined}\qquad\Rightarrow\qquad\forall p\in\mathbb{P}\ (p\not\precsim\emptyset).
\end{equation}
Indeed, if $p\precsim\emptyset$ then $p\precsim q$, for all $q\in\mathbb{P}$, so if $\mathbb{P}$ is cap-determined then $p$ is a minimum of $\mathbb{P}$, i.e. $\mathbb{P}=p^\leq$.  But then $\{p\}$ itself is already a band and hence a cap, even though the empty set $\emptyset$ is never a cap, contradicting $p\precsim\emptyset$.

Lastly, we show that $\precsim$ is determined by its restriction to singletons on the left.

\begin{prp}
For any poset $\mathbb{P}$ and $B,C\subseteq\mathbb{P}$,
\begin{equation}\label{bprecC}
B\precsim C\qquad\Leftrightarrow\qquad\forall b\in B\ (b\precsim C).
\end{equation}
\end{prp}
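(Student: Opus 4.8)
The plan is to prove the two directions of \eqref{bprecC} separately, with the forward implication being essentially immediate and the reverse requiring a finite ``peeling'' argument that removes the elements of $B$ one at a time.

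For the forward direction, suppose $B\precsim C$ and fix $b\in B$; I want $\{b\}\precsim C$. So take any $F\subseteq\mathbb{P}$ with $F\cup\{b\}\in\mathsf{C}\mathbb{P}$. Since $b\in B$ we have $F\cup\{b\}\subseteq F\cup B$, and as every superset of a cap is a cap, $F\cup B\in\mathsf{C}\mathbb{P}$. Applying $B\precsim C$ to this very same $F$ yields $F\cup C\in\mathsf{C}\mathbb{P}$, which is exactly what $\{b\}\precsim C$ demands. Thus $b\precsim C$ for every $b\in B$.

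For the reverse direction, suppose $b\precsim C$ for all $b\in B$ and take $F\subseteq\mathbb{P}$ with $F\cup B\in\mathsf{C}\mathbb{P}$; I must show $F\cup C\in\mathsf{C}\mathbb{P}$. The key preliminary step is to reduce to a finite portion of $B$: since every cap has a finite subcap, there is a finite $E\subseteq F\cup B$ with $E\in\mathsf{C}\mathbb{P}$, whence $B_0=E\cap B$ is a finite subset of $B$ with $F\cup B_0\supseteq E$, so that $F\cup B_0\in\mathsf{C}\mathbb{P}$. Now I would peel off the elements of $B_0=\{b_1,\dots,b_k\}$ by a decreasing induction, establishing that $F\cup\{b_1,\dots,b_j\}\cup C\in\mathsf{C}\mathbb{P}$ for each $j$ from $k$ down to $0$. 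The base case $j=k$ holds because this set contains the cap $F\cup B_0$. For the step, writing $G=F\cup\{b_1,\dots,b_{j-1}\}\cup C$, the set $G\cup\{b_j\}$ equals $F\cup\{b_1,\dots,b_j\}\cup C$, a cap by the induction hypothesis, so applying $b_j\precsim C$ to the auxiliary set $G$ gives that $G\cup C=F\cup\{b_1,\dots,b_{j-1}\}\cup C$ is a cap, which is the $j-1$ case. At $j=0$ this reads $F\cup C\in\mathsf{C}\mathbb{P}$, as required (and if $B_0=\emptyset$ the conclusion is already immediate, since $F\supseteq E$ is a cap).

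The only real subtlety—the part I would be most careful about—is the bookkeeping in the peeling step: each application of $b_j\precsim C$ uses a \emph{different} auxiliary set $G$, one that already contains $C$ and the not-yet-removed $b_i$'s, and one must verify both that $G\cup\{b_j\}$ is precisely the cap furnished by the previous stage and that $G\cup C$ collapses back to the set with $b_j$ removed (using $C\cup C=C$). The finiteness of $B_0$, guaranteed by the finite-subcap property, is exactly what makes this induction terminate; without it the peeling could not be carried out in finitely many steps.
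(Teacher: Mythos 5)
Your proof is correct and in essence the same as the paper's: the paper first isolates the pairwise-union property $A,B\precsim C\Rightarrow A\cup B\precsim C$ (whose proof is exactly your one-step peeling move, applying $b\precsim C$ to an auxiliary set already containing $C$ and using $C\cup C=C$) and then handles the reverse direction contrapositively via a finite subcap $F\subseteq B$, whereas you unroll the same finite iteration as a direct downward induction. The difference is only packaging—lemma plus contraposition versus explicit peeling—so nothing substantive separates the two arguments.
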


\begin{proof}
First let us note that $\precsim$ respects pairwise unions, i.e. for all $A,B,C\subseteq\mathbb{P}$,
\begin{equation}\label{ABprecC}
A,B\precsim C\qquad\Rightarrow\qquad A\cup B\precsim C.
\end{equation}
To see this, take any $F\subseteq\mathbb{P}$ such that $A\cup B\cup F\in\mathsf{C}\mathbb{P}$.  If $A\precsim C$ then this implies that $B\cup C\cup F\in\mathsf{C}\mathbb{P}$.  If $B\precsim C$ too then this further implies that $C\cup F=C\cup C\cup F\in\mathsf{C}\mathbb{P}$.  This shows that $A\cup B\precsim C$.

Now if $B\precsim C$ then certainly $b\precsim C$, for all $b\in B$.  Conversely, if $B\not\precsim C$ then we have some $D\subseteq\mathbb{P}$ such that $B\cup D\in\mathsf{C}\mathbb{P}$ but $C\cup D\notin\mathsf{C}\mathbb{P}$.  We then have some finite $F\subseteq B$ such that $F\cup D$ is still a cap and hence $F\not\precsim C$.  If we had $f\precsim C$, for all $f\in F$, then \eqref{ABprecC} would imply $F\precsim C$, a contradiction.  Thus $f\not\precsim C$, for some $f\in F\subseteq B$, as required.
\end{proof}

We summarize implications between considered properties of $\omega$-posets in \autoref{fig:poset_properties}.
The notion of a prime poset is defined in \autoref{DefPrimePoset} in the next section.

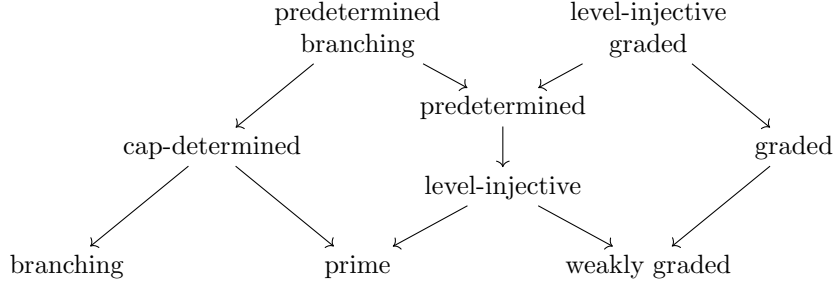
\begin{figure}[!ht]
\begin{tikzpicture}[
		x = {(5.5em, 0)},
		y = {(0, 3em)},
		text height = 1.5ex,
		text depth = 0.25ex,
		multiline/.style = {
			align = center,
			text height =,
			text width =,
		}
	]
	\node (predetermined) at (0, 0) {predetermined};
	\node (levelinjective) at (0, -1) {level-injective};
	\node (weakly graded) at (1, -2) {weakly graded};
	\node (prime) at (-1, -2) {prime};
	\node (capdetermined) at (-2, -0.5) {cap-determined};
	\node (graded) at (2, -0.5) {graded};
	\node (branching) at (-3, -2) {branching};
	\node (pb) at (-1, 1) [multiline] {predetermined \\ branching};
	\node (lg) at (1, 1) [multiline] {level-injective \\ graded};
	
	\graph{
		(pb) -> {(capdetermined), (predetermined)},
		(lg) -> {(predetermined), (graded)},
		(predetermined) -> (levelinjective) -> {(prime), (weakly graded)},
		(graded) -> (weakly graded),
		(capdetermined) -> {(branching), (prime)},
	};
\end{tikzpicture}
\caption{Implications between properties of $\omega$-posets.}
\label{fig:poset_properties}
\end{figure}

\section{The Spectrum}\label{TheSpectrum}

In this section, we construct a $\mathsf{T}_1$ compactum from any poset and relate its topological properties to the order properties of the original poset.

\begin{center}
\textbf{Throughout this section fix some poset $(\mathbb{P},\leq)$.}
\end{center}

\subsection{Selectors}\label{ss.Selectors}

The points of our desired compactum will be certain subsets of $\mathbb{P}$ which contain at least one element from every cap (i.e. `transversals' of the caps).

\begin{dfn}
We call $S\subseteq\mathbb{P}$ a \emph{selector} if it overlaps all caps, i.e.
\[\tag{Selector}C\in\mathsf{C}\mathbb{P}\qquad\Rightarrow\qquad S\cap C\neq\emptyset.\]
\end{dfn}

Equivalently, $S\subseteq\mathbb{P}$ is a selector precisely when its complement $\mathbb{P}\setminus S$ is not a cap (as being a cap and containing a cap are the same thing).

We will be particularly interested in minimal selectors.

\begin{prp}\label{SelectorsContainMinimalSelectors}
Every selector contains a minimal selector.
\end{prp}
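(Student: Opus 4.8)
The plan is to apply Kuratowski--Zorn to the family $\mathcal{S}$ of all selectors contained in the given selector $S$, ordered by \emph{reverse} inclusion $\supseteq$; a maximal element of $(\mathcal{S},\supseteq)$ is precisely a minimal selector contained in $S$. Since $S$ itself belongs to $\mathcal{S}$, the family is non-empty, so everything comes down to verifying the chain condition: every chain of selectors (totally ordered by inclusion) admits a lower bound that is again a selector.

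The natural candidate is the intersection $T=\bigcap\mathcal{T}$ of a given non-empty chain $\mathcal{T}\subseteq\mathcal{S}$, so the heart of the argument is to show that $T$ is a selector. The structural fact I would lean on is that \emph{every cap contains a finite subcap}: if $B\leq C$ with $B\in\mathsf{B}\mathbb{P}$ a band, then choosing $c_b\in C$ with $b\leq c_b$ for each $b\in B$ produces a finite subset $\{c_b:b\in B\}\subseteq C$ which is still refined by $B$ and is hence itself a cap.

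Granting this, suppose for contradiction that $T$ misses some cap, which by the previous paragraph we may take to be a finite cap $C=\{c_1,\dots,c_k\}$. For each $i$, since $c_i\notin\bigcap\mathcal{T}$, there is some $T_i\in\mathcal{T}$ with $c_i\notin T_i$; as $\mathcal{T}$ is a chain, the finite subcollection $\{T_1,\dots,T_k\}$ has a smallest member $T_\ast$, whence $T_\ast\subseteq T_i$ and so $c_i\notin T_\ast$ for every $i$. Then $T_\ast\cap C=\emptyset$, contradicting the fact that $T_\ast\in\mathcal{S}$ is a selector. Therefore $T$ overlaps every cap, i.e. $T$ is a selector, providing the required lower bound for $\mathcal{T}$, and Kuratowski--Zorn then yields a minimal selector contained in $S$.

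The delicate point is exactly this chain condition: intersections of selectors need not be selectors in general, and the argument succeeds only because finiteness of subcaps reduces the obstruction to finitely many elements $c_1,\dots,c_k$, a finite family drawn from the chain which therefore has a single smallest member simultaneously avoiding all of them. Getting this finite-subcap reduction right is the main thing, and it is the one place where the specific combinatorics of caps (rather than a soft Zorn-style argument alone) is genuinely used.
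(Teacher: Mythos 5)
Your proof is correct and follows essentially the same route as the paper: reduce to finite caps via the observation that every cap contains a finite subcap (a finite subset of $C$ still refined by the band $B$), deduce that the intersection of a chain of selectors is again a selector, and conclude by Kuratowski--Zorn. Your write-up simply makes explicit the finite-avoidance step (finding a single smallest $T_\ast$ in the chain missing all of $c_1,\dots,c_k$) that the paper leaves implicit.
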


\begin{proof}
Note that every cap $C$ contains a finite subcap -- bands are finite by definition so if $B$ is a band refining $C$ then we can simply choose a finite subset of $C$ that is still refined by $B$. For $S$ to be a selector, it thus suffices for $S$ to select elements from just the finite caps. The intersection of a chain of selectors is therefore again a selector so Kuratowski--Zorn implies every selector contains a minimal selector.
\end{proof}

The first thing to note about minimal selectors is the following.

\begin{prp}\label{MinimalSelectors}
Every minimal selector is an up-set.
\end{prp}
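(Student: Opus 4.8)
The plan is to argue by contradiction from minimality. Suppose $S$ is a minimal selector and that $p\in S$ with $p\leq q$; the goal is to show $q\in S$, so assume instead that $q\notin S$ and aim for a contradiction.

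First I would use minimality to locate a witnessing cap. Since $S$ is a \emph{minimal} selector, $S\setminus\{p\}$ is not a selector, so by the very definition of a selector there must be a cap $C\in\mathsf{C}\mathbb{P}$ that $S\setminus\{p\}$ fails to meet, i.e. $(S\setminus\{p\})\cap C=\emptyset$. On the other hand $S$ itself is a selector and so does meet $C$; combining these two facts forces $p\in C$ to be the only element of $S$ in $C$, that is $S\cap C=\{p\}$.

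The key step is then to replace $p$ by the larger element $q$ and check that capness is preserved. Set $C'=(C\setminus\{p\})\cup\{q\}$. Every element of $C$ lies below some element of $C'$: the elements other than $p$ are unchanged, while $p\leq q\in C'$. Hence $C$ refines $C'$. Since $C$ is a cap, some band $B$ refines $C$, and transitivity of refinement (a preorder, as established earlier) then yields $B\leq C'$, so $C'$ is again a cap.

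Finally I would derive the contradiction. The set $S$ misses $C\setminus\{p\}$, because $S\cap C=\{p\}$, and it misses $q$, because $q\notin S$; therefore $S\cap C'=\emptyset$, contradicting that the selector $S$ meets every cap. Hence $q\in S$, so $p\in S$ and $p\leq q$ always give $q\in S$, i.e. $S^\leq\subseteq S$ and $S$ is an up-set. The only point requiring any care is the verification that $C'$ is a cap, which rests entirely on transitivity of refinement; the rest is routine bookkeeping with the definition of a selector.
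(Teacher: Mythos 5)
Your proof is correct and follows essentially the same route as the paper: extract from minimality a cap $C$ with $S\cap C=\{p\}$, replace $p$ by $q$ to form $(C\setminus\{p\})\cup\{q\}$, observe it is refined by $C$ and hence still a cap, and invoke the selector property. The paper phrases this directly rather than by contradiction, but the construction and the key verification (capness of the modified set via transitivity of refinement) are identical.
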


\begin{proof}
Take a minimal selector $S\subseteq\mathbb{P}$. Minimality means that, for every $s\in S$, we have some $C\in\mathsf{C}\mathbb{P}$ such that $S\cap C=\{s\}$ (otherwise $S\setminus\{s\}$ would be a strictly smaller selector). For any $p\geq s$, note that $(C\setminus\{s\})\cup\{p\}$ is refined by $C$ and is thus also a cap. As $S$ must also overlap this new cap, the only possibility is that $S$ also contains $p$. This shows that $S^\leq=S$, i.e. $S$ is an up-set.
\end{proof}

Moreover, to verify that an up-set is a selector, it suffices to consider a subfamily of caps $\mathcal{B}\subseteq\mathsf{C}\mathbb{P}$ that is coinitial with respect to refinement, e.g. the bands $\mathsf{B}\mathbb{P}$ or even just the levels $(\mathbb{P}_n)$ if $\mathbb{P}$ is an $\omega$-poset, thanks to \autoref{CapsRefineLevels}.

\begin{prp}\label{UpsetSelectors}
Take an up-set $U\subseteq\mathbb{P}$.  For any coinitial $\mathcal{B}\subseteq\mathsf{C}\mathbb{P}$,
\[U\text{ is a selector}\qquad\Leftrightarrow\qquad U\text{ overlaps every }B\in\mathcal{B}.\]
If $\mathbb{P}$ is an $\omega$-poset, $U$ is a selector precisely when $U$ is infinite or contains an atom.
\end{prp}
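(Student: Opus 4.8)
The plan is to handle the two assertions in turn, reducing the second to the first. For the displayed equivalence the forward direction is immediate, since a selector meets every cap and $\mathcal{B}\subseteq\mathsf{C}\mathbb{P}$. For the converse I would use the up-set hypothesis to lift overlaps along refinements: given any cap $C$, coinitiality furnishes some $B\in\mathcal{B}$ with $B\leq C$, and if $u\in U\cap B$ then refinement yields $c\in C$ with $u\leq c$, whence $c\in U$ as $U$ is an up-set, so $U\cap C\neq\emptyset$. This is the only place the up-set assumption enters, and it is exactly what lets one pass from a coinitial subfamily to all caps.

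For the $\omega$-poset statement I would first record that the levels $(\mathbb{P}_n)$ are coinitial in $\mathsf{C}\mathbb{P}$: every cap is refined by a band by definition, every band is refined by a level by \autoref{CapsRefineLevels}, and refinement is transitive. By the first part, $U$ is then a selector precisely when it meets every level, so it remains to show that an up-set $U$ meets every level if and only if it is infinite or contains an atom.

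I would prove both directions by contraposition, via two claims. First, if $U$ misses some level $\mathbb{P}_n$, then $U$ is finite and atomless: for $u\in U$, bandness of $\mathbb{P}_n$ gives a comparable $b\in\mathbb{P}_n$, and $u\leq b$ is impossible (it would force $b\in U\cap\mathbb{P}_n$ by the up-set property), so $b<u$; hence $\mathsf{r}(u)<\mathsf{r}(b)\leq n$, placing $U$ inside a finite cone and showing no $u$ can be minimal. This yields the implication ``infinite or atom $\Rightarrow$ selector''. Conversely, if $U$ is finite and contains no atom, I would exhibit a level it misses. The subtle point is that the naive choice, one above the maximal rank occurring in $U$, can fail: a low-rank non-atom may stay minimal in a high cone when its predecessors jump in rank. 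The correct choice is $m=\max_{u\in U}\min\{\mathsf{r}(q):q<u\}$, which is finite because $U$ is finite and each $u\in U$, being a non-atom, has $u^{>}\neq\emptyset$. For each $u$ there is then $q<u$ with $\mathsf{r}(q)\leq m$, so $u^{>}\cap\mathbb{P}^m\neq\emptyset$ and hence $u\notin\mathbb{P}_m$; thus $U\cap\mathbb{P}_m=\emptyset$.

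I expect this last choice of level to be the main obstacle: recognizing that the relevant quantity is the maximal \emph{least-rank-of-a-predecessor} rather than the maximal rank of $U$ itself. The atom case is an easy separate remark (an atom $a\in U$ is $\leq$ some element of each band $\mathbb{P}_n$ by minimality, and that element lies in $U$); it is already subsumed by the contrapositive of the first claim, but worth isolating for clarity.
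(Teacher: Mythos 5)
Your proof is correct and follows essentially the same route as the paper: the forward/converse argument for the first equivalence via coinitiality and the up-set property is identical, and the second part likewise reduces to the levels through \autoref{CapsRefineLevels}. Your explicit choice $m=\max_{u\in U}\min\{\mathsf{r}(q):q<u\}$ correctly fills in the one step the paper leaves unproved (it merely asserts that a finite atomless up-set misses some level, and the naive choice of a level above $\max_{u\in U}\mathsf{r}(u)$ can indeed fail since levels may retain low-rank non-atoms); the same device appears in the paper's proof of \autoref{UpsetCaps}.
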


\begin{proof}
As $\mathcal{B}\subseteq\mathsf{C}\mathbb{P}$, $\Rightarrow$ is immediate.  Conversely, say $U\cap B$, for all $B\in\mathcal{B}$.  For any $C\in\mathsf{C}\mathbb{P}$, coinitiality yields $B\in\mathcal{B}$ refining $C$.  This means any $b\in B\cap U$ has an upper bound $c\in C$, which is thus also in $U$, as $U$ is an up-set.  Thus $U$ is a selector.

Next note that if $a\in\mathbb{P}$ is an atom then $a^\leq$ is a selector.  Indeed, for any band $B\in\mathsf{B}\mathbb{P}$, the minimality of $a$ implies $a\in B^\geq$ and hence $a^\leq\cap B\neq\emptyset$.  As $a^\leq$ is up-set and bands are coinitial in $\mathsf{C}\mathbb{P}$, we are done.

It follows that if $U$ contains an atom then $U$ is a selector.  Now assume $\mathbb{P}$ is an $\omega$-poset.  If $U$ is infinite then $U$ contains elements of arbitrary rank.  In particular, $U$ overlaps all levels of $\mathbb{P}$, which are coinitial by \autoref{CapsRefineLevels}, showing that $U$ is again a selector.  Conversely, if $U$ is finite and contains no atoms then we have a level of $\mathbb{P}$ which is disjoint from $U$, showing $U$ is not a selector.
\end{proof}

\subsection{Spectra}\label{ss.Spectrum}

As alluded to above, minimal selectors will form the points of the desired compactum $\mathsf{S}\mathbb{P}$ that we are about to define.  While the definition of $\mathsf{S}\mathbb{P}$ applies to arbitrary $\mathbb{P}$, it is best behaved when $\mathbb{P}$ is an $\omega$-poset, as we will soon see.  For example, minimal selectors are then special kinds of filters, as noted in \autoref{MinimalSelectorsAreFilters} below, just like in many more classical topological dualities.  Under suitable regularity conditions, they can even be characterised as the maximal round filters, as shown below in \autoref{RoundFilterSelectors}.

First let us define the \emph{power space} of $\mathbb{P}$ as the power set $\mathsf{P}\mathbb{P}$ with the topology generated by the subbasis $(p_\mathsf{P}^\in)_{p\in\mathbb{P}}$, where
\[p_\mathsf{P}^\in=\{S\in\mathsf{P}\mathbb{P}:p\in S\}.\]
Equivalently, this is the topology we get from identifying every $S\subseteq\mathbb{P}$ with its characteristic function $\chi_S\in\mathbf{2}^\mathbb{P}$, where $\mathbf{2}=\{0,1\}$ is the Sierpi\'nski space (where $\{1\}$ is open but $\{0\}$ is not) and $\mathbf{2}^\mathbb{P}$ is given the usual product topology.


\begin{dfn}\label{Spectrum}
The \emph{spectrum} is the subspace of $\mathsf{P}\mathbb{P}$ consisting of minimal selectors
\[\mathsf{S}\mathbb{P}=\{S\subseteq\mathbb{P}:S\text{ is a minimal selector}\}.\]
\end{dfn}

So $\mathsf{S}\mathbb{P}$ has a subbasis consisting of the sets $p_{\mathsf{S}}^\in=p_\mathsf{P}^\in\cap\mathsf{S}\mathbb{P}$, for $p\in\mathbb{P}$.  From now on we will usually drop the subscript and just write $p_{\mathsf{S}}^\in$ as $p^\in$.

By \autoref{MinimalSelectors}, minimal selectors are always up-sets so, for all $p,q\in\mathbb{P}$,
\[p\leq q\qquad\Rightarrow\qquad p^\in\subseteq q^\in.\]
We can thus view the sets $(p^\in)_{p\in\mathbb{P}}$ as a more concrete representation of the poset $\mathbb{P}$ as a subbasis of a topological space.  However, this representation may not always be faithful, at least with respect to the original ordering, i.e. it is possible to have $p^\in\subseteq q^\in$ even when $p\nleq q$.  It is even possible for $p^\in$ to be empty, for some $p\in\mathbb{P}$.

For example, consider the graded $\omega$-poset $\mathbb{P}=\omega\times\{0,1\}$ where
\begin{equation*}
(n,\delta)\leq(n',\delta')\qquad\Leftrightarrow\qquad n'\leq n\text{ and }\delta'\leq\delta.
\end{equation*}
The levels of $\mathbb{P}$ are then given by $\mathbb{P}_0=\{(0,0)\}$ and $\mathbb{P}_n=\{(n,0),(n-1,1)\}$, for all $n>0$.  The only minimal selector is then $\omega\times\{0\}$ so $(n,1)^\in=\emptyset$, for all $n\in\omega$.

The representation $p\mapsto p^\in$ will, however, be faithful with respect to $\precsim$, as defined in \eqref{CapOrder}.  In particular, it will be faithful with respect to the original order precisely when $\mathbb{P}$ is cap-determined.

\begin{prp}\label{prp:caporder2}
For any $A,B\subseteq\mathbb{P}$,
\begin{equation}\label{AprecsimB}
A\precsim B\qquad\Leftrightarrow\qquad\bigcup_{a\in A}a^\in\subseteq\bigcup_{b\in B}b^\in.
\end{equation}
\end{prp}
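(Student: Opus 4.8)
The plan is to prove the two implications separately, using throughout the characterisation that $S$ is a selector precisely when $\mathbb{P}\setminus S$ is not a cap, together with the observation that $\bigcup_{a\in A}a^\in$ is exactly the set of minimal selectors meeting $A$ (and likewise for $B$). Thus the right-hand side of \eqref{AprecsimB} says precisely that every minimal selector meeting $A$ also meets $B$, and it is this reformulation that I would work with.

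For the forward implication, suppose $A\precsim B$ and let $S$ be a minimal selector with some $a\in S\cap A$. By minimality of $S$ (as in the proof of \autoref{MinimalSelectors}), $S\setminus\{a\}$ is not a selector, so its complement $F\cup\{a\}$ is a cap, where $F=\mathbb{P}\setminus S$. Since $a\in A$ and supersets of caps are caps, $F\cup A$ is then a cap, so $A\precsim B$ yields that $F\cup B$ is a cap. Were $S$ disjoint from $B$, we would have $B\subseteq F$ and hence $F\cup B=F=\mathbb{P}\setminus S$, which is not a cap because $S$ is a selector — a contradiction. Hence $S$ meets $B$, giving the desired inclusion.

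For the reverse implication I would argue contrapositively. If $A\not\precsim B$, fix $F$ with $F\cup A$ a cap but $F\cup B$ not a cap, and set $T=\mathbb{P}\setminus(F\cup B)$; this is a selector disjoint from both $F$ and $B$. Using \autoref{SelectorsContainMinimalSelectors}, choose a minimal selector $S\subseteq T$. Then $S$ is disjoint from $B$, so $S\notin\bigcup_{b\in B}b^\in$. The point to verify is that $S$ nevertheless meets $A$: since $S$ is a selector and $F\cup A$ is a cap, $S$ meets $F\cup A$, but $S\subseteq T$ is disjoint from $F$, forcing $S$ to meet $A$; thus $S\in\bigcup_{a\in A}a^\in$, which witnesses the failure of the inclusion.

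The one genuinely delicate point is exactly this last step: after shrinking the selector $T$ to a minimal selector inside it, one must be sure not to lose contact with $A$. The resolution is that the choice $T=\mathbb{P}\setminus(F\cup B)$ makes \emph{every} selector contained in $T$ automatically disjoint from $F$, so meeting the cap $F\cup A$ forces it to meet $A$; no control over which minimal selector is picked is needed. I expect the remainder to be routine bookkeeping with the complement-and-cap dictionary, and I would not need the reduction \eqref{bprecC} to single elements, as the argument handles the sets $A$ and $B$ directly.
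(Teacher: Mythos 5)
Your proof is correct and takes essentially the same route as the paper: your reverse direction is the paper's argument verbatim (with the set $A$ in place of the singleton $\{a\}$), and your forward direction merely recasts the paper's minimality cap $C$ with $C\cap S=\{a\}$ in the equivalent complement form, namely that $(\mathbb{P}\setminus S)\cup\{a\}$ is a cap. Your observation that the reduction \eqref{bprecC} to singletons can be skipped is sound—the forward argument only ever uses a single element $a\in S\cap A$—but this is a cosmetic streamlining rather than a genuinely different approach.
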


\begin{proof}
By \eqref{bprecC}, it suffices to consider a singleton $A=\{a\}$.

Now take a minimal selector $S\in a^\in$.  Minimality means we have a cap $C\in\mathsf{C}\mathbb{P}$ such that $C\cap S=\{a\}$.  If $a\precsim B$ then it follows that $B\cup(C\setminus\{a\})$ is a cap and hence $B\cap S=(B\cup(C\setminus\{a\}))\cap S\neq\emptyset$, as $S$ is a selector, i.e. $S\in\bigcup_{b\in B}b^\in$.  This shows that $a^\in\subseteq\bigcup_{b\in B}b^\in$.

Conversely, if $a\not\precsim B$ then we have $F\subseteq\mathbb{P}$ such that $\{a\}\cup F$ is a cap but $B\cup F$ is not.  This means $\mathbb{P}\setminus(B\cup F)$ is a selector and hence contains a minimal selector $S$, by \autoref{SelectorsContainMinimalSelectors}.  As $\{a\}\cup F$ is a cap and $F$ is disjoint from $S$, it follows that $a\in S$ so $S\in a^\in\setminus\bigcup_{b\in B}b^\in$, as $B$ is disjoint from $S$, i.e. $S$ witnesses $a^\in\nsubseteq\bigcup_{b\in B}b^\in$.
\end{proof}

For any $C\subseteq\mathbb{P}$, we denote the corresponding family of open sets in $\mathsf{S}\mathbb{P}$ by
\[C_\mathsf{S}=\{c^\in:c\in C\}.\]

\begin{cor}\label{CapDetermined}
The map $p\mapsto p^\in$ is an order isomorphism from $\mathbb{P}$ onto the canonical subbasis $\mathbb{P}_\mathsf{S}$ of the spectrum precisely when $\mathbb{P}$ is cap-determined.
\end{cor}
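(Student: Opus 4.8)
The plan is to reduce everything to the singleton case of \autoref{prp:caporder2}. Applying \eqref{AprecsimB} to $A=\{p\}$ and $B=\{q\}$ yields, for all $p,q\in\mathbb{P}$ and any poset whatsoever, the key equivalence
\[
p\precsim q\qquad\Leftrightarrow\qquad p^\in\subseteq q^\in.
\]
Since $p\mapsto p^\in$ is surjective onto $\mathbb{P}_\mathsf{S}$ by definition, I would observe that the map is an order isomorphism precisely when $p\leq q\Leftrightarrow p^\in\subseteq q^\in$ holds for all $p,q$, and then match this against cap-determinacy.

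For the direction assuming cap-determinacy, I would first recall from the discussion after \eqref{CapOrder} that $\precsim$ contains refinement, and hence the original order, on singletons; thus $p\leq q$ always implies $p\precsim q$ and so $p^\in\subseteq q^\in$ by the displayed equivalence, with no hypothesis needed. For the reverse implication I would invoke cap-determinacy: $p^\in\subseteq q^\in$ gives $p\precsim q$, which gives $p\leq q$. This establishes $p\leq q\Leftrightarrow p^\in\subseteq q^\in$, and antisymmetry of $\leq$ then makes the map injective (from $p^\in=q^\in$ we get $p\leq q\leq p$, so $p=q$), so together with surjectivity onto $\mathbb{P}_\mathsf{S}$ the map is an order isomorphism.

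For the converse, assuming the map is an order isomorphism, I would use that it reflects order, i.e. $p^\in\subseteq q^\in$ implies $p\leq q$. Then any $p\precsim q$ gives $p^\in\subseteq q^\in$ by the displayed equivalence and hence $p\leq q$, which is exactly the definition of cap-determined.

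I expect no serious obstacle here, as the statement is essentially immediate once \autoref{prp:caporder2} is specialised to singletons. The only points meriting care are noting that $\leq$ sits inside $\precsim$ on singletons (so that order is preserved in one direction unconditionally) and observing that injectivity of $p\mapsto p^\in$ need not be argued separately, but follows from antisymmetry of the poset order.
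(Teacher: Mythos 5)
Your proposal is correct and follows essentially the same route as the paper: both specialise \autoref{prp:caporder2} to singletons to get $p\precsim q\Leftrightarrow p^\in\subseteq q^\in$, note that $\leq$ is contained in $\precsim$ unconditionally, and observe that the remaining implication $p\precsim q\Rightarrow p\leq q$ is precisely the definition of cap-determined. Your explicit remarks on injectivity via antisymmetry and surjectivity onto $\mathbb{P}_\mathsf{S}$ merely spell out details the paper leaves implicit.
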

\begin{proof}
For any $p,q\in\mathbb{P}$ we have
\[p\leq q\qquad\Rightarrow\qquad p\precsim q\qquad\Leftrightarrow\qquad p^\in\subseteq q^\in\]
by \eqref{AprecsimB} and former observations.
The remaining implication $p \leq q \Leftarrow p \precsim q$ is equivalent by definition to $\PP$ being cap-determined.
\end{proof}

\autoref{prp:caporder2} yields the first fundamental properties of the spectrum.

\begin{prp}\label{SpectrumCompactT1}
The spectrum is a compact $\mathsf{T}_1$ space. Moreover, $C\subseteq\mathbb{P}$ is a cap precisely when the corresponding subbasic sets $C_\mathsf{S}$ cover the whole spectrum.
\end{prp}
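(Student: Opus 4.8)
The plan is to establish the cover characterisation first, since both compactness and the separation axiom are then easy to extract, and to treat $\mathsf{T}_1$ by a separate direct argument at the end.

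For the characterisation, that $C$ is a cap precisely when the subbasic sets $C_\mathsf{S}$ cover $\mathsf{S}\mathbb{P}$, I would argue both implications straight from the definition of selector, bypassing the cap-order machinery. If $C$ is a cap, then by definition every selector — in particular every minimal selector $S\in\mathsf{S}\mathbb{P}$ — overlaps $C$, so $S\in c^\in$ for some $c\in C$; hence $\mathsf{S}\mathbb{P}\subseteq\bigcup C_\mathsf{S}$, and since each $c^\in\subseteq\mathsf{S}\mathbb{P}$ the reverse inclusion is automatic, giving $\bigcup C_\mathsf{S}=\mathsf{S}\mathbb{P}$. For the converse I would argue contrapositively: if $C$ is not a cap then $\mathbb{P}\setminus C$ is a selector (recall $S$ is a selector exactly when $\mathbb{P}\setminus S$ is not a cap, applied to $S=\mathbb{P}\setminus C$), so by \autoref{SelectorsContainMinimalSelectors} it contains a minimal selector $S\subseteq\mathbb{P}\setminus C$. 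This $S$ is a point of $\mathsf{S}\mathbb{P}$ with $S\cap C=\emptyset$, hence $S\notin\bigcup C_\mathsf{S}$ and the sets $C_\mathsf{S}$ fail to cover. (Alternatively the same equivalence drops out of \eqref{MaximalCaps} combined with \autoref{prp:caporder2}, after noting $\bigcup_{p\in\mathbb{P}}p^\in=\mathsf{S}\mathbb{P}$, but the direct route is cleaner and avoids degenerate cases.)

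For compactness I would invoke the Alexander subbase lemma applied to the canonical subbasis $(p^\in)_{p\in\mathbb{P}}$. A cover of $\mathsf{S}\mathbb{P}$ by subbasic sets is exactly a family $C_\mathsf{S}$ with $\bigcup C_\mathsf{S}=\mathsf{S}\mathbb{P}$, which by the characterisation just established means $C$ is a cap. Every cap contains a finite subcap — if a band $B$ refines $C$, choose for each $b\in B$ an upper bound $c_b\in C$, so that $\{c_b:b\in B\}$ is a finite subset of $C$ still refined by $B$ — and this finite subcap again covers $\mathsf{S}\mathbb{P}$ by the characterisation. Thus every subbasic cover admits a finite subcover and $\mathsf{S}\mathbb{P}$ is compact. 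The main subtlety of the whole argument lives here: the ambient power space $\mathbf{2}^\mathbb{P}$ is compact but not Hausdorff, so one cannot simply argue that $\mathsf{S}\mathbb{P}$ is a closed subspace, and the reduction to subbasic covers via Alexander's lemma is what makes the cover characterisation do the work.

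Finally, for $\mathsf{T}_1$, I would first note that two distinct minimal selectors are incomparable: if $S\subsetneq T$ were both minimal selectors, then $S$ would be a strictly smaller selector, contradicting minimality of $T$. Hence for distinct $S,T\in\mathsf{S}\mathbb{P}$ there is some $p\in S\setminus T$, so $p^\in$ is an open neighbourhood of $S$ avoiding $T$; symmetrically some $q\in T\setminus S$ yields $q^\in$ separating $T$ from $S$. Since any two distinct points can thus be separated in both directions, points are closed and $\mathsf{S}\mathbb{P}$ is $\mathsf{T}_1$.
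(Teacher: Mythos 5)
Your proof is correct. The compactness and $\mathsf{T}_1$ parts coincide with the paper's argument (Alexander subbase lemma plus the finite-subcap extraction, which you rightly spell out exactly as in the proof of \autoref{SelectorsContainMinimalSelectors}; and incomparability of distinct minimal selectors for $\mathsf{T}_1$), but your treatment of the covering characterisation takes a genuinely more elementary route. The paper invokes the cap-order machinery: by \eqref{MaximalCaps}, $C\in\mathsf{C}\mathbb{P}$ precisely when $\mathbb{P}\precsim C$, which by \eqref{AprecsimB} translates into $\bigcup_{p\in\mathbb{P}}p^\in\subseteq\bigcup_{c\in C}c^\in$, i.e. into $C_\mathsf{S}$ covering the spectrum. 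You instead argue straight from the definitions: caps meet every minimal selector, while if $C$ is not a cap then $\mathbb{P}\setminus C$ is a selector, and \autoref{SelectorsContainMinimalSelectors} yields a point of $\mathsf{S}\mathbb{P}$ disjoint from $C$. In effect you have inlined the singleton case of \autoref{prp:caporder2} specialised to $A=\mathbb{P}$, $B=C$ --- the paper's proof of that proposition uses exactly your complement-plus-minimal-selector move --- so nothing is lost in rigour, and your version is self-contained. Your parenthetical about degenerate cases is also justified: the alternative route needs $\bigcup_{p\in\mathbb{P}}p^\in=\mathsf{S}\mathbb{P}$ (equivalently, that $\emptyset$ is not a minimal selector), and the proof of \eqref{MaximalCaps} itself requires $\mathbb{P}$ to be a cap; both hold precisely when $\mathbb{P}$ admits a band, which is automatic for $\omega$-posets (levels are bands) but fails for, say, an infinite antichain, where $\mathsf{S}\mathbb{P}=\{\emptyset\}$ --- and this section fixes an arbitrary poset. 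Your contrapositive argument is insensitive to this corner case, which is a genuine, if small, advantage of the direct route.
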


\begin{proof}
Given any distinct $S,T\in\mathsf{S}\mathbb{P}$, minimality implies that we have $s\in S\setminus T$ and $t\in T\setminus S$. This means $S\in s^\in\not\ni T$ and $T\in t^\in\not\ni S$, showing that $\mathsf{S}\mathbb{P}$ is $\mathsf{T}_1$.

By \eqref{MaximalCaps}, $C\subseteq\mathbb{P}$ is a cap precisely when $\mathbb{P}\precsim C$, which is equivalent to saying $C_\mathsf{S}$ covers the entire spectrum, by \eqref{AprecsimB}.  As every cap contains a finite subcap, $X$ is compact, by the Alexander--Wallman subbasis lemma (see \cite{Wallman1938} or \cite{Alexander1939}).
\end{proof}

The spectrum can also recover a space from the order structure of a cap-basis.

\begin{prp}\label{SpaceRecovery}
If $\mathbb{P}$ is a cap-basis of a $\mathsf{T}_1$ space $X$ then
\[x\mapsto x^\in=\{p\in\mathbb{P}:x\in p\}\]
is a homeomorphism from $X$ onto $\mathsf{S}\mathbb{P}$.
\end{prp}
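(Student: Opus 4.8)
The plan is to show that the map $\phi\maps x\mapsto x^\in$ is a well-defined bijection onto $\mathsf{S}\mathbb{P}$ which is simultaneously continuous and open, hence a homeomorphism. The whole argument rests on the single translation afforded by the cap-basis hypothesis: a subset $C\subseteq\mathbb{P}$ is a cap exactly when it covers $X$.

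First I would check that each $x^\in$ genuinely lies in $\mathsf{S}\mathbb{P}$, i.e.\ is a minimal selector. It is an up-set, since $x\in p$ and $p\subseteq q$ force $x\in q$. It is a selector because any cap $C$ is a cover, so $x\in\bigcup C$ puts some $c\in C$ into $x^\in\cap C$. For \emph{minimality} I would fix $p\in x^\in$ and exploit that $X$ is $\mathsf{T}_1$, so $X\setminus\{x\}$ is open: covering it by basic sets from $\mathbb{P}$ avoiding $x$ and then adjoining $p$ yields a cover $C$ with $x^\in\cap C=\{p\}$. This witnesses that $x^\in\setminus\{p\}$ misses a cap, so no proper subset of $x^\in$ is a selector.

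Next I would establish bijectivity. Injectivity is immediate from $\mathsf{T}_1$-ness together with the basis property: distinct $x,y$ are separated by some $p\in\mathbb{P}$ with $x\in p\not\ni y$, so $p\in x^\in\setminus y^\in$. For surjectivity, take any minimal selector $S$; since $S$ is a selector, $\mathbb{P}\setminus S$ is not a cap, hence not a cover, so some $x\in X$ lies in no member of $\mathbb{P}\setminus S$, which says precisely that $x^\in\subseteq S$. As $x^\in$ is itself a selector, minimality of $S$ forces $x^\in=S$, so $S$ is in the range of $\phi$.

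Finally I would verify the topological conditions using the canonical subbasis $(p^\in)_{p\in\mathbb{P}}$ of $\mathsf{S}\mathbb{P}$. On the one hand $\phi\preim{p^\in}=\{x\in X:p\in x^\in\}=\{x\in X:x\in p\}=p$ is basic open, giving continuity. On the other hand I claim $\phi\im{p}=p^\in$: the inclusion $\subseteq$ is clear, and conversely any $S\in p^\in$ equals $x^\in$ for some $x$ by surjectivity, whence $p\in x^\in$ means $x\in p$ and $S\in\phi\im{p}$. Since the sets $p$ form a basis of $X$, this shows $\phi$ is open, and a continuous open bijection is a homeomorphism. The only genuinely delicate step is the minimality of $x^\in$, where I would take care to build the separating cover honestly from $\mathsf{T}_1$-ness and the basis property; everything else is a routine transcription between covers of $X$ and caps of $\mathbb{P}$, needing no appeal to compactness.
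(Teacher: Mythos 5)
Your proposal is correct and follows essentially the same route as the paper: you verify $x^\in$ is a minimal selector by building a cap meeting $x^\in$ only in $\{p\}$ from $\mathsf{T}_1$-ness and the basis property (the paper uses the canonical cap $\{p\}\cup(\mathbb{P}\setminus x^\in)$, a cosmetic difference), establish bijectivity exactly as the paper does, and conclude via $\phi\preim{p^\in}=p$ and $\phi\im{p}=p^\in$, which is just the paper's observation that the map carries the (sub)basis $\mathbb{P}$ onto the subbasis $\mathbb{P}_\mathsf{S}$ spelled out as continuity plus openness. No gaps; your closing remark that compactness is not needed is also consistent with the paper's proof.
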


\begin{proof}
Take any $x\in X$. By assumption, any cap $C\in\mathsf{C}\mathbb{P}$ is a cover of $X$ and hence we have some $c\in C$ containing $x$, i.e. $c\in x^\in\cap C$. This shows that $x^\in$ is a selector. Now take any $p\in x^\in$. For any $y\in X\setminus p$, we have some $q\in y^\in\setminus x^\in$, as $X$ is $\mathsf{T}_1$. This means $C=\{p\}\cup(\mathbb{P}\setminus x^\in)$ is a cover of $X$ and hence a cap of $\mathbb{P}$ with $C\cap x^\in=\{p\}$. Thus $x^\in$ is a minimal selector.

On the other hand, for any selector $S\in\mathsf{S}\mathbb{P}$, we know that $\mathbb{P}\setminus S$ can not cover $X$ (otherwise it would be cap with $S\cap(\mathbb{P}\setminus S)=\emptyset$, a contradiction). So we can pick $x\in X$ not covered by $\mathbb{P}\setminus S$, which means $x^\in\subseteq S$. If $S$ is a minimal selector then this implies $x^\in= S$. This shows that $\mathsf{S}\mathbb{P}=\{x^\in:x\in X\}$. Also $x\neq y$ implies $x^\in\neq y^\in$, as $X$ is $\mathsf{T}_1$, so $x\mapsto x^\in$ is a bijection from $X$ onto $\mathsf{S}\mathbb{P}$.

Finally, note that $x\mapsto x^\in$ maps each $p\in\mathbb{P}$ onto $p^\in$, as
\[x\in p\qquad\Leftrightarrow\qquad p\in x^\in\qquad\Leftrightarrow\qquad x^\in\in p^\in.\]
As $\mathbb{P}$ is a (sub)basis of $X$ and $(p^\in)_{p\in\mathbb{P}}$ is a subbasis of the spectrum $\mathsf{S}\mathbb{P}$, this shows that the map $x\mapsto x^\in$ is a homeomorphism from $X$ onto $\mathsf{S}\mathbb{P}$.
\end{proof}

Spectra thus yield a large class of spaces.

\begin{cor}\label{SpacesFromGradedPosets}
Every second countable compact $\mathsf{T}_1$ space arises as the spectrum of some predetermined branching graded $\omega$-poset.
\end{cor}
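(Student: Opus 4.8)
The plan is to assemble this corollary directly from the basis-existence result of the previous subsection together with the spectrum recovery result, so that essentially no new work is required. First I would invoke \autoref{PredeterminedGradedBasis}, which furnishes, for any second countable $\mathsf{T}_1$ compactum $X$, a graded $\omega$-band-basis $\mathbb{P}$ of $X$ ordered by inclusion $\subseteq$. The advantage of starting here is that this single theorem already delivers two of the four adjectives we want outright: $\mathbb{P}$ is a graded $\omega$-poset by construction.

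Next I would extract the two remaining properties one at a time. Since $\mathbb{P}$ is an $\omega$-band-basis, \autoref{BandCapBases} immediately yields that $\mathbb{P}$ is \emph{predetermined} and, at the same time, an $\omega$-\emph{cap}-basis of $X$. Branching then comes for free: as observed just after \autoref{CapBasis}, every element of a cap-basis of a non-empty space is a non-empty open set (otherwise $\emptyset$ would be a minimum and hence a band failing to cover $X$), so \autoref{BranchingBasis} applies and shows $\mathbb{P}$ is branching. Collecting these observations, $\mathbb{P}$ is a predetermined branching graded $\omega$-poset which is moreover a cap-basis of $X$.

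Finally, because $\mathbb{P}$ is a cap-basis of the $\mathsf{T}_1$ space $X$, \autoref{SpaceRecovery} provides a homeomorphism $x\mapsto x^\in$ from $X$ onto $\mathsf{S}\mathbb{P}$. Thus $X$ is homeomorphic to the spectrum of a poset of exactly the prescribed type, completing the argument. I do not anticipate any genuine obstacle: the whole mathematical content has been front-loaded into \autoref{PredeterminedGradedBasis} and \autoref{SpaceRecovery}, and the step chaining cap-basis to non-empty elements to branching is routine. The only point meriting a moment's care is the degenerate case $X=\emptyset$, handled by taking $\mathbb{P}=\emptyset$, for which $\mathsf{S}\mathbb{P}=\emptyset$ as well.
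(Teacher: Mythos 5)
Your proof is correct and follows essentially the same route as the paper: obtain a graded $\omega$-band-basis from \autoref{PredeterminedGradedBasis}, deduce predeterminedness via \autoref{BandCapBases}, branching via \autoref{BranchingBasis}, and conclude with \autoref{SpaceRecovery} (indeed, your citations are arranged in the correct logical order, whereas the paper's own one-line proof swaps the roles of \autoref{BandCapBases} and \autoref{PredeterminedGradedBasis}). Your extra care about non-emptiness of cap-basis elements and the degenerate case $X=\emptyset$ is harmless and slightly more thorough than the paper's version.
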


\begin{proof}
By \autoref{BandCapBases}, any second countable compact $\mathsf{T}_1$ space $X$ has a graded $\omega$-band-basis $\mathbb{P}$, which is predetermined, by \autoref{PredeterminedGradedBasis}, and branching, by \autoref{BranchingBasis}. 
 Moreover, its spectrum $\mathsf{S}\mathbb{P}$ is homeomorphic to $X$, by \autoref{SpaceRecovery}.
\end{proof}

\begin{rmk}
    Any graded $\omega$-poset is determined by the order relation between consecutive levels.  By \autoref{SpacesFromGradedPosets}, we should therefore be able to construct any second countable compact $\mathsf{T}_1$ space by recursively defining relations between finite sets $\mathbb{P}_0,\mathbb{P}_1,\ldots$ and then looking at the spectrum of the resulting poset $\mathbb{P}=\bigcup_{n\in\omega}\mathbb{P}_n$. The exact nature of the construction will of course depend on the space we wish to construct, as we will soon see in the examples of the next subsection.  In future work, we will examine more examples constructed within the framework of Fra\"iss\'e theory as it applies to certain subcategories of relations between graphs.
\end{rmk}

In \autoref{GradedLemma}, we saw how graded posets arise from consolidations.  Conversely, levels of graded posets correspond to consolidations in the spectrum.

Note that $\mathbb{P}_{n\mathsf{S}}$ below refers to the $_\mathsf{S}$ operation applied to the $n^\mathrm{th}$ level of $\mathbb{P}$, i.e.
\[\mathbb{P}_{n\mathsf{S}}=\{p^\in:p\in\mathbb{P}_n\}.\]

\begin{prp}
    If $\mathbb{P}$ is a graded $\omega$-poset then $\mathbb{P}_{m\mathsf{S}}$ consolidates $\mathbb{P}_{n\mathsf{S}}$ when $m\leq n$.
\end{prp}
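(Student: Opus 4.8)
The plan is to verify the two defining conditions of consolidation separately, translating both into statements about the cap-order $\precsim$ via \autoref{prp:caporder2}. Recall that $\mathbb{P}_{m\mathsf{S}}$ consolidating $\mathbb{P}_{n\mathsf{S}}$ means (i) $\mathbb{P}_{n\mathsf{S}}$ refines $\mathbb{P}_{m\mathsf{S}}$ and (ii) each $q^\in$ with $q\in\mathbb{P}_m$ is the union of those $p^\in$ with $p\in\mathbb{P}_n$ and $p^\in\subseteq q^\in$. For (i) I would first observe that $\mathbb{P}_n\leq\mathbb{P}_m$ whenever $m\leq n$: given $p\in\mathbb{P}_n$, the band $\mathbb{P}_m$ contains some element $b$ comparable to $p$, and $b$ cannot lie strictly below $p$ (it would then witness that $p$ is not minimal in $\mathbb{P}^n$, since $b\in\mathbb{P}^n$), so $p\leq b$ for some $b\in\mathbb{P}_m$. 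As $p\leq b$ gives $p^\in\subseteq b^\in$ by \autoref{MinimalSelectors}, this is exactly refinement of $\mathbb{P}_{n\mathsf{S}}$ over $\mathbb{P}_{m\mathsf{S}}$.

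For (ii), fix $q\in\mathbb{P}_m$ and consider the subset $q^\geq\cap\mathbb{P}_n=\{p\in\mathbb{P}_n:p\leq q\}$. Every such $p$ satisfies $p^\in\subseteq q^\in$, so by \eqref{AprecsimB} the desired equality $q^\in=\bigcup\{p^\in:p\in q^\geq\cap\mathbb{P}_n\}$ is equivalent to the single cap-order relation $q\precsim q^\geq\cap\mathbb{P}_n$, the reverse inequality $q^\geq\cap\mathbb{P}_n\precsim q$ being immediate from $p\leq q\Rightarrow p\precsim q$. Unwinding the definition \eqref{CapOrder}, this amounts to showing that for every finite $F\subseteq\mathbb{P}$ with $F\cup\{q\}\in\mathsf{C}\mathbb{P}$ we also have $F\cup(q^\geq\cap\mathbb{P}_n)\in\mathsf{C}\mathbb{P}$. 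By \autoref{CapsRefineLevels} I would pick a level $\mathbb{P}_k$ with $k\geq n$ refining the cap $F\cup\{q\}$ and then argue that $\mathbb{P}_k$ also refines $F\cup(q^\geq\cap\mathbb{P}_n)$: for $r\in\mathbb{P}_k$ either $r\leq f$ for some $f\in F$, in which case we are done, or else $r\leq q$, and in the latter case it remains to produce $p\in q^\geq\cap\mathbb{P}_n$ with $r\leq p$.

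This last step is where gradedness is essential and is the main obstacle. The point to establish is the intermediate-level lemma: if $r\leq q$ with $\mathsf{r}(q)=m\leq n$ and $r\in\mathbb{P}_k$ for $k\geq n$, then there is $p\in\mathbb{P}_n$ with $r\leq p\leq q$. I would split into the case where $r$ is an atom of rank ${<}\,n$, where $r\in\mathbb{P}_n$ already and we may take $p=r$, and the case $\mathsf{r}(r)\geq n$, where for $r<q$ gradedness yields $\mathsf{r}[(r,q)]=(\mathsf{r}(q),\mathsf{r}(r))\ni n$ and hence an element $p$ of rank exactly $n$ with $r<p<q$, so $p\in\mathbb{P}_n$ (the boundary cases $n=\mathsf{r}(q)$ and $n=\mathsf{r}(r)$ giving $p=q$ and $p=r$). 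The only subtlety is recalling that a non-atomic member of $\mathbb{P}_k$ has rank exactly $k\geq n$, so these two cases genuinely exhaust $\mathbb{P}_k$; once the lemma is in hand, $\mathbb{P}_k\leq F\cup(q^\geq\cap\mathbb{P}_n)$ follows, the latter is a cap, and the argument is complete.
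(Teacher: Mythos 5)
Your proof is correct, and it takes a genuinely different route from the paper's. The paper argues pointwise with minimal selectors: given $S\in q^\in$ for $q\in\mathbb{P}_m$, minimality of $S$ yields a cap $C$ with $C\cap S=\{q\}$, choosing a level $\mathbb{P}_k\leq C$ with $k\geq n$ forces $\mathbb{P}_k\cap S\subseteq q^\geq$, and gradedness then interpolates an element of $\mathbb{P}_n$ between a point of $\mathbb{P}_k\cap S$ and $q$ \emph{inside the filter $S$ itself}, giving $S\in r^\in\subseteq q^\in$ directly. You instead transfer the whole statement to the cap-order via \autoref{prp:caporder2} and prove the purely combinatorial fact $q\precsim q^\geq\cap\mathbb{P}_n$ --- that $q$ may be replaced by $q^\geq\cap\mathbb{P}_n$ in any cap, witnessed by a refining level $\mathbb{P}_k$ with $k\geq n$ as in \autoref{CapsRefineLevels} --- which \eqref{AprecsimB} converts back into the required equality $q^\in=\bigcup\{p^\in:p\in q^\geq\cap\mathbb{P}_n\}$. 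The gradedness input is the same interpolation trick, but applied to arbitrary elements of a deep level below $q$ rather than to elements of a selector, and your intermediate-level lemma with its explicit boundary cases (atoms persisting into later levels, $n=m$, $n=\mathsf{r}(r)$, and the observation that non-atomic members of $\mathbb{P}_k$ in a graded poset have rank exactly $k$) is in fact more scrupulous than the paper's one-line ``$r\in\mathbb{P}_n\cap(q,p)$'', which silently assumes a strict inequality and a nonempty interval. What your route buys is a selector-free, reusable combinatorial statement about levels and caps; what it costs is indirection, since \autoref{prp:caporder2} is itself proved via minimal selectors, so the selector theory is not avoided but merely encapsulated, and the paper's direct argument ends up shorter.
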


\begin{proof}
    If $\mathbb{P}$ is an $\omega$-poset and $m\leq n$ then certainly $\mathbb{P}_n\leq\mathbb{P}_m$ and hence $\mathbb{P}_{n\mathsf{S}}\leq\mathbb{P}_{m\mathsf{S}}$.  Now take any $p\in\mathbb{P}_m$.  For any $S\in p^\in$, minimality yields $C\in\mathsf{C}\mathbb{P}$ with $C\cap S=\{p\}$.  Then we have $k\geq n$ with $\mathbb{P}_k\leq C$ and hence $\mathbb{P}_k\cap S\subseteq p^\geq$.  As $\mathbb{P}$ is graded, for any $q\in\mathbb{P}_k\cap S$, we have $r\in\mathbb{P}_n\cap(q,p)\subseteq q^\leq\subseteq S$ and hence $S\in r^\in\subseteq p^\in$.  Thus $p^\in=\bigcup\{r^\in:r\in\mathbb{P}_n\cap p^\geq\}$, showing that $\mathbb{P}_{m\mathsf{S}}$ consolidates $\mathbb{P}_{n\mathsf{S}}$.
\end{proof}

Before moving on, however, let us make a couple more observations about spectra arising from general $\omega$-posets.  The first thing to note is that every element $S$ of the spectrum of an $\omega$-poset is not just an up-set but even a \emph{filter}, i.e.
\begin{equation*}
\tag{Filter}p,q\in S\qquad\Leftrightarrow\qquad\exists r\in S\ (r\leq p,q)
\end{equation*}
(note $\Rightarrow$ means $S$ is down-directed while $\Leftarrow$ just means $S$ is an up-set).

\begin{prp}\label{MinimalSelectorsAreFilters}
If $\mathbb{P}$ is an $\omega$-poset then every $S\in\mathsf{S}\mathbb{P}$ is a filter.
\end{prp}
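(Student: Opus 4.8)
The plan is to lean on the already-established fact (\autoref{MinimalSelectors}) that every minimal selector $S$ is an up-set, which immediately gives the $\Leftarrow$ direction of the filter condition (being an up-set is exactly $p,q\in S\Leftarrow\exists r\in S\ (r\leq p,q)$). It thus remains only to verify that $S$ is down-directed, i.e. that any $p,q\in S$ admit a common lower bound lying in $S$.

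First I would exploit the minimality of $S$: since neither $S\setminus\{p\}$ nor $S\setminus\{q\}$ is a selector, there are caps $C_p,C_q\in\mathsf{C}\mathbb{P}$ with $C_p\cap S=\{p\}$ and $C_q\cap S=\{q\}$. The $\omega$-poset hypothesis now enters precisely to produce a single level refining both caps: by \autoref{CapsRefineLevels} the levels are coinitial among the bands, so the caps of $\mathbb{P}$ are downwards directed under refinement. Hence there is a level $\mathbb{P}_n$ with $\mathbb{P}_n\leq C_p$ and $\mathbb{P}_n\leq C_q$ simultaneously (concretely, one finds a level refining each of $C_p,C_q$ and then passes to any sufficiently high level, using that a higher level refines a lower one by a routine rank argument).

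With such an $n$ fixed, the conclusion is quick. Since $\mathbb{P}_n$ is a band, hence a cap, and $S$ is a selector, there is some $r\in S\cap\mathbb{P}_n$. The refinement $\mathbb{P}_n\leq C_p$ supplies an upper bound $c\in C_p$ with $r\leq c$; because $S$ is an up-set and $r\in S$, we get $c\in S$, so $c\in C_p\cap S=\{p\}$, forcing $c=p$ and therefore $r\leq p$. The identical argument applied to $C_q$ gives $r\leq q$. Thus $r\in S$ is the desired common lower bound, and $S$ is a filter.

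The argument is essentially routine once the right objects are aligned; the only point needing care is the step producing a common refining level, where the finiteness of the levels (so that \autoref{CapsRefineLevels} and the directedness of caps apply) and the up-set property of $S$ (so that an upper bound of a selected element is itself selected) are both genuinely used. I expect no serious obstacle beyond marshalling these citations correctly.
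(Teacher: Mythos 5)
Your proof is correct and follows essentially the same route as the paper: minimality yields caps meeting $S$ exactly in $\{p\}$ and $\{q\}$, \autoref{CapsRefineLevels} (plus the linear ordering of levels under refinement) gives a common refining level, the selector property produces $r\in S$ in that level, and the up-set property from \autoref{MinimalSelectors} forces the upper bounds in the two caps to be $p$ and $q$, giving $r\leq p,q$. No gaps.
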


\begin{proof}
Assume $\mathbb{P}$ is an $\omega$-poset and take a minimal selector $S\in\mathsf{S}\mathbb{P}$. For any $q,r\in S$, we have caps $C,D\in\mathsf{C}\mathbb{P}$ such that $C\cap S=\{q\}$ and $D\cap S=\{r\}$. By \autoref{CapsRefineLevels}, $C$ and $D$ are refined by levels of $\mathbb{P}$. As the levels are linearly ordered by refinement, we can find a single level $L\in\mathsf{C}\mathbb{P}$ which refines both $C$ and $D$. As $S$ is a selector, we can take $s\in S\cap L$. As $L$ refines $C$ and $D$, we have $c\in C$ and $d\in D$ such that $s\leq c,d$ and hence $c,d\in s^\leq\subseteq S$. But $q$ and $r$ are the only elements of $S$ in $C$ and $D$ respectively so $q=c\geq s$ and $r=d\geq s$, which shows that $S$ is down-directed. By \autoref{MinimalSelectors}, $S$ is also an up-set.
\end{proof}

\begin{cor}\label{BasisOrderIsomorphism}
If $\mathbb{P}$ is an $\omega$-poset then $\mathbb{P}_\mathsf{S}$ is a basis for $\mathsf{S}\mathbb{P}$.
\end{cor}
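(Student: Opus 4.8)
The plan is to upgrade the canonical subbasis $\mathbb{P}_\mathsf{S}=(p^\in)_{p\in\mathbb{P}}$ to a genuine basis by showing that every finite intersection of subbasic sets is again a union of subbasic sets. Since $\mathbb{P}_\mathsf{S}$ is a subbasis of $\mathsf{S}\mathbb{P}$ by construction, this is exactly what is required, as a topology generated by a subbasis has, as a basis, the finite intersections of subbasic sets.

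The key ingredient is \autoref{MinimalSelectorsAreFilters}: as $\mathbb{P}$ is an $\omega$-poset, every $S\in\mathsf{S}\mathbb{P}$ is a filter, and in particular is down-directed. First I would take any finite $F\subseteq\mathbb{P}$ and consider the finite intersection $\bigcap_{p\in F}p^\in$. For any point $S$ of this intersection we have $F\subseteq S$, and down-directedness of the filter $S$ --- extended from pairs to arbitrary finite subsets by a routine induction --- yields a common lower bound $r\in S$ with $r\leq p$ for all $p\in F$. Since minimal selectors are up-sets (\autoref{MinimalSelectors}), $r\leq p$ gives $r^\in\subseteq p^\in$, so $r^\in\subseteq\bigcap_{p\in F}p^\in$, while $r\in S$ gives $S\in r^\in$. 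Thus each point of the intersection has a subbasic neighbourhood contained in the intersection, which exhibits $\bigcap_{p\in F}p^\in$ as a union of members of $\mathbb{P}_\mathsf{S}$.

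It remains to treat the degenerate case of the empty intersection, namely the whole space. Here I would observe that (assuming $\mathbb{P}\neq\emptyset$, else $\mathsf{S}\mathbb{P}=\emptyset$ and there is nothing to prove) the level $\mathbb{P}_0$ is a finite band, as noted in the proof of \autoref{CapsRefineLevels}, and hence a cap; so every selector must overlap it. In particular every $S\in\mathsf{S}\mathbb{P}$ is nonempty and therefore lies in some $r^\in$, whence $\mathsf{S}\mathbb{P}=\bigcup_{p\in\mathbb{P}}p^\in$ is also a union of subbasic sets, completing the verification.

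I do not expect any genuine obstacle: the whole argument rests on the filter property already established in \autoref{MinimalSelectorsAreFilters}, and the only points needing any care are the inductive passage from pairwise to finite common lower bounds in a filter and the nonemptiness of selectors used for the whole-space case. The essential content --- and the only place the $\omega$-poset hypothesis is used --- is precisely the down-directedness of minimal selectors, which fails in general and is exactly why this improves on the mere subbasis statement for arbitrary $\mathbb{P}$.
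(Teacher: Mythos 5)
Your proof is correct and is essentially the paper's own argument: the paper likewise derives the basis property directly from \autoref{MinimalSelectorsAreFilters}, taking a common lower bound $r\in S$ for $S\in p^\in\cap q^\in$ and noting $S\in r^\in\subseteq p^\in\cap q^\in$. Your extra care with arbitrary finite intersections and the whole-space case only makes explicit what the paper leaves routine, so the two proofs coincide in substance.
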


\begin{proof}
Whenever $S\in p^\in\cap q^\in$, we have $r\in S$ with $r\leq p,q$, by \autoref{MinimalSelectorsAreFilters}.  But this means $S\in r^\in\subseteq p^\in\cap q^\in$, showing that $\mathbb{P}_\mathsf{S}$ is a basis.
\end{proof}

This yields a kind of converse to \autoref{PredeterminedGradedBasis}.

\begin{cor}\label{BranchingPredeterminedBasis}
Any cap-determined $\omega$-poset $\mathbb{P}$ arises as a cap-basis of a $\mathsf{T}_1$ space.
\end{cor}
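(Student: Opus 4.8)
The plan is to let the witnessing space be the spectrum itself, $X=\mathsf{S}\mathbb{P}$, and to exhibit $\mathbb{P}$ as a cap-basis of $X$ through its canonical representation $p\mapsto p^\in$. First I would invoke \autoref{SpectrumCompactT1} to see that $X$ is compact $\mathsf{T}_1$, so in particular $\mathsf{T}_1$ as needed, and \autoref{BasisOrderIsomorphism} (using that $\mathbb{P}$ is an $\omega$-poset) to see that $\mathbb{P}_\mathsf{S}=\{p^\in:p\in\mathbb{P}\}$ is a basis for $X$. Since $\mathbb{P}$ is cap-determined, \autoref{CapDetermined} then tells us that $p\mapsto p^\in$ is an order isomorphism of $\mathbb{P}$ onto $\mathbb{P}_\mathsf{S}$ ordered by inclusion; in particular it is a bijection, so every subfamily of $\mathbb{P}_\mathsf{S}$ is of the form $C_\mathsf{S}$ for a unique $C\subseteq\mathbb{P}$.

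The core of the argument is to verify the defining condition of a cap-basis from \autoref{CapBasis}, namely that the caps of $\mathbb{P}_\mathsf{S}$ are exactly its covers. Because bands and refinement, and hence caps, are defined purely in terms of comparability and the order, the order isomorphism transports them faithfully: $C$ is a cap of $\mathbb{P}$ if and only if $C_\mathsf{S}$ is a cap of $\mathbb{P}_\mathsf{S}$. On the other hand, the ``moreover'' clause of \autoref{SpectrumCompactT1} says precisely that $C$ is a cap of $\mathbb{P}$ if and only if $C_\mathsf{S}$ covers $X$. Chaining these two equivalences gives $C_\mathsf{S}\in\mathsf{C}\mathbb{P}_\mathsf{S}\Leftrightarrow\bigcup C_\mathsf{S}=X$, which is exactly the cap-basis equation.

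Finally I would check the harmless nondegeneracy point that $\mathbb{P}_\mathsf{S}$ consists of non-empty sets, since otherwise $\{\emptyset\}$ would be a spurious cap failing to cover a non-empty $X$. This is where cap-determinacy is used once more: by \eqref{AprecsimB}, $p^\in=\emptyset$ would mean $p\precsim\emptyset$, contradicting \eqref{CapDeterminedPrime}. Thus $\mathbb{P}_\mathsf{S}$ is a genuine cap-basis of the $\mathsf{T}_1$ space $X$, order-isomorphic to $\mathbb{P}$, which is what the statement asserts. I do not expect a real obstacle here; the result is a bookkeeping corollary of the dictionary already built between the order theory of $\mathbb{P}$ and the topology of $\mathsf{S}\mathbb{P}$, and the only step meriting a moment's care is confirming that an order isomorphism carries bands to bands and preserves refinement, so that caps correspond to caps.
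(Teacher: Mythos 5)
Your proposal is correct and follows essentially the same route as the paper, whose proof is simply ``immediate from \autoref{CapDetermined}, \autoref{SpectrumCompactT1} and \autoref{BasisOrderIsomorphism}'' --- exactly the three results you invoke, with your write-up merely making explicit the transport of bands and caps along the order isomorphism $p\mapsto p^\in$ and the chaining with the ``moreover'' clause of \autoref{SpectrumCompactT1}. The nondegeneracy check via \eqref{CapDeterminedPrime} is a harmless extra that the paper leaves implicit.
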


\begin{proof}
Immediate from \autoref{CapDetermined}, \autoref{SpectrumCompactT1} and \autoref{BasisOrderIsomorphism}.
\end{proof}

\subsection{Examples}\label{Examples}

Our spectrum generalises the well-known construction of a metrisable Stone space from the branches of an $\omega$-tree (sometimes called its \emph{branch space}, as in \cite[\S III]{Fuller1983} for example).  Of course, the advantage of our spectrum, as applied to more general graded $\omega$-posets, is that we can also construct connected spaces, the simplest example being the arc.

\begin{xpl}\label{ArcExample}
Let $X$ be the arc, which we can take to be the unit interval $[0,1]$ in its usual topology. Define open covers $(C_n)$ of $X$ by
\begin{equation*}
C_n = \{\operatorname{int}([(k - 1)/2^{n+1}, (k + 1)/2^{n + 1}]): 1 \leq k \leq 2^{n + 1} - 1\}.
\end{equation*}
So each $C_n$ consists of $2^{n+1}-1$ evenly spaced intervals, each of length $2^{-n}$. Then $\mathbb{P}=\bigcup_{n\in\omega}C_n$ is a predetermined graded $\omega$-poset, by \autoref{GradedLemma}, which can also be seen directly from its Hasse diagram, as drawn below.
\begin{center}
\begin{tikzpicture}
\node (00) at (0,0) [align=center]{$[0,1]$};
\node (-2-2) at (-2,-2) [align=center]{$[0,\tfrac{1}{2})$};
\node (0-2) at (0,-2) [align=center]{$(\tfrac{1}{4},\tfrac{3}{4})$};
\node (2-2) at (2,-2) [align=center]{$(\tfrac{1}{2},1]$};
\node (-3-4) at (-3,-4) [align=center]{$[0,\tfrac{1}{4})$};
\node (-2-4) at (-2,-4) [align=center]{$(\tfrac{1}{8},\tfrac{3}{8})$};
\node (-1-4) at (-1,-4) [align=center]{$(\tfrac{1}{4},\tfrac{1}{2})$};
\node (0-4) at (0,-4) [align=center]{$(\tfrac{3}{8},\tfrac{5}{8})$};
\node (1-4) at (1,-4) [align=center]{$(\tfrac{1}{2},\tfrac{3}{4})$};
\node (2-4) at (2,-4) [align=center]{$(\tfrac{5}{8},\tfrac{7}{8})$};
\node (3-4) at (3,-4) [align=center]{$(\tfrac{3}{4},1]$};
\draw (00)--(-2-2);
\draw (00)--(0-2);
\draw (00)--(2-2);
\draw (-2-2)--(-3-4);
\draw (-2-2)--(-2-4);
\draw (-2-2)--(-1-4);
\draw (0-2)--(-1-4);
\draw (0-2)--(0-4);
\draw (0-2)--(1-4);
\draw (2-2)--(1-4);
\draw (2-2)--(2-4);
\draw (2-2)--(3-4);
\node (0-4.7) at (0,-4.7) [align=center]{\vdots};
\end{tikzpicture}
\end{center}
Note $\mathbb{P}$ is a cap-basis, by \autoref{OmegaCapBasesInMetricCompacta} (or \autoref{GradedLemma} \ref{GradedLemma2}).  By \autoref{SpaceRecovery}, the spectrum of $\mathbb{P}$ then recovers the original space $X$, i.e. the arc. A more combinatorial construction of the arc could thus proceed as follows -- first define relations between finite linearly ordered sets $\mathbb{P}_n$ such that each element of $\mathbb{P}_n$ is related to 3 consecutive elements of $\mathbb{P}_{n+1}$ and consecutive pairs in $\mathbb{P}_n$ are related to exactly 1 common element in $\mathbb{P}_{n+1}$. Then let $\mathbb{P}=\bigcup_{n\in\omega}\mathbb{P}_n$ with the order defined from the relations between consecutive $\mathbb{P}_n$'s. Finally, define the arc as the spectrum of $\mathbb{P}$.
\end{xpl}

The following example shows that a basis forming a cap-determined poset is not necessarily a cap-basis, i.e. the converse of \autoref{CapBasesAreCapDetermined} is not true (although the poset will yield a cap-basis of a different space, namely its spectrum, which will be a quotient of the original space -- see \autoref{CapBasification} below).
\begin{xpl}\label{CircleArc}
Let $Y$ be the unit circle in the complex plane with the usual topology, and let $\theta\maps \RR \to Y$ be the covering map $x \mapsto e^{2\pi i x}$, so the restriction $\theta\maps [0, 1] \to Y$ is the quotient map identifying the end-points.
We define open covers $(D_n)$ of $Y$ by 
\begin{equation*}
D_n = \{\theta\im{((k-1)/2^{n + 1},(k + 1)/2^{n + 1})}: 0 \leq k < 2^{n + 1}\}
\end{equation*}
So each $D_n$ for $n \geq 1$ consists of $2^{n + 1}$ evenly spaced arcs of length $2\pi / 2^n$.
In particular, $D_1$ consists of the images of the intervals $(-1/4, 1/4)$, $(0, 1/2)$, $(1/4, 3/4)$, and $(1/2, 1)$.
We put $D_0 = \{X\}$ and $\QQ = \bigcup_{n \in \omega} D_n$.
As in the previous example, $\QQ$ is a predetermined graded $\omega$-poset and a cap-basis of $Y$ so the spectrum of $\QQ$ recovers the space $Y$, i.e. the circle.

Let $C'_n = C_n \cup \{[0, 1/2^{n+1}) \cup (1 - 1/2^{n + 1}, 1]\}$ where $C_n$ is the cover of the arc $X = [0, 1]$ from the previous example for $n \geq 1$ and $C'_0 = C_0 = \{X\}$, and let $\PP' = \bigcup_{n \in \omega} C'_n$.
Observe that $p \mapsto \operatorname{int}(\theta\im{p})$ is an isomorphism of posets $\PP' \to \QQ$.
It follows that $\PP'$ is a cap-determined poset and an open basis of $X$ (as it contains the original cap-basis $\PP$), but is not a cap-basis of $X$ (as its spectrum is the circle and not the arc).
\end{xpl}

Our primary interest is in Hausdorff spaces, but our spectrum can indeed produce more general $\mathsf{T}_1$ spaces.  Some of these are not even sober($\Leftrightarrow$ each irreducible closed set has a unique dense point), like the cofinite topology on a countably infinite set.

\begin{xpl}
Let $X = \omega$ with the cofinite topology (i.e. non-empty open sets are exactly the cofinite ones), and let $\PP = \{p_{n, i}: i \leq n,\, n \in \omega\}$ where $p_{n, i} = (\omega \setminus n + 1) \cup \{i\}$, so $p_{0, 0} = \omega$, $p_{1, 0} = \omega \setminus \{1\}$, $p_{1, 1} = \omega \setminus \{0\}$, $p_{2, 0} = \omega \setminus \{1, 2\}$, $p_{2, 1} = \omega \setminus \{0, 2\}$, $p_{2, 2} = \omega \setminus \{0, 1\}$, and so on.
Clearly, $\{p_{n, i}: n > i\}$ is an open basis at $i \in X$, and so $\PP$ is an open basis of $X$.

Every $p_{n, i}$, $i \leq n \in \omega$, has exactly two immediate predecessors: $p_{n + 1, i}$ and $p_{n + 1, n + 1}$, and so every $p_{n, i}$ with $i < n \neq 0$ has a unique immediate successor $p_{n - 1, i}$, while $p_{n, n}$ with $n \neq 0$ has all elements $p_{n - 1, i}$ for $i \leq n - 1$ as immediate successors.
It follows that $\PP$ is a predetermined branching atomless graded $\omega$-poset, as shown in \autoref{fig:cofinite}, with disjoint levels $\PP_n = \{p_{n, i}: i \leq n\}$.

The levels $\PP_n$ are minimal covers of $X$ since $p_{n, i}$ is the unique set containing $i$.
Also, every $\PP_n$ is a consolidation of $\PP_{n + 1}$ since $p_{n, i} = p_{n + 1, i} \cup p_{n + 1, n + 1}$.
Altogether, $\PP$ is a cap-basis of $X$ by \autoref{GradedLemma}.
\end{xpl}

\begin{figure}[ht!]
\begin{tikzpicture}[x={(0, -3em)}, y={(3em, 0)}]
	\def \nmax {4}
	
	\begin{scope}[every node/.style={circle, fill, inner sep=2pt}]
		\foreach \n in {0, ..., \nmax}
			\foreach \k in {0, ..., \n}
				\node (\n\k) at (\n, \k) {};
	\end{scope}
	
	\foreach \n in {0, ..., \numexpr\nmax - 1\relax}
		\pgfmathparse{\n+1}
		\pgfmathtruncatemacro{\np}{\pgfmathresult}
		\foreach \k in {0, ..., \n}
			\draw (\n\k) -- (\np\k);
	
	\foreach \n in {1, ..., \nmax}
		\pgfmathparse{\n-1}
		\pgfmathtruncatemacro{\nm}{\pgfmathresult}
		\foreach \k in {0, ..., \nm}
			\draw (\nm\k) -- (\n\n);
	
	\node at (\nmax + 0.5, \nmax/2) {...};
\end{tikzpicture}

\caption{The poset $\PP$ for the cofinite topology on $\omega.$}
\label{fig:cofinite}
\end{figure}
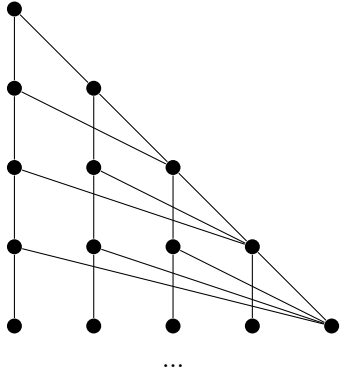

However, this does not have an uncountable extension.

\begin{xpl}
An uncountable set $X$ with the cofinite topology has no cap-basis.
\end{xpl}

\begin{proof}
    Suppose $\QQ$ is a subbasis of $X$ consisting of nonempty and so cofinite sets.
    By the $\Delta$-system lemma, there are pairwise disjoint finite sets $R$ and $F_\alpha$, $\alpha \in \omega_1$, such that $\mathcal{D} = \{X \setminus (R \cup F_\alpha): \alpha \in \omega_1\} \subseteq \PP$.
    Let $B \subseteq \PP$ be any band.
    Since $B$ is finite, there is $b \in B$ comparable to uncountably many elements of $\mathcal{D}$.
    Since $b$ is cofinite, it cannot be below uncountably many elements of $\mathcal{D}$.
    Hence, $b$ is above uncountably many elements of $\mathcal{D}$, and so $b \supseteq X \setminus R$.
    It follows that the finite upwards closed family $F = \{b \subseteq X: b \supseteq X \setminus R\}$ is a selector.
    
    If $\QQ$ was a cap-basis, a minimal selector contained in $F$ would correspond to a point of $X$ with a finite local (sub)basis, by \autoref{SpaceRecovery}, which is impossible.
\end{proof}

The following gives an example of spectrum that is not a first-countable space.
\begin{xpl}
Let $\kappa$ be an infinite cardinal, let $X = \kappa \cup \{\infty\}$ be the one-point compactification of $\kappa$ with the discrete topology, and let $\PP = \{F, \kappa \setminus F: F \subseteq \kappa \text{ finite}\} \setminus \{\emptyset\}$ be the finite-cofinite algebra on $\kappa$ minus the bottom element.
For every $\alpha \in \kappa$, let $S_\alpha = \{\alpha\}^\subseteq$ be the principal filter generated by $\{\alpha\}$, and let $S_\infty$ be the family of all cofinite elements of $\PP$.
We show that the map $f\maps X \to \Spec{\PP}$ defined by $x \mapsto S_x$ is a homeomorphism.

\begin{proof}
Since $S_\alpha$ is an up-set and $\{\alpha\}$ is an atom in $\PP$, $S_\alpha$ is a selector as every band contains an element above $\{\alpha\}$.
Moreover, $S_\alpha$ is a minimal selector since every subselector $S \subseteq S_\alpha$ has to overlap the band $\{\{\alpha\}, \kappa \setminus \{\alpha\}\}$ and so contains $\{\alpha\}$. By \autoref{SelectorsContainMinimalSelectors} there is a minimal selector $S' \subseteq S$, and it is equal to $S_\alpha$ as it is an up-set (\autoref{MinimalSelectors}) containing $\{\alpha\}$.

$S_\infty$ is a selector since every band is finite and so has to contain a cofinite element.
For every finite $F \subseteq \kappa$, the family $\{\{\alpha\}: \alpha \in F\} \cup \{\kappa \setminus F\}$ is a band, and so every selector either contains an atom $\{\alpha\}$, or contains all cofinite elements.
Hence, $S_\infty$ is a minimal selector, and $\Spec{\PP} = \{S_\alpha: \alpha \in \kappa\} \cup \{S_\infty\}$.

We have already shown that $f$ is a bijection.
Now for every finite $F \subseteq \kappa$ we have $F^\in = \{S_\alpha: \alpha \in F\}$ and $(\kappa \setminus F)^\in = \{S_\alpha: \alpha \notin F\} \cup \{S_\infty\}$, and so the elements of $\PP$ correspond to basic open sets of $X$.
\end{proof}
\end{xpl}

\subsection{Subcompacta}\label{Subcompacta}

Next we examine closed subsets of the spectrum.

\begin{prp}\label{SubsetsDefineClosures}
Any $Q\subseteq\mathbb{P}$ determines a closed subset of $\mathsf{S}\mathbb{P}$ given by
\[Q^\supseteq=\{S\in\mathsf{S}\mathbb{P}:S\subseteq Q\}.\]
\end{prp}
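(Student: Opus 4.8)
The plan is to prove $Q^\supseteq$ is closed by showing that its complement in $\mathsf{S}\mathbb{P}$ is open, exhibiting that complement explicitly as a union of the canonical subbasic open sets.

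First I would unwind the defining condition. For $S\in\mathsf{S}\mathbb{P}$, we have $S\notin Q^\supseteq$ precisely when $S\nsubseteq Q$, which means there is some $p\in S$ with $p\notin Q$, i.e. some $p\in\mathbb{P}\setminus Q$ with $S\in p^\in$. Hence
\[\mathsf{S}\mathbb{P}\setminus Q^\supseteq=\bigcup_{p\in\mathbb{P}\setminus Q}p^\in.\]
Each $p^\in$ is one of the subbasic open sets of the spectrum, so this is a union of open sets and therefore open. This immediately yields that $Q^\supseteq$ is closed.

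The one point requiring care — and the reason the argument is phrased via the complement rather than directly — is the asymmetry of the subbasis, which stems from the Sierpi\'nski factor $\mathbf{2}$ in which only $\{1\}$ is open. The open condition available at each coordinate $p$ is the positive membership ``$p\in S$'', realised by $p^\in$, whereas ``$p\notin S$'' is not an open condition. Consequently $Q^\supseteq$ itself, being governed by ``$\forall p\notin Q\ (p\notin S)$'', cannot be written directly as a union of subbasic opens; only its complement, governed by ``$\exists p\in\mathbb{P}\setminus Q\ (p\in S)$'', is manifestly open. So the main (and essentially only) obstacle is to notice that passing to the complement converts the problem into the form suited to this topology. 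After that the verification is routine, and notably it uses nothing about minimality of selectors nor any earlier result — the same computation shows $Q^\supseteq$ is closed already in the ambient power space $\mathsf{P}\mathbb{P}$, and it remains closed upon intersecting with the subspace $\mathsf{S}\mathbb{P}$.
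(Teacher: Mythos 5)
Your proof is correct and is essentially the paper's argument in contrapositive form: the paper shows every $S\in\mathrm{cl}(Q^\supseteq)$ satisfies $S\subseteq Q$ by using the subbasic neighbourhoods $p^\in$ for $p\in S$, while you exhibit the complement $\mathsf{S}\mathbb{P}\setminus Q^\supseteq=\bigcup_{p\in\mathbb{P}\setminus Q}p^\in$ directly as a union of subbasic opens --- the same computation viewed from the other side. Your side remarks are also accurate: the argument uses no minimality of selectors and indeed shows the set $\{S\in\mathsf{P}\mathbb{P}:S\subseteq Q\}$ is already closed in the ambient power space, with $Q^\supseteq$ its trace on $\mathsf{S}\mathbb{P}$.
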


\begin{proof}
If $S\in\mathrm{cl}(Q^\supseteq)$ then every subbasic neighbourhood $p^\in$ containing $S$ must also contain some $T\in Q^\supseteq$. In other words, for every $p\in S$, we have $T\in\mathsf{S}\mathbb{P}$ with $p\in T\subseteq Q$. Thus $S\subseteq Q$, showing that $S\in Q^\supseteq$ and hence $\mathrm{cl}(Q^\supseteq)=Q^\supseteq$.
\end{proof}

In fact, every closed subset of the spectrum of an $\omega$-poset arises in this way.

\begin{prp}\label{prop:closures}
If $\mathbb{P}$ is an $\omega$-poset then the closure of any $X\subseteq\mathsf{S}\mathbb{P}$ is given by
\begin{equation}\label{Closure}
\mathrm{cl}(X)=(\bigcup X)^\supseteq.
\end{equation}
\end{prp}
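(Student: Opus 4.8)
The plan is to prove the two inclusions separately, with $\mathrm{cl}(X)\subseteq(\bigcup X)^\supseteq$ being the routine direction and $(\bigcup X)^\supseteq\subseteq\mathrm{cl}(X)$ being where the $\omega$-poset hypothesis does its work.

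For the easy inclusion, I would first observe that $(\bigcup X)^\supseteq$ is closed by \autoref{SubsetsDefineClosures}, applied with $Q=\bigcup X$. Since every $S\in X$ is one of the sets whose union forms $\bigcup X$, we trivially have $S\subseteq\bigcup X$, so $X\subseteq(\bigcup X)^\supseteq$. As $(\bigcup X)^\supseteq$ is a closed set containing $X$, it must contain $\mathrm{cl}(X)$.

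For the reverse inclusion, take any $S\in(\bigcup X)^\supseteq$, i.e.\ a minimal selector with $S\subseteq\bigcup X$, and aim to show $S\in\mathrm{cl}(X)$. It suffices to check that every basic open neighbourhood of $S$ meets $X$, and such a neighbourhood has the form $p_1^\in\cap\dots\cap p_k^\in$ with each $p_i\in S$. Here I would invoke the hypothesis that $\mathbb{P}$ is an $\omega$-poset: by \autoref{MinimalSelectorsAreFilters}, $S$ is a filter, so there is a single $r\in S$ with $r\leq p_i$ for all $i$. Since $r\in S\subseteq\bigcup X$, some $T\in X$ has $r\in T$; and because $T$ is an up-set (\autoref{MinimalSelectors}) with $r\leq p_i$, we get $p_i\in T$ for every $i$, so $T\in p_1^\in\cap\dots\cap p_k^\in$. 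Thus $T$ witnesses that the neighbourhood meets $X$, and therefore $S\in\mathrm{cl}(X)$.

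The main obstacle, and the only place the hypothesis is genuinely needed, is passing from the subbasic neighbourhoods $p^\in$ (each of which meets $X$ simply because $p\in\bigcup X$) to an arbitrary finite intersection of them. Without down-directedness the various $p_i$ could be selected by different members of $X$, with no single $T\in X$ containing all of them at once. The filter property of minimal selectors in $\omega$-posets, equivalently the fact that $\mathbb{P}_\mathsf{S}$ is a basis by \autoref{BasisOrderIsomorphism}, is exactly what supplies the common lower bound $r$ and resolves this difficulty.
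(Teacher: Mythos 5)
Your proof is correct and follows essentially the same route as the paper: the easy inclusion via \autoref{SubsetsDefineClosures}, and the reverse inclusion by showing every neighbourhood of $S$ meets $X$ using the fact that minimal selectors are filters. The paper simply packages your explicit common-lower-bound argument as a citation of \autoref{BasisOrderIsomorphism} (that $(p^\in)_{p\in\mathbb{P}}$ is a basis), which is proved by exactly the filter and up-set reasoning you spell out.
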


\begin{proof}
By the previous result, $(\bigcup X)^\supseteq$ is a closed subset of $\mathsf{S}\mathbb{P}$ which certainly contains $X$. Conversely, if $S\in(\bigcup X)^\supseteq$ then, for any $p\in S$, we have some $T\in X$ with $p\in T$, i.e. every subbasic neighbourhood of $S$ contains an element of $X$. However, if $\mathbb{P}$ is an $\omega$-poset then $(p^\in)_{p\in\mathbb{P}}$ is actually a basis for $\mathsf{S}\mathbb{P}$, by \autoref{BasisOrderIsomorphism}. Thus this shows that $S\in\mathrm{cl}(X)$, which in turn shows that $\mathrm{cl}(X)=(\bigcup X)^\supseteq$.
\end{proof}

Let us call $Q\subseteq\mathbb{P}$ \emph{prime} if $q\not\precsim\mathbb{P}\setminus Q$, for all $q\in Q$, where $\precsim$ is the relation from \eqref{CapOrder}.
Put another way, this means that $Q$ must overlap every subset which is cap-above any element of $Q$, i.e. for all $C\subseteq\mathbb{P}$,
\[\tag{Prime}\label{Prime}Q\ni q\precsim C\qquad\Rightarrow\qquad Q\cap C\neq\emptyset.\]
Indeed, if $q\not\precsim\mathbb{P}\setminus Q$ and $q\precsim C$ then $C\nsubseteq\mathbb{P}\setminus Q$, i.e. $Q\cap C\neq\emptyset$, showing \eqref{Prime} holds. Conversely, if $Q\ni q\precsim\mathbb{P}\setminus Q$ then $\mathbb{P}\setminus Q$ itself witnesses the failure of \eqref{Prime}.

It follows that any non-empty prime $Q\subseteq\mathbb{P}$ is automatically a selector -- if $q\in Q$ and $C\in\mathsf{C}\mathbb{P}$ then certainly $q\precsim C$ and hence $Q\cap C\neq\emptyset$.  Actually, more is true.

\begin{prp}\label{PrimeImpliesUnionOfMinimal}
Prime subsets are precisely the unions of minimal selectors.
\end{prp}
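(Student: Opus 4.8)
The plan is to prove the two inclusions separately. First I would show that every union of minimal selectors is prime, and then conversely that every prime set is a union of minimal selectors.

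For the easy direction, suppose $Q=\bigcup_{i}S_i$ where each $S_i$ is a minimal selector. I would take any $q\in Q$, so $q\in S_i$ for some $i$, and any $C\subseteq\mathbb{P}$ with $q\precsim C$; the goal is to show $Q\cap C\neq\emptyset$. Since $q\in S_i$ and $S_i$ is a \emph{minimal} selector, minimality yields a cap $D\in\mathsf{C}\mathbb{P}$ with $D\cap S_i=\{q\}$ (this is exactly the fact extracted in the proof of \autoref{MinimalSelectors}). Now I would exploit $q\precsim C$: by definition of the cap-order \eqref{CapOrder}, since $(D\setminus\{q\})\cup\{q\}=D$ is a cap, we get that $(D\setminus\{q\})\cup C$ is also a cap. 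As $S_i$ is a selector, it must overlap this cap, and since $S_i$ is disjoint from $D\setminus\{q\}$ except at $q$ (indeed $S_i\cap(D\setminus\{q\})=\emptyset$), the overlap must occur inside $C$. Hence $S_i\cap C\neq\emptyset$, so $Q\cap C\neq\emptyset$, establishing \eqref{Prime}.

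For the harder direction, suppose $Q$ is prime; I want to write $Q$ as a union of minimal selectors. The natural strategy is to show that \emph{every} element $q\in Q$ lies in some minimal selector contained in $Q$. Fix $q\in Q$. The key idea is that primeness of $Q$ says exactly $q\not\precsim\mathbb{P}\setminus Q$, which by \autoref{prp:caporder2} (the characterisation \eqref{AprecsimB}) translates into $q^\in\not\subseteq\bigcup_{p\notin Q}p^\in$. This means there is a minimal selector $S\in q^\in$ with $S\notin p^\in$ for every $p\notin Q$, i.e. $q\in S$ and $S\cap(\mathbb{P}\setminus Q)=\emptyset$, which is to say $S\subseteq Q$. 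Thus $S$ is a minimal selector with $q\in S\subseteq Q$. Taking the union of all such $S$ over $q\in Q$ recovers exactly $Q$ (the union is contained in $Q$ since each $S\subseteq Q$, and contains $Q$ since each $q$ lies in its own $S$).

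The main obstacle is ensuring the translation in the converse direction is clean: I need to verify that primeness is genuinely equivalent, via \eqref{AprecsimB}, to the containment failure $q^\in\not\subseteq\bigcup_{p\notin Q}p^\in$, and that a witnessing minimal selector $S$ automatically satisfies $S\subseteq Q$. The point is that a minimal selector $S$ lying in $q^\in$ but in no $p^\in$ for $p\notin Q$ must have $S\cap(\mathbb{P}\setminus Q)=\emptyset$, precisely because $p\in S$ would force $S\in p^\in$. Everything then rests on \autoref{prp:caporder2} together with the observation that non-empty prime sets are selectors (already noted in the excerpt), so the existence of the required minimal selector inside $q^\in$ is guaranteed. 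I expect the forward (union-of-selectors-is-prime) direction to be genuinely routine once the cap $D$ with $D\cap S=\{q\}$ is produced, while the converse is where the characterisation \eqref{AprecsimB} does the real work.
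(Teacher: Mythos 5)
Your proposal is correct and takes essentially the same route as the paper: the converse direction, extracting a minimal selector $S\in q^\in\setminus\bigcup_{p\in\mathbb{P}\setminus Q}p^\in$ via \autoref{prp:caporder2}, is exactly the paper's argument, and your forward direction rests on the same key fact (a cap $D$ with $D\cap S_i=\{q\}$ from minimality) unfolded through the definition of $\precsim$. The only cosmetic difference is that you verify the reformulation \eqref{Prime} directly for the union, whereas the paper shows $s\not\precsim\mathbb{P}\setminus S$ for each minimal selector and then passes to unions; these amount to the same computation.
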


\begin{proof}
Take a minimal selector $S\subseteq\mathbb{P}$. For every $s\in S$, minimality yields $F\subseteq S\setminus\mathbb{P}$ such that $F\cup\{s\}$ is a cap. But $S\setminus\mathbb{P}(=(S\setminus\mathbb{P})\cup F)$ is not a cap, simply because $S$ is a selector, so $F$ witnesses $s\not\precsim S\setminus\mathbb{P}$. This shows that every minimal selector is prime and hence the same is true of any union of minimal selectors.

Conversely, take any prime $Q\subseteq\mathbb{P}$.  For every $q\in Q$, this means $q\not\precsim\mathbb{P}\setminus Q$ so we have $S\in q^\in\setminus\bigcup_{p\in\mathbb{P}\setminus Q}p^\in$, by \eqref{AprecsimB}, and hence $q\in S\subseteq Q$.  This shows that $Q$ is a union of minimal selectors.
\end{proof}

In fact, as long as $\mathbb{P}$ is an $\omega$-poset, the minimal selectors forming a prime subset $Q$ determine the spectrum of $Q$ when considered as an $\omega$-poset in its own right.

\begin{prp}\label{QSpec}
If $\mathbb{P}$ is an $\omega$-poset and $Q\subseteq\mathbb{P}$ is prime then $\mathsf{S}Q=Q^\supseteq$.
\end{prp}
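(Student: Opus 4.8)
The plan is to realise $Q$ as an $\omega$-poset in its own right, identify its caps with the traces on $Q$ of the caps of $\mathbb{P}$, and then check that the selector condition is insensitive to whether one works inside $Q$ or inside $\mathbb{P}$. First I would observe that, being prime, $Q$ is a union of minimal selectors by \autoref{PrimeImpliesUnionOfMinimal}, each of which is an up-set by \autoref{MinimalSelectors}; hence $Q$ is itself an up-set of $\mathbb{P}$. By the remarks preceding \autoref{UpsetCaps}, $Q$ is then an $\omega$-poset whose ranks agree with those of $\mathbb{P}$. The goal is to invoke the equality half of \autoref{UpsetCaps}, yielding $\mathsf{C}Q=\{C\cap Q:C\in\mathsf{C}\mathbb{P}\}$, and for this the one thing that must be verified is that every atom of $Q$ is already an atom of $\mathbb{P}$.

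This atom statement is where I expect the real work to lie. Given an atom $q$ of $Q$, primeness together with \eqref{AprecsimB} supplies a minimal selector $S$ of $\mathbb{P}$ with $q\in S\subseteq Q$. Since $\mathbb{P}$ is an $\omega$-poset, $S$ is a filter by \autoref{MinimalSelectorsAreFilters}, so for each $s\in S$ there is $r\in S\subseteq Q$ with $r\leq q,s$; minimality of $q$ in $Q$ forces $r=q$ and hence $q\leq s$. Thus $q=\min S$, so $S=q^\leq$ is finite. A finite selector must contain an atom of $\mathbb{P}$ by \autoref{UpsetSelectors}, say an atom $a$ with $q\leq a$; but then $a^\leq\subseteq q^\leq=S$ is again a selector, so minimality of $S$ gives $a^\leq=S=q^\leq$ and therefore $a=q$. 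Hence $q$ is an atom of $\mathbb{P}$, as required.

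With $\mathsf{C}Q=\{C\cap Q:C\in\mathsf{C}\mathbb{P}\}$ in hand, the rest is bookkeeping. For any $S\subseteq Q$ and any $C\subseteq\mathbb{P}$ we have $S\cap(C\cap Q)=S\cap C$, so this cap identity shows that $S$ overlaps every cap of $Q$ exactly when it overlaps every cap of $\mathbb{P}$; that is, a subset of $Q$ is a selector of $Q$ iff it is a selector of $\mathbb{P}$. Consequently the minimal selectors of $Q$ coincide with the $\subseteq$-minimal selectors of $\mathbb{P}$ contained in $Q$. A minimal selector of $\mathbb{P}$ lying in $Q$ is clearly such a minimal element, while conversely any minimal selector of $Q$ contains a minimal selector of $\mathbb{P}$ by \autoref{SelectorsContainMinimalSelectors}, which lies in $Q$ and is a selector of $Q$, forcing equality by minimality. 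Either way, the minimal selectors of $Q$ are precisely the minimal selectors of $\mathbb{P}$ contained in $Q$, i.e. $\mathsf{S}Q=Q^\supseteq$.
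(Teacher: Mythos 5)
Your proof is correct, and its overall skeleton matches the paper's: both reduce everything to the claim that atoms of a prime $Q$ are already atoms of $\mathbb{P}$, then invoke the equality case of \autoref{UpsetCaps} to get $\mathsf{C}Q=\{C\cap Q:C\in\mathsf{C}\mathbb{P}\}$, and finally transfer the selector and minimal-selector properties between $Q$ and $\mathbb{P}$. Where you genuinely diverge is in the proof of the atom claim, which you rightly identify as the crux. The paper argues contrapositively with levels: if $q\in Q$ is not an atom of $\mathbb{P}$, then $q^\leq$ contains no atoms, so some level $\mathbb{P}_n$ is disjoint from $q^\leq$; enlarging $n$ so that $\mathbb{P}_n$ refines a cap witnessing minimality of some $S\in Q^\supseteq$ containing $q$ forces $\emptyset\neq S\cap\mathbb{P}_n\subseteq q^>$, exhibiting an element of $Q$ strictly below $q$. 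You instead argue directly through already-established structure theory: the filter property (\autoref{MinimalSelectorsAreFilters}) forces an atom $q$ of $Q$ to be the minimum of the minimal selector $S\subseteq Q$ supplied by primeness, so $S=q^\leq$ is finite, and the characterisation in \autoref{UpsetSelectors} then yields an atom $a\in S$ of $\mathbb{P}$ -- at which point you could shortcut your final detour, since $q\leq a$ with $a$ minimal in $\mathbb{P}$ already forces $q=a$ without appealing to minimality of $S$. Both arguments are sound; the paper's works bare-handed with levels and the defining minimality of $S$, while yours buys brevity at the crux by leaning on two earlier lemmas (whose own proofs, of course, used levels). One further merit of your write-up: the closing step, which the paper compresses into ``the same then applies to minimal selectors'', is spelled out correctly, and the converse direction really does require \autoref{SelectorsContainMinimalSelectors} exactly as you deploy it.
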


\begin{proof}
First we claim that the atoms of any prime $Q\subseteq\mathbb{P}$ must already be atoms in $\mathbb{P}$.  Indeed, any $q\in Q$ is contained in some $S\in Q^\supseteq$.  If $q$ is not an atom in $\mathbb{P}$ then we have some level $\mathbb{P}_n$ disjoint from $q^\leq$.  Making $n$ larger if necessary, we may further assume that $S\cap\mathbb{P}_n\subseteq q^\geq$, as $S$ is a minimal selector, and hence $\emptyset\neq S\cap\mathbb{P}_n\subseteq q^>$, showing that $q$ is not an atom in $S\subseteq Q$.  This proves the claim and hence $\mathsf{C}Q=\{C\cap Q:C\in\mathsf{C}\mathbb{P}\}$, by \autoref{UpsetCaps}.  But if $S\subseteq Q$ and $C\in\mathsf{C}\mathbb{P}$ then $S\cap C=S\cap C\cap Q\neq\emptyset$, so it follows that $S$ is a selector in $\mathbb{P}$ if and only if $S$ is a selector in $Q$.  The same then applies to minimal selectors, i.e. $\mathsf{S}Q=Q^\supseteq$.
\end{proof}

In this way, prime subsets of $\mathbb{P}$ correspond exactly to closed subsets of $\mathsf{S}\mathbb{P}$.

\begin{cor}\label{StrongSelectorVsClosedSubsets}
If $\mathbb{P}$ is an $\omega$-poset then we have mutually inverse order isomorphisms between prime $Q\subseteq\mathbb{P}$ and closed subsets $X$ of the spectrum $\mathsf{S}\mathbb{P}$ given by
\begin{equation}\label{StrongSelectorVsClosedSubsetsEq}
    Q\mapsto\mathsf{S}Q\qquad\text{and}\qquad X\mapsto\bigcup X.
\end{equation}
\end{cor}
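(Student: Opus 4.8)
The plan is to assemble the preceding results rather than argue from scratch: I would show both maps in \eqref{StrongSelectorVsClosedSubsetsEq} are well defined, check that the two composites are identities, and then observe that order-isomorphism is automatic because both maps are manifestly inclusion-preserving.

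First I would verify well-definedness. For a prime $Q\subseteq\mathbb{P}$, \autoref{QSpec} gives $\mathsf{S}Q=Q^\supseteq$, which is a closed subset of $\mathsf{S}\mathbb{P}$ by \autoref{SubsetsDefineClosures}; hence $Q\mapsto\mathsf{S}Q$ does land among closed subsets. (Here the $\omega$-poset hypothesis enters, both to apply \autoref{QSpec} and to know that the up-set $Q$ is itself an $\omega$-poset in the induced order.) Conversely, any closed $X\subseteq\mathsf{S}\mathbb{P}$ is a set of minimal selectors, so $\bigcup X$ is a union of minimal selectors and is therefore prime by \autoref{PrimeImpliesUnionOfMinimal}; hence $X\mapsto\bigcup X$ lands among prime subsets.

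Next I would compute the two composites. Starting from a prime $Q$, I form $\bigcup\mathsf{S}Q=\bigcup Q^\supseteq$. Since $Q$ is prime, the argument in \autoref{PrimeImpliesUnionOfMinimal} shows every $q\in Q$ lies in some minimal selector $S$ with $S\subseteq Q$, i.e.\ $S\in Q^\supseteq$; thus $Q=\bigcup Q^\supseteq=\bigcup\mathsf{S}Q$. Starting from a closed $X$, the set $\bigcup X$ is prime, so \autoref{QSpec} gives $\mathsf{S}(\bigcup X)=(\bigcup X)^\supseteq$, while \autoref{prop:closures} identifies $(\bigcup X)^\supseteq$ with $\mathrm{cl}(X)$; as $X$ is closed this is simply $X$. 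So both composites are the respective identities.

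Finally, both maps are monotone: $Q\subseteq Q'$ yields $Q^\supseteq\subseteq Q'^\supseteq$, that is $\mathsf{S}Q\subseteq\mathsf{S}Q'$, and $X\subseteq X'$ trivially gives $\bigcup X\subseteq\bigcup X'$. Two mutually inverse monotone maps between posets are automatically order isomorphisms, which completes the argument. I do not expect a genuine obstacle here; the only delicate point is invoking the correct earlier result for each half of the inversion—and in particular using the $\omega$-poset hypothesis, which is needed both for \autoref{QSpec} and for the closure formula of \autoref{prop:closures}—so that nothing has to be reproved.
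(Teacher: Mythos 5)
Your proof is correct and follows essentially the same route as the paper's: both verify well-definedness via \autoref{SubsetsDefineClosures}, \autoref{PrimeImpliesUnionOfMinimal} and \autoref{QSpec}, then obtain $X=(\bigcup X)^\supseteq$ from \autoref{prop:closures} and $Q=\bigcup(Q^\supseteq)$ from \autoref{PrimeImpliesUnionOfMinimal}. Your explicit monotonicity check at the end is a harmless addition that the paper leaves implicit.
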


\begin{proof}
By \autoref{SubsetsDefineClosures}, \autoref{PrimeImpliesUnionOfMinimal} and \autoref{QSpec}, $X\mapsto\bigcup X$ and $Q\mapsto Q^\supseteq=\mathsf{S}Q$ take prime selectors to closed subsets and vice versa. By \autoref{prop:closures}, $X=(\bigcup X)^\supseteq$ whenever $X$ is closed.  By \autoref{PrimeImpliesUnionOfMinimal} again, $Q=\bigcup(Q^\supseteq)$ whenever $Q$ is prime.  Thus these maps are inverse to each other.
\end{proof}

\begin{rmk}
    The frame of open subsets of $\mathsf{S}\mathbb{P}$ can thus be obtained directly from $\mathbb{P}$.  Specifically, complements of prime subsets ordered by inclusion form a frame $\mathbb{F}$ which is order isomorphic to the open subsets of $\mathsf{S}\mathbb{P}$, by \autoref{StrongSelectorVsClosedSubsets}.  Thus $\mathbb{F}$ can be viewed as a kind of completion of $\mathbb{P}$, once we identify each $p\in\mathbb{P}$ with $p^\succsim\in\mathbb{F}$.
\end{rmk}

\begin{dfn} \label{DefPrimePoset}
Let us call a poset $\mathbb{P}$ \emph{prime} if it is prime in itself, i.e. if $p^\in\neq\emptyset$ or, equivalently, $p\not\precsim\emptyset$, for all $p\in\mathbb{P}$.
\end{dfn}

While there do exist non-prime $\omega$-posets (e.g. $\mathbb{P}=\omega\times\{0,1\}$ mentioned just before \autoref{prp:caporder2}), every $\omega$-poset $\mathbb{P}$ contains a prime $\omega$-subposet $\bigcup\mathsf{S}\mathbb{P}$ with exactly the same spectrum, by \autoref{QSpec}.  Also, cap-determined $\omega$-posets are necessarily prime, by \eqref{CapDeterminedPrime}, as are level-injective $\omega$-posets.

\begin{prp} \label{PrimeMinimalCover}
    Every level-injective $\omega$-poset is prime.
\end{prp}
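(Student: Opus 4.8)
The plan is to unwind both hypotheses to the forms in which they fit together immediately. By \autoref{DefPrimePoset}, primeness means $p \not\precsim \emptyset$ for every $p \in \mathbb{P}$, and spelling out the cap-order \eqref{CapOrder} with right-hand side $\emptyset$, this says precisely that for each $p$ there is some $F \subseteq \mathbb{P}$ with $F \cup \{p\}$ a cap while $F$ itself is not. So the whole task reduces to producing, for an arbitrary $p$, one such witnessing set $F$.

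First I would fix $p \in \mathbb{P}$ and set $m = \mathsf{r}(p)$, which is finite since $\mathbb{P}$ is an $\omega$-poset. Because $\mathsf{r}^{-1}\{m\} \subseteq \mathbb{P}_m$, we have $p \in \mathbb{P}_m$, and (as already observed at the start of the proof of \autoref{CapsRefineLevels}) the level $\mathbb{P}_m$ is a band, hence a cap. Now I would invoke level-injectivity in the form of its first equivalent condition, namely that every level is a \emph{minimal} cap. Taking $F = \mathbb{P}_m \setminus \{p\}$, minimality of the cap $\mathbb{P}_m$ forces this proper subset not to be a cap, whereas $F \cup \{p\} = \mathbb{P}_m$ is a cap. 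Thus $F$ witnesses $p \not\precsim \emptyset$; equivalently, by \eqref{AprecsimB} applied to $A = \{p\}$ and $B = \emptyset$, it shows $p^\in \neq \emptyset$. As $p$ was arbitrary, $\mathbb{P}$ is prime.

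There is no genuine obstacle here, so the argument is essentially two lines once the definitions are aligned. The only points requiring care are bookkeeping ones: getting the orientation of $\precsim$ correct (it is $p \precsim \emptyset$, the ``redundancy of $p$'' statement, that must fail, not the reverse), and confirming that a level really is a cap, so that deleting a single point from the minimal cap $\mathbb{P}_m$ genuinely yields a non-cap. Everything else follows directly from \autoref{CapsRefineLevels}, \eqref{CapOrder} and \eqref{AprecsimB}.
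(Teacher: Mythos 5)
Your proof is correct and is essentially the paper's own argument: both use the first equivalent form of level-injectivity (each level is a minimal cap) and take $F=\mathbb{P}_{\mathsf{r}(p)}\setminus\{p\}$ as the witness that $p\not\precsim\emptyset$, since $F\cup\{p\}=\mathbb{P}_{\mathsf{r}(p)}$ is a cap while $F$, being a proper subset of a minimal cap, is not. Your version merely spells out the bookkeeping (that $p\in\mathbb{P}_{\mathsf{r}(p)}$ via $\mathsf{r}^{-1}\{m\}\subseteq\mathbb{P}_m$, and the unwinding of \eqref{CapOrder}) that the paper leaves implicit.
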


\begin{proof}
    If $\mathbb{P}$ is level-injective then every level of $\mathbb{P}$ is a minimal cap.  For any $p\in\mathbb{P}$, this means $\mathbb{P}_{\mathsf{r}(n)}\setminus\{p\}$ is not a cap and hence $p\not\precsim\emptyset$, showing that $\mathbb{P}$ is prime.
\end{proof}

\begin{thm}\label{OmegaCapBasesFromOmegaPosets}
If $\mathbb{P}$ is a prime $\omega$-poset then $\mathbb{P}_\mathsf{S}$ is an $\omega$-cap-basis for $\mathsf{S}\mathbb{P}$.
\end{thm}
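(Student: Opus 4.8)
The plan is to establish three things in turn: that $\mathbb{P}_\mathsf{S}$ is a basis of non-empty open sets, that it is a cap-basis, and that it is an $\omega$-poset. The first is immediate: $\mathbb{P}_\mathsf{S}$ is a basis of $\mathsf{S}\mathbb{P}$ by \autoref{BasisOrderIsomorphism}, and each $p^\in$ is non-empty precisely because $\mathbb{P}$ is prime (\autoref{DefPrimePoset}). Since $\mathsf{S}\mathbb{P}$ is a compact $\mathsf{T}_1$ space (\autoref{SpectrumCompactT1}), every cap of $\mathbb{P}_\mathsf{S}$ is automatically a cover by \autoref{CapsCovers}, so it suffices to prove the converse, that every cover $\mathcal{C}\subseteq\mathbb{P}_\mathsf{S}$ of $\mathsf{S}\mathbb{P}$ is a cap of $\mathbb{P}_\mathsf{S}$.

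For this cap-basis step, write $\mathcal{C}=C_\mathsf{S}$ for a suitable $C\subseteq\mathbb{P}$. Since $C_\mathsf{S}$ covers $\mathsf{S}\mathbb{P}$, $C$ is a cap of $\mathbb{P}$ by \autoref{SpectrumCompactT1}, so some band $B\in\mathsf{B}\mathbb{P}$ refines $C$. The key observation is that taking images turns bands into bands: $B_\mathsf{S}=\{b^\in:b\in B\}$ is finite, and for any $p^\in\in\mathbb{P}_\mathsf{S}$ the comparability of $p$ with some $b\in B$ yields $p^\in\subseteq b^\in$ or $p^\in\supseteq b^\in$, so $B_\mathsf{S}$ is a band of $\mathbb{P}_\mathsf{S}$; and $b\leq c$ for some $c\in C$ gives $b^\in\subseteq c^\in$, so this band refines $\mathcal{C}$. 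Thus $\mathcal{C}$ is a cap and $\mathbb{P}_\mathsf{S}$ is a cap-basis.

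For the $\omega$-poset property I would work with the images $\mathbb{P}_{n\mathsf{S}}$ of the levels of $\mathbb{P}$. By the same band-image argument each $\mathbb{P}_{n\mathsf{S}}$ is a finite band of $\mathbb{P}_\mathsf{S}$, with $\mathbb{P}_{(n+1)\mathsf{S}}$ refining $\mathbb{P}_{n\mathsf{S}}$ and, by \autoref{CapsRefineLevels}, these bands coinitial among the caps of $\mathbb{P}_\mathsf{S}$. As a warm-up, the maximal elements of $\mathbb{P}_\mathsf{S}$ all lie in the finite set $\mathbb{P}_{0\mathsf{S}}$, since every $p$ is comparable to a maximal $b\in\mathbb{P}_0$, which forces $p^\in\subseteq b^\in$. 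The goal is then to upgrade this to the conditions defining an $\omega$-poset for $\mathbb{P}_\mathsf{S}$ — Noetherianity together with finiteness of every rank and every cone — equivalently, that each principal filter $\{q^\in:p^\in\subseteq q^\in\}$ is finite and that there are only finitely many such filters of each size, using the finite coinitial bands $\mathbb{P}_{n\mathsf{S}}$.

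I expect this finiteness step to be the main obstacle, and it is exactly here that primeness does the essential work. The difficulty is that $p\mapsto p^\in$ need not be injective once $\mathbb{P}$ fails to be cap-determined (for instance when $\mathbb{P}$ is an infinite descending chain, where every $p^\in$ is all of $\mathsf{S}\mathbb{P}$), so the relation $\precsim$ of \eqref{AprecsimB} may identify elements of wildly different $\mathbb{P}$-ranks; one therefore cannot simply transport the $\omega$-poset structure of $\mathbb{P}$ along $p\mapsto p^\in$, nor bound $\mathbb{P}_\mathsf{S}$-ranks by $\mathbb{P}$-ranks. The mechanism I would exploit is that any basic open $q^\in$ lying above the \emph{fixed non-empty} $p^\in$ must contain a common point, namely a minimal selector $z\ni p$, which by \autoref{MinimalSelectorsAreFilters} is a filter meeting each level $\mathbb{P}_n$ in a finite set. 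Combining this with the shrinking property — for $z\in q^\in$ some $c\in\mathbb{P}_n\cap z$ satisfies $z\in c^\in\subseteq q^\in$, extracted from minimality and \autoref{CapsRefineLevels} — and with the finiteness of the cones $\mathbb{P}^n$, one should bound both the number of distinct basic opens above $p^\in$ and, dually, the number of basic opens of each cone. The delicate point, which must be argued carefully, is that these bounds are genuinely uniform: that the collapse induced by $\precsim$ cannot manufacture an infinite ascending chain or an infinite level inside $\mathbb{P}_\mathsf{S}$.
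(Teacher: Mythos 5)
Your first two steps are correct and coincide with the paper's own proof: $\mathbb{P}_\mathsf{S}$ is a basis by \autoref{BasisOrderIsomorphism}, caps of $\mathbb{P}_\mathsf{S}$ are covers by primeness and \autoref{CapsCovers}, and covers are caps because a cover $C_\mathsf{S}$ makes $C$ a cap of $\mathbb{P}$ (\autoref{SpectrumCompactT1}), whose refining band $B$ pushes forward to a band $B_\mathsf{S}$ of $\mathbb{P}_\mathsf{S}$ refining $C_\mathsf{S}$. The genuine gap is the third step: you never actually prove that $\mathbb{P}_\mathsf{S}$ is an $\omega$-poset. You correctly diagnose the danger (the map $p\mapsto p^\in$ can collapse elements of different ranks, so the $\omega$-poset structure cannot simply be transported along it), but the ``mechanism'' you sketch --- finite coinitial bands $\mathbb{P}_{n\mathsf{S}}$, the filter $z\ni p$, the shrinking property --- is left unfinished, and you explicitly defer the ``delicate point'' of the uniform finiteness bounds. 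That deferred point is the entire content of this half of the theorem, and the tools you assemble do not by themselves force it: a poset can have coinitial finite bands and still fail to be Noetherian (e.g.\ the chain $\omega$, where every singleton is a band), so some input beyond band/level bookkeeping is unavoidable.

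What is missing is two specific applications of minimality of selectors. First, each principal filter of $\mathbb{P}_\mathsf{S}$ is finite: if $p^\in\subsetneqq q^\in$ for every $q$ in an infinite $Q\subseteq\mathbb{P}$, pick $S\in p^\in$ (possible by primeness); then $Q\subseteq S$ and hence $Q^\leq\subseteq S$, while $p\in S\setminus Q^\leq$ (since $q\leq p$ would give $q^\in\subseteq p^\in$); but $Q^\leq$ is an infinite up-set and therefore a selector by \autoref{UpsetSelectors}, properly contained in the minimal selector $S$ --- a contradiction. This yields Noetherianity and finiteness of ranks in one stroke; note that your filter $z$ is exactly this $S$, but the punchline is the clash between an infinite up-set and minimality, not a counting bound extracted from levels. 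Second, the levels of $\mathbb{P}_\mathsf{S}$ are finite: an infinite level $\mathbb{P}_{\mathsf{S}n}$ would cover $\mathsf{S}\mathbb{P}$ by \autoref{LevelsCovers}, yet a minimal $L\subseteq\mathbb{P}$ with $L_\mathsf{S}=\mathbb{P}_{\mathsf{S}n}$ is an infinite antichain in $\mathbb{P}$, hence not a cap by \autoref{CapAntichains}, so $\mathbb{P}\setminus L$ is a selector containing a minimal selector avoiding $\bigcup L_\mathsf{S}$, contradicting the covering. Neither \autoref{UpsetSelectors} nor \autoref{CapAntichains} appears in your proposal, and without them (or substitutes playing the same role) the proof does not close.
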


\begin{proof}
    We already showed that $\mathbb{P}_\mathsf{S}$ is a basis in \autoref{BasisOrderIsomorphism}.

    Now take a cover of $\mathsf{S}\mathbb{P}$ from $\mathbb{P}_\mathsf{S}$, i.e. of the form $C_\mathsf{S}$, for some $C\subseteq\mathbb{P}$.  Then $C$ is a cap of $\mathbb{P}$, by \autoref{SpectrumCompactT1}, and is thus refined by some band $B\subseteq\mathbb{P}$.  This implies $B_\mathsf{S}$ is a band of $\mathbb{P}_\mathsf{S}$ which refines $C_\mathsf{S}$, showing that $C_\mathsf{S}$ is a cap of $\mathbb{P}_\mathsf{S}$.  Conversely, caps of $\mathbb{P}_\mathsf{S}$ are covers, by \autoref{CapsCovers}, seeing as $p^\in\neq\emptyset$, for all $p\in\mathbb{P}$, as $\mathbb{P}$ is prime.  This shows that $\mathbb{P}_\mathsf{S}$ is a cap-basis.

    Next say that we have $p\in\mathbb{P}$ and infinite $Q\subseteq\mathbb{P}$ such that $p^\in\subsetneqq q^\in$, for all $q\in Q$.  Then $p\notin Q^\leq\subseteq S\ni p$, for any $S\in p^\in$, even though $Q^\leq$ is an infinite up-set and hence a selector, contradicting the minimality of $S$.  Thus $\mathbb{P}_\mathsf{S}$ is Noetherian and every element of $\mathbb{P}_\mathsf{S}$ has finite rank.

    Now say $\mathbb{P}_\mathsf{S}$ had an infinite level $\mathbb{P}_{\mathsf{S}n}$, which must cover $\mathsf{S}\mathbb{P}$, by \autoref{LevelsCovers}.  Take minimal $L\subseteq\mathbb{P}$ with $L_\mathsf{S}=\mathbb{P}_{\mathsf{S}n}$, which must be an antichain in $\mathbb{P}$, as $L_\mathsf{S}$ is an antichain in $\mathbb{P}_\mathsf{S}$.  By \autoref{CapAntichains}, $L$ can not be a cap, i.e. $\mathbb{P}\setminus L$ is a selector and hence contains a minimal selector $S\notin\bigcup L_S$, contradicting the fact $L_S$ covers $\mathsf{S}\mathbb{P}$.  Thus $\mathbb{P}_\mathsf{S}$ has finite levels and is thus an $\omega$-poset.
\end{proof}

When $\mathbb{P}$ is prime and $X=p^\in$ in \eqref{StrongSelectorVsClosedSubsetsEq}, the union $\bigcup X$ can be described in simple terms using the \emph{common lower bound} relation ${\wedge}={\geq}\circ{\leq}$, i.e. $p\wedge q$ means there exists some $r\in\mathbb{P}$ below both $p$ and $q$ or, more symbolically,
\[p\wedge q\qquad\Leftrightarrow\qquad\exists r\in\mathbb{P}\ (p,q\geq r).\]
Note the following is saying $p\wedge q$ holds precisely when $p^\in\cap q^\in\neq\emptyset$.

\begin{prp}\label{PropPrime}
If $\mathbb{P}$ is a prime $\omega$-poset then, for all $p\in\mathbb{P}$,
\begin{equation}\label{Upni}
\bigcup p^\in=p^\wedge.
\end{equation}
\end{prp}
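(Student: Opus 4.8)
The plan is to prove the two inclusions separately, using the filter property of minimal selectors for the forward containment and primeness for the reverse one. Recall that $\bigcup p^\in$ is the union of all minimal selectors containing $p$, while $q\in p^\wedge$ means precisely that $p$ and $q$ admit a common lower bound.

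For the inclusion $\bigcup p^\in\subseteq p^\wedge$, I would take any $q\in\bigcup p^\in$, so that $q$ lies in some minimal selector $S$ with $p\in S$ (i.e. $S\in p^\in$). Since $\mathbb{P}$ is an $\omega$-poset, \autoref{MinimalSelectorsAreFilters} tells us $S$ is a filter, hence down-directed; applying this to $p,q\in S$ yields some $r\in S$ with $r\leq p$ and $r\leq q$. This $r$ is exactly the witness for $p\wedge q$, so $q\in p^\wedge$. Note this direction does not use primeness at all, only the $\omega$-poset hypothesis.

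For the reverse inclusion $p^\wedge\subseteq\bigcup p^\in$, I would take $q\in p^\wedge$ and fix a common lower bound $r$ with $r\leq p$ and $r\leq q$. Here primeness enters: by \autoref{DefPrimePoset} we have $r^\in\neq\emptyset$, so there is an actual minimal selector $S$ with $r\in S$. Because minimal selectors are up-sets by \autoref{MinimalSelectors}, and $r\leq p$ and $r\leq q$, both $p$ and $q$ belong to $S$. In particular $S\in p^\in$, whence $q\in S\subseteq\bigcup p^\in$, as required.

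The only real subtlety is recognising that primeness is exactly the hypothesis needed for the reverse inclusion, and nowhere else. Without it the statement genuinely fails: in the non-prime $\omega$-poset $\mathbb{P}=\omega\times\{0,1\}$ mentioned before \autoref{prp:caporder2}, the elements $(n,1)$ satisfy $p^\wedge\neq\emptyset$ (each has itself as a lower bound) yet $p^\in=\emptyset$, so $\bigcup p^\in=\emptyset\subsetneqq p^\wedge$. Thus the proof is essentially a matter of unwinding definitions and invoking the correct lemma in each direction, with no heavy computation expected.
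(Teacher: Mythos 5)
Your proof is correct and follows essentially the same route as the paper's: the forward inclusion via \autoref{MinimalSelectorsAreFilters} (down-directedness of a minimal selector containing both $p$ and $q$), and the reverse inclusion by taking a common lower bound $r$, using primeness to find $S\in r^\in$, and the up-set property to conclude $p,q\in S$. Your added remark pinpointing where primeness is needed, with the counterexample $\mathbb{P}=\omega\times\{0,1\}$, is a nice sanity check but not part of the required argument.
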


\begin{proof}
If $\mathbb{P}$ is an $\omega$-poset then every $S\in p^\in$ is a filter, by \autoref{MinimalSelectorsAreFilters}, and hence $S\subseteq p^\wedge$, showing that $\bigcup p^\in\subseteq p^\wedge$. Conversely, if $\mathbb{P}$ is prime and $p\wedge q$ then, taking any $r\in p^\geq\cap q^\geq$, by assumption we have some $S\in r^\in\subseteq p^\in\cap q^\in$ and hence $q\in\bigcup p^\in$, showing that $p^\wedge\subseteq\bigcup p^\in$.
\end{proof}

Incidentally, this result also holds for any cap-basis $\mathbb{P}$ of a space $X$ (which again applies to all cap-determined $\omega$-posets, by \autoref{BranchingPredeterminedBasis}).  Indeed, in this case
\[p\wedge q\qquad\Leftrightarrow\qquad p\cap q\neq\emptyset,\]
for if $p,q\supseteq r\in\mathbb{P}$ then $p\cap q\neq\emptyset$, as $r\neq\emptyset$ (see the comments after \autoref{CapBasis}).  Conversely, if $x\in p\cap q$ then, as $\mathbb{P}$ is a basis, we have $r\in\mathbb{P}$ with $x\in r\subseteq p\cap q$. 
 \autoref{SpaceRecovery} then yields $\bigcup p^\in=\bigcup_{x\in p}x^\in=p^\wedge$.

Here is another simple observation about $\wedge$ that will soon be useful.

\begin{prp}\label{WedgeSplit}
    For any $p,q\in\mathbb{P}$ and $C\in\mathsf{C}\mathbb{P}$,
    \[p\wedge q\qquad\Rightarrow\qquad\exists c\in C\ (p\wedge c\wedge q).\]
\end{prp}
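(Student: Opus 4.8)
The plan is to argue directly from the definitions, without passing through the spectrum. Unfolding the hypothesis $p\wedge q$ gives a witness $r\in\mathbb{P}$ with $r\leq p$ and $r\leq q$. Since $C$ is a cap, it is refined by some band $B\in\mathsf{B}\mathbb{P}$, so by definition $\mathbb{P}=B^\leq\cup B^\geq$ and hence $r$ is comparable to some $b\in B$. Because $B\leq C$, refinement then supplies some $c\in C$ with $b\leq c$; this $c$ is the element whose existence we must establish.

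It remains to produce a common lower bound of $p$ and $c$, and of $c$ and $q$. I would split on the direction of comparability between $r$ and $b$. If $r\leq b$, then $r\leq b\leq c$ yields $r\leq c$, so $r$ itself is a common lower bound of $p$ and $c$ and also of $c$ and $q$, giving $p\wedge c$ and $c\wedge q$. If instead $b\leq r$, then transitivity gives $b\leq r\leq p$ and $b\leq r\leq q$, which together with $b\leq c$ show that $b$ is a common lower bound of $p$ and $c$ and of $c$ and $q$; again $p\wedge c\wedge q$. In either case we conclude $\exists c\in C\ (p\wedge c\wedge q)$.

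The only point requiring any care is the split on whether $r\leq b$ or $b\leq r$, which is what forces the two cases; but since a band element is by definition comparable to $r$, and the refining element $c$ always sits above $b$, one of $r$ or $b$ serves as the required common lower bound every time, so I do not expect a genuine obstacle. Note in particular that no hypothesis beyond $\mathbb{P}$ being a poset is used: the argument relies only on transitivity of $\leq$ together with the defining properties of bands (finite and comparability-dominating) and caps (refined by a band), so the finiteness of $B$ is not even needed for this particular statement.
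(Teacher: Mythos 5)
Your proof is correct and follows essentially the same route as the paper's: both take a common lower bound $r$ of $p$ and $q$, pick a band $B$ refining $C$ with $b\in B$ comparable to $r$, push $b$ up to some $c\in C$, and split on whether $r\leq b$ or $b\leq r$ to exhibit $r$ or $b$ as the common lower bound witnessing $p\wedge c\wedge q$. Your closing observation that the finiteness of $B$ plays no role here is accurate, but there is nothing substantively different from the paper's argument.
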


\begin{proof}
    If $C\in\mathsf{C}\mathbb{P}$ then we have $B\in\mathsf{B}\mathbb{P}$ with $B\leq C$.  If $p\wedge q$ then we have $r\leq p,q$.  As $B$ is a band, we then have $b\in B\cap(r^\leq\cap r^\geq)$.  If $b\leq r$ then $b\leq p,q,b$, while if $r\leq b$ then $r\leq p,q,b$.  In either case $p\wedge b\wedge q$ and hence $p\wedge c\wedge q$, for any $c\in C\cap b^\leq$.
\end{proof}

\subsection{Stars}\label{Stars}

For Hausdorff spectra, stars play a particularly important role.  Specifically, as in \cite[\S2.3]{PicadoPultr2012}, we denote the \emph{star} of $p\in\mathbb{P}$ in $C\in\mathsf{C}\mathbb{P}$ by
\[Cp=C\cap p^\wedge.\]

The first thing to observe is the following.

\begin{prp}\label{NonemptyStars}
Stars are never empty.
\end{prp}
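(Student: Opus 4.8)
The plan is to deduce this immediately from \autoref{WedgeSplit}. The first observation is that $p\wedge p$ holds for every $p\in\mathbb{P}$, since $p\leq p$ exhibits $p$ itself as a common lower bound of $p$ with itself. Given any cap $C\in\mathsf{C}\mathbb{P}$, I would then apply \autoref{WedgeSplit} with $q=p$ to obtain some $c\in C$ with $p\wedge c\wedge p$; in particular $p\wedge c$, so $c\in C\cap p^\wedge=Cp$, which shows $Cp\neq\emptyset$.

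Should one prefer a self-contained argument that does not cite \autoref{WedgeSplit}, I would simply unwind its proof. By the definition of a cap, there is a band $B\in\mathsf{B}\mathbb{P}$ refining $C$, i.e. $B\leq C$. Since $B$ is a band, $p$ is comparable to some $b\in B$, and since $B\leq C$ there is $c\in C$ with $b\leq c$. If $p\leq b$ then $p\leq b\leq c$, so $p\wedge c$ via $r=p$; if instead $b\leq p$, then $b\leq p$ and $b\leq c$ exhibit $b$ as a common lower bound of $p$ and $c$, so again $p\wedge c$. Either way $c\in C\cap p^\wedge=Cp$, so the star is non-empty.

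There is essentially no obstacle here. The only point worth flagging is the recognition that reflexivity $p\leq p$ makes $q=p$ an admissible input to \autoref{WedgeSplit}, after which the conclusion is instantaneous. I would also note that no hypothesis on $\mathbb{P}$ (primeness, regularity, being an $\omega$-poset, etc.) is required, since a band refining $C$ exists by the very definition of a cap and the argument uses nothing beyond comparability within that band together with the refinement $B\leq C$.
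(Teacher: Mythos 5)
Your proposal is correct and matches the paper's proof exactly: the paper also observes that $p\wedge p$ holds trivially and invokes \autoref{WedgeSplit} to conclude $Cp\neq\emptyset$. Your self-contained unwinding of \autoref{WedgeSplit} via a band $B\leq C$ and your remark that no hypotheses on $\mathbb{P}$ are needed are both accurate, just not part of the paper's one-line argument.
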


\begin{proof}
For any $p\in\mathbb{P}$ and $C\in\mathsf{C}\mathbb{P}$, certainly $p\wedge p$ so $Cp\neq\emptyset$, by \autoref{WedgeSplit}.
\end{proof}

For any $C\in\mathsf{C}\mathbb{P}$, let us define a relation $\vartriangleleft_C$ on $\mathbb{P}$ by
\[p\vartriangleleft_Cq\qquad\Leftrightarrow\qquad Cp\leq q.\]
Note $\vartriangleleft_C$ is also compatible with the ordering, i.e. for all $p,p',q,q'\in\mathbb{P}$,
\[\tag{Compatibility}\label{Compatibility}p\leq p'\vartriangleleft_Cq'\leq q\qquad\Rightarrow\qquad p\vartriangleleft_Cq.\]
Indeed, if $p\leq p'\vartriangleleft_Cq'\leq q$ then $Cp\subseteq Cp'\leq q'\leq q$, i.e. $p\vartriangleleft_Cq$.  Also
\[\tag{Transitivity}\label{Transitivity}p\vartriangleleft_Cq\vartriangleleft_Cr\qquad\Rightarrow\qquad p\vartriangleleft_Cr,\]
as the left side means $Cp\leq q$ and hence $Cp\subseteq Cq\leq r$, thus giving the right side.  Also note that refining the cap results in a weaker relation, i.e. for all $B,C\in\mathsf{C}\mathbb{P}$,
\begin{equation}\label{RefiningStars}
B\leq C\qquad\Rightarrow\qquad\vartriangleleft_C\ \subseteq\ \vartriangleleft_B.
\end{equation}
Indeed, if $B\leq C$ and $p\vartriangleleft_Cq$ then $pB\leq pC\leq q$, i.e. $p\vartriangleleft_Bq$.

The \emph{star-below} relation is the minimal relation $\vartriangleleft$ on $\mathbb{P}$ containing all of these
\[\tag{Star-Below}\vartriangleleft\ =\ \bigcup_{C\in\mathsf{C}\mathbb{P}}\vartriangleleft_C,\]
i.e. $p\vartriangleleft q$ means $p\vartriangleleft_Cq$, for some $C\in\mathsf{C}\mathbb{P}$ and hence some $B\in\mathsf{B}\mathbb{P}$, by \eqref{RefiningStars}.  Whenever $p\vartriangleleft_Cq$, for some $C\in\mathsf{C}\mathbb{P}$, note that we can always replace $C$ with $D={C\setminus q^\geq}\cup\{q\}\in\mathsf{C}\mathbb{P}$.  In other words, $\vartriangleleft$ could also be defined more explicitly by
\begin{equation*}
    p\vartriangleleft q\qquad\Leftrightarrow\qquad\exists C\in\mathsf{C}\mathbb{P}\ (Cp=\{q\}).
\end{equation*}

We again immediately see that $\vartriangleleft$ is compatible with the ordering.  As long as $\mathbb{P}$ is an $\omega$-poset then it is also transitive -- in this case any $B,C\in\mathsf{C}\mathbb{P}$ has a common refinement $D\in\mathsf{C}\mathbb{P}$ and so $p\vartriangleleft_Bq\vartriangleleft_Cr$ implies $p\vartriangleleft_Dq\vartriangleleft_Dr$, by \eqref{RefiningStars}, and hence $p\vartriangleleft_Dr$, by \eqref{Transitivity}.  We also see that $\vartriangleleft\ \subseteq\wedge$ and even
\begin{equation}\label{WedgeStar}
    {\wedge}\circ{\vartriangleleft}\ \subseteq\ {\wedge}.
\end{equation}
Indeed, if $p\wedge q\vartriangleleft r$ then we have $s\leq p$ with $s\leq q\vartriangleleft r$ and hence $s\vartriangleleft_Cr$, for some $C\in\mathsf{C}\mathbb{P}$.  Then \autoref{NonemptyStars} yields $t\in Cs$ so $p\geq s\wedge t\leq r$ and hence $p\wedge r$.

The significance of $\vartriangleleft$ is that it represents `closed containment' in the spectrum.

\begin{prp}\label{ClosedContainmentProp}
If $\mathbb{P}$ is an $\omega$-poset then, for all $p\in\mathbb{P}$,
\begin{equation}\label{ClosedContainment}
p\vartriangleleft q\qquad\Rightarrow\qquad\mathrm{cl}(p^\in)\subseteq q^\in.
\end{equation}
The converse also holds when $\mathbb{P}$ is also prime.
\end{prp}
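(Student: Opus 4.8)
The plan is to prove the two implications separately, using the closure formula of \autoref{prop:closures} for the forward direction and passing to the subposet $p^\wedge$ for the converse. For the forward implication, suppose $p\vartriangleleft q$, so that $Cp\leq q$ for some $C\in\mathsf{C}\mathbb{P}$, and take any $S\in\mathrm{cl}(p^\in)$. Since $\mathbb{P}$ is an $\omega$-poset, \autoref{prop:closures} gives $\mathrm{cl}(p^\in)=(\bigcup p^\in)^\supseteq$, so $S\subseteq\bigcup p^\in$. As $S$ is a selector it meets the cap $C$ in some $c$, and $c\in S\subseteq\bigcup p^\in$ means $c\in T$ for some minimal selector $T\ni p$. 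By \autoref{MinimalSelectorsAreFilters}, $T$ is a filter, so $p$ and $c$ have a common lower bound, i.e. $p\wedge c$ and hence $c\in C\cap p^\wedge=Cp$. Then $Cp\leq q$ forces $c\leq q$, and since $S$ is an up-set (\autoref{MinimalSelectors}) we conclude $q\in S$, that is $S\in q^\in$. This yields $\mathrm{cl}(p^\in)\subseteq q^\in$.

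For the converse, assume $\mathbb{P}$ is also prime and $\mathrm{cl}(p^\in)\subseteq q^\in$. The key first step is to observe that $p^\wedge$ is itself a prime up-set: by \autoref{PropPrime} we have $p^\wedge=\bigcup p^\in$, which is a union of minimal selectors and hence prime by \autoref{PrimeImpliesUnionOfMinimal}. Combining \autoref{prop:closures}, \autoref{PropPrime} and \autoref{QSpec} then yields $\mathrm{cl}(p^\in)=(p^\wedge)^\supseteq=\mathsf{S}(p^\wedge)$, identifying the closure with the spectrum of the subposet $p^\wedge$. Since $\mathbb{P}$ is prime, $p^\in\neq\emptyset$, and any $S\in p^\in\subseteq\mathrm{cl}(p^\in)\subseteq q^\in$ is a filter containing $p$ and $q$, so $q\in p^\wedge$. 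Now the hypothesis $\mathsf{S}(p^\wedge)=\mathrm{cl}(p^\in)\subseteq q^\in$ says precisely that every minimal selector of $p^\wedge$ contains $q$, i.e. that the subbasic family $\{q\}_\mathsf{S}$ covers $\mathsf{S}(p^\wedge)$; by \autoref{SpectrumCompactT1} applied to $p^\wedge$ this means $\{q\}\in\mathsf{C}(p^\wedge)$. Finally, \eqref{CUsub} of \autoref{UpsetCaps} produces a cap $C\in\mathsf{C}\mathbb{P}$ with $C\cap p^\wedge=\{q\}$, whence $Cp=\{q\}\leq q$ and therefore $p\vartriangleleft q$.

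The main obstacle is the bookkeeping in the converse: one must recognise that $p^\wedge$ is prime so that \autoref{QSpec} applies and the ambient closure $\mathrm{cl}(p^\in)$ coincides with the intrinsic spectrum $\mathsf{S}(p^\wedge)$, and then correctly translate the set containment $\mathrm{cl}(p^\in)\subseteq q^\in$ into the statement that the singleton $\{q\}$ is a cap of the subposet $p^\wedge$. Once this translation is in place, pulling the cap back to $\mathbb{P}$ via \autoref{UpsetCaps} to recover $Cp=\{q\}$ is routine. A minor point to verify along the way is that $q$ really lies in $p^\wedge$, which follows from primeness ensuring $p^\in$ is non-empty.
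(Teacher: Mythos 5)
Your proof is correct. The forward direction is essentially the paper's own argument: the paper condenses your filter step into the single observation that $\bigcup p^\in\subseteq p^\wedge$ (via \autoref{MinimalSelectorsAreFilters}), so that any $S\in\mathrm{cl}(p^\in)=(\bigcup p^\in)^\supseteq$ satisfies $\emptyset\neq S\cap C\subseteq Cp\leq q$, but the content is identical. The converse, however, takes a genuinely different route. The paper argues by contraposition: if $p\not\vartriangleleft q$ then $Cp\nleq q$ for every $C\in\mathsf{C}\mathbb{P}$, so $p^\wedge\setminus q^\geq$ is a selector; a minimal selector $S$ inside it lies in $p^{\wedge\supseteq}=(\bigcup p^\in)^\supseteq=\mathrm{cl}(p^\in)$ (primeness entering only through \eqref{Upni}) while omitting $q$, directly witnessing $\mathrm{cl}(p^\in)\nsubseteq q^\in$. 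You instead argue positively: $p^\wedge=\bigcup p^\in$ is prime by \autoref{PrimeImpliesUnionOfMinimal}, so \autoref{QSpec} identifies $\mathrm{cl}(p^\in)$ with the intrinsic spectrum $\mathsf{S}(p^\wedge)$; the hypothesis then says $\{q\}_\mathsf{S}$ covers $\mathsf{S}(p^\wedge)$, whence $\{q\}\in\mathsf{C}(p^\wedge)$ by \autoref{SpectrumCompactT1}, and \eqref{CUsub} pulls this back to some $C\in\mathsf{C}\mathbb{P}$ with $Cp=C\cap p^\wedge=\{q\}\leq q$. The paper's contrapositive is shorter and more elementary, needing only the one-line witness construction; your version is heavier machinery but structurally more informative, as it exhibits $p\vartriangleleft q$ (under the standing hypotheses) as precisely the statement that $\{q\}$ is a cap of the subposet $p^\wedge$, with the cap--cover correspondence doing the work. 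The delicate points in your route --- that $q\in p^\wedge$ (via primeness of $\mathbb{P}$ and \autoref{MinimalSelectorsAreFilters}), and that $p^\wedge$ is a prime up-set so that \autoref{QSpec} and \autoref{UpsetCaps} legitimately apply --- are exactly where such an argument could go wrong, and you verified each of them.
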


\begin{proof}
If $p\vartriangleleft q$ then we have $C\in\mathsf{C}\mathbb{P}$ with $Cp\leq q$.  Take $S\in\mathrm{cl}(p^\in)=(\bigcup p^\in)^\supseteq$, by \eqref{Closure}.  By \autoref{MinimalSelectorsAreFilters}, all minimal selectors are filters so $\bigcup p^\in\subseteq p^\wedge$ and hence $S\subseteq p^\wedge$.  It follows that $\emptyset\neq S\cap C\subseteq Cp\leq q$ and hence $q\in S^\leq=S$, i.e. $S\in q^\in$. This shows that $\mathrm{cl}(p^\in)\subseteq q^\in$.

Now assume $\mathbb{P}$ is prime.  If $p\not\vartriangleleft q$ then $Cp\nleq q$, for every $C\in\mathsf{C}\mathbb{P}$, i.e. $p^\wedge\setminus q^\geq$ is a selector.  By \autoref{MinimalSelectors}, we have a minimal selector $S\subseteq p^\wedge\setminus q^\geq$ and hence $S\in p^{\wedge\supseteq}=(\bigcup p^\in)^\supseteq=\mathrm{cl}(p^\in)$, by \eqref{Closure} and \eqref{Upni} (this is where we need $\mathbb{P}$ to be prime).  Thus $S\in\mathrm{cl}(p^\in)\setminus q^\in$ witnesses $\mathrm{cl}(p^\in)\nsubseteq q^\in$.
\end{proof}

In particular, $p\vartriangleleft q$ implies $p^\in\subseteq q^\in$ and hence $p\precsim q$, by \eqref{AprecsimB}, so
\[\mathbb{P}\text{ is a cap-determined $\omega$-poset}\qquad\Rightarrow\qquad\vartriangleleft\ \subseteq\ \leq.\]
However, there are non-cap-determined $\omega$-posets with $\vartriangleleft\ \nsubseteq\ \leq$.  For example, if we take $\mathbb{P}=-\omega$ then, for any $p,q\in\mathbb{P}$, we see that $C=\{\min(p,q)\}$ is a band with $Cp=C\leq q$ and hence $p\vartriangleleft q$, i.e. ${\vartriangleleft}=\mathbb{P}\times\mathbb{P}\nsubseteq{\leq}$.

There is one other situation worth noting, though, when $p\vartriangleleft q$ implies $p\leq q$.

\begin{prp}
If $\mathbb{P}$ is an $\omega$-poset and $p\in\mathbb{P}$ is an atom then $p^\vartriangleleft=p^\leq$.
\end{prp}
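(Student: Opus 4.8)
The plan is to establish the two inclusions $p^\vartriangleleft\subseteq p^\leq$ and $p^\leq\subseteq p^\vartriangleleft$ separately, the crucial preliminary observation being that an atom $p$ satisfies $p^\wedge=p^\leq$. Indeed, $p\wedge r$ requires a common lower bound $s\leq p,r$, and since $p$ is minimal this forces $s=p$ and hence $p\leq r$; conversely $p\leq r$ exhibits $p$ itself as a common lower bound. So for an atom the common-lower-bound relation and the order coincide above $p$.

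The inclusion $p^\vartriangleleft\subseteq p^\leq$ is then immediate from the containment $\vartriangleleft\ \subseteq\ \wedge$ noted just above the statement: if $p\vartriangleleft q$ then $p\wedge q$, and by the previous observation $p\wedge q$ is the same as $p\leq q$.

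For the reverse inclusion I would argue directly that $p\leq q$ implies $p\vartriangleleft q$ by exhibiting a single cap $C$ whose star at $p$ collapses to $\{q\}$. Since $p$ is an atom of rank $\mathsf{r}(p)$, it lies in the level $\mathbb{P}_{\mathsf{r}(p)}$, which is a band (hence a cap) by \autoref{CapsRefineLevels} and is an antichain. Set $C=\{q\}\cup(\mathbb{P}_{\mathsf{r}(p)}\setminus\{p\})$. Every element of $\mathbb{P}_{\mathsf{r}(p)}$ is below some element of $C$ (itself when different from $p$, and $q$ in the case of $p$, since $p\leq q$), so $\mathbb{P}_{\mathsf{r}(p)}$ refines $C$ and therefore $C$ is a cap. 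Now compute the star $Cp=C\cap p^\wedge=C\cap p^\leq$: the element $q$ lies in $p^\leq$, while any other $r\in\mathbb{P}_{\mathsf{r}(p)}\setminus\{p\}$ is incomparable with $p$ (both belong to the antichain $\mathbb{P}_{\mathsf{r}(p)}$ and $r\neq p$), so $r\notin p^\leq$. Hence $Cp=\{q\}$, so in particular $Cp\leq q$, i.e. $p\vartriangleleft_C q$ and therefore $p\vartriangleleft q$, as desired.

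I expect the only delicate point to be the verification that $C$ really is a cap and that its star reduces to the single element $\{q\}$; both hinge on $\mathbb{P}_{\mathsf{r}(p)}$ being simultaneously a band and an antichain containing $p$, which is exactly where the finiteness of levels in an $\omega$-poset and the minimality of $p$ are used. Everything else is routine bookkeeping with the definitions of $\wedge$, $\vartriangleleft_C$ and $\vartriangleleft$.
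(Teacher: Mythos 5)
Your proof is correct and takes essentially the same approach as the paper: both directions hinge on the level containing $p$ being a finite antichain band, together with minimality of $p$ turning comparability (or $\wedge$) into $p\leq\cdot$. The only cosmetic differences are that you shrink the star to exactly $\{q\}$ via the modified cap $\{q\}\cup(\mathbb{P}_{\mathsf{r}(p)}\setminus\{p\})$ where the paper simply uses a level $L$ containing $p$ (showing $Lp=\{p\}\leq q$), and that you obtain $p^\vartriangleleft\subseteq p^\leq$ by routing through ${\vartriangleleft}\subseteq{\wedge}$ and $p^\wedge=p^\leq$ instead of directly picking an element of the witnessing band comparable to $p$.
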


\begin{proof}
Take a level $L$ containing $p$ and note $pL=\{p\}$.  Indeed, if $l\in pL$ then we have $q\in p^\geq\cap l^\geq$ so $q=p$, as $p$ is an atom, and hence $l=p$, as distinct elements of $L$ are incomparable.  Thus $p\leq q$ implies $p\vartriangleleft_Lq$.  Conversely, if $p\vartriangleleft q$ then we have a band $B$ with $p\vartriangleleft_Bq$.  Thus we have $b\in B$ comparable to $p$ and hence $p\leq b$, as $p$ is an atom.  Thus $b\in Bp\leq q$ and hence $p\leq b\leq q$.
\end{proof}

But sometimes we can replace $\vartriangleleft$ with $\vartriangleleft\cap\leq$.  First let us call $R\subseteq\mathbb{P}$ \emph{round} if
\[\tag{Round}R\subseteq R^\vartriangleleft,\]
i.e. $R$ is round if each $r\in R$ is star-above some $q\in R$.  Let us also call $S\subseteq\mathbb{P}$ \emph{star-prime} if it overlaps every star of every element of $S$, i.e.
\[\tag{Star-Prime}p\in S\quad\text{and}\quad C\in\mathsf{C}\mathbb{P}\qquad\Rightarrow\qquad S\cap Cp\neq\emptyset.\]
For example, $\mathbb{P}$ itself is always star-prime, by \autoref{NonemptyStars}.

\begin{prp}\label{PRoundStrong}
If $\mathbb{P}$ is an $\omega$-poset and $S\subseteq\mathbb{P}$ is both round and star-prime then, for every $r\in S$, we have $s\in S$ such that both $s\vartriangleleft r$ and $s\leq r$ hold. 
\end{prp}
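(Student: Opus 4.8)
The plan is to exploit roundness twice and link the two witnessing caps through a common refinement. Fix $r\in S$. Since $S$ is round, roundness supplies $q\in S$ with $q\vartriangleleft r$, say $q\vartriangleleft_Cr$ (so $Cq\leq r$) for some $C\in\mathsf{C}\mathbb{P}$. Applying roundness again, this time to $q\in S$, yields $p\in S$ with $p\vartriangleleft q$, say $p\vartriangleleft_Dq$ (so $Dp\leq q$) for some $D\in\mathsf{C}\mathbb{P}$.

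Next I would pass to a common refinement. Since $\mathbb{P}$ is an $\omega$-poset, its caps are downwards directed under refinement, so there is $E\in\mathsf{C}\mathbb{P}$ with $E\leq C$ and $E\leq D$. By \eqref{RefiningStars} this only weakens the star relations, giving $p\vartriangleleft_Eq$ and $q\vartriangleleft_Er$; in particular $Ep\leq q$, and by the fixed-cap transitivity \eqref{Transitivity} also $p\vartriangleleft_Er$, i.e.\ $Ep\leq r$.

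Now I would invoke star-primeness. As $p\in S$ and $E\in\mathsf{C}\mathbb{P}$, star-primeness gives some $s\in S\cap Ep$. From $Ep\leq r$ we immediately get $s\leq r$, and from $Ep\leq q$ we get $s\leq q$. Combining $s\leq q$ with $q\vartriangleleft_Er$ (and $r\leq r$) via the compatibility property \eqref{Compatibility} yields $s\vartriangleleft_Er$, hence $s\vartriangleleft r$. Thus this single $s\in S$ satisfies both $s\vartriangleleft r$ and $s\leq r$, as required.

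The one point deserving care -- and the reason a single application of roundness does not suffice -- is that the common-lower-bound relation $\wedge$ is not transitive. If one tried to take $s$ directly from the star $Cq$ of the first witness $q$, one would obtain $s\leq r$ but no control over $s\vartriangleleft r$: a star of $s$ need not lie below $r$, since elements meeting $s$ need not meet $q$. Threading the argument through the second witness $p$ and a common refinement $E$ is precisely what repairs this, letting compatibility convert $s\leq q\vartriangleleft_Er$ into $s\vartriangleleft_Er$.
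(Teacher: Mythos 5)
Your proof is correct and follows essentially the same route as the paper's: two applications of roundness giving $p\vartriangleleft q\vartriangleleft r$ within $S$, a common refinement of the two witnessing caps, star-primeness to pick $s\in S$ from the star of $p$ in that refinement, and compatibility to upgrade $s\leq q\vartriangleleft r$ to $s\vartriangleleft r$. The only difference is cosmetic: you transfer both star relations to the common refinement via \eqref{RefiningStars} and invoke \eqref{Transitivity} to get $Ep\leq q$ and $Ep\leq r$ at once, where the paper instead chases elements $c\in C$ and $d\in D$ directly through the $\wedge$ relation.
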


\begin{proof}
Take any $r\in S$.  If $S$ is round then we have $p,q\in S$ and $C,D\in\mathsf{C}\mathbb{P}$ with $p\vartriangleleft_Cq\vartriangleleft_Dr$.  If $\mathbb{P}$ is an $\omega$-poset then we have $B\in\mathsf{C}\mathbb{P}$ refining both $C$ and $D$.  If $S$ is star-prime then we have $s\in Bp\cap S$.  We then have $c\in C$ with $c\geq s\mathrel{\wedge}p$ and hence $c\in Cp\leq q$.  So $s\leq c\leq q\vartriangleleft r$ and hence $s\vartriangleleft r$, by \eqref{Compatibility}.  On the other hand, we also have $d\in D$ with $d\geq s\leq q$ so $d\in Dq\leq r$ and hence $s\leq d\leq r$.
\end{proof}

If $\mathbb{P}$ is round, we can also improve on \eqref{WedgeStar} as follows.

\begin{prp}
    If $\mathbb{P}$ is round then
    \begin{equation*}
    {\wedge}={\wedge}\circ{\vartriangleleft}.
    \end{equation*}
\end{prp}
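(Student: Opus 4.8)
The plan is to prove the two inclusions separately. The inclusion ${\wedge}\circ{\vartriangleleft}\ \subseteq\ {\wedge}$ is already recorded as \eqref{WedgeStar} and requires no hypothesis on $\mathbb{P}$, so the entire content of the statement lies in the reverse inclusion ${\wedge}\ \subseteq\ {\wedge}\circ{\vartriangleleft}$, which is exactly where roundness enters. Unpacking the composition via \eqref{Composition}, I must show that $p\wedge r$ implies the existence of some $q\in\mathbb{P}$ with $p\wedge q$ and $q\vartriangleleft r$.

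First I would extract a common lower bound: $p\wedge r$ gives some $t\in\mathbb{P}$ with $t\leq p$ and $t\leq r$. The key move — and the one easy-to-miss step — is to apply roundness not to $r$ itself but to this witness $t$: since $\mathbb{P}$ is round, $t\in\mathbb{P}^\vartriangleleft$, so there is $q\in\mathbb{P}$ with $q\vartriangleleft t$. Applying roundness directly to $r$ would produce some $q\vartriangleleft r$ with no control over its relation to $p$, whereas routing through the lower bound $t$ forces $q$ to sit below both $p$ and $r$ in the appropriate senses.

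It then remains to propagate $q\vartriangleleft t$ upward along $t\leq p$ and $t\leq r$. Compatibility \eqref{Compatibility} turns $q\vartriangleleft t\leq r$ into $q\vartriangleleft r$, which is half of what is needed. The same compatibility turns $q\vartriangleleft t\leq p$ into $q\vartriangleleft p$, and then the inclusion $\vartriangleleft\ \subseteq\ \wedge$ (noted just before \eqref{WedgeStar}, and itself a consequence of \autoref{NonemptyStars}) yields $q\wedge p$, i.e. $p\wedge q$ by symmetry of the common-lower-bound relation. Hence $p\wedge q\vartriangleleft r$, witnessing $p\,({\wedge}\circ{\vartriangleleft})\,r$, and the reverse inclusion follows.

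There is essentially no obstacle here beyond choosing the correct element to which roundness is applied; in particular no $\omega$-poset assumption is needed, only the definition of roundness together with the general facts $\vartriangleleft\ \subseteq\ \wedge$ and \eqref{Compatibility}.
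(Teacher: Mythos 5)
Your proof is correct and follows essentially the same route as the paper's: both reduce everything to the inclusion ${\wedge}\subseteq{\wedge}\circ{\vartriangleleft}$ via \eqref{WedgeStar}, apply roundness to a common lower bound of the two elements (the paper's $r$, your $t$), and then propagate the resulting star-below relation upward using \eqref{Compatibility} together with ${\vartriangleleft}\subseteq{\wedge}$. Your observation that roundness must be applied to the lower bound rather than to $r$ directly is exactly the key point of the paper's argument, and your remark that no $\omega$-poset hypothesis is needed is also accurate.
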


\begin{proof}
    We already know ${\wedge}\supseteq{\wedge}\circ{\vartriangleleft}$, by \eqref{WedgeStar}.  Conversely, say $p\wedge q$, so we have $r\in p^\geq\cap q^\geq$.  If $\mathbb{P}$ is round then we have $s\vartriangleleft r$ so $p,q\vartriangleright s$, by \eqref{Compatibility}, and hence $p\wedge s\vartriangleleft q$, by \eqref{WedgeStar} again, showing that ${\wedge}\subseteq{\wedge}\circ{\vartriangleleft}$.
\end{proof}

To say more, we will also need the caps to be `round' in an appropriate sense.

\subsection{Regularity}

The key condition for Hausdorff spectra is regularity.

\begin{dfn}\label{dfn:regularposet}
We call $\mathbb{P}$ \emph{regular} if every cap is $\vartriangleleft$-refined by another cap, i.e.
\[\tag{Regular}\mathsf{C}\mathbb{P}\subseteq\mathsf{C}\mathbb{P}^\vartriangleleft.\]
\end{dfn}

Equivalently, here we could strengthen $\vartriangleleft$-refinement to \emph{star-refinement} where
\[\tag{Star-Refinement}C\text{ star-refines }D\qquad\Leftrightarrow\qquad C\vartriangleleft_CD.\]

\begin{prp}\label{RegularStarRefinement}
An $\omega$-poset $\mathbb{P}$ is regular precisely when every band or cap is star-refined by another band or cap.
\end{prp}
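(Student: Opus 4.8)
The plan is to prove both implications, with essentially all the work in the forward direction. The reverse implication is immediate: if every cap $D$ is star-refined by some cap $C$, then $Cc\leq d$ for each $c\in C$ gives $c\vartriangleleft_C d$ and hence $c\vartriangleleft d$, so $C\vartriangleleft D$ in the refinement sense, witnessing $\mathbb P\in\mathsf C\mathbb P^\vartriangleleft$, i.e.\ regularity (bands are just special caps, so nothing extra is needed). Since bands are caps, it also suffices in the forward direction to produce, for every cap $D$, a \emph{band} $C$ that star-refines it.

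First I would record two routine reductions for an $\omega$-poset. \emph{Uniformization}: if a finite set $E$ (e.g.\ a band) $\vartriangleleft$-refines $D$, then a single cap can witness all the stars at once; indeed, for each $e\in E$ choose $C_e$ with $C_e e\leq d_e\in D$, and since caps are downward directed by refinement (\autoref{CapsRefineLevels}) take $F$ refining every $C_e$, so $Fe\leq d_e$ by \eqref{RefiningStars}. Secondly, a $\vartriangleleft$-refining cap may always be shrunk to a band: if $L\leq E\vartriangleleft D$ with $L$ a level, then $L\vartriangleleft D$ by \eqref{Compatibility}. Combining these, regularity yields, for any cap $D$, a band $E$ and a cap $V$ with $Ve\leq d_e\in D$ for all $e\in E$.

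The crux is upgrading such $\vartriangleleft$-refinements to a genuine star-refinement, which I would do via a Kelley-type ``two barycentric refinements make a star refinement'' argument, adapted to the poset. Applying regularity twice (together with the reductions above) I would produce a band $E'$ and cap $V$ with $Ve'\leq d_{e'}\in D$, then a band $E''$ and cap $H$ with $He''\leq v_{e''}\in V$; finally I let $C$ be a single band refining $E'$, $E''$ and $H$ simultaneously (possible since finitely many caps share a common level refinement by \autoref{CapsRefineLevels}). Refining only shrinks stars, so $Ce''\leq v_{e''}$ persists. Because $C\leq E'$ and $C\leq E''$, every element below a member of $C$ lies below a member of $E'$ and of $E''$; hence for such points $p$ one has $Vp\leq d\in D$ and $Cp\leq v\in V$, i.e.\ $C$ ``barycentrically'' refines $V$ and $V$ ``barycentrically'' refines $D$ at all points relevant to the star argument. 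The Kelley step then runs as follows: given $c_0\in C$, pick $d\in D$ with $Vc_0\leq d$; for any $c'\in Cc_0$ choose a common lower bound $z\leq c_0,c'$, obtain $v\in V$ with $Cz\leq v$, note $c_0,c'\in Cz$ so $c_0,c'\leq v$, whence $v\in Vc_0\leq d$ and therefore $c'\leq d$. Thus $Cc_0\leq d$, i.e.\ $C$ star-refines $D$.

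I expect the main obstacle to be exactly this upgrade. Regularity only provides control of the star $Ve$ for $e$ ranging over a band, and the star of a point lying strictly above a band element is not controlled at all. The device that saves the argument is to take the final band $C$ finer than every intermediate band, so that all the points at which stars must be evaluated in the Kelley computation (namely elements of $C$ and their common lower bounds) automatically lie below band elements, placing them in the ``good'' case; verifying that this single choice of $C$ simultaneously inherits both barycentric-type properties is where the care is needed.
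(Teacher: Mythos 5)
Your proof is correct, but it takes a genuinely different route from the paper's. You apply regularity twice and run the classical Kelley-type argument that two barycentric-style refinements compose to a star-refinement: a band $E'$ with a uniformized witnessing cap $V$ for $D$, a band $E''$ with a uniformized witnessing cap $H$ for $V$, a common band refinement $C$, and then the computation $z\leq c_0,c'$, $Cz\leq v$, $v\in Vc_0\leq d$, $c'\leq d$. The paper instead applies regularity only \emph{once}: it takes a band $B$ with $B\vartriangleleft D$, so for each $b\in B$ there are $C_b\in\mathsf{C}\mathbb{P}$ and $d_b\in D$ with $C_bb\leq d_b$, lets $A$ be a band refining $B$ and all the finitely many $C_b$, and for $p\in A$ with $p\leq b$ concludes directly $Ap\leq C_bp\subseteq C_bb\leq d_b$, whence $A\vartriangleleft_AD$. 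The observation the paper exploits is that the relation $\vartriangleleft$ already carries a barycentric-type layer built into its definition via the witnessing caps $C_b$, so a single further common refinement suffices; your proof does not use this and instead manufactures the second layer by invoking regularity a second time. Your preliminary reductions (uniformizing the witnessing caps into a single $V$ via \autoref{CapsRefineLevels}, and shrinking a $\vartriangleleft$-refining cap to a band via \eqref{Compatibility}) are sound and correspond to what the paper does implicitly with \eqref{Compatibility} and the finiteness of bands. Both arguments are valid; yours is longer but mirrors the standard uniform-space lemma and makes the mechanism transparent, while the paper's is more economical. One minor notational slip in your easy direction: you write ``witnessing $\mathbb{P}\in\mathsf{C}\mathbb{P}^\vartriangleleft$'' where what is shown is $D\in\mathsf{C}\mathbb{P}^\vartriangleleft$ for every cap $D$, i.e. $\mathsf{C}\mathbb{P}\subseteq\mathsf{C}\mathbb{P}^\vartriangleleft$.
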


\begin{proof}
One direction is immediate from the fact that star-refinement is stronger than $\vartriangleleft$-refinement.  Conversely, say $\mathbb{P}$ is regular and take $D\in\mathsf{C}\mathbb{P}$.  By regularity and \eqref{Compatibility}, we have $B\in\mathsf{B}\mathbb{P}$ with $B\vartriangleleft D$, i.e. for each $b\in B$, we have $C_b\in\mathsf{C}\mathbb{P}$ and $d_b\in D$ with $C_bb\leq d_b$.  As $B$ is finite and $\mathbb{P}$ is an $\omega$-poset, we have $A\in\mathsf{B}\mathbb{P}$ with $A\leq B$ and $A\leq C_b$, for all $b\in B$.  For every $p\in A$, we then have $b\in B$ with $p\leq b$ and hence $Ap\leq C_bp\subseteq C_bb\leq d_b$, showing that $A\vartriangleleft_AD$.
\end{proof}

In regular $\omega$-posets, the spectrum consists of round filters.  In fact, it suffices to consider $L\subseteq\mathbb{P}$ that are merely \emph{linked} in that $p\wedge q$, for all $p,q\in L$.

\begin{prp}\label{RoundLinked}
Every round linked selector is minimal.  If $\mathbb{P}$ is an $\omega$-poset,
\[\text{Every minimal selector is round}\qquad\Leftrightarrow\qquad\mathbb{P}\text{ is regular}.\]
\end{prp}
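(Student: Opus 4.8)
The plan is to treat the unconditional claim first and then the two directions of the equivalence separately.

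\textbf{Round linked $\Rightarrow$ minimal.} To show a round linked selector $S$ is minimal, I would fix $s\in S$ and produce a cap meeting $S$ in exactly $s$. Roundness gives $t\in S$ with $t\vartriangleleft s$, and by the explicit description of $\vartriangleleft$ recorded just after the Star-Below definition, this means $Ct=\{s\}$, i.e.\ $C\cap t^\wedge=\{s\}$, for some $C\in\mathsf{C}\mathbb{P}$. Now any $c\in S\cap C$ satisfies $c\wedge t$ since $c,t\in S$ and $S$ is linked; hence $c\in C\cap t^\wedge=\{s\}$, so $c=s$. Thus $S\cap C\subseteq\{s\}$, while $S\cap C\neq\emptyset$ because $S$ is a selector and $C$ a cap, giving $S\cap C=\{s\}$. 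Then $S\setminus\{s\}$ omits the cap $C$ and so is not a selector; as $s$ was arbitrary, $S$ is minimal. (This part uses neither the $\omega$-poset hypothesis nor regularity.)

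\textbf{Regular $\Rightarrow$ minimal selectors round.} Here I would fix a minimal selector $S$, which is an up-set by \autoref{MinimalSelectors} and a filter by \autoref{MinimalSelectorsAreFilters}, and fix $s\in S$. Minimality yields a cap $C$ with $S\cap C=\{s\}$, and regularity yields a cap $D\vartriangleleft C$. Since $S$ is a selector, pick $t\in S\cap D$, and by $\vartriangleleft$-refinement choose $c\in C$ with $t\vartriangleleft c$, witnessed by $Bt\leq c$ for some cap $B$. Picking $b\in S\cap B$ and using that $S$ is a filter, a common lower bound of $b$ and $t$ certifies $b\wedge t$, so $b\in Bt$ and hence $b\leq c$; as $S$ is an up-set this forces $c\in S\cap C=\{s\}$, i.e.\ $c=s$. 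Thus $t\vartriangleleft s$ with $t\in S$, and since $s$ was arbitrary, $S$ is round.

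\textbf{Minimal selectors round $\Rightarrow$ regular.} For this direction I would fix a cap $C$ and set $D=\{p\in\mathbb{P}:\exists c\in C\ (p\vartriangleleft c)\}$, which $\vartriangleleft$-refines $C$ by construction; it remains to check $D$ is a cap. By \autoref{SpectrumCompactT1} a set is a cap precisely when it meets every minimal selector, so take any minimal selector $S$: it meets the cap $C$, say $c\in S\cap C$, and roundness gives $t\in S$ with $t\vartriangleleft c$, whence $t\in S\cap D$. So $D$ is a cap $\vartriangleleft$-refining $C$, and as $C$ was arbitrary, $\mathbb{P}$ is regular. The hard part is the previous (regularity) direction: transferring the relation $t\vartriangleleft c$ coming from the refining cap back to a relation involving $s$ itself. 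This is exactly where the filter property of minimal selectors in an $\omega$-poset is indispensable, since it is the common lower bound of $b$ and $t$ that produces $b\wedge t$ and thereby drives $c$ into $S$, forcing $c=s$.
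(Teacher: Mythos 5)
Your proof is correct, and its overall decomposition matches the paper's, but in the direction ``regular $\Rightarrow$ every minimal selector is round'' you take a genuinely different route. The paper first upgrades plain $\vartriangleleft$-refinement to star-refinement via \autoref{RegularStarRefinement} (which is where it spends the $\omega$-poset hypothesis, through common refinements of finitely many caps): with $D\vartriangleleft_D C$, the witness $d\in D\cap S$ satisfies $d\in Dd\leq c$ automatically, so $d\leq c$ comes for free. You instead work with the bare $\vartriangleleft$-refinement $D\vartriangleleft C$ from \autoref{dfn:regularposet} and pay for the missing comparability by invoking the filter property of minimal selectors (\autoref{MinimalSelectorsAreFilters}, which is where \emph{you} spend the $\omega$-poset hypothesis): the common lower bound of $b\in S\cap B$ and $t$ gives $b\in Bt\leq c$, and the up-set property then drives $c$ into $S\cap C=\{s\}$. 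The two arguments are equally economical; yours has the small advantage of not needing the star-refinement upgrade at all, while the paper's works with linked selectors without explicitly appealing to down-directedness. Your remaining two parts are the paper's arguments in light disguise: for minimality you use the explicit form $Ct=\{s\}$ of $t\vartriangleleft s$ (recorded after the Star-Below definition) where the paper constructs the cap $(C\setminus S)\cup\{s\}$ -- the same cap, derived the same way; and for ``round $\Rightarrow$ regular'' you argue directly that $C^\vartriangleright$ is a cap via the characterisation in \autoref{SpectrumCompactT1} (a set is a cap iff it meets every minimal selector), whereas the paper takes the contrapositive, extracting a minimal selector from $\mathbb{P}\setminus C^\vartriangleright$ by \autoref{SelectorsContainMinimalSelectors} -- the identical fact read in mirror image.
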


\begin{proof}
If $S$ is round then, for any $s\in S$, we have $t\in S$ and $C\in\mathsf{C}\mathbb{P}$ with $t\vartriangleleft_Cs$.  If $S$ is also linked then $S\cap C\subseteq Ct$ so this implies $S\cap C\leq s$ and hence $(C\setminus S)\cup\{s\}$ is cap, as it is refined by the cap $C$. As $S\cap((C\setminus S)\cup\{s\})=\{s\}$, if $S$ is also a selector then this shows that it must be minimal.

Now if $\mathbb{P}$ is not regular then we have $C\in\mathsf{C}\mathbb{P}\setminus\mathsf{C}\mathbb{P}^\vartriangleleft$.  This means that $ C^\vartriangleright$ does not contain any cap, i.e. $\mathbb{P}\setminus C^\vartriangleright$ is a selector and hence contains some minimal selector $S$, by \autoref{MinimalSelectors}.  In particular, we have some $c\in C\cap S$ and hence $s\not\!\vartriangleleft c$, for all $s\in S$, showing that $S$ is not round.

Conversely, say $\mathbb{P}$ is a regular $\omega$-poset and take a minimal selector $S$.  For any $s\in S$, minimality yields $C\in\mathsf{C}\mathbb{P}$ such that $S\cap C=\{s\}$.  As $\mathbb{P}$ is regular, we have $D\in\mathsf{C}\mathbb{P}$ with $D\vartriangleleft_DC$.  As $S$ is a selector, we have $d\in D\cap S$.  Taking $c\in C$ with $d\vartriangleleft_Dc$ and hence $d\leq c$, it follows $c\in S$ so $s=c\vartriangleright d$, showing that $S$ is round.
\end{proof}

Regularity thus means that the spectrum is Hausdorff/regular/metrisable.

\begin{cor}\label{RegularImpliesHausdorff}
If $\mathbb{P}$ is an $\omega$-poset then
\[\mathbb{P}\text{ is regular}\qquad\Rightarrow\qquad\mathsf{S}\mathbb{P}\text{ is Hausdorff}.\]
The converse also holds as long as $\mathbb{P}$ is prime.
\end{cor}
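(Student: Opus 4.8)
The plan is to route both implications through two tools already in hand: \autoref{RoundLinked}, which ties regularity of $\mathbb{P}$ to roundness of minimal selectors, and \autoref{ClosedContainmentProp}, which interprets the star-below relation $\vartriangleleft$ as closed containment in the spectrum. The guiding idea is that Hausdorffness is exactly the assertion that points can be separated by an open set and the complement of a closure, and $p\vartriangleleft q$ is precisely what produces such a pair of sets at the level of $\mathbb{P}$. Throughout I will use that $\mathsf{S}\mathbb{P}$ is a compact $\mathsf{T}_1$ space by \autoref{SpectrumCompactT1} and that $\mathbb{P}_\mathsf{S}$ is a basis by \autoref{BasisOrderIsomorphism} (both valid for any $\omega$-poset).

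For the forward direction, suppose $\mathbb{P}$ is a regular $\omega$-poset and take distinct minimal selectors $S,T\in\mathsf{S}\mathbb{P}$. Minimality (hence both being up-sets) gives some $s\in S\setminus T$. By \autoref{RoundLinked}, regularity makes every minimal selector round, so $S$ being round yields $p\in S$ with $p\vartriangleleft s$. Then \autoref{ClosedContainmentProp} gives $\mathrm{cl}(p^\in)\subseteq s^\in$. Since $s\notin T$ we have $T\notin s^\in$ and therefore $T\notin\mathrm{cl}(p^\in)$, so $p^\in$ and $\mathsf{S}\mathbb{P}\setminus\mathrm{cl}(p^\in)$ are disjoint open sets containing $S$ and $T$ respectively. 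This separates the two points and shows $\mathsf{S}\mathbb{P}$ is Hausdorff. Note no primeness is needed here, since only the first (unconditional) implication of \autoref{ClosedContainmentProp} is invoked.

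For the converse, assume $\mathbb{P}$ is prime and $\mathsf{S}\mathbb{P}$ is Hausdorff; by \autoref{RoundLinked} it suffices to prove every minimal selector is round. A compact Hausdorff space is regular, so given $S\in\mathsf{S}\mathbb{P}$ and $s\in S$, the open neighbourhood $s^\in$ of $S$ contains the closure of a smaller open neighbourhood of $S$; since $\mathbb{P}_\mathsf{S}$ is a basis we may take this smaller neighbourhood to be basic, i.e.\ obtain $p\in S$ with $\mathrm{cl}(p^\in)\subseteq s^\in$. Now the converse half of \autoref{ClosedContainmentProp}, which is exactly where primeness is used, upgrades $\mathrm{cl}(p^\in)\subseteq s^\in$ to $p\vartriangleleft s$. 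As $p\in S$, this witnesses $s\in S^\vartriangleleft$, and since $s\in S$ was arbitrary, $S$ is round. Applying \autoref{RoundLinked} once more yields regularity of $\mathbb{P}$.

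The main obstacle is not any single estimate but making sure the topological regularity step in the converse is pinned down correctly: one must extract a \emph{basic} neighbourhood whose closure sits inside $s^\in$, so that \autoref{ClosedContainmentProp} applies to an actual element $p\in S$ rather than to an abstract open set. This is where compactness (to get regularity from Hausdorffness) and the basis property of $\mathbb{P}_\mathsf{S}$ both enter, and it is the only place primeness is genuinely required. The remaining manipulations are routine once the correspondence ``$p\vartriangleleft q \leftrightarrow \mathrm{cl}(p^\in)\subseteq q^\in$'' and ``roundness $\leftrightarrow$ regularity'' are invoked.
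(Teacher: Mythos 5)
Your proof is correct and follows essentially the same route as the paper: both directions hinge on \autoref{RoundLinked} (roundness of minimal selectors $\Leftrightarrow$ regularity) combined with \autoref{ClosedContainmentProp} ($\vartriangleleft$ as closed containment), with compactness from \autoref{SpectrumCompactT1} and the basis property from \autoref{BasisOrderIsomorphism}. The only differences are cosmetic: you separate two points directly where the paper verifies topological regularity of $\mathsf{S}\mathbb{P}$ (plus $\mathsf{T}_1$), and you argue the converse directly where the paper argues by contraposition via a non-round selector.
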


\begin{proof}
If $\mathbb{P}$ is regular $\omega$-poset then, whenever $S\in p^\in$, \autoref{RoundLinked} yields $q\in S$ with $q\vartriangleleft p$ so $S\in q^\in$ and $\mathrm{cl}(q^\in)\subseteq p^\in$, by \eqref{ClosedContainment}. This shows that $\mathsf{S}\mathbb{P}$ is a regular space and, in particular, Hausdorff.

Conversely, if $\mathbb{P}$ is a prime $\omega$-poset that is not regular then, by \autoref{RoundLinked}, we have some non-round $S\in\mathsf{S}\mathbb{P}$, i.e. we have $c\in S\setminus S^\vartriangleleft$ so $\mathrm{cl}(s^\in)\nsubseteq c^\in$, for all $s\in S$, by \autoref{ClosedContainmentProp}.  This means that $S$ has no closed neighbourhood contained in $c^\in$, showing $\mathsf{S}\mathbb{P}$ is not a regular space.  This, in turn, means that $\mathsf{S}\mathbb{P}$ is not even Hausdorff, as we already know that $\mathsf{S}\mathbb{P}$ is compact, by \autoref{SpectrumCompactT1}.
\end{proof}

We can now also characterise minimal selectors in regular $\omega$-posets as follows.  In particular, in this case the spectrum consists precisely of maximal round filters, just like those considered in compingent lattices in \cite{Shirota1952} and \cite{deVries1962}.

\begin{prp}\label{RoundFilterSelectors}
If $\mathbb{P}$ is a regular $\omega$-poset then
\begin{align*}
    \mathsf{S}\mathbb{P}&=\{S\subseteq\mathbb{P}:S\text{ is a round linked selector }\}\\
    &=\{S\subseteq\mathbb{P}:S\text{ is a round filter selector }\}\\
    &=\{S\subseteq\mathbb{P}:S\text{ is a maximal round filter}\}.
\end{align*}
\end{prp}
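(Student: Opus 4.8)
The plan is to treat the four descriptions as a single loop of inclusions. Write $A=\mathsf{S}\mathbb{P}$ for the set of minimal selectors, and let $B$, $C$, $D$ denote the round linked selectors, the round filter selectors, and the maximal round filters respectively. First I would establish $A=B=C$, then $A\subseteq D$, and finally the main inclusion $D\subseteq A$.

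For $A=B=C$: if $S\in A$ then $S$ is round by \autoref{RoundLinked} (this is where regularity enters) and is a filter by \autoref{MinimalSelectorsAreFilters}; since a common lower bound of $p,q\in S$ witnesses $p\wedge q$, every filter is linked, so $S\in C\subseteq B$. Conversely $C\subseteq B$ is immediate (filters are linked), and every round linked selector is minimal by \autoref{RoundLinked}, giving $B\subseteq A$. Thus $A\subseteq C\subseteq B\subseteq A$. The inclusion $A\subseteq D$ is then quick: an $S\in A$ is already a round filter by the above, so I only need maximality. If $T$ is a round filter with $S\subseteq T$, then $T$ is a selector (a superset of a selector) and linked (being a filter), hence a minimal selector by \autoref{RoundLinked}; as $S\subseteq T$ is itself a selector, minimality forces $S=T$, so $S$ is maximal among round filters.

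The crux is $D\subseteq A$, i.e. that a maximal round filter $S$ is a minimal selector. Here I would first record the key observation that every element $p$ of \emph{any} round filter satisfies $p^\in\neq\emptyset$. Indeed, roundness provides $t\vartriangleleft p$ with $t$ in the filter, and by the explicit description of $\vartriangleleft$ there is a cap $C$ with $Ct=\{p\}$. If we had $p^\in=\emptyset$, then $p\precsim\emptyset$ by \autoref{prp:caporder2}, so the definition of $\precsim$ applied to $C=(C\setminus\{p\})\cup\{p\}\in\mathsf{C}\mathbb{P}$ shows $C\setminus\{p\}$ is again a cap; but its star $(C\setminus\{p\})t=Ct\setminus\{p\}=\emptyset$, contradicting \autoref{NonemptyStars}. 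With nonemptiness established, the closed sets $\{\mathrm{cl}(s^\in):s\in S\}$ have the finite intersection property, since for $s_1,\dots,s_n\in S$ the filter yields $s\leq s_i$ with $\emptyset\neq s^\in\subseteq\bigcap_i\mathrm{cl}(s_i^\in)$. Compactness of $\mathsf{S}\mathbb{P}$ (\autoref{SpectrumCompactT1}) then yields a point $T\in\bigcap_{s\in S}\mathrm{cl}(s^\in)\subseteq\mathsf{S}\mathbb{P}$.

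Finally I would merge $S$ with this cluster point $T$. Since $\mathbb{P}_\mathsf{S}$ is a basis (\autoref{BasisOrderIsomorphism}), for $t\in T$ and $s\in S$ the neighbourhood $t^\in$ of $T$ must meet $s^\in$, producing a minimal selector through both $t$ and $s$ and hence, via \autoref{MinimalSelectorsAreFilters}, $t\wedge s$. Therefore $S\cup T$ is pairwise compatible, i.e. linked; it is round (both $S$ and $T$ are round, the latter by \autoref{RoundLinked}); and it is a selector as it contains $T$. By \autoref{RoundLinked}, $S\cup T$ is then a minimal selector, and containing the selector $T$ it must equal $T$, so $S\subseteq T$. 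As $T$ is a round filter and $S$ is a maximal round filter, this gives $S=T\in A$. I expect the genuine obstacle to be precisely this inclusion $D\subseteq A$: the delicate points are the nonemptiness lemma $p^\in\neq\emptyset$ (which is what lets compactness produce $T$ without assuming $\mathbb{P}$ prime) and the verification that $S\cup T$ remains round, linked, and a selector so that \autoref{RoundLinked} applies.
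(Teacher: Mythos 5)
Your proposal is correct, and while your treatment of the first three descriptions and of the inclusion ``minimal selector $\Rightarrow$ maximal round filter'' coincides with the paper's (via \autoref{RoundLinked} and \autoref{MinimalSelectorsAreFilters}), your proof of the crux --- that a maximal round filter $S$ is a minimal selector --- takes a genuinely different route. The paper stays purely combinatorial: if $S$ were not a selector then, by \autoref{UpsetSelectors}, it would be finite and contain no atoms, hence principal, $S=m^\leq$ for a non-atomic minimum $m$; since roundness then forces $m\vartriangleleft m$ (by \eqref{Compatibility}), the strictly larger filter $q^\leq$ for any $q<m$ is still round, contradicting maximality. You instead argue topologically: your preliminary lemma that $p^\in\neq\emptyset$ whenever some $t\vartriangleleft p$ --- proved via the explicit description $Ct=\{p\}$ of $\vartriangleleft$, \autoref{prp:caporder2} and \autoref{NonemptyStars} --- is correct and is the key point, since it says roundness substitutes for primeness here (elements of round filters automatically avoid $\mathbb{P}_\emptyset$); compactness (\autoref{SpectrumCompactT1}) then yields a cluster point $T\in\bigcap_{s\in S}\mathrm{cl}(s^\in)$, and your merge $S\cup T$ is indeed round, linked and a selector, so \autoref{RoundLinked} gives $S\subseteq T$ and maximality gives $S=T$. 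The trade-off: the paper's argument is shorter and needs nothing beyond order theory, whereas yours costs the auxiliary lemma and compactness but exhibits the maximal round filter concretely as a point of the spectrum, and it even handles the degenerate case $S=\emptyset$ when $\mathbb{P}\neq\emptyset$ (your empty intersection is all of the nonempty compactum $\mathsf{S}\mathbb{P}$), a case the paper's ``finite filter has a minimum $m$'' step tacitly excludes; both proofs, as written, assume $\mathbb{P}\neq\emptyset$.
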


\begin{proof}
By \autoref{RoundLinked}, every round linked selector is minimal and every minimal selector is round.  By \autoref{BasisOrderIsomorphism}, every minimal selector is also a filter and, in particular, linked.  This proves the first two equalities.

For the last, first note that any round filter $R$ containing a selector $S$ must again be a selector and hence a minimal selector, by what we just proved, which implies $R=S$.  This shows that round filter selectors are always maximal among round filters.  Conversely, say $M$ is a maximal round filter.  If $M$ were not a selector then it would be finite and not contain any atoms of $\mathbb{P}$, by \autoref{UpsetSelectors}.  As $M$ is a filter, finiteness implies it has a minimum $m$, but then $M=m^\geq$ would not be maximal, as $m$ is not an atom, a contradiction.  Thus $M$ is a selector.
\end{proof}

When the space $X$ in \autoref{WeaklyGradedCapBasis} and \autoref{PredeterminedSubBasis} is Hausdorff, minor modifications of the proofs allow us to construct the cap-bases so that strict containment implies closed containment, i.e.
\[p\subsetneqq q\qquad\Rightarrow\qquad\mathrm{cl}(p)\subseteq q.\]
In terms of the resulting poset, this means ${<}\subseteq{\vartriangleleft}$ (and we can likewise modify the proof of \autoref{PredeterminedSubposet} below when $\mathbb{P}$ is regular to ensure ${<}\subseteq{\vartriangleleft}$ on the subposet $\mathbb{Q}$).  When ${<}\subseteq{\vartriangleleft}$, our spectrum consists precisely of the \emph{ultrafilters}, i.e. the maximal filters in $\mathbb{P}$.  This ultrafilter spectrum is just like that considered for Boolean algebras in the classic Stone duality (originally formulated in terms of maximal ideals -- see \cite{Stone1936}) and has also been considered for general posets more recently in \cite{MummertStephan2010}.

\begin{cor}\label{cor:ultrafilter}
    If $\mathbb{P}$ is a regular $\omega$-poset with ${<}\subseteq{\vartriangleleft}$ then
    \[\mathsf{S}\mathbb{P}=\{U\subseteq\mathbb{P}:U\text{ is an ultrafilter}\}.\]
\end{cor}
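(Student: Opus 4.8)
The plan is to leverage \autoref{RoundFilterSelectors}, which already identifies $\mathsf{S}\mathbb{P}$ with the maximal round filters (equivalently, the round filter selectors) of a regular $\omega$-poset $\mathbb{P}$. So it will suffice to show that, under the extra hypothesis ${<}\subseteq{\vartriangleleft}$, the round filter selectors are exactly the ultrafilters. First I would isolate one key observation: \emph{every filter that is a selector is automatically round}. Given a filter selector $F$ and $r\in F$, either $F$ contains some $q<r$, in which case $q\vartriangleleft r$ by ${<}\subseteq{\vartriangleleft}$ and $F$ is round at $r$; or else $r$ is the minimum of $F$, forcing $F=r^\leq$. Since principal filters in an $\omega$-poset are finite, $F$ is then a finite up-set selector, so it contains an atom by \autoref{UpsetSelectors}; this atom lies in $r^\leq$, hence equals $r$, so $r$ is an atom and $r\vartriangleleft r$ (using $p^\vartriangleleft=p^\leq$ for atoms). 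Thus $F$ is round at $r$ in every case.

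Next I would show every ultrafilter $U$ lies in $\mathsf{S}\mathbb{P}$. First, $U$ is a selector: if $U$ is infinite this is immediate from \autoref{UpsetSelectors}, and if $U$ is finite then down-directedness gives it a minimum $m$ with $U=m^\leq$, while maximality forces $m$ to be an atom (otherwise $m'^\leq\supsetneq U$ would be a strictly larger filter for any $m'<m$), so again $U$ is a selector by \autoref{UpsetSelectors}. Being a filter selector, $U$ is round by the observation above, hence $U\in\mathsf{S}\mathbb{P}$ by \autoref{RoundFilterSelectors}.

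For the reverse inclusion, take $S\in\mathsf{S}\mathbb{P}$; it is a filter by \autoref{MinimalSelectorsAreFilters} and a selector by definition, so I must only verify maximality. Suppose some filter $F\supsetneq S$; then $F\supseteq S$ is again a selector, hence a round filter selector by the observation, hence $F\in\mathsf{S}\mathbb{P}$ is a minimal selector. But $S\subseteq F$ is itself a selector, so minimality forces $F=S$, a contradiction. Thus $S$ is a maximal filter, i.e. an ultrafilter, completing the equality $\mathsf{S}\mathbb{P}=\{U:U\text{ is an ultrafilter}\}$.

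I expect the only delicate point to be the boundary case where a filter's minimum is an atom: there roundness cannot be supplied by a strictly smaller element and must instead come from the atom identity $p^\vartriangleleft=p^\leq$, which in turn relies on the interplay between \autoref{UpsetSelectors} (finite selectors contain atoms) and the finiteness of principal filters in $\omega$-posets. Everything else amounts to routine bookkeeping with the selector, filter, and roundness definitions.
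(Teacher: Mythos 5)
Your proposal is correct, and its engine is the same as the paper's: establish roundness using ${<}\subseteq{\vartriangleleft}$, handle the boundary case of a filter with a minimum via atomicity and the identity $p^\vartriangleleft=p^\leq$ for atoms, then conclude from \autoref{RoundFilterSelectors}. The decomposition genuinely differs, however. The paper's proof is four lines using the \emph{third} equality of \autoref{RoundFilterSelectors}: every ultrafilter is round (no minimum: apply ${<}\subseteq{\vartriangleleft}$; minimum $m$: maximality forces $m$ to be an atom, so $m\vartriangleleft m$), hence the ultrafilters are precisely the maximal round filters, and the proposition finishes. The ``hence'' there silently uses that every maximal round filter is an ultrafilter, which needs a Zorn-type extension of filters that the paper leaves implicit. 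You instead route through the \emph{second} equality (round filter selectors), which obliges you to verify two things the paper absorbs into that identification --- that ultrafilters are selectors, and that minimal selectors are maximal filters --- but in exchange your final step (any filter $F\supsetneq S$ would be a round filter selector by your lemma, hence a minimal selector, contradicting that its proper subset $S$ is a selector) discharges exactly the implicit step with no appeal to maximal extensions. Your auxiliary lemma is also marginally stronger than the paper's claim: \emph{every} filter selector is round, with atomicity of the minimum extracted from \autoref{UpsetSelectors} together with finiteness of principal filters in an $\omega$-poset, rather than from maximality of an ultrafilter. All the individual steps check out against the paper's definitions, so the net effect is a slightly longer but more self-contained argument.
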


\begin{proof}
    Assume $\mathbb{P}$ is an $\omega$-poset with ${<}\subseteq{\vartriangleleft}$.  Take an ultrafilter $U\subseteq\mathbb{P}$.  If $U$ has no minimum then it is round because ${<}\subseteq{\vartriangleleft}$.  If $U$ has a minimum $m$ then this must be an atom, by maximality, in which case $m\vartriangleleft m$ so $U$ is again round.  So all ultrafilters are round and hence these are precisely the maximal round filters.  The result now follows immediately from \autoref{RoundFilterSelectors}.
\end{proof}

However, for graded posets, this only happens when $\vartriangleleft$ is reflexive.  In this case the spectrum has to be totally disconnected and so this never happens for the continua ($\Leftrightarrow$ connected compacta) we are primarily interested in.

\begin{prp}
    If $\mathbb{P}$ is a graded $\omega$-poset with ${<}\subseteq{\vartriangleleft}$ then $\vartriangleleft$ is reflexive.
\end{prp}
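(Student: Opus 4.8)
The plan is to exhibit, for each $p\in\mathbb{P}$, a single cap witnessing $p\vartriangleleft p$, i.e.\ a cap $C$ with $Cp\leq p$; the natural candidate is a sufficiently deep level $\mathbb{P}_m$. Everything hinges on a structural description of the common-lower-bound set $p^\wedge$ in terms of the predecessors of $p$. First I would prove, using gradedness, that
\[p^\wedge=p^\leq\cup\bigcup_{q\lessdot p}q^\wedge.\]
The inclusion $\supseteq$ is immediate, since $p\in p^\leq\subseteq p^\wedge$ and each predecessor satisfies $q<p$, so $q^\wedge\subseteq p^\wedge$. For $\subseteq$, suppose $r\wedge p$ via some $s$ with $s\leq r$ and $s\leq p$: if $s=p$ then $r\in p^\leq$, while if $s<p$ then gradedness produces an element of rank $\mathsf{r}(p)+1$ in the interval $[s,p]$, which is necessarily a predecessor $q\lessdot p$ with $s\leq q$, whence $r\in q^\wedge$.

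Next I would exploit the hypothesis ${<}\subseteq{\vartriangleleft}$ on the predecessors. Since $\mathbb{P}$ is an $\omega$-poset, the predecessors of $p$ all lie in the finite level $\mathbb{P}_{\mathsf{r}(p)+1}$, so there are only finitely many, say $q_1,\dots,q_k$. Each $q_i<p$ gives $q_i\vartriangleleft p$, hence $q_i\vartriangleleft_{C_i}p$ for some $C_i\in\mathsf{C}\mathbb{P}$. Using \autoref{CapsRefineLevels} I would then choose a level $\mathbb{P}_m$ with $m>\mathsf{r}(p)$ refining every $C_i$; by \eqref{RefiningStars} this yields $q_i\vartriangleleft_{\mathbb{P}_m}p$, i.e.\ $\mathbb{P}_mq_i\leq p$, for all $i$.

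It remains to control the term $\mathbb{P}_m\cap p^\leq$. I would check that no element of a level beyond $\mathsf{r}(p)$ lies strictly above $p$: any such $r>p$ would satisfy $\mathsf{r}(r)<\mathsf{r}(p)\leq m$, yet $p\in\mathbb{P}^m$ with $p<r$ contradicts $r$ being an atom of $\mathbb{P}^m$. Hence $\mathbb{P}_m\cap p^\leq\subseteq\{p\}$, which trivially refines $\{p\}$. Combining with the displayed decomposition,
\[\mathbb{P}_mp=\mathbb{P}_m\cap p^\wedge=(\mathbb{P}_m\cap p^\leq)\cup\bigcup_i\mathbb{P}_mq_i\leq p,\]
so $p\vartriangleleft_{\mathbb{P}_m}p$ and therefore $p\vartriangleleft p$. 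This also subsumes the atomic case, for which there are no predecessors and the decomposition degenerates to $p^\wedge=p^\leq$.

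The main obstacle is the decomposition lemma: it is precisely where gradedness (rather than mere weak gradedness) is essential, since I must descend from an \emph{arbitrary} strict lower bound $s<p$ to a predecessor in a single rank step, which fails without the interval-to-interval behaviour of $\mathsf{r}$. The remaining work is routine bookkeeping with \eqref{RefiningStars} and the finiteness of levels in an $\omega$-poset.
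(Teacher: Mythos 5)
Your proof is correct and takes essentially the same route as the paper's: both reduce to the finitely many predecessors of $p$ (the elements of $\mathbb{P}_{\mathsf{r}(p)+1}$ below $p$), apply the hypothesis ${<}\subseteq{\vartriangleleft}$ to obtain a single common witnessing cap, and then verify that the star of $p$ is below $p$ by splitting on whether the common lower bound is $p$ itself or strictly below $p$, with gradedness routing the latter case through a predecessor exactly as in the paper's use of $p^>=\bigcup_{f\in F}f^\geq$. The only cosmetic differences are that you package this fact as the decomposition $p^\wedge=p^\leq\cup\bigcup_{q\lessdot p}q^\wedge$ and take the witnessing cap to be a deep level $\mathbb{P}_m$, which lets you settle the $r=p$ case by a rank argument where the paper instead derives a contradiction from $p<q\in Cf\leq p$.
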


\begin{proof}
    Assume $\mathbb{P}$ is an $\omega$-poset with ${<}\subseteq{\vartriangleleft}$ and take any $p\in\mathbb{P}$.  If $p$ is an atom then, in particular, $p\vartriangleleft p$.  If $p$ is not an atom then $F=p^\geq\cap\mathbb{P}_{\mathsf{r}(p)+1}$ is a finite set with $p^>=\bigcup_{f\in F}f^\geq$.  As ${<}\subseteq{\vartriangleleft}$, we then have $C\in\mathsf{C}\mathbb{P}$ with $f\vartriangleleft_Cp$, for all $f\in F$.  Take any $q\in Cp$, so $q\in C$ and we have $r\leq p,q$.  If $r=p$ then $q=p$ because $p=r<q$ would imply $q\geq f$ and, in particular, $q\in Cf\leq p$, for any $f\in F$, a contradiction.  On the other hand, if $r<p$ then $r\leq f$, for some $f\in F$, which implies $q\in Cf\leq p$.  In either case, $q\leq p$, showing that $Cp\leq p$, i.e. $p\vartriangleleft_Cp$.  As $p$ was arbitrary, this proves that $\vartriangleleft$ is reflexive.
\end{proof}

Regularity also yields the following characterisations of prime subsets.

\begin{prp}
Consider the following statements about some $S\subseteq\mathbb{P}$.
\begin{enumerate}
\item\label{PrimeSelector} $S$ is prime.
\item\label{RoundPrimeUpset} $S$ is star-prime and round.
\item\label{MinimalInP} $S$ is a round up-set whose atoms are all already atoms in $\mathbb{P}$.
\end{enumerate}
If $\mathbb{P}$ is an $\omega$-poset then \ref{RoundPrimeUpset}$\Rightarrow$\ref{MinimalInP}$\Rightarrow$\ref{PrimeSelector}.  If $\mathbb{P}$ is also regular then \ref{PrimeSelector}$\Rightarrow$\ref{RoundPrimeUpset} as well.
\end{prp}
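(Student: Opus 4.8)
The plan is to prove the two stated implication chains separately, isolating a single construction that threads a minimal selector through each point of $S$. Throughout I will freely use that an up-set of an $\omega$-poset is again an $\omega$-poset, that a nonempty star-prime set is automatically a selector (since $Cp\subseteq C$), and the replacement cap $D=(C\setminus q^\geq)\cup\{q\}$ recorded just after the explicit description of $\vartriangleleft$.

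For \ref{RoundPrimeUpset}$\Rightarrow$\ref{MinimalInP} roundness is shared, so I must extract the up-set property and the atom condition from star-primeness and roundness. The up-set property is the crux: given $s\in S$ with $s\leq p$, roundness yields $s'\in S$ and a cap $C$ with $Cs'\leq s\leq p$, so $s'\vartriangleleft_C p$. Replacing $C$ by the cap $D=(C\setminus p^\geq)\cup\{p\}$, I would check that $Ds'=\{p\}$: any member of $D\setminus\{p\}=C\setminus p^\geq$ lying in $(s')^\wedge$ would belong to $Cs'\subseteq p^\geq$, a contradiction, and $Ds'\neq\emptyset$ by \autoref{NonemptyStars}. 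Star-primeness of $S$ at $s'$ then forces $p\in Ds'\cap S$, i.e. $p\in S$. For the atom condition I would take $a$ minimal in $S$; \autoref{PRoundStrong} supplies $a''\in S$ with $a''\leq a$ and $a''\vartriangleleft a$, whence $a''=a$ and $a\vartriangleleft a$, and if $a$ were not an atom of $\mathbb{P}$ I would refine the witnessing cap to a high level $\mathbb{P}_m$ via \eqref{RefiningStars} and use star-primeness to pull out an element of $S$ strictly below $a$, contradicting minimality.

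For \ref{MinimalInP}$\Rightarrow$\ref{PrimeSelector} I first recover star-primeness of $S$: by \autoref{UpsetCaps} the caps of $S$ are exactly the sets $C\cap S$, so applying \autoref{WedgeSplit} inside the $\omega$-poset $S$ to the pair $p,p$ produces, for each $C\in\mathsf{C}\mathbb{P}$ and $p\in S$, some $c\in S\cap Cp$. Now for each $q\in S$ I would build a chain $q=s_0>s_1>\cdots$ in $S$ with $s_{n+1}\vartriangleleft s_n$: \autoref{PRoundStrong} gives $s_{n+1}\leq s_n$ with $s_{n+1}\vartriangleleft s_n$, and whenever it returns $s_n$ itself (so $s_n\vartriangleleft s_n$) at a non-atomic $s_n$, the same level-refinement trick produces a strictly smaller element of $S$ which is still $\vartriangleleft s_n$ by \eqref{Compatibility}. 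Setting $T=\bigcup_n s_n^\leq$, which lies in $S$ because $S$ is an up-set, $T$ is linked and round, and it is a selector since the chain is either infinite or reaches an atom (\autoref{UpsetSelectors}); hence $T$ is a minimal selector through $q$ inside $S$ by \autoref{RoundLinked}. Thus $S$ is a union of minimal selectors and so prime by \autoref{PrimeImpliesUnionOfMinimal}.

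Finally, for the regular case \ref{PrimeSelector}$\Rightarrow$\ref{RoundPrimeUpset}, I would write the prime set $S$ as a union of minimal selectors (\autoref{PrimeImpliesUnionOfMinimal}); under regularity each of these is round by \autoref{RoundLinked} and is a filter by \autoref{MinimalSelectorsAreFilters}, hence star-prime, and both roundness and star-primeness pass to unions, giving exactly \ref{RoundPrimeUpset}. The main obstacle is the up-set step in \ref{RoundPrimeUpset}$\Rightarrow$\ref{MinimalInP}, where the cap replacement $D$ is needed to collapse a star down to the single point $\{p\}$ so that star-primeness can even be applied; a secondary subtlety is ensuring the chain descends strictly (so that $T$ is genuinely a selector) at non-atomic self-star-below elements, which the level-refinement argument is designed to overcome.
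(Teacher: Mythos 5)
Your proof is correct, and while the outer implications track the paper closely, your treatment of the middle one is genuinely different and in one respect sharper. For (2)$\Rightarrow$(3) your cap replacement $D=(C\setminus p^\geq)\cup\{p\}$ with $Ds'=\{p\}$ is the same maneuver as the paper's $D=(C\setminus Cq)\cup\{t\}$, and your level-refinement argument for the atom condition (choosing $\mathbb{P}_m\leq C$ with $a\notin\mathbb{P}_m$ and applying star-primeness) is the contrapositive of the paper's choice of $B=\mathbb{P}_n$ with $p\notin B$; likewise (1)$\Rightarrow$(2) is verbatim the paper's argument that minimal selectors are round, linked, hence star-prime, with both properties passing to unions via \autoref{PrimeImpliesUnionOfMinimal}. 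The divergence is in (3)$\Rightarrow$(1): the paper splits off the case of an atom star-below $p$ and then asserts tersely that roundness lets one ``recursively define a sequence of \emph{distinct} $p_n\in S$'' with $p_{n+1}\vartriangleleft p_n$ -- roundness alone may keep returning $p_n$ itself, so this distinctness needs the kind of repair you supply. You instead first extract star-primeness of $S$ from (3) via the ``moreover'' clause of \autoref{UpsetCaps} together with \autoref{WedgeSplit} applied inside the $\omega$-poset $S$ (a step that uses neither roundness nor star-primeness as hypotheses, so you have in fact shown (3)$\Rightarrow$(2) unconditionally, i.e. (2) and (3) are equivalent over $\omega$-posets -- a small strengthening the paper does not record), and then \autoref{PRoundStrong} plus the level trick, with \eqref{Compatibility} preserving $\vartriangleleft$, guarantees the chain descends strictly at every non-atomic self-star-below element, with atoms absorbed into the stabilising case where $a\vartriangleleft a$ keeps $T=\bigcup_n s_n^\leq$ round. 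The paper's route is shorter; yours makes the strictness of the descent fully explicit, unifies the atom case into the chain construction, and yields the extra equivalence essentially for free.
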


\begin{proof}\
\begin{itemize}
\item[\ref{RoundPrimeUpset}$\Rightarrow$\ref{MinimalInP}] If $S$ is round then, for any $p\in S$, we have $q\in S$ and $C\in\mathsf{C}\mathbb{P}$ with $q\vartriangleleft_Cp$.  For any $t\geq p$, this means $D=(C\setminus Cq)\cup\{t\}$ is refined by $C$ and is thus also a cap with $Dq=\{t\}$.  If $S$ is also star-prime then $t\in S$, showing that $S$ is an up-set.  Moreover, if $p$ is not an atom in $\mathbb{P}$ then we can choose $B\in\mathsf{C}\mathbb{P}$ refining $C$ with $p\notin B$ (e.g. take $t<p$ and $B=\mathbb{P}_n$ for some $n\geq\mathsf{r}(t)$ with $\mathbb{P}_n\leq C$).  As $S$ is star-prime, we then have $r\in S\cap Bq\leq Cq\leq p$.  Thus $S\ni r<p$, showing that $p$ is not an atom in $S$ either.

\item[\ref{MinimalInP}$\Rightarrow$\ref{PrimeSelector}] Take any $p\in S$.  If we have some atom $a$ of $\mathbb{P}$ with $a\vartriangleleft p$ and hence $a\leq p$ then $a^\leq$ is a minimal selector containing $p$.  Otherwise, assuming $S$ is round and has no extra atoms, we can recursively define a sequence of distinct $p_n\in S$ with $p=p_0$ and $p_n\vartriangleright p_{n+1}$, for all $n\in\omega$.  As long as $S$ is also an up-set, the upwards closure $U=\bigcup_{n\in\omega}p_n^\leq$ is then a round linked selector.  In particular, $U$ is a minimal selector containing $p$, by \autoref{RoundLinked}.  So $S$ is a union of minimal selectors and thus prime, by \autoref{PrimeImpliesUnionOfMinimal}.

\item[\ref{PrimeSelector}$\Rightarrow$\ref{RoundPrimeUpset}] Now if $\mathbb{P}$ is regular then every $S\in\mathsf{S}\mathbb{P}$ is round and linked, by \autoref{RoundLinked}.  Thus, for every $s\in S$ and $C\in\mathsf{C}\mathbb{P}$, $\emptyset\neq S\cap C\subseteq S\cap Cs$, i.e. $S\cap Cs\neq\emptyset$.  So every minimal selector is round and star-prime and hence the same applies to any union of minimal selectors.  By \autoref{PrimeImpliesUnionOfMinimal}, these are precisely the prime subsets. \qedhere
\end{itemize}
\end{proof}

As $\mathbb{P}$ itself is always star-prime, the above result implies that, in particular, any round $\omega$-poset is prime and, conversely, any regular prime $\omega$-poset is round.  Also, if $\vartriangleleft\ \subseteq\ <$ (e.g. if $\mathbb{P}$ is a cap-basis of proper open subsets of a continuum) then no round subset can contain any atoms, making the last condition in \ref{MinimalInP} superfluous, i.e. in this case every round up-set is prime (and conversely if $\mathbb{P}$ is also regular).

Lastly, we note linked selectors can be made round by taking the star-up-closure.

\begin{cor}\label{LinkedSelector}
    If $\mathbb{P}$ is regular and $S\subseteq\mathbb{P}$ is a linked selector then $S^\vartriangleleft\in\mathsf{S}\mathbb{P}$.
\end{cor}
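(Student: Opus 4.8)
The plan is to show that $S^\vartriangleleft$ is a round, linked selector and then invoke the unconditional first sentence of \autoref{RoundLinked}, which says that every round linked selector is minimal, hence an element of $\mathsf{S}\mathbb{P}$. Throughout I would use only the bare definition of regularity, namely that every cap is $\vartriangleleft$-refined by some cap, together with \eqref{WedgeStar}, \eqref{Compatibility}, and \autoref{NonemptyStars}; notably no $\omega$-poset hypothesis seems to be required. The guiding idea is that $S^\vartriangleleft$ is the ``star-interior'' of $S$, and regularity lets us fatten any single $\vartriangleleft$-step into two, which is exactly what roundness demands.

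First I would check that $S^\vartriangleleft$ is a selector. Given any cap $C$, regularity yields a cap $D$ with $D\vartriangleleft C$; since $S$ is a selector it meets $D$ in some $s$, and then $s\vartriangleleft c$ for some $c\in C$, so $c\in S^\vartriangleleft\cap C$. Next I would verify that $S^\vartriangleleft$ is linked. If $p,q\in S^\vartriangleleft$, choose $s,t\in S$ with $s\vartriangleleft p$ and $t\vartriangleleft q$; linkedness of $S$ gives $s\wedge t$, so feeding $s\wedge t\vartriangleleft q$ into \eqref{WedgeStar} yields $s\wedge q$, and feeding $q\wedge s\vartriangleleft p$ into \eqref{WedgeStar} again yields $p\wedge q$.

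The main work, and the step I expect to be the real obstacle, is roundness: for each $r\in S^\vartriangleleft$ I must produce some $q\in S^\vartriangleleft$ with $q\vartriangleleft r$. Fix $s\in S$ and a cap $C$ with $s\vartriangleleft_C r$, i.e. $Cs\leq r$. Applying regularity twice gives caps $E\vartriangleleft D\vartriangleleft C$. Since $S$ is a selector, pick $s'\in S\cap E$, and since $S$ is linked, $s\wedge s'$. From $E\vartriangleleft D$ obtain $d'\in D$ with $s'\vartriangleleft d'$, so that $d'\in S^\vartriangleleft$ because $s'\in S$. It remains to see $d'\vartriangleleft r$: combining $s\wedge s'\vartriangleleft d'$ with \eqref{WedgeStar} gives $s\wedge d'$; from $D\vartriangleleft C$ take $c\in C$ with $d'\vartriangleleft c$, and combining $s\wedge d'\vartriangleleft c$ with \eqref{WedgeStar} gives $s\wedge c$, whence $c\in Cs$ and so $c\leq r$; thus $d'\vartriangleleft c\leq r$ forces $d'\vartriangleleft r$ by \eqref{Compatibility}. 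Then $q=d'$ witnesses roundness, and \autoref{RoundLinked} completes the argument. The delicate point is that membership in $S^\vartriangleleft$ must always be certified by a witness in $S$ lying one $\vartriangleleft$-step below, which is precisely why two successive refinements of $C$ (rather than one) are needed: one refinement to push $r$ down to an element $d'$ of $S^\vartriangleleft$, and a second to guarantee that $d'$ itself retains such a witness $s'$ drawn from the selector $S$.
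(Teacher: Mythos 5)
Your proof is correct and follows essentially the same route as the paper: show $S^\vartriangleleft$ is linked via \eqref{WedgeStar}, a selector via one application of regularity, and round via a further refinement step with \eqref{Compatibility}, then invoke the first (unconditional) assertion of \autoref{RoundLinked}. The only cosmetic difference is that where you apply regularity twice explicitly ($E\vartriangleleft D\vartriangleleft C$) in the roundness step, the paper applies it once and cites the already-established selector property of $S^\vartriangleleft$ to produce the witness in $B\cap S^\vartriangleleft$ --- the same argument, merely unrolled.
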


\begin{proof}
    First note $S^\vartriangleleft$ is linked, by \eqref{WedgeStar}.  To see that $S^\vartriangleleft$ is a selector, take any $C\in\mathsf{C}\mathbb{P}$.  As $\mathbb{P}$ is regular, we have $B\in\mathsf{C}\mathbb{P}$ with $B\vartriangleleft C$.  As $S$ is a selector, we have $b\in B\cap S$.  Then we have $c\in C\cap b^\vartriangleleft\subseteq C\cap S^\vartriangleleft$, as required.  To see that $S^\vartriangleleft$ is round, take any $t\in S^\vartriangleleft$, so we have $s\in S$ and $C\in\mathsf{C}\mathbb{P}$ with $s\vartriangleleft_Ct$.  As $\mathbb{P}$ is regular, we have $B\in\mathsf{C}\mathbb{P}$ with $B\vartriangleleft C$.  As $S^\vartriangleleft$ is a selector, we have $b\in B\cap S^\vartriangleleft$.  Then we have $c\in C\cap b^\vartriangleleft\subseteq C\cap S^{\vartriangleleft\vartriangleleft}\subseteq Cs$, again by \eqref{WedgeStar}, so $b\vartriangleleft c\leq t$, showing that $S^\vartriangleleft$ is indeed round.  Thus $S^\vartriangleleft$ is a minimal selector, by \autoref{RoundLinked}.
\end{proof}

This gives us the following variant of \autoref{SpaceRecovery}, showing that \autoref{CircleArc} is one instance of a more general phenomenon where the spectrum of a regular $\omega$-basis is a quotient of the original compactum.

\begin{cor}\label{CapBasification}
If $\mathbb{P}\subseteq\mathsf{P}X\setminus\{\emptyset\}$ is a regular $\omega$-basis of a $\mathsf{T}_1$ space $X$ then
\[\eta(x)=x^{\in\vartriangleleft}=\{p\in\mathbb{P}:\exists q\in\mathbb{P}\ (x\in q\vartriangleleft p)\}\]
defines a continuous map $\eta:X\rightarrow\mathsf{S}\mathbb{P}$.  If $X$ is compact then $\eta$ is also a closed surjective map.  In this case, $\eta$ is also injective precisely when $\mathbb{P}$ is a cap-basis.
\end{cor}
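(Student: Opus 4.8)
The plan is to verify in turn that $\eta$ lands in $\mathsf{S}\mathbb{P}$ and is continuous, that it is closed and surjective in the compact case, and finally that injectivity is equivalent to being a cap-basis. For well-definedness I would first note that $x^\in=\{p\in\mathbb{P}:x\in p\}$ is a \emph{linked selector}: it is a selector since every cap covers $X$ by \autoref{CapsCovers} and hence meets $x^\in$, and it is linked since any $p,q\in x^\in$ have $x\in p\cap q$, so (as $\mathbb{P}$ is a basis of non-empty sets) some $r\in\mathbb{P}$ satisfies $x\in r\subseteq p\cap q$, giving $p\wedge q$. Then \autoref{LinkedSelector} yields $\eta(x)=x^{\in\vartriangleleft}\in\mathsf{S}\mathbb{P}$. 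The workhorse for everything that follows is the implication, valid in $X$,
\[q\vartriangleleft p\qquad\Rightarrow\qquad\mathrm{cl}(q)\subseteq p,\]
which I would prove by taking a cap $C$ with $Cq\leq p$; since $C$ is a cover by \autoref{CapsCovers}, any $y\in\mathrm{cl}(q)$ lies in some $c\in C$, and $c$ is then an open set meeting $q$, so $c\in Cq$ and hence $y\in c\subseteq p$. Continuity is then immediate because $\eta^{-1}[p^\in]=\{x:\exists q\ (x\in q\vartriangleleft p)\}=\bigcup\{q\in\mathbb{P}:q\vartriangleleft p\}$ is a union of open sets, and the sets $p^\in$ form a subbasis of $\mathsf{S}\mathbb{P}$.

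Assume now $X$ is compact. As $\mathbb{P}$ is a regular $\omega$-poset, $\mathsf{S}\mathbb{P}$ is Hausdorff by \autoref{RegularImpliesHausdorff}, so the continuous image-map $\eta$ out of the compact $X$ is automatically closed. For surjectivity I would take any $S\in\mathsf{S}\mathbb{P}$, which is a round filter by \autoref{RoundLinked} and \autoref{MinimalSelectorsAreFilters}. The closures $\{\mathrm{cl}(q):q\in S\}$ form a downward directed family of non-empty closed sets (using that $S$ is down-directed), so compactness supplies a point $x\in\bigcap_{q\in S}\mathrm{cl}(q)$. Applying roundness twice, each $p\in S$ admits $r\vartriangleleft q\vartriangleleft p$ with $r,q\in S$, and then $x\in\mathrm{cl}(r)\subseteq q$ gives $x\in q\vartriangleleft p$, i.e. $p\in x^{\in\vartriangleleft}=\eta(x)$; thus $S\subseteq\eta(x)$, and since $\eta(x)$ is a minimal selector containing the selector $S$ we conclude $S=\eta(x)$.

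For the equivalence, if $\mathbb{P}$ is a cap-basis then $x^\in$ is itself a minimal selector (\autoref{SpaceRecovery}) and hence round (\autoref{RoundLinked}), so $x^\in\subseteq x^{\in\vartriangleleft}=\eta(x)\subseteq x^\in$, the last inclusion by the boxed implication (any $p\in\eta(x)$ forces $x\in p$); thus $\eta(x)=x^\in$, which is injective by $\mathsf{T}_1$-ness. Conversely I would argue contrapositively. If $\mathbb{P}$ is not a cap-basis, pick a cover $C$ of $X$ that is not a cap, so $\mathbb{P}\setminus C$ is a selector; by surjectivity it contains some $\eta(x)=S$, whence $S\cap C=\emptyset$, while $C$ covers $X$ gives $c\in x^\in\cap C$, so $c\in x^\in\setminus S$ and $S\subsetneq x^\in$. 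The point is that the fibre $\eta^{-1}(S)$ equals $\bigcap_{q\in S}\mathrm{cl}(q)$ (the inclusion $\subseteq$ uses $\eta(y)\subseteq y^\in$, the reverse the surjectivity argument), and it contains $x$. If this set were exactly $\{x\}$, the directed family $\{\mathrm{cl}(q):q\in S\}$ would have intersection inside the open set $c$, so compactness would produce $q_0\in S$ with $\mathrm{cl}(q_0)\subseteq c$; writing $q_0\in S=x^{\in\vartriangleleft}$ as $q_2\vartriangleleft q_0$ with $q_2\in x^\in$, compatibility \eqref{Compatibility} together with $q_0\subseteq c$ gives $q_2\vartriangleleft c$, hence $c\in x^{\in\vartriangleleft}=S$, contradicting $c\notin S$. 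Therefore the fibre contains some $y\neq x$ and $\eta$ is not injective.

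The main obstacle is this final step: identifying the fibres of $\eta$ with $\bigcap_{q\in S}\mathrm{cl}(q)$ and then producing a second preimage. The delicate part is converting the purely topological inclusion $\mathrm{cl}(q_0)\subseteq c$ back into the combinatorial star-relation $c\in S$, which is exactly where regularity (through roundness and the compatibility and transitivity of $\vartriangleleft$) must be combined with compactness; getting this conversion right, rather than the routine verifications of well-definedness, continuity, and closedness, is where the care is needed.
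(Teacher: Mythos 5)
Your proof is correct and follows essentially the same route as the paper's: the same decomposition into well-definedness via \autoref{LinkedSelector}, continuity via $\eta^{-1}[p^\in]=\bigcup p^\vartriangleright$, the same key lemma $q\vartriangleleft p\Rightarrow\mathrm{cl}(q)\subseteq p$, surjectivity from $\bigcap_{q\in S}\mathrm{cl}(q)\neq\emptyset$ by compactness and roundness, and non-injectivity from a cover that is not a cap whose complement yields a minimal selector with a non-singleton fibre. Your two local deviations are both valid and slightly cleaner: you deduce closedness from Hausdorffness of $\mathsf{S}\mathbb{P}$ (\autoref{RegularImpliesHausdorff}) plus the standard compact-to-Hausdorff fact where the paper gives a direct finite-intersection argument, and you reach the contradiction $c\in S$ via compatibility \eqref{Compatibility} (from $q_2\vartriangleleft q_0\leq c$) where the paper uses the filter and up-set properties of $S$ (obtaining $s\in S$ with $s\subseteq c$, hence $c\in s^\leq\subseteq S$).
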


\begin{proof}
    Take any $x\in X$ and first note $x^\in$ is linked, as $\mathbb{P}$ is a basis.  Also any $C\in\mathsf{C}\mathbb{P}$ covers $X$, by \autoref{CapsCovers}, 
    and hence overlaps $x^\in$, showing that $x^\in$ is also a selector.  By \autoref{LinkedSelector}, $x^{\in\vartriangleleft}\in\mathsf{S}\mathbb{P}$, showing that $\eta$ maps $X$ to $\mathsf{S}\mathbb{P}$.  Continuity is then immediate from the fact $\eta^{-1}[p^\in]=\bigcup p^\vartriangleright$, for all $p\in\mathbb{P}$.

    Now assume $X$ is compact.  First we claim that, for all $p,q\in\mathbb{P}$,
    \[p\vartriangleleft q\qquad\Rightarrow\qquad\mathrm{cl}(p)\subseteq q.\]
    To see this, just note again that any $C\in\mathsf{C}\mathbb{P}$ covers $X$, by \autoref{CapsCovers}, and hence $p\vartriangleleft_Cq$ implies $\mathrm{cl}(p)\subseteq\bigcup Cp\subseteq q$, as $\mathbb{P}$ is a basis.  By \autoref{RoundLinked}, any $S\in\mathsf{S}\mathbb{P}$ is round and so this means $\bigcap S=\bigcap_{s\in S}\mathrm{cl}(s)\neq\emptyset$, as $X$ is compact.  Taking any $x\in\bigcap S$, it follows that $S\subseteq x^\in$ and hence $S\subseteq S^\vartriangleleft\subseteq x^{\in\vartriangleleft}$.  Thus $S=x^{\in\vartriangleleft}$, as $x^{\in\vartriangleleft}$ is a minimal selector, showing that $\eta$ is surjective.

    Similarly, we can show that $\eta$ is a closed map.  To see this, take any closed $Y\subseteq X$ and any $S\in\mathrm{cl}(\eta[Y])$.  By compactness, $\emptyset=Y\cap\bigcap S(=Y\cap\bigcap_{s\in S}\mathrm{cl}(s))$ would imply that $Y\cap\bigcap F=\emptyset$, for some finite $F\subseteq S$.  As $S\in\mathrm{cl}(\eta[Y])\cap\bigcap_{f\in F}f^\in$, we would then have $y\in Y$ with $\eta(y)\in\bigcap_{f\in F}f^\in$.  But this means $F\subseteq y^{\in\vartriangleleft}\subseteq y^\in$ and hence $y\in Y\cap\bigcap F=\emptyset$, a contradiction.  Thus we must have some $y\in Y\cap\bigcap S$ so $S\subseteq S^\vartriangleleft\subseteq y^{\in\vartriangleleft}$ and hence $S=y^{\in\vartriangleleft}\in\eta[Y]$, showing that $\eta[Y]$ is closed.
    
    If $\mathbb{P}$ is a cap-basis then $x^\in$ is already a minimal selector so $x^{\in\vartriangleleft}=x^\in$, for any $x\in X$, and hence $\eta$ is injective, by \autoref{SpaceRecovery}.  Conversely, if $\mathbb{P}$ is not a cap-basis then $X$ has a cover $C\subseteq\mathbb{P}$ which is not a cap.  Thus $\mathbb{P}\setminus C$ is a selector and hence contains a minimal selector $S$, again with $\bigcap S\neq\emptyset$, by compactness.  If we had $\bigcap S=\{x\}$, for some $x\in X$, then $x$ would lie in some $c\in C$.  But then $\bigcap S\setminus c=\emptyset$ so compactness would yield finite $F\subseteq S$ with $\bigcap F\setminus c=\emptyset$.  As $\mathbb{P}$ is a basis, we would then have $s\in S$ with $x\in s\subseteq\bigcap F$ and hence $s\setminus c=\emptyset$, meaning $s\subseteq c$ and hence $c\in s^\leq\subseteq S$, contradicting $S\subseteq\mathbb{P}\setminus C$.  Thus $\bigcap S$ contains at least two distinct $x,y\in X$, necessarily with $S\subseteq x^\in\cap y^\in$ and hence $S=\eta(x)=\eta(y)$, showing that $\eta$ is not injective.
\end{proof}

\subsection{Subcontinua}\label{Subcontinua}

Next we examine connected subsets of the spectrum.

While the open subset $p^\in$ coming from a single $p\in\mathbb{P}$ may not be connected, subsets of $\mathbb{P}$ can still form analogous `clusters'.  First let us extend $\wedge$ to subsets $A,B\subseteq\mathbb{P}$ by defining
    \[A\wedge B\qquad\Leftrightarrow\qquad\exists a\in A\ \exists b\in B\ (a\mathrel{\wedge}b).\]
We call $C\subseteq\mathbb{P}$ a \emph{cluster} if
\[\tag{Cluster}A\neq\emptyset\neq B\quad\text{and}\quad A\cup B=C\qquad\Rightarrow\qquad A\wedge B.\]
In other words, $C$ is a cluster precisely when it is connected as a subset of the graph with edge relation $\wedge$.  Put another way, $C$ fails to be a cluster precisely when $C$ has a non-trivial discrete partition $\{A,B\}$, meaning $A\neq\emptyset\neq B$, $A\cap B=\emptyset$, $A\cup B=C$ and $a^\geq\cap b^\geq=\emptyset$, for all $a\in A$ and $b\in B$.

The first thing to observe is that clusters are `upwards closed'.  For convenience, here and below we let ${\sqsubset_D}={\sqsubset}|_D={\sqsubset}\cap(\mathbb{P}\times D)$, for any ${\sqsubset}\subseteq\mathbb{P}\times\mathbb{P}$ and $D\subseteq\mathbb{P}$.

\begin{prp}
    If $C\subseteq\mathbb{P}$ is a cluster and $C\leq D$ then $C^{\leq_D}$ is also a cluster.
\end{prp}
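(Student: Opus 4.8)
The plan is to argue by contraposition, pulling a discrete partition of $C^{\leq_D}$ back to one of $C$. First I would unwind the notation: since $\leq_D = {\leq}\cap(\mathbb{P}\times D)$ is a relation from $D$ to $\mathbb{P}$, the preimage is $E := C^{\leq_D} = \{d\in D : \exists c\in C,\ c\leq d\} = C^\leq\cap D$, so $E$ consists of exactly the members of $D$ lying above some element of $C$. Using the discrete-partition reformulation of non-clusterhood recalled just before the statement, I would assume $E$ is not a cluster and fix a partition $E = A\cup B$ with $A,B$ nonempty, disjoint, and $a^\geq\cap b^\geq=\emptyset$ for all $a\in A$, $b\in B$.

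I would then define the pullbacks
\[
A' = \{c\in C : \exists a\in A,\ c\leq a\},\qquad B' = \{c\in C : \exists b\in B,\ c\leq b\},
\]
and check that $\{A',B'\}$ is a non-trivial discrete partition of $C$. There are three points: (i) $A'\cup B' = C$, which is precisely where the hypothesis $C\leq D$ enters, since each $c\in C$ lies below some $d\in D$, and then $d\in C^\leq\cap D = E = A\cup B$; (ii) $A'$ and $B'$ are nonempty, because every element of $A$ (resp.\ $B$) lies above some element of $C$ by the very definition of $E$; and (iii) no $a'\in A'$ and $b'\in B'$ have a common lower bound, for such an $r\leq a',b'$ together with $a'\leq a\in A$ and $b'\leq b\in B$ would give $r\leq a,b$, contradicting $a^\geq\cap b^\geq=\emptyset$. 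Specialising (iii) to $a'=b'$ (using reflexivity of $\leq$) also yields $A'\cap B'=\emptyset$. This partition contradicts the hypothesis that $C$ is a cluster, so $E$ must be a cluster.

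I do not anticipate a real obstacle here: the argument is essentially the downward-monotonicity of $\wedge$ (used in (iii)) combined with careful tracking of which hypothesis drives each step. The only point meriting care is (i), the single place the refinement hypothesis $C\leq D$ is genuinely used — without it the pullback $A'\cup B'$ could fail to exhaust $C$, and the whole contradiction would collapse.
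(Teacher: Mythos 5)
Your proof is correct and is essentially the paper's argument, merely phrased contrapositively: the paper takes an arbitrary decomposition $C^{\leq_D}=A\cup B$ and pulls it back to $C=A^{\geq_C}\cup B^{\geq_C}$ (your $A'$, $B'$), using $C\leq D$ for exhaustion exactly as in your step (i), then pushes a common lower bound in $C$ up to one in $C^{\leq_D}$, which is the same upward monotonicity of $\wedge$ you use in step (iii). No gaps.
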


\begin{proof}
    Say $C^{\leq_D}=A\cup B$, where $A\neq\emptyset\neq B$ and hence $A^{\geq_C}\neq\emptyset\neq B^{\geq_C}$.  If $C\leq D$ then $C=A^{\geq_C}\cup B^{\geq_C}$.  If $C$ is also a cluster then we must have $c\in A^{\geq_C}$ and $d\in B^{\geq_C}$ with $c\wedge d$.  This means we have $a\in A$ and $b\in B$ with $a\geq c\wedge d\leq b$ and hence $a\wedge b$, showing that $C^{\leq_D}$ is also a cluster.
\end{proof}

Connected subsets of the spectrum yield clusters in all caps.

\begin{prp}\label{ConnectedClusters}
    If $\mathbb{P}$ is an $\omega$-poset and $X\subseteq\mathsf{S}\mathbb{P}$ is connected then $C\cap\bigcup X$ is a cluster, for every cap $C\in\mathsf{C}\mathbb{P}$.
\end{prp}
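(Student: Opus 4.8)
The plan is to argue by contraposition: I will show that if $C\cap\bigcup X$ fails to be a cluster for some cap $C\in\mathsf{C}\mathbb{P}$, then $X$ is disconnected. Unwinding the definition of cluster, such a failure gives nonempty $A,B\subseteq\mathbb{P}$ with $A\cup B=C\cap\bigcup X$ and no cross common lower bound, i.e. $a^\geq\cap b^\geq=\emptyset$ for all $a\in A$ and $b\in B$. From these I would form the two open subsets of the spectrum $U=\bigcup_{a\in A}a^\in$ and $V=\bigcup_{b\in B}b^\in$ and check that $\{U\cap X,V\cap X\}$ separates $X$.

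I then verify the three conditions for a separation. For covering, $X\subseteq U\cup V$: given $S\in X$, the selector property together with $C\in\mathsf{C}\mathbb{P}$ yields some $p\in S\cap C$; since $p\in S\subseteq\bigcup X$ we get $p\in C\cap\bigcup X=A\cup B$, so $S\in p^\in$ lies in $U$ or in $V$ accordingly. For nonemptiness, $A$ and $B$ are nonempty subsets of $\bigcup X$, so choosing $a\in A$ with $a\in S\in X$ places $S\in a^\in\cap X\subseteq U\cap X$, and symmetrically for $V\cap X$. For disjointness, $U\cap V=\emptyset$: if some $S$ belonged to both, then $a,b\in S$ for some $a\in A$ and $b\in B$, and here the hypothesis enters — by \autoref{MinimalSelectorsAreFilters} every element of $\mathsf{S}\mathbb{P}$ is a filter, so $a,b\in S$ forces a common lower bound $r\in S$ with $r\leq a,b$, i.e. $a^\geq\cap b^\geq\neq\emptyset$, contradicting our choice of $A,B$.

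The step carrying the real content is this last one: the disjointness of $U$ and $V$ is exactly the translation of the order-theoretic separation of $C\cap\bigcup X$ into a topological separation of $X$, and it is precisely where the $\omega$-poset assumption is needed, via the filter property of minimal selectors. The remaining verifications are routine bookkeeping with the subbasic sets $p^\in$ and the selector condition. Assembling the three facts, $U\cap X$ and $V\cap X$ are disjoint nonempty relatively open subsets of $X$ covering it, contradicting the connectedness of $X$; hence $C\cap\bigcup X$ must be a cluster for every cap $C$.
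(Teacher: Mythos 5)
Your proposal is correct and follows essentially the same route as the paper: both take a discrete partition $\{A,B\}$ of $C\cap\bigcup X$, pass to the open sets $\bigcup_{a\in A}a^\in$ and $\bigcup_{b\in B}b^\in$, and use \autoref{MinimalSelectorsAreFilters} (the filter property of minimal selectors in an $\omega$-poset) to get disjointness, contradicting connectedness. Your write-up simply spells out the covering and nonemptiness checks that the paper leaves implicit, and does so correctly.
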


\begin{proof}
    If $C\cap\bigcup X$ were not a cluster, for some $C\in\mathsf{C}\mathbb{P}$, then it would have a discrete partition $\{A,B\}$.  As every minimal selector in an $\omega$-poset is a filter, this means $Y=\bigcup_{a\in A}a^\in$ and $Z=\bigcup_{b\in B}b^\in$ are disjoint non-empty (as $p^\in\neq\emptyset$, for all $p\in\bigcup X$) open subsets covering $X$, contradicting its connectedness.
\end{proof}

Recall from \autoref{SubsetsDefineClosures} that any $Q\subseteq\mathbb{P}$ defines a closed subset of the spectrum $Q^\supseteq=\{S\in\mathsf{S}\mathbb{P}:S\subseteq Q\}$.  As a converse to the above, we can show that if $Q\cap C$ is a cluster, even just coinitially often, then $Q^\supseteq$ is connected.

\begin{prp}\label{CoinitialClusters}
    If $\mathbb{P}$ is a regular prime $\omega$-poset and $Q\subseteq\mathbb{P}$ is an up-set selector,
    \[\{C\in\mathsf{C}\mathbb{P}:Q\cap C\text{ is a cluster}\}\text{ is coinitial in }\mathsf{C}\mathbb{P}\quad\Rightarrow\quad Q^\supseteq\text{ is connected}.\]
\end{prp}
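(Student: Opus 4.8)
The plan is to argue by contradiction, assuming $Q^\supseteq$ is disconnected and producing a single cap $C$ for which $Q\cap C$ fails to be a cluster, contradicting the coinitiality hypothesis. So suppose $Q^\supseteq=X\sqcup Y$ with $X,Y$ nonempty, disjoint and relatively clopen in $Q^\supseteq$. Since $Q^\supseteq$ is closed in $\mathsf{S}\mathbb{P}$ by \autoref{SubsetsDefineClosures}, both $X$ and $Y$ are closed, hence compact, in $\mathsf{S}\mathbb{P}$. As $\mathbb{P}$ is regular and prime, $\mathsf{S}\mathbb{P}$ is compact Hausdorff by \autoref{RegularImpliesHausdorff} and \autoref{SpectrumCompactT1}, hence normal, so I would fix disjoint open $U\supseteq X$ and $V\supseteq Y$.

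First I would manufacture a cap that ``sees'' this separation. Using that $\mathbb{P}_\mathsf{S}$ is a basis (\autoref{BasisOrderIsomorphism}), each $S\in X$ lies in some basic $p^\in\subseteq U$, each $S\in Y$ in some $p^\in\subseteq V$, and each $S\notin Q^\supseteq$ (so $S\nsubseteq Q$) in some $p^\in$ with $p\notin Q$. These basic sets cover $\mathsf{S}\mathbb{P}$, so by compactness finitely many suffice, and by \autoref{SpectrumCompactT1} the corresponding finite set of parameters is a cap, which I would write as a disjoint union $C_0=A\sqcup B\sqcup R$ where each $a\in A$ has $a^\in\subseteq U$, each $b\in B$ has $b^\in\subseteq V$, and each $r\in R$ has $r\notin Q$. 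The coinitiality hypothesis, applied to $C_0$, then furnishes a cap $C\leq C_0$ with $Q\cap C$ a cluster; this is the cap I will contradict.

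The crux is to exhibit a discrete partition of $Q\cap C$. The key algebraic fact is that $a\not\wedge b$ for all $a\in A$, $b\in B$, since $a^\in\cap b^\in\subseteq U\cap V=\emptyset$ forces this via the identity $p\wedge q\Leftrightarrow p^\in\cap q^\in\neq\emptyset$ (the remark following \autoref{PropPrime}, valid as $\mathbb{P}$ is prime). Since $C\leq C_0$, each $c\in Q\cap C$ lies below some element of $C_0$; it cannot lie below any $r\in R$ (else $r\in Q$, as $Q$ is an up-set), nor below both an $a\in A$ and a $b\in B$ (else $a\wedge b$). So setting $Q_A=\{c\in Q\cap C:\exists a\in A,\ c\leq a\}$ and $Q_B=\{c\in Q\cap C:\exists b\in B,\ c\leq b\}$ yields a genuine partition of $Q\cap C$, and any common lower bound of some $c\in Q_A$ and $c'\in Q_B$ would again witness $a\wedge b$, so $Q_A\not\wedge Q_B$. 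Finally, both pieces are nonempty: picking $S\in X$ and $c\in S\cap C$ (nonempty since $S$ is a selector), we have $c\in Q\cap C$, and $c\leq b\in B$ would force $b\in S$ and hence $S\in V$, impossible as $S\in X\subseteq U$; thus $c\in Q_A$, and symmetrically $Q_B\neq\emptyset$ from $Y$. Hence $\{Q_A,Q_B\}$ is a nontrivial discrete partition, so $Q\cap C$ is not a cluster, the desired contradiction.

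The main obstacle I anticipate is bookkeeping rather than a deep idea: everything hinges on transporting the topological disjointness $U\cap V=\emptyset$ into the purely combinatorial statement $a\not\wedge b$, and on the up-set property of $Q$ (and of minimal selectors, \autoref{MinimalSelectors}) to eliminate the stray cases. I would take particular care to confirm that the three ``colours'' of $C_0$ can be assigned disjointly and that membership in $Q\cap C$ really does exclude the $R$-colour, since these are precisely the points where the hypotheses that $\mathbb{P}$ is prime and that $Q$ is an up-set selector get consumed.
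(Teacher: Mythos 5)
Your proof is correct, and it takes a genuinely different route from the paper's. Both arguments begin the same way -- disconnectedness of $Q^\supseteq$ plus Hausdorffness of $\mathsf{S}\mathbb{P}$ (\autoref{RegularImpliesHausdorff}) produces separating open sets, and primeness converts their topological disjointness into the combinatorial fact $a\not\wedge b$ via the remark after \autoref{PropPrime} -- but they diverge in how the coinitiality hypothesis is consumed. The paper never builds a cap covering the whole spectrum: it shows $Q\setminus(A\cup B)$ is still a selector by invoking the hypothesis at \emph{every} cap $D$ (pushing a cluster cap $C\leq D$ upward via $(Q\cap C)^{\leq_D}$ and noting that a cluster cannot straddle $A$ and $B$), and the final contradiction lives at the level of points: a minimal selector inside $Q^\supseteq$ missed by the separating cover. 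You instead use normality to fix disjoint $U\supseteq X$, $V\supseteq Y$, build one three-coloured cap $C_0$ -- where the third colour $R$, with parameters outside $Q$, handles the selectors outside $Q^\supseteq$ and is later excluded by the up-set property of $Q$ -- and apply coinitiality exactly once; your contradiction is purely combinatorial, a nontrivial discrete partition $\{Q_A,Q_B\}$ of the supposed cluster $Q\cap C$. In fact your argument shows that $Q\cap C$ fails to be a cluster for \emph{every} cap $C\leq C_0$, so you have really proved the sharper contrapositive: if $Q^\supseteq$ is disconnected, then no cluster cap refines $C_0$ at all. What the paper's route buys is that it sidesteps covering $\mathsf{S}\mathbb{P}\setminus Q^\supseteq$ (no $R$ colour needed); what yours buys is a single explicit witnessing cap and a contradiction located in the poset rather than in the spectrum. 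The delicate points you flagged all check out: no parameter can carry both the $A$- and $B$-colours since $p^\in\neq\emptyset$ for prime $\mathbb{P}$ while $U\cap V=\emptyset$, overlaps with $R$ can be broken arbitrarily without harming the case analysis, membership in $Q\cap C$ excludes an upper bound in $R$ precisely because $Q$ is an up-set, and the nonemptiness of $Q_A$ and $Q_B$ correctly uses that elements of $X$ and $Y$ are selectors contained in $Q$ and are up-sets by \autoref{MinimalSelectors}.
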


\begin{proof}
    If $\mathbb{P}$ is a regular $\omega$-poset then $\mathsf{S}\mathbb{P}$ is Hausdorff, by \autoref{RegularImpliesHausdorff}.  Thus if $Q^\supseteq$ were not connected then we would have $A,B\subseteq\mathbb{P}$ such that the corresponding open sets $O=\bigcup_{a\in A}a^\in$ and $N=\bigcup_{b\in B}b^\in$ form a disjoint minimal cover of $Q^\supseteq$.  Assuming $\mathbb{P}$ is also prime (and hence $a\wedge b$ implies $a^\in\cap b^\in\neq\emptyset$), this means $a^\geq\cap b^\geq=\emptyset$, for all $a\in A$ and $b\in B$.
    
    If $Q$ is an up-set selector and $\{C\in\mathsf{C}\mathbb{P}:Q\cap C\text{ is a cluster}\}$ is coinitial in $\mathsf{C}\mathbb{P}$, we claim that $Q\setminus(A\cup B)$ is still a selector.  Indeed, this means that any $D\in\mathsf{C}\mathbb{P}$ is refined by some $C\in\mathsf{C}\mathbb{P}$ such that $Q\cap C$ is cluster.  Then $D'=(Q\cap C)^{\leq_D}\subseteq Q\cap D$ is also a cluster so $Q\cap D\subseteq A\cup B$ would imply that $D'$ is contained in either $A$ or $B$.  Assume $D'\subseteq A$.  Take $S\in N\cap Q^\supseteq$, so we have some $b\in B\cap S$.  As $S$ is a selector, $S\cap C\neq\emptyset$ and hence we also have $a\in(S\cap C)^{\leq_D}\subseteq D'\subseteq A$.  But then $a\wedge b$, as $a,b\in S$, a contradiction.  Likewise, we get a contradiction if $D'\subseteq B$, so the only possibility is that, in fact, $Q\cap D\nsubseteq A\cup B$.  As $D$ was an arbitrary cap, this shows that $Q\setminus(A\cup B)$ is still a selector and hence contains some minimal selector $T$.  But then $T\in Q^\supseteq\setminus(O\cup N)$, again a contradiction.  Thus $Q^\supseteq$ is connected.
\end{proof}

In particular, \autoref{ConnectedClusters} and \autoref{CoinitialClusters} tell us that if $\mathbb{P}$ is a regular prime $\omega$-poset, $X\subseteq\mathsf{S}\mathbb{P}$ is closed and $\mathcal{C}=\{C\in\mathsf{C}\mathbb{P}:\bigcup X\cap C\text{ is a cluster}\}$ then
\begin{equation*}
    X\text{ is connected}\qquad\Leftrightarrow\qquad\mathcal{C}=\mathsf{C}\mathbb{P}\qquad\Leftrightarrow\qquad\mathcal{C}\text{ is coinitial in }\mathsf{C}\mathbb{P}.
\end{equation*}

Hereditarily indecomposable spaces have been a topic of much interest in continuum theory since the discovery of the pseudoarc (see \cite{Bing1948} and \cite{Moise1948}).  Here we will show how to characterise them in terms of certain `tangled' refinements.  These are more in the original spirit of Bing's crooked refinements, in contrast to the crooked covers introduced by Krasinkiewicz and Minc to characterise hereditary indecomposability (as discussed in \cite{KrasinkiewiczMinc1976}, \cite{OversteegenTymchatyn1986} and \cite{BartosKubis2022}).

First let us recall some standard terminology for a compactum $X$.  We call any closed connected $Y\subseteq X$ a \emph{subcontinuum}.  We call $X$ \emph{indecomposable} if it is not the union of two proper subcontinua.  We call $X$ \emph{hereditarily indecomposable} if every subcontinuum is indecomposable (note here we do not require $X$ itself to be connected, although that is the case of primary interest).  This is equivalent to saying that any two subcontinua of $X$ that overlap are comparable, i.e. one is contained in the other. 

This motivates the definition of a `tangled' refinement.  Specifically, we call a refinement $A\subseteq\mathbb{P}$ of $B\subseteq\mathbb{P}$ \emph{tangled} if, for all clusters $C,D\subseteq A$,
\[C\wedge D\qquad\Rightarrow\qquad C\subseteq D^{\leq_B\geq}\quad\text{or}\quad D\subseteq C^{\leq_B\geq}.\]
More explicitly, $C\subseteq D^{\leq_B\geq}$ means that every $c\in C$ shares an upper bound in $B$ with some $d\in D$, while $D\subseteq C^{\leq_B\geq}$ means that every $d\in D$ shares an upper bound in $B$ with some $c\in C$.  We denote tangled refinements by $\looparrowright$, i.e.
\[A\looparrowright B\qquad\Leftrightarrow\qquad A\text{ is a tangled refinement of }B.\]
In particular, $A\looparrowright B$ implies $A\leq B$.  Next we show that tangled refinements are auxiliary to general refinements in the sense that if $A$ is a tangled refinement of $B$ then any refinement of $A$ is a tangled refinement of any family refined by $B$.

\begin{prp}
    For any $A,A',B,B'\subseteq\mathbb{P}$,
    \[A'\leq A\looparrowright B\leq B'\qquad\Rightarrow\qquad A'\looparrowright B'.\]
\end{prp}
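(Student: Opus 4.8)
The plan is to reduce the tangledness of $A'$ over $B'$ to that of $A$ over $B$, by transporting clusters \emph{upward} along the refinement $A'\leq A$ and then reading off shared upper bounds \emph{downward} along $B\leq B'$. First, since refinement is a preorder (by the earlier proposition that a preorder on $\mathbb{P}$ induces one on $\mathsf{P}\mathbb{P}$), the chain $A'\leq A\looparrowright B\leq B'$ already gives $A'\leq B'$; so only the tangling condition remains to be verified.

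To transport clusters, I would fix a \emph{selection map} $\phi\maps A'\to A$ choosing, for each $a'\in A'$, some $\phi(a')\in A$ with $a'\leq\phi(a')$ (possible as $A'\leq A$). The key observation is that $\phi$ preserves the common-lower-bound relation $\wedge$: if $a'\wedge b'$ then some $r$ satisfies $r\leq a',b'$, whence $r\leq\phi(a')$ and $r\leq\phi(b')$ by transitivity, so $\phi(a')\wedge\phi(b')$. Since a cluster is by definition a connected subset of the graph with edge relation $\wedge$, and a $\wedge$-preserving map carries walks to walks, $\phi$ sends any cluster $C'\subseteq A'$ to a cluster $\phi[C']\subseteq A$. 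I expect this cluster-preservation to be the only real content of the argument; the remaining steps are definition chasing.

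Now take clusters $C',D'\subseteq A'$ with $C'\wedge D'$, and set $C=\phi[C']$ and $D=\phi[D']$, which are clusters in $A$ with $C\wedge D$ (a witnessing pair $c'\wedge d'$ maps to $\phi(c')\wedge\phi(d')$). Tangledness of $A\looparrowright B$ then gives, say, $C\subseteq D^{\leq_B\geq}$. To finish I would project this down through $B\leq B'$: for any $c'\in C'$ we have $\phi(c')\in C$, so $\phi(c')$ shares an upper bound $b\in B$ with some $d=\phi(d')$ for $d'\in D'$; then $c'\leq\phi(c')\leq b$ and $d'\leq\phi(d')\leq b$, and choosing $b'\in B'$ with $b\leq b'$ shows that $c'$ and $d'$ share the upper bound $b'\in B'$, i.e. $c'\in {D'}^{\leq_{B'}\geq}$. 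As $c'$ was arbitrary, $C'\subseteq {D'}^{\leq_{B'}\geq}$; symmetrically, the other alternative $D\subseteq C^{\leq_B\geq}$ yields $D'\subseteq {C'}^{\leq_{B'}\geq}$. This establishes $A'\looparrowright B'$.
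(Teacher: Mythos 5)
Your proof is correct and takes essentially the same route as the paper: lift the clusters from $A'$ into $A$, apply the tangledness of $A\looparrowright B$ there, and then transfer the resulting shared upper bounds from $B$ up to $B'$ (and you rightly note that $A'\leq B'$ follows by transitivity of refinement). The only cosmetic difference is that the paper lifts clusters via the upward closures $C^{\leq_A}$, invoking the preceding proposition that up-closures of clusters along refinements are again clusters, whereas you use the image under a selection map $a'\mapsto\phi(a')$ and re-prove cluster preservation directly via $\wedge$-preservation; the two devices are interchangeable here and the surrounding argument is identical.
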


\begin{proof}
    Take clusters $C,D\subseteq A'$ so $C^{\leq_A}$ and $D^{\leq_A}$ are then also clusters.  If $C\wedge D$ then $C^{\leq_A}\wedge D^{\leq_A}$ and hence, by the definition of $\looparrowright$, either $C^{\leq_A}\subseteq D^{\leq_A\leq_B\geq}\subseteq D^{\leq_B\geq}$ or $D^{\leq_A}\subseteq C^{\leq_A\leq_B\geq}\subseteq C^{\leq_B\geq}$.  If $C^{\leq_A}\subseteq D^{\leq_B\geq}$ then $A'\leq A$ and $B\leq B'$ implies
    \[C\subseteq C^{\leq_A\geq}\subseteq D^{\leq_B\geq\geq}=D^{\leq_B\geq}\subseteq D^{\leq_B\leq_{B'}\geq\geq}\subseteq D^{\leq_{B'}\geq}.\]
    Likewise, if $D^{\leq_A}\subseteq C^{\leq_B\geq}$ then $D\subseteq C^{\leq_{B'}\geq}$, showing that $A'\looparrowright B'$.
\end{proof}

Let us call $P\in\mathsf{F}\mathbb{P}$ a \emph{path} if $P$ is a path graph with respect to the relation $\wedge$, which means we have an enumeration $\{p_1,\ldots,p_n\}$ of $P$ such that
\[p_j\wedge p_k\qquad\Leftrightarrow\qquad|j-k|\leq1.\]
For paths, tangled refinements can be characterised in a similar manner to the crooked refinements from \cite{Bing1948} used to construct the pseudoarc, as we now show.

Note that any cluster in a path $P$ is also a path and each pair $q,r\in P$ is contained in a unique minimal cluster/subpath, which we will denote by $[q,r]$.  We further define $[q,r)=[q,r]\setminus\{r\}$, $(q,r]=[q,r]\setminus\{q\}$ and $(q,r)=[q,r]\setminus\{q,r\}$.

\begin{prp}
    If $P,Q\subseteq\mathbb{P}$ are paths with $P\leq Q$ then
    \begin{align}
        \label{CrookedPath}P\looparrowright Q\ &\Leftrightarrow\ \forall a,d\in P\ \exists b\in[a,d]\ \exists c\in[b,d]\ \exists q,r\in Q\ (a,c\leq q\ \&\ b,d\leq r).\\
        \label{TangledPath}&\Leftrightarrow\ \forall a,d\in P\ \exists b\in[a,d]\ \exists c\in[b,d]\ ([a,d]\subseteq[a,b]^{\leq_Q\geq}\cap[c,d]^{\leq_Q\geq}).
    \end{align}
\end{prp}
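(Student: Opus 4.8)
The plan is to prove the cycle $P\looparrowright Q\Rightarrow\eqref{CrookedPath}\Rightarrow\eqref{TangledPath}\Rightarrow P\looparrowright Q$, which makes all three conditions equivalent. Fix the path enumeration of $P$ and, given $a,d\in P$, orient the subpath $[a,d]$ from $a$ to $d$, writing $e\preceq e'$ when $e$ occurs no later than $e'$ along it and $e^-,e^+$ for the immediate predecessor/successor. I will use throughout that every subpath of $P$ is a cluster, that two subpaths $C,D$ satisfy $C\wedge D$ exactly when their index ranges overlap or are adjacent, and the hypothesis $P\leq Q$, which guarantees each element of $P$ has an upper bound in $Q$. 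For readability write $e\wedge_Q e'$ for ``$e$ and $e'$ have a common upper bound in $Q$'', so that $e\in F^{\leq_Q\geq}$ means $e\wedge_Q f$ for some $f\in F$. All three implications run on the same engine: pick an extremal fold point, and force it into position by applying the hypothesis to a cleverly chosen pair.

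For $P\looparrowright Q\Rightarrow\eqref{CrookedPath}$, given $a\neq d$ I would let $b$ be the \emph{earliest} element of $[a,d]$ with $b\wedge_Q d$ and $c$ the \emph{latest} with $a\wedge_Q c$; both exist since $d\wedge_Q d$ and $a\wedge_Q a$, and the shared bounds provide the required $q,r\in Q$. It remains to check $b\preceq c$, i.e.\ $c\in[b,d]$. If instead $c\prec b$ (so $b\neq a$), I apply the tangled condition to the adjacent split clusters $[a,b^-]$ and $[b,d]$: the case $[a,b^-]\subseteq[b,d]^{\leq_Q\geq}$ gives $a\wedge_Q e$ for some $e\succeq b\succ c$, contradicting maximality of $c$, and the case $[b,d]\subseteq[a,b^-]^{\leq_Q\geq}$ gives $d\wedge_Q e$ for some $e\prec b$, contradicting minimality of $b$.

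The middle implication $\eqref{CrookedPath}\Rightarrow\eqref{TangledPath}$ is the main obstacle, since one must upgrade the pointwise ``crooked'' data at the two endpoints into containments covering \emph{all} of $[a,d]$. Here I would define $b$ to be the earliest element with $[a,d]\subseteq[a,b]^{\leq_Q\geq}$ and $c$ the latest with $[a,d]\subseteq[c,d]^{\leq_Q\geq}$; both exist because $b=d$ and $c=a$ trivially work, so the two containments of \eqref{TangledPath} hold by construction and only $b\preceq c$ must be shown. Assuming $c\prec b$, minimality of $b$ yields $x\notin[a,b^-]^{\leq_Q\geq}$ (forcing $b\preceq x$) and maximality of $c$ yields $y\notin[c^+,d]^{\leq_Q\geq}$ (forcing $y\preceq c\prec b\preceq x$). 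Applying \eqref{CrookedPath} to the pair $(y,x)$ produces fold points $b'\in[y,x]$, $c'\in[b',x]$ with $x\wedge_Q b'$ and $y\wedge_Q c'$; since $x$ shares no $Q$-bound with anything before $b$, we get $b\preceq b'\preceq c'$, so $c'$ lies strictly past $c$ yet $y\wedge_Q c'$, contradicting the choice of $y$. This application of the crooked condition to the auxiliary pair of ``uncaptured'' witnesses is the one genuinely new idea and the step I expect to be trickiest to pin down.

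Finally, for $\eqref{TangledPath}\Rightarrow P\looparrowright Q$, take clusters $C,D\subseteq P$ with $C\wedge D$; these are subpaths, and if one index range contains the other the conclusion is immediate since $P\leq Q$ makes each element share a bound with itself. Otherwise $C$ and $D$ are staggered, and I would apply \eqref{TangledPath} to the pair $(a,d)$ spanning $C\cup D$ ($a$ the earliest, $d$ the latest endpoint). Because the ranges overlap or abut, the resulting fold point $b$ lies inside $C$ or inside $D$: if $[a,b]\subseteq C$ then $[a,d]\subseteq[a,b]^{\leq_Q\geq}$ gives $D\subseteq C^{\leq_Q\geq}$, while if $b$ (hence $c\succeq b$) lies in $D$ then $[c,d]\subseteq D$ and $[a,d]\subseteq[c,d]^{\leq_Q\geq}$ gives $C\subseteq D^{\leq_Q\geq}$, which is exactly the tangled condition and closes the cycle.
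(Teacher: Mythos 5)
Your proposal is correct, and it establishes the same three-implication cycle as the paper, so the comparison comes down to the individual steps. Your first and last implications are essentially the paper's own arguments: for $P\looparrowright Q\Rightarrow\eqref{CrookedPath}$ the paper likewise takes $b$ earliest with $b,d$ sharing a $Q$-bound and applies the tangled dichotomy to the adjacent clusters $[a,b)$ and $[b,d]$ (it gets $c$ directly from the surviving case rather than via your second extremal choice, but this is bookkeeping); for $\eqref{TangledPath}\Rightarrow P\looparrowright Q$ the paper also spans $C\cup D=[a,d]$ and splits on whether the fold point lands in the initial or final cluster. Where you genuinely diverge is the middle implication $\eqref{CrookedPath}\Rightarrow\eqref{TangledPath}$. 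The paper works on the $Q$ side: it uses that $[a,d]^{\leq_Q}$ is a subpath $[g,h]$ of $Q$ (upper bounds of a fixed $p$ pairwise meet, so each $p$ has at most two, and images of subpaths are intervals), picks $a',d'\in[a,d]$ witnessing the endpoints $g,h$ with extra care in the $p^{\leq_Q}=\{g\}$ cases, applies the crooked condition to $(a',d')$, and gets the global containments from interval connectivity of images. You instead stay entirely on the $P$ side: candidacy for $b$ (resp.\ $c$) is monotone along the path and nonempty since $P\leq Q$ makes $b=d$, $c=a$ work, so the extremal $b,c$ exist with the two containments built in; assuming $c\prec b$, the uncaptured witnesses $x\notin[a,b^-]^{\leq_Q\geq}$ (forcing $b\preceq x$) and $y\notin[c^+,d]^{\leq_Q\geq}$ (forcing $y\preceq c$) exist, and applying \eqref{CrookedPath} to $(y,x)$ forces $b\preceq b'\preceq c'$ since $x$ meets nothing before $b$, whence $y\wedge_Q c'$ with $c'\in[c^+,d]$ contradicts the choice of $y$. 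I checked this argument and it is airtight. Your route buys a self-contained extremal/contradiction proof that avoids all structural analysis of $Q$ (no image intervals, no endpoint case distinctions); the paper's route buys the explicit and reusable fact that the $Q$-upper-bound set of a subpath of $P$ is itself a subpath of $Q$. The only omission is the trivial base case $a=d$ of \eqref{CrookedPath}, where one takes $b=c=a$ and $q=r$ any upper bound of $a$ in $Q$, which exists because $P\leq Q$; nothing hinges on it.
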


\begin{proof}
    Assume the right side of \eqref{CrookedPath} holds.  To show that \eqref{TangledPath} holds, take any $a,d\in P$.  Then we have $g,h\in Q$ with $[a,d]^\leq=[g,h]$ and we may pick $a',d'\in[a,d]$ with $a'\leq g$ and $d'\leq h$.  If $p^{\leq_Q}=\{g\}$, for some $p\in[a,d]$ then we may further ensure that $a'^{\leq_Q}=\{g\}$ and, likewise, if $p^{\leq_Q}=\{h\}$, for some $p\in[a,d]$ then we may further ensure that $d'^{\leq_Q}=\{h\}$.  By \eqref{TangledPath}, we have $b\in[a',d']$ and $c\in[b,d']$ with $a',c\leq q$ and $b,d'\leq r$, for some $q,r\in Q$.  If $a'^{\leq_Q}=\{g\}$ then $q=g$ and hence $[c,d']^{\leq_Q\geq}=[g,h]^\geq\supseteq[a,d]$.  On the other hand, if $a'^{\leq_Q}\neq\{g\}$ then $p^{\leq_Q}\neq\{g\}$ and hence $p^\leq\cap(g,h]\neq\emptyset$, for all $p\in[a,d]$, which again yields $[c,d']^{\leq_Q\geq}\supseteq(g,h]^\geq\supseteq[a,d]$.  Likewise, we see that $[a',b]^{\leq_Q\geq}\supseteq[a,d]$.  Expanding $[a',b]$ and $[c,d']$ to include $a$ and $d$ then shows that \eqref{TangledPath} holds.

    Now assume \eqref{TangledPath} holds and take any clusters/subpaths $A,D\subseteq P$ with $A\wedge D$.  This means $A\cup D=[a,d]$, for some $a,d\in P$, so we have $b\in[a,d]$, $c\in[b,d]$ satisfying \eqref{TangledPath}.  It follows that $A$ or $D$ must contain $[a,b]$ or $[c,d]$ and hence that $D\subseteq A^{\leq_Q\geq}$ or $A\subseteq D^{\leq_Q\geq}$, e.g. if $[a,b]\subseteq A$ then $D\subseteq[a,d]\subseteq[a,b]^{\leq_Q\geq}\subseteq A^{\leq_Q\geq}$.  This shows that $P\looparrowright Q$.

    Finally, assume that $P\looparrowright Q$ and take any $a,d\in P$.  Then we have $b\in[a,d]$ such that $b$ shares an upper bound in $Q$ with $d$ but no element of $[a,b)$ does.  As $P\looparrowright Q$, this implies $[a,b)\subseteq[b,d]^{\leq_Q\geq}$ and, in particular, we have some $c\in[b,d]$ sharing an upper bound in $Q$ with $a$ (because $a\in[a,b)$, as long as $a\neq b$, while if $a=b$ then we can just take $c=a$ too).  This shows that the right side of \eqref{CrookedPath} holds.
\end{proof}

In the next result it will be convenient to consider a slightly weakening of $\looparrowright$.  Specifically, let us call a refinement $A\subseteq\mathbb{P}$ of $B\subseteq\mathbb{P}$ \emph{weakly tangled}, denoted $A\looparrowright_\mathsf{w}B$, if, for all clusters $C,D\subseteq A$,
\[C\cap D\neq\emptyset\qquad\Rightarrow\qquad C\subseteq D^{\leq_B\geq}\quad\text{or}\quad D\subseteq C^{\leq_B\geq}.\]
We call $\mathbb{P}$ \emph{(weakly) tangled} if every cap has a (weakly) tangled refinement, i.e.
\[\tag{(Weakly) Tangled}C\in\mathsf{C}\mathbb{P}\qquad\Rightarrow\qquad\exists D\in\mathsf{C}\mathbb{P}\ (D\looparrowright_{(\mathsf{w})}C).\]
We immediately see that $A\vartriangleleft B\looparrowright_\mathsf{w}C$ implies $A\looparrowright C$ so
\[\mathbb{P}\text{ is regular and weakly tangled}\qquad\Rightarrow\qquad\mathbb{P}\text{ is tangled}.\]
The following result thus tells us that, among prime regular $\omega$-posets, those with hereditarily indecomposable spectra are precisely the tangled posets.

\begin{thm}
    If $\mathbb{P}$ is an $\omega$-poset then
    \[\mathbb{P}\text{ is weakly tangled}\qquad\Rightarrow\qquad\mathsf{S}\mathbb{P}\text{ is hereditarily indecomposable}.\]
    The converse holds if $\mathbb{P}$ is also regular and prime.
\end{thm}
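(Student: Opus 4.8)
The plan is to prove the two implications separately, using throughout the order isomorphism of \autoref{StrongSelectorVsClosedSubsets} between prime subsets $Q\subseteq\mathbb{P}$ and closed subsets $Q^\supseteq$ of $\mathsf{S}\mathbb{P}$, under which a closed set is connected exactly when its union meets each cap in a cluster (\autoref{ConnectedClusters} and \autoref{CoinitialClusters}). I note first that weakly tangled refinements behave like tangled ones: the same computation that shows $\looparrowright$ is auxiliary to $\leq$ also shows $A'\leq A\looparrowright_\mathsf{w}B\leq B'\Rightarrow A'\looparrowright_\mathsf{w}B'$ (the only extra point being that $C\cap D\neq\emptyset$ forces $C^{\leq_A}\cap D^{\leq_A}\neq\emptyset$ once $A'\leq A$). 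In particular, for any cap $B$ the weakly tangled refinements of $B$ are coinitial, so by \autoref{CapsRefineLevels} we may always take them to be levels.

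\textbf{Forward direction.} Suppose $\mathbb{P}$ is weakly tangled and, toward a contradiction, that $\mathsf{S}\mathbb{P}$ is not hereditarily indecomposable; fix subcontinua $Y,Z$ with $Y\cap Z\neq\emptyset$, $Y\not\subseteq Z$ and $Z\not\subseteq Y$, and set $Q=\bigcup Y$, $R=\bigcup Z$, both prime up-sets. For each $n$ choose a level $\mathbb{P}_{m_n}\looparrowright_\mathsf{w}\mathbb{P}_n$ with $m_n$ large. By \autoref{ConnectedClusters} the traces $Q\cap\mathbb{P}_{m_n}$ and $R\cap\mathbb{P}_{m_n}$ are clusters, and they meet (both contain $w\cap\mathbb{P}_{m_n}$ for any common point $w\in Y\cap Z$), so weak tangling gives, for each $n$, either $Q\cap\mathbb{P}_{m_n}\subseteq(R\cap\mathbb{P}_{m_n})^{\leq_{\mathbb{P}_n}\geq}$ or the symmetric inclusion. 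Since $R$ is an up-set, $(R\cap\mathbb{P}_{m_n})^{\leq_{\mathbb{P}_n}}\subseteq R\cap\mathbb{P}_n$, so the first alternative says precisely that $Q\cap\mathbb{P}_{m_n}$ refines $R\cap\mathbb{P}_n$; by \autoref{prp:caporder2} this yields $Y\subseteq\bigcup\{p^\in:p\in Q\cap\mathbb{P}_{m_n}\}\subseteq\bigcup\{c^\in:c\in R\cap\mathbb{P}_n\}=:N_n$. By pigeonhole one alternative holds for an infinite index set $I$; say the first. Then $Y\subseteq\bigcap_{n\in I}N_n$, and since $\{\mathbb{P}_n:n\in I\}$ is coinitial, any $S\in\bigcap_{n\in I}N_n$ has $S\cap R$ meeting every cap, so $S\cap R$ is a selector inside the minimal selector $S$, whence $S\subseteq R$ and $S\in Z$; thus $Y\subseteq Z$, a contradiction. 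The symmetric alternative gives $Z\subseteq Y$.

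\textbf{Converse.} Now assume $\mathbb{P}$ is regular and prime and $\mathsf{S}\mathbb{P}$ is hereditarily indecomposable; I argue the contrapositive. If $\mathbb{P}$ is not weakly tangled, fix a cap $C$ with no weakly tangled refinement. The key reduction, valid whenever $D$ star-refines $C$ (such $D$ exist by \autoref{RegularStarRefinement}), is: for clusters $E,F\subseteq D$, writing $\widehat E=\mathrm{cl}(\bigcup_{e\in E}e^\in)=(E^\wedge)^\supseteq$ and likewise $\widehat F$, one has $\widehat E\subseteq\widehat F\Rightarrow E\subseteq F^{\leq_C\geq}$. Indeed, given $a\in E$, primeness gives $S\in a^\in\subseteq\widehat E\subseteq\widehat F=(F^\wedge)^\supseteq$, so $a\in S\subseteq F^\wedge$ produces $f\in F$ with $a\wedge f$; then $a,f\in Da$ and $Da\leq c$ for some $c\in C$ force $a\leq c\geq f$, i.e.\ $a\in F^{\leq_C\geq}$. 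Thus it would suffice to nest $\widehat E$ and $\widehat F$ via hereditary indecomposability — were they subcontinua.

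\textbf{Main obstacle.} This is exactly the difficulty: in a hereditarily indecomposable continuum every regular closed proper subset has nonempty interior and so is not a subcontinuum, meaning $\widehat E$ is typically disconnected (already $\widehat{\{e\}}=\mathrm{cl}(e^\in)$ is), so hereditary indecomposability cannot be applied to $\widehat E,\widehat F$ directly. The correct route is to realize the separation inside genuine subcontinua. By \autoref{CoinitialClusters}, a \emph{coherent tower} of clusters (one cluster $\mathcal E_k=\bigcup Y\cap C_k$ at each level of a coinitial tower $C=C_0\geq C_1\geq\cdots$ of star-refinements) does define a subcontinuum $Y$. Using the failure of weak tangling at every refinement of $C$ — which, by the contrapositive of the reduction above, supplies at each level points of $\widehat E\setminus\widehat F$ and of $\widehat F\setminus\widehat E$ together with a common point — one recursively selects two coherent towers of subclusters whose limits are subcontinua $Y,Z$ that share a point yet remain non-nested, contradicting hereditary indecomposability. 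Performing this recursive selection so that the non-nestedness survives to the limit is the main technical hurdle; it is the order-theoretic analogue of extracting a non-nested pair of subcontinua from a cover admitting no crooked refinement in the classical theory.
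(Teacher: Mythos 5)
Your forward direction is correct and takes a genuinely different route from the paper's. The paper, given overlapping incomparable subcontinua $Y,Z\subseteq\mathsf{S}\mathbb{P}$, fixes points $S\in Y\setminus Z$ and $T\in Z\setminus Y$, takes a single cap $C$ whose basic sets refine the open cover consisting of $\mathsf{S}\mathbb{P}\setminus Y$, $\mathsf{S}\mathbb{P}\setminus Z$ and $\mathsf{S}\mathbb{P}\setminus\{S,T\}$, and then shows directly that no $D\leq C$ is weakly tangled over $C$: the traces $A=D\cap\bigcup Y$ and $B=D\cap\bigcup Z$ are overlapping clusters by \autoref{ConnectedClusters}, and elements $a\in D\cap S$, $b\in D\cap T$ can share no upper bound in $C$ with any element of the opposite cluster. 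You instead apply weak tangling level-by-level, convert the resulting alternative into refinement of traces (using that $\bigcup Z$ is an up-set), and pigeonhole over $n$ to conclude $Y\subseteq Z$ outright via \autoref{UpsetSelectors} and \autoref{prop:closures}. Your route needs the auxiliarity property for $\looparrowright_\mathsf{w}$, which you verify correctly (the one new point being that $C\cap D\neq\emptyset$ forces $C^{\leq_A}\cap D^{\leq_A}\neq\emptyset$ once $A'\leq A$); the paper's single-cap argument avoids this lemma. Both are sound, and yours has the mild advantage of producing the containment $Y\subseteq Z$ directly rather than merely refuting weak tangledness of one refinement.

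The converse, however, has a genuine gap, and you name it yourself: the recursive selection of two coherent towers of subclusters whose limits stay non-nested is precisely the content of the paper's proof, and nothing in your sketch performs it. The missing idea is that no delicate recursion is needed -- a pigeonhole stabilization on finite caps does everything at once. The paper takes a coinitial decreasing sequence $(C_n)\subseteq\mathsf{B}\mathbb{P}$ with $C_0\leq C$ and, for each $n$, overlapping clusters $A_n,B_n\subseteq C_n$ with $A_n\nsubseteq B_n^{\leq_C\geq}$ and $B_n\nsubseteq A_n^{\leq_C\geq}$; since each $C_m$ is finite, passing to subsequences makes all upward traces constant, i.e. $A_n^{\leq_C}=A$, $B_n^{\leq_C}=B$, and $A_n^{\leq_{C_m}}=D_m$, $B_n^{\leq_{C_m}}=E_m$ for $m<n$. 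Coherence is then automatic: the up-sets $Q=\bigcap_m\bigcup_{n>m}A_n^\leq$ and $R=\bigcap_m\bigcup_{n>m}B_n^\leq$ satisfy $Q\cap C_m=D_m$ and $R\cap C_m=E_m$, so $Q^\supseteq$ and $R^\supseteq$ are subcontinua by \autoref{CoinitialClusters}. Crucially, the non-nestedness you worried about surviving to the limit follows from a one-line computation with the stabilized sets: $D_n\subseteq B^\geq$ would give $A_{n+1}\subseteq D_n^\geq\subseteq B^\geq=B_{n+1}^{\leq_C\geq}$, contradicting the choice of $A_{n+1}$; hence $\bigcup_n(D_n\setminus B^\geq)^\leq$ and $\bigcup_n(E_n\setminus A^\geq)^\leq$ are selectors yielding minimal selectors in $Q^\supseteq\setminus R^\supseteq$ and $R^\supseteq\setminus Q^\supseteq$, while $\emptyset\neq D_n\cap E_n\subseteq Q\cap R$ yields one in $Q^\supseteq\cap R^\supseteq$. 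Finally, note that your reduction $\widehat{E}\subseteq\widehat{F}\Rightarrow E\subseteq F^{\leq_C\geq}$ under star-refinement, though correct, turns out to be unnecessary: the paper never compares the (indeed typically disconnected) closed sets $\widehat{E},\widehat{F}$, working instead with the $\leq_C$-traces of the witnessing clusters directly, so \autoref{RegularStarRefinement} plays no role in its argument.
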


\begin{proof}
    Assume $\mathsf{S}\mathbb{P}$ is not hereditarily indecomposable, so we have overlapping incomparable subcontinua $Y,Z\subseteq\mathsf{S}\mathbb{P}$.  So we can take $S\in Y\setminus Z$ and $T\in Z\setminus Y$ and obtain a minimal open cover of $\mathsf{S}\mathbb{P}$ consisting of the sets $\mathsf{S}\mathbb{P}\setminus Y$, $\mathsf{S}\mathbb{P}\setminus Z$ and $\mathsf{S}\mathbb{P}\setminus\{S,T\}$.  This is refined by some basic cover $(c^\in)_{c\in C}$, necessarily with $C\in\mathsf{C}\mathbb{P}$, by \autoref{SpectrumCompactT1}.  Now take $D\in\mathsf{C}\mathbb{P}$ with $D\leq C$.  By \autoref{ConnectedClusters}, we have clusters $A=D\cap\bigcup Y$ and $B=D\cap\bigcup Z$, necessarily with $A\cap B\neq\emptyset$, as $Y\cap Z\neq\emptyset$.  We also have $a\in D\cap S\subseteq A$ and $b\in D\cap T\subseteq B$.  Taking any $c\in C$ with $a\leq c$, we see that $S\in a^\in\subseteq c^\in$ and so $c^\in\nsubseteq\mathsf{S}\mathbb{P}\setminus Y$ and $c^\in\nsubseteq\mathsf{S}\mathbb{P}\setminus\{S,T\}$, the only remaining option then being $c^\in\subseteq\mathsf{S}\mathbb{P}\setminus Z$.
    But whenever $B\ni b'\leq c'\in C$, we see that $b'\in\bigcup Z$, so we have $U\in Z$ with $b'\in U$ and hence $U\in b'^\in\subseteq c'^\in$.  Thus implies $c'^\in\nsubseteq\mathsf{S}\mathbb{P}\setminus Z$ and hence $c'\neq c$.  Likewise, we see that $b$ has no common upper bound in $C$ with any element of $A$.  This shows that $D$ is not a weakly tangled refinement of $C$.  As $D$ was arbitrary, this shows that $\mathbb{P}$ is not a weakly tangled poset, thus proving $\Rightarrow$.

    Conversely, assume $\mathbb{P}$ is regular and prime but not weakly tangled, so we have $C\in\mathsf{B}\mathbb{P}$ such that $D\not\looparrowright_\mathsf{w} C$, for all $D\in\mathsf{C}\mathbb{P}$.  Take a coinital decreasing sequence $(C_n)\subseteq\mathsf{B}\mathbb{P}$ with $C_0\leq C$.  As $C_n\not\looparrowright_\mathsf{w}C$, we have overlapping clusters $A_n,B_n\subseteq C_n$ such that $A_n\nsubseteq B_n^{\leq_C\geq}$ and $B_n\nsubseteq A_n^{\leq_C\geq}$.  Taking a subsequence if necessary, we can obtain clusters $A,B\subseteq C$ with $A_n^{\leq_C}=A$ and $B_n^{\leq_C}=B$, for all $n\in\omega$.  Taking further subsequences if necessary, we may assume we have clusters $D_m,E_m\subseteq C_m$ with $A_n^{\leq_{C_m}}=D_m$ and $B_n^{\leq_{C_m}}=E_m$ whenever $m<n$.  Note $D_n\subseteq B^\geq$ would imply
    \[A_{n+1}\subseteq A_{n+1}^{\leq_{C_n}\geq}=D_n^\geq\subseteq B^{\geq\geq}\subseteq B^\geq=B_{n+1}^{\leq_C\geq},\]
    a contradiction.  Thus $D_n\nsubseteq B^\geq$ and, likewise, $E_n\nsubseteq A^\geq$, for all $n\in\omega$.

    Now note that $Q=\bigcap_{m\in\omega}\bigcup_{n>m}A_n^\leq$ is an up-set such that $Q\cap C=A_n^{\leq_C}=A$ and $Q\cap C_n=A_{n+1}^{\leq_{C_n}}=D_n$, for all $n\in\omega$.  It follows that $Q^\supseteq$ is connected, by \autoref{CoinitialClusters}.  Likewise, we have an up-set $R=\bigcap_{m\in\omega}\bigcup_{n>m}B_n^\leq$ such that $R\cap C=B$ and $R^\supseteq$ is connected.  Also $Q'=\bigcup_{n\in\omega}(D_n\setminus B^\geq)^\leq\subseteq Q\setminus B$ is a selector and hence contains a minimal selector in $Q^\supseteq\setminus R^\supseteq$, seeing as $R\cap C=B$.  Likewise, $R'=\bigcup_{n\in\omega}(E_n\setminus A^\geq)^\leq\subseteq R\setminus A$ contains a minimal selector in $R^\supseteq\setminus Q^\supseteq$.  Lastly note that $\emptyset\neq D_n\cap E_n\subseteq Q\cap R$, for all $n\in\omega$, so $Q\cap R$ also contains a minimal selector in $Q^\supseteq\cap R^\supseteq$.  Thus $Q^\supseteq$ and $R^\supseteq$ are incomparable overlapping subcontinua and hence $\mathsf{S}\mathbb{P}$ is not hereditarily indecomposable.
\end{proof}

\section{Functoriality}\label{Functoriality}

Here we examine order theoretic analogs of continuous maps, using these to obtain a more combinatorial equivalent of the usual category of metrisable compacta.

\begin{center}
\textbf{Throughout this section, fix some posets $\mathbb{P}$, $\mathbb{Q}$, $\mathbb{R}$ and $\mathbb{S}$.}
\end{center}
For extra clarity, we will sometimes use subscripts to indicate which poset we are referring to, e.g. $\leq_\mathbb{P}$ and $\leq_\mathbb{Q}$ refer to order relations on $\mathbb{P}$ and $\mathbb{Q}$ respectively.

\subsection{Continuous Maps}

\begin{dfn}
We call $\sqsupset\ \subseteq\mathbb{Q}\times\mathbb{P}$ a \emph{refiner} if
\[\tag{Refiner}\mathsf{C}\mathbb{Q}\subseteq\mathsf{C}\mathbb{P}^\sqsubset,\]
i.e. if each cap of $\mathbb{Q}$ is refined by some cap of $\mathbb{P}$.
\end{dfn}

For example, in this terminology a poset is regular precisely when the star-above relation $\vartriangleright\ \subseteq\mathbb{P}\times\mathbb{P}$ is a refiner.

We can use refiners to encode continuous maps as follows.

\begin{prp} \label{MapToRefiner}
If $\mathbb{P}$ is an $\omega$-poset and $\phi:\mathsf{S}\mathbb{P}\rightarrow\mathsf{S}\mathbb{Q}$ is continuous then
\begin{equation}\label{sqphi}
q\sqsupset_\phi p\qquad\Leftrightarrow\qquad\phi^{-1}[q^\in]\supseteq p^\in
\end{equation}
defines a refiner $\sqsupset_\phi\ \subseteq\mathbb{Q}\times\mathbb{P}$
such that $S^{\sqsubset_\phi} = \phi(S)$ for every $S \in \Spec{\PP}$.
\end{prp}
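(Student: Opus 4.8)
The plan is to unwind the definition of $\sqsubset_\phi$ into a containment of basic open sets and then exploit the fact that these sets form a basis. The essential input is \autoref{BasisOrderIsomorphism}: because $\mathbb{P}$ is an $\omega$-poset, the family $(p^\in)_{p\in\mathbb{P}}$ is a genuine basis for $\mathsf{S}\mathbb{P}$, not merely a subbasis. Continuity of $\phi$ ensures each $\phi^{-1}[q^\in]$ is open, and reading off the definition, $p\sqsubset_\phi q$ says precisely $p^\in\subseteq\phi^{-1}[q^\in]$, equivalently $\phi[p^\in]\subseteq q^\in$.

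First I would establish the identity $S^{\sqsubset_\phi}=\phi(S)$, which I expect to carry most of the weight. Writing out the preimage, $S^{\sqsubset_\phi}=\{q\in\mathbb{Q}:\exists p\in S\ (p\sqsubset_\phi q)\}$. The inclusion $\subseteq$ is immediate from the definitions: if $p\in S$ and $p^\in\subseteq\phi^{-1}[q^\in]$, then $S\in p^\in$ forces $\phi(S)\in q^\in$, i.e. $q\in\phi(S)$. For $\supseteq$, take $q\in\phi(S)$, so $S\in\phi^{-1}[q^\in]$; since this set is open and $(p^\in)$ is a basis, I can interpose a basic neighbourhood $p^\in$ with $S\in p^\in\subseteq\phi^{-1}[q^\in]$, which yields $p\in S$ with $p\sqsubset_\phi q$ and hence $q\in S^{\sqsubset_\phi}$.

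With the identity in hand, the refiner condition $\mathsf{C}\mathbb{Q}\subseteq\mathsf{C}\mathbb{P}^{\sqsubset_\phi}$ follows cheaply. Given $C\in\mathsf{C}\mathbb{Q}$, put $D=C^{\sqsupset_\phi}=\{p\in\mathbb{P}:\exists c\in C\ (p\sqsubset_\phi c)\}$; by construction $D\sqsubset_\phi C$, so it only remains to check $D\in\mathsf{C}\mathbb{P}$. By \autoref{SpectrumCompactT1}, $D$ is a cap exactly when $(p^\in)_{p\in D}$ covers $\mathsf{S}\mathbb{P}$, i.e. when every $S\in\mathsf{S}\mathbb{P}$ meets $D$. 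Fixing such an $S$, the point $\phi(S)$ is a selector of $\mathbb{Q}$, so $\phi(S)\cap C\neq\emptyset$, and the identity gives $S^{\sqsubset_\phi}\cap C\neq\emptyset$; any witness $q\in C\cap S^{\sqsubset_\phi}$ supplies $p\in S$ with $p\sqsubset_\phi q$, so that $p\in D\cap S$. As $S$ was arbitrary, $D$ is a cap.

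The only genuine obstacle is the $\supseteq$ direction of the identity, and this is exactly where the $\omega$-poset hypothesis is indispensable: continuity by itself only gives openness of $\phi^{-1}[q^\in]$, but to descend from an arbitrary open set to a single subbasic $p^\in$ one needs the $(p^\in)$ to form a basis, which \autoref{BasisOrderIsomorphism} supplies via the fact that minimal selectors over an $\omega$-poset are filters (\autoref{MinimalSelectorsAreFilters}). Everything else is routine bookkeeping with the image and preimage conventions.
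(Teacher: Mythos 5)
Your proof is correct and takes essentially the same route as the paper: both halves of the identity $S^{\sqsubset_\phi}=\phi(S)$ are argued exactly as there (the nontrivial direction by interposing a basic set $p^\in$ inside the open set $\phi^{-1}[q^\in]$, which is where \autoref{BasisOrderIsomorphism} and hence the $\omega$-poset hypothesis enter), and the refiner property rests on the same cap--cover correspondence from \autoref{SpectrumCompactT1}. The only organisational difference is that the paper verifies the refiner property directly, refining the pulled-back open cover $(\phi^{-1}[c^\in])_{c\in C}$ by a basic cover $B_\mathsf{S}$, whereas you prove the identity first and deduce the refiner property by checking that $D=C^{\sqsupset_\phi}$ meets every minimal selector $S$ via $\phi(S)\cap C\neq\emptyset$ --- a harmless and equally valid rearrangement of the same ingredients.
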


\begin{proof}
Any $C\in\mathsf{C}\mathbb{Q}$ defines a cover $C_\mathsf{S}$ of $\mathsf{S}\mathbb{Q}$, which in turn yields a cover $(\phi^{-1}[c^\in])_{c\in C}$ of $\mathsf{S}\mathbb{P}$.  If $\mathbb{P}$ is an $\omega$-poset then $\mathbb{P}_\mathsf{S}$ is a basis for $\mathsf{S}\mathbb{P}$, by \autoref{BasisOrderIsomorphism}, so we have $B\subseteq\mathbb{P}$ such that $B_\mathsf{S}$ refines $(\phi^{-1}[c^\in])_{c\in C}$, with respect to inclusion, and hence $B$ refines $C$, with respect to ${\sqsubset_\phi}={\sqsupset_\phi^{-1}}$, i.e. $B\sqsubset_\phi C$.  By \autoref{SpectrumCompactT1}, $B$ is a cap of $\mathbb{P}$, so this shows that $\sqsupset_\phi$ is indeed a refiner.

If $q \in S^{\sqsubset_\phi}$ then there is some $p \in S$ with $\phi^{-1}[q^\in] \supseteq p^\in \ni S$ so $\phi(S) \in q^\in$ and hence $q \in \phi(S)$.
On the other hand, if $q \in \phi(S)$, i.e. $\phi(S) \in q^\in$, then by continuity there is some $p \in S$ such that $\phi^{-1}[q^\in] \supseteq p^\in$.
Hence $q \sqsupset_\phi p \in S$ so $q \in S^{\sqsubset_\phi}$.
\end{proof}

A relation $\sqsupset\ \subseteq\mathbb{P}\times\mathbb{Q}$ is \emph{$\wedge$-preserving} if, for all $p,p'\in\mathbb{P}$ and $q,q'\in\mathbb{Q}$,
\begin{equation}
    \tag{$\wedge$-Preservation}q\sqsupset p\wedge p'\sqsubset q'\qquad\Rightarrow\qquad q\wedge q'.
\end{equation}
As long as $\mathbb{P}$ is prime and $\mathbb{Q}$ is an $\omega$-poset, the refiner $\sqsupset_\phi$ defined in \eqref{sqphi} will also be $\wedge$-preserving.  Indeed, if $\mathbb{P}$ is prime then, for any $p,p'\in\mathbb{P}$ and $s\leq p,p'$, we have $S\in\mathsf{S}\mathbb{P}$ containing $s$.  If $q\sqsupset_\phi p$ and $q'\sqsupset_\phi p'$ then $q,q'\in\phi(S)$ and hence $q\wedge q'$, assuming $\mathbb{Q}$ is an $\omega$-poset, by \autoref{MinimalSelectorsAreFilters}.

Conversely, as long as we restrict to regular posets (and hence Hausdorff spectra), we can define continuous maps from $\wedge$-preserving refiners.

\begin{prp} \label{RefinerToMap}
If $\mathbb{P}$ is an $\omega$-poset and $\mathbb{Q}$ is a regular poset then any $\wedge$-preserving refiner $\sqsupset\ \subseteq\mathbb{Q}\times\mathbb{P}$ defines a continuous map $\phi_\sqsupset:\mathsf{S}\mathbb{P}\rightarrow\mathsf{S}\mathbb{Q}$ by
\begin{equation}\label{phisq}
\phi_\sqsupset(S)=S^{\sqsubset\vartriangleleft}.
\end{equation}
If $\mathbb{Q}$ and $\mathbb{R}$ are also regular $\omega$-posets and $\sqni\ \subseteq\mathbb{R}\times\mathbb{Q}$ is another $\wedge$-preserving refiner,
\begin{equation}\label{CompositionPreservation}
    \phi_{\sqni}\circ\phi_\sqsupset=\phi_{\sqni\circ\sqsupset}.
\end{equation}
\end{prp}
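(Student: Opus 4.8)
The plan is to handle the statement in three stages — well-definedness, continuity, and the composition identity — fixing a minimal selector $S\in\mathsf{S}\mathbb{P}$ and writing $\sqsubset\ =\ \sqsupset^{-1}$ and $U=S^\sqsubset\subseteq\mathbb{Q}$ throughout. First I would verify that $\phi_\sqsupset(S)=U^\vartriangleleft$ really is a minimal selector of $\mathbb{Q}$, and for this the key is to show $U$ is a \emph{linked selector} and then invoke \autoref{LinkedSelector}. Linkedness uses $\wedge$-preservation: since $\mathbb{P}$ is an $\omega$-poset, $S$ is a filter by \autoref{MinimalSelectorsAreFilters}, so any $p,p'\in S$ satisfy $p\wedge p'$; given $q,q'\in U$ with $q\sqsupset p$ and $q'\sqsupset p'$, $\wedge$-preservation yields $q\wedge q'$. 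For the selector property I would take any $C\in\mathsf{C}\mathbb{Q}$, use the refiner condition to get $B\in\mathsf{C}\mathbb{P}$ with $B\sqsubset C$, pick $b\in S\cap B$ (as $S$ is a selector), and note the witnessing $c\in C$ with $c\sqsupset b$ lies in $U\cap C$. As $\mathbb{Q}$ is regular, \autoref{LinkedSelector} then gives $U^\vartriangleleft\in\mathsf{S}\mathbb{Q}$.

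Continuity I would get by unwinding the preimage of a subbasic set: $q\in\phi_\sqsupset(S)=U^\vartriangleleft$ holds precisely when there exist $p\in S$ and $q'\in\mathbb{Q}$ with $q'\sqsupset p$ and $q'\vartriangleleft q$. Hence $\phi_\sqsupset^{-1}[q^\in]$ is the union of all $p^\in$ for which some such $q'$ exists, which is open.

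For the composition, I would first check that $\sqni\circ\sqsupset$ is itself a $\wedge$-preserving refiner, so that $\phi_{\sqni\circ\sqsupset}$ is defined: the refiner property follows by composing refinements (a cap of $\mathbb{P}$ refining a cap of $\mathbb{Q}$ refining a cap of $\mathbb{R}$ refines the latter via the composite), and $\wedge$-preservation follows by chaining the two given $\wedge$-preservations through the intermediate $\mathbb{Q}$-elements. A short relation-algebra computation using \eqref{Composition} then gives $S^{(\sqni\circ\sqsupset)^{-1}}=(S^\sqsubset)^\sqin=U^\sqin$, so that $\phi_{\sqni\circ\sqsupset}(S)=(U^\sqin)^\vartriangleleft$, whereas $\phi_\sqni(\phi_\sqsupset(S))=(T^\sqin)^\vartriangleleft$ with $T=U^\vartriangleleft$. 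Everything thus reduces to proving $(U^\sqin)^\vartriangleleft=(T^\sqin)^\vartriangleleft$.

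This last equality is the main obstacle, because the refiner $\sqni$ carries no information about $\vartriangleleft$, so $U^\sqin$ and $T^\sqin$ cannot be compared directly. The device I would use is to observe that $U\cup T$ is \emph{linked}: for $u\in U$ and $q\in T=U^\vartriangleleft$ choose $u'\in U$ with $u'\vartriangleleft q$; then $u\wedge u'$ (as $U$ is linked) and $u\wedge u'\vartriangleleft q$ gives $u\wedge q$ by \eqref{WedgeStar}. Applying $\sqni$ together with $\wedge$-preservation, $(U\cup T)^\sqin=U^\sqin\cup T^\sqin$ is linked, and since it contains the selector $T^\sqin$ it is a linked selector; by \autoref{LinkedSelector} its star-up-closure
\[(U^\sqin\cup T^\sqin)^\vartriangleleft=(U^\sqin)^\vartriangleleft\cup(T^\sqin)^\vartriangleleft\]
is a \emph{minimal} selector of $\mathbb{R}$. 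As each of $(U^\sqin)^\vartriangleleft=\phi_{\sqni\circ\sqsupset}(S)$ and $(T^\sqin)^\vartriangleleft=\phi_\sqni(\phi_\sqsupset(S))$ is a selector contained in this minimal selector, minimality forces both to coincide with it, and hence with each other, completing the proof.
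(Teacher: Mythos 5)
Your proof is correct, and for well-definedness and continuity it follows the paper exactly: $S^\sqsubset$ is a linked selector ($\wedge$-preservation plus \autoref{MinimalSelectorsAreFilters} for linkedness, the refiner property for the selector condition), so \autoref{LinkedSelector} — which needs only regularity of $\mathbb{Q}$, matching the weaker hypothesis in the first part — gives $S^{\sqsubset\vartriangleleft}\in\mathsf{S}\mathbb{Q}$, and $\phi_\sqsupset^{-1}[q^\in]=\bigcup_{p\sqsubset\circ\vartriangleleft q}p^\in$ is open. Where you genuinely diverge is the composition identity. Both you and the paper first observe via \eqref{WedgeStar} that $S^\sqsubset\cup S^{\sqsubset\vartriangleleft}=U\cup T$ is linked, hence so is its image $U^{\sqin}\cup T^{\sqin}$ by $\wedge$-preservation of $\sqni$. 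But the paper then shows directly that the \emph{intersection} $S^{\sqsubset\sqin\vartriangleleft}\cap S^{\sqsubset\vartriangleleft\sqin\vartriangleleft}$ is a selector, via a hands-on double star-refinement in $\mathbb{R}$: it picks $A,B,C\in\mathsf{C}\mathbb{R}$ with $A\vartriangleleft_AB\vartriangleleft C$, takes $a\in A\cap S^{\sqsubset\sqin}$ and $a'\in A\cap S^{\sqsubset\vartriangleleft\sqin}$ with $a\wedge a'$, and pushes both up to a common $c\in C$, concluding by minimality of the two values. You instead work with the \emph{union}: $U^{\sqin}\cup T^{\sqin}$ is a linked selector, so by \autoref{LinkedSelector} its star-closure $(U^{\sqin})^\vartriangleleft\cup(T^{\sqin})^\vartriangleleft$ is a single minimal selector, and since each summand is itself a selector contained in it, minimality collapses all three sets to one. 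This buys a real simplification: the explicit cap-chasing with $A\vartriangleleft_AB\vartriangleleft C$ disappears, the use of regularity of $\mathbb{R}$ being absorbed into one further application of \autoref{LinkedSelector}; you also make explicit — which the paper leaves implicit — that $\sqni\circ\sqsupset$ is a $\wedge$-preserving refiner, so that $\phi_{\sqni\circ\sqsupset}$ is even defined. Two small elisions, neither a gap: linkedness of $U\cup T$ also needs the case of two elements of $T$ (immediate since $T\in\mathsf{S}\mathbb{Q}$ is a filter by \autoref{MinimalSelectorsAreFilters}, or by applying \eqref{WedgeStar} twice), and the claim that $T^{\sqin}$ is a selector requires the same routine refiner argument you spelled out for $U$.
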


\begin{proof}
For any $S\in\mathsf{S}\mathbb{P}$, we see that $S^\sqsubset$ is a linked selector, as $\sqsupset$ is a $\wedge$-preserving refiner.  Thus $S^{\sqsubset\vartriangleleft}\in\mathsf{S}\mathbb{Q}$, by \autoref{LinkedSelector}, showing 
 that $\phi_\sqsupset$ maps $\mathsf{S}\mathbb{P}$ to $\mathsf{S}\mathbb{Q}$.  For continuity just note that $\phi_\sqsupset^{-1}[q^\in]=\bigcup_{p\sqsubset\circ\vartriangleleft q}p^\in$ is open, for any $q\in\mathbb{Q}$.

Next note that the larger subset $S^\sqsubset\cup S^{\sqsubset\vartriangleleft}$ is still linked, because $\sqsubset$ is $\wedge$-preserving and $\wedge\ \circ\vartriangleleft\ \subseteq\wedge$, by \eqref{WedgeStar}.  If $\mathbb{Q}$ and $\mathbb{R}$ are also regular $\omega$-posets and $\sqni\ \subseteq\mathbb{R}\times\mathbb{Q}$ is another $\wedge$-preserving refiner then it follows that $S^{\sqsubset\sqin}$ and $S^{\sqsubset\vartriangleleft\sqin}$ are again selectors with linked union.  Now, for any $C\in\mathsf{C}\mathbb{R}$, we have $A,B\in\mathsf{C}\mathbb{R}$ with $A\vartriangleleft_AB\vartriangleleft C$.  We then have $a\in A\cap S^{\sqsubset\sqin}$ and $a'\in A\cap S^{\sqsubset\vartriangleleft\sqin}$, necessarily with $a\wedge a'$.  We then also have $b\in B$ and $c\in C$ with $a,a'\leq b\vartriangleleft c$ and hence $c\in S^{\sqsubset\sqin\vartriangleleft}\cap S^{\sqsubset\vartriangleleft\sqin\vartriangleleft}\cap C$.  This shows that $S^{\sqsubset\sqin\vartriangleleft}\cap S^{\sqsubset\vartriangleleft\sqin\vartriangleleft}$ is a selector and hence, by the minimality of $\phi_{\sqni\circ\sqsupset}(S)=S^{\sqsubset\sqin\vartriangleleft}$ and $\phi_{\sqni}\circ\phi_\sqsupset(S)=S^{\sqsubset\vartriangleleft\sqin\vartriangleleft}$, it follows that $\phi_{\sqni}\circ\phi_\sqsupset(S)=\phi_{\sqni\circ\sqsupset}(S)$.
\end{proof}

Let $\mathbf{K}$ denote the category of metrisable compact spaces and continuous maps, and let $\mathbf{P}$ denote the category of regular prime $\omega$-posets and $\wedge$-preserving refiners (note that these are closed under composition and that $\id_\PP$ is always a $\wedge$-preserving refiner).  We already have a map $\mathsf{S}$ from objects $\mathbb{P}\in\mathbf{P}$ to $\mathsf{S}\mathbb{P}\in\mathbf{K}$ and we extend this to morphisms ${\sqsupset}\in\mathbf{P}^\mathbb{Q}_\mathbb{P}(=$ refiners in $\mathbb{Q}\times\mathbb{P}$) by setting $\mathsf{S}(\sqsupset)=\phi_\sqsupset\in\mathbf{K}_{\mathsf{S}\mathbb{P}}^{\mathsf{S}\mathbb{Q}}(=$ continuous maps from $\mathsf{S}\mathbb{P}$ to $\mathsf{S}\mathbb{Q}$ -- in general, for any objects $A$ and $B$ of a category $\mathbf{C}$, we denote the corresponding hom-set by $\mathbf{C}_A^B=\{m:m\text{ is a morphism from $A$ to $B$}\}$).

The previous results can thus be summarised as follows.

\begin{thm}\label{SFunctor}
    The map $\mathsf{S}:\mathbf{P}\rightarrow\mathbf{K}$ is an essentially surjective full functor.
\end{thm}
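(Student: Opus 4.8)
The plan is to assemble the three required properties—functoriality, essential surjectivity, and fullness—almost entirely from results already proved, the only genuinely new ingredient being a single observation that the round-up-closure fixes minimal selectors. I would isolate that observation first: \emph{if $\mathbb{P}$ is a regular $\omega$-poset and $S\in\mathsf{S}\mathbb{P}$, then $S^\vartriangleleft=S$.} Indeed, $S$ is round by \autoref{RoundLinked}, so $S\subseteq S^\vartriangleleft$; and $S$ is a filter, hence linked, by \autoref{MinimalSelectorsAreFilters}, so $S^\vartriangleleft\in\mathsf{S}\mathbb{P}$ by \autoref{LinkedSelector}. Since $S$ is then a selector contained in the \emph{minimal} selector $S^\vartriangleleft$, minimality forces $S=S^\vartriangleleft$. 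Every object of $\mathbf{P}$ is a regular $\omega$-poset, so this applies throughout.

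For functoriality I would verify the two category axioms. Preservation of composition is exactly \eqref{CompositionPreservation} of \autoref{RefinerToMap}, which gives $\phi_{\sqni}\circ\phi_\sqsupset=\phi_{\sqni\circ\sqsupset}$ for $\wedge$-preserving refiners between regular $\omega$-posets, and so applies verbatim to morphisms of $\mathbf{P}$. For preservation of identities, the identity refiner $\id_\mathbb{P}$ is the diagonal relation, so its inverse is again the diagonal and $S^{\sqsubset}=S$; hence $\phi_{\id_\mathbb{P}}(S)=S^{\sqsubset\vartriangleleft}=S^\vartriangleleft=S$ by the observation above, giving $\phi_{\id_\mathbb{P}}=\id_{\mathsf{S}\mathbb{P}}$.

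For essential surjectivity I would start from an arbitrary metrisable compactum $X$, which is in particular a second countable $\mathsf{T}_1$ (indeed Hausdorff) compactum. By \autoref{CoversAreCaps} it has an $\omega$-cap-basis $\mathbb{P}$, which is cap-determined by \autoref{CapBasesAreCapDetermined} and hence prime by \eqref{CapDeterminedPrime}. By \autoref{SpaceRecovery}, $\mathsf{S}\mathbb{P}$ is homeomorphic to $X$ and so is Hausdorff, whence the prime converse direction of \autoref{RegularImpliesHausdorff} shows $\mathbb{P}$ is regular. Thus $\mathbb{P}$ is a regular prime $\omega$-poset, i.e. an object of $\mathbf{P}$, with $\mathsf{S}\mathbb{P}\cong X$.

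Finally, for fullness, given $\mathbb{P},\mathbb{Q}\in\mathbf{P}$ and a continuous $\phi\maps\mathsf{S}\mathbb{P}\rightarrow\mathsf{S}\mathbb{Q}$, I would take the refiner $\sqsupset_\phi$ of \autoref{MapToRefiner}. Since $\mathbb{P}$ is prime and $\mathbb{Q}$ is an $\omega$-poset, the discussion following \autoref{MapToRefiner} shows $\sqsupset_\phi$ is $\wedge$-preserving, so it is a morphism of $\mathbf{P}$. Using $S^{\sqsubset_\phi}=\phi(S)$ from \autoref{MapToRefiner} together with the observation applied in $\mathbb{Q}$, I compute $\phi_{\sqsupset_\phi}(S)=(S^{\sqsubset_\phi})^\vartriangleleft=\phi(S)^\vartriangleleft=\phi(S)$, so $\mathsf{S}(\sqsupset_\phi)=\phi$. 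The main obstacle is not any single hard estimate—the whole argument is a synthesis of prior lemmas—but rather the recurring subtlety that the round-up-closure $(-)^\vartriangleleft$ built into the definition \eqref{phisq} of $\phi_\sqsupset$ must leave the already-minimal selectors $\phi(S)$ (and, for identities, $S$ itself) unchanged; the observation $S^\vartriangleleft=S$ is precisely what guarantees that $\phi_{\sqsupset_\phi}$ reproduces $\phi$ on the nose rather than up to some coarsening, and it is what makes both the identity axiom and the fullness computation go through.
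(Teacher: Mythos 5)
Your proof is correct and takes essentially the same route as the paper's: functoriality via \eqref{CompositionPreservation} plus the fixed-point fact $S^\vartriangleleft=S$ (which the paper attributes to roundness via \autoref{RoundLinked}, and which your minimality argument through \autoref{LinkedSelector} spells out cleanly), essential surjectivity via a cap-determined hence prime $\omega$-cap-basis made regular by the converse direction of \autoref{RegularImpliesHausdorff}, and fullness via \autoref{MapToRefiner} with $\wedge$-preservation from primeness. The only cosmetic difference is that the paper sources the poset from \autoref{SpacesFromGradedPosets}, whereas you use the $\omega$-cap-basis from the proof of \autoref{CoversAreCaps} together with \autoref{CapBasesAreCapDetermined}; both are valid.
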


\begin{proof}
    For every $S \in \Spec{\PP}$ we have $\phi_{\id_\PP}(S) = S^\vartriangleleft = S$ since every minimal selector in a regular $\omega$-poset is round, by \autoref{RoundLinked}, and so $\mathsf{S}(\id_\PP) = \id_{\Spec{\PP}}$ for every $\PP\in\mathbf{P}$.
    Together with \eqref{CompositionPreservation}, this shows that $\mathsf{S}$ is a functor.

    Moreover, $\mathsf{S}$ is essentially surjective because every metrizable compactum $X$ is homeomorphic to $\Spec{\PP}$ for some cap-determined $\omega$-poset $\PP$, by \autoref{SpacesFromGradedPosets}, which is necessarily prime, by \eqref{CapDeterminedPrime}, and regular, by \autoref{RegularImpliesHausdorff}.

    The functor is full by \autoref{MapToRefiner} since, for every pair of prime regular $\omega$-posets $\PP$ and $\QQ$ and every continuous map $\phi\maps \Spec{\PP} \to \Spec{\QQ}$, we have $\phi = \mathsf{S}(\sqsupset_\phi)$ (because $\phi_{\sqsupset_\phi}(S)=S^{\sqsubset_\phi\vartriangleleft}=S^{\sqsubset_\phi}$, as $S^{\sqsubset_\phi}$ is already round, by \autoref{MapToRefiner} and \autoref{RoundLinked}).
    The refiner $\sqsupset_\phi$ is $\wedge$-preserving since our $\omega$-posets are prime.
\end{proof}

\begin{rmk}
    We could turn the above result into an equivalence of categories by simply identifying ${\sqsupset},{\sqni}\in\mathbf{P}^\mathbb{Q}_\mathbb{P}$ whenever $\phi_\sqsupset=\phi_{\sqni}$.  Then $\mathsf{S}$ factors as $\mathsf{E}\circ\mathsf{Q}$, where $\mathsf{E}$ is an equivalence and $\mathsf{Q}$ is the quotient functor.  However, what we would really like is a more combinatorial formulation of the quotient category.  We will achieve this in \autoref{StarComposition} via a certain category $\mathbf{S}$ with the same objects as $\mathbf{P}$ but more restrictive `strong refiners' as morphisms under a modified `star-composition'.
\end{rmk}

With \eqref{sqphi} in mind, one might expect that $q\sqsupset p$ is equivalent to $\phi_\sqsupset^{-1}[q^\in]\supseteq p^\in$.  However, both implications may fail, even for $\wedge$-preserving refiners on regular $\omega$-posets.  The best we can do at this stage is show two weaker relations are equivalent.

\begin{prp}
Whenever ${\sqsupset}\in\mathbf{P}^\mathbb{Q}_\mathbb{P}$,
\[q\sqsupset p\qquad\Rightarrow\qquad q^\wedge\supseteq p^{\wedge\sqsubset\vartriangleleft}\qquad\Leftrightarrow\qquad\mathrm{cl}(q^\in)\supseteq\phi_\sqsupset[p^\in].\]
\end{prp}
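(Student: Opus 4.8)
The statement bundles an implication with a biconditional, and the plan is to treat them as two essentially independent tasks. The first, $q\sqsupset p\Rightarrow q^\wedge\supseteq p^{\wedge\sqsubset\vartriangleleft}$, is where the structural hypothesis on the morphism $\sqsupset$ (namely $\wedge$-preservation) actually gets used; the second is a formal manipulation of images, closures and preimages that collapses to a single set identity. Throughout I will use that $\sqsupset\in\mathbf{P}^\mathbb{Q}_\mathbb{P}$ means $\mathbb{P},\mathbb{Q}$ are regular prime $\omega$-posets and $\sqsupset$ is a $\wedge$-preserving refiner, so that $\phi_\sqsupset$ is defined by \eqref{phisq} and maps into $\mathsf{S}\mathbb{Q}$.

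For the implication I would fix $q\sqsupset p$ and an arbitrary $q''\in p^{\wedge\sqsubset\vartriangleleft}$ and simply unwind the three superscripts from the outside in. This produces a witness $q_0$ with $q_0\vartriangleleft q''$ and $q_0\in p^{\wedge\sqsubset}$, and then a witness $p'\in p^\wedge$ (so $p\wedge p'$) with $p'\sqsubset q_0$, i.e.\ $q_0\sqsupset p'$. Now $\wedge$-preservation applied to $q\sqsupset p\wedge p'\sqsubset q_0$ yields $q\wedge q_0$, and since $q_0\vartriangleleft q''$, the inclusion \eqref{WedgeStar} (that is, ${\wedge}\circ{\vartriangleleft}\subseteq{\wedge}$) gives $q\wedge q''$, i.e.\ $q''\in q^\wedge$. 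As $q''$ was arbitrary this is exactly $p^{\wedge\sqsubset\vartriangleleft}\subseteq q^\wedge$. The only care needed here is lining up the correct witnesses so that $\wedge$-preservation fires with $p\wedge p'$ in the middle.

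For the biconditional I would first rewrite the left-hand closure. Since $\mathbb{Q}$ is a prime $\omega$-poset, \autoref{prop:closures} (via \eqref{Closure}) together with \eqref{Upni} give $\mathrm{cl}(q^\in)=(\bigcup q^\in)^\supseteq=(q^\wedge)^\supseteq$, so membership $T\in\mathrm{cl}(q^\in)$ is just $T\subseteq q^\wedge$. Hence $\mathrm{cl}(q^\in)\supseteq\phi_\sqsupset[p^\in]$ says precisely that $\phi_\sqsupset(S)=S^{\sqsubset\vartriangleleft}\subseteq q^\wedge$ for every $S\in p^\in$, equivalently $\bigcup_{S\in p^\in}S^{\sqsubset\vartriangleleft}\subseteq q^\wedge$. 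The key observation is that the superscript operations $\cdot^\sqsubset$ and $\cdot^\vartriangleleft$ are preimage maps and therefore distribute over unions, while $\bigcup_{S\in p^\in}S=\bigcup p^\in=p^\wedge$ by \eqref{Upni} (using primeness of $\mathbb{P}$). Pulling the union inside twice then gives $\bigcup_{S\in p^\in}S^{\sqsubset\vartriangleleft}=\bigl((\bigcup p^\in)^\sqsubset\bigr)^\vartriangleleft=p^{\wedge\sqsubset\vartriangleleft}$, so the image-containment is equivalent to $p^{\wedge\sqsubset\vartriangleleft}\subseteq q^\wedge$, which is the right-hand side of the biconditional.

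I do not expect a genuine obstacle: the whole argument is a careful unwinding of definitions. The two places where something must be invoked rather than computed are $\wedge$-preservation plus \eqref{WedgeStar} in the implication, and primeness (entering both through $\mathrm{cl}(q^\in)=(q^\wedge)^\supseteq$ and through $\bigcup p^\in=p^\wedge$) together with distributivity of preimages over unions in the biconditional. The mildest risk is bookkeeping the iterated superscripts $\wedge\sqsubset\vartriangleleft$ consistently and making sure the quantifier `for all $S\in p^\in$' is correctly converted into a single union containment; once the closure is expressed as $(q^\wedge)^\supseteq$ there is nothing further to do.
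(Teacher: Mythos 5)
Your proof is correct and takes essentially the same route as the paper: your first implication is exactly the paper's one-line computation $p^{\wedge\sqsubset\vartriangleleft}\subseteq q^{\wedge\vartriangleleft}\subseteq q^\wedge$ ($\wedge$-preservation giving $p^{\wedge\sqsubset}\subseteq q^\wedge$, then \eqref{WedgeStar}) unpacked with explicit witnesses. For the biconditional, your single identity $\bigcup_{S\in p^\in}S^{\sqsubset\vartriangleleft}=p^{\wedge\sqsubset\vartriangleleft}$ (via \eqref{Upni} and distributivity of preimages over unions) is merely a tidy repackaging of the paper's two separate directions -- primeness of $\mathbb{P}$ enters in exactly the same place, namely the inclusion $p^\wedge\subseteq\bigcup p^\in$, which is what the paper's witness $S\in p^\in\cap s^\in$ provides in its converse direction, while $\mathrm{cl}(q^\in)=(q^\wedge)^\supseteq$ plays the role of \eqref{Closure} and \eqref{Upni} in both.
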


\begin{proof}
    For the first $\Rightarrow$, just note that $q\sqsupset p$ implies $p^{\wedge\sqsubset\vartriangleleft}\subseteq q^{\wedge\vartriangleleft}\subseteq q^\wedge$.

    If $q^\wedge\supseteq p^{\wedge\sqsubset\vartriangleleft}$ then, for any $S\in p^\in$,
    \[\phi_\sqsupset(S)=S^{\sqsubset\vartriangleleft}\subseteq p^{\wedge\sqsubset\vartriangleleft}\subseteq q^\wedge=\bigcup q^\in,\]
    by \eqref{Upni}, and hence $\phi_\sqsupset(S)\in\mathrm{cl}(q^\in)$, by \eqref{Closure}.  This proves the $\Rightarrow$ part.

    Conversely, say $\mathrm{cl}(q^\in)\supseteq\phi_\sqsupset[p^\in]$ and take $r\in p^{\wedge\sqsubset\vartriangleleft}$.  Then we have $s\in p^\wedge\cap r^{\vartriangleright\sqsupset}$ and $S\in p^\in\cap s^\in$, as $\mathbb{P}$ is prime (see \eqref{Upni}), necessarily with $r\in s^{\sqsubset\vartriangleleft}\subseteq S^{\sqsubset\vartriangleleft}=\phi_\sqsupset(S)$.  As $\phi_\sqsupset(S)\in\phi_\sqsupset[p^\in]\subseteq\mathrm{cl}(q^\in)$, it follows that $r\in\phi_\sqsupset(S)\subseteq\bigcup q^\in=q^\wedge$, by \eqref{Closure} and \eqref{Upni} again.  This shows that $q^\wedge\supseteq p^{\wedge\sqsubset\vartriangleleft}$, as required.
\end{proof}

By \autoref{ClosedContainmentProp}, $q\vartriangleright r\sqsupset p$ then implies $q^\in\supseteq\mathrm{cl}(r^\in)\supseteq\mathrm{cl}(\phi_\sqsupset[p^\in])$ and hence $\phi_\sqsupset^{-1}[q^\in]\supseteq\phi_\sqsupset^{-1}[\mathrm{cl}(\phi_\sqsupset[p^\in])]\supseteq\mathrm{cl}(p^\in)$, i.e.
\begin{equation}\label{StarCircSq}
    q\vartriangleright\circ\sqsupset p\qquad\Rightarrow\qquad\phi_\sqsupset^{-1}[q^\in]\supseteq\mathrm{cl}(p^\in).
\end{equation}
Later we will show how to turn this into an equivalence using star-composition.

\subsection{Homeomorphisms}\label{Homeomorphisms}

By \autoref{SFunctor}, isomorphisms in $\mathbf{P}$ yield homeomorphisms in $\mathbf{K}$.  We can also obtain homeomorphisms of spectra from much more general pairs of refiners, even between non-regular posets.

Let ${\succsim_\mathbb{P}}\subseteq\mathbb{P}\times\mathbb{P}$ be the cap-order ${\succsim}\subseteq\mathsf{P}\mathbb{P}\times\mathsf{P}\mathbb{P}$ restricted to singletons, i.e.
\[p\succsim_\mathbb{P}p'\qquad\Leftrightarrow\qquad\{p\}\succsim\{p'\}\qquad\Leftrightarrow\qquad p^\in\supseteq p'^\in\]
(see \eqref{AprecsimB}).  Likewise define ${\succsim_\mathbb{Q}}\subseteq\mathbb{Q}\times\mathbb{Q}$ and let ${\precsim_\mathbb{P}}={\succsim_\mathbb{P}^{-1}}$ and ${\precsim_\mathbb{Q}}={\succsim_\mathbb{Q}^{-1}}$.  If these have subrelations coming from compositions of a pair of refiners between them then these refiners yield mutually inverse homeomorphisms between their spectra.

\begin{prp}\label{BirefinableImpliesHomeomorphic}
If $\sqsupset\ \subseteq\mathbb{Q}\times\mathbb{P}$ and $\sqni\ \subseteq\mathbb{P}\times\mathbb{Q}$ are refiners satisfying 
\[\sqsupset\circ\sqni\ \subseteq\ \succsim_\mathbb{Q}\qquad\text{and}\qquad\sqni\circ\sqsupset\ \subseteq\ \succsim_\mathbb{P}\]
then $S\mapsto S^\sqsubset$ and $T\mapsto T^{\sqin}$ are continuous maps between $\mathsf{S}\mathbb{P}$ and $\mathsf{S}\mathbb{Q}$ satisfying
\[S=S^{\sqsubset\sqin}\qquad\text{and}\qquad T=T^{\sqin\sqsubset}.\]
\end{prp}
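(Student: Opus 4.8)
The plan is to treat the two maps symmetrically, reducing everything to two facts: that a refiner sends selectors to selectors, and that preimages compose, i.e. $(S^\sqsubset)^\sqin = S^{\sqsubset\circ\sqin}$ (immediate from the definition of preimage and \eqref{Composition}). Throughout I would write $\sqsubset = \sqsupset^{-1}\subseteq\mathbb{P}\times\mathbb{Q}$ and $\sqin = \sqni^{-1}\subseteq\mathbb{Q}\times\mathbb{P}$, so that $S\mapsto S^\sqsubset$ takes subsets of $\mathbb{P}$ to subsets of $\mathbb{Q}$ and $T\mapsto T^\sqin$ goes back.

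First I would prove the selector-preservation lemma: if $S\subseteq\mathbb{P}$ is a selector then so is $S^\sqsubset\subseteq\mathbb{Q}$. Given $C\in\mathsf{C}\mathbb{Q}$, the refiner condition yields a cap $B\in\mathsf{C}\mathbb{P}$ with $B\sqsubset C$; since $S$ meets $B$, picking $b\in S\cap B$ and then $c\in C$ with $b\sqsubset c$ gives $c\in S^\sqsubset\cap C$. The identical argument for the refiner $\sqni$ shows $T\mapsto T^\sqin$ also preserves selectors. Next I would establish the one-sided containment $S^{\sqsubset\sqin}\subseteq S$ for every $S\in\mathsf{S}\mathbb{P}$. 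By functoriality $S^{\sqsubset\sqin}=S^{\sqsubset\circ\sqin}$, and a short computation identifies $\sqsubset\circ\sqin$ as the inverse of $\sqni\circ\sqsupset$; hence the hypothesis $\sqni\circ\sqsupset\subseteq\succsim_\mathbb{P}$ gives $s(\sqsubset\circ\sqin)p\Rightarrow s\precsim_\mathbb{P}p\Rightarrow s^\in\subseteq p^\in$ by \eqref{AprecsimB}. So if $p\in S^{\sqsubset\sqin}$ there is $s\in S$ with $s^\in\subseteq p^\in$, and since $s\in S$ means $S\in s^\in\subseteq p^\in$, we conclude $p\in S$. Symmetrically $T^{\sqin\sqsubset}\subseteq T$ follows from $\sqsupset\circ\sqni\subseteq\succsim_\mathbb{Q}$.

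Now the round-trip identities fall out from minimality: for $S\in\mathsf{S}\mathbb{P}$ the set $S^{\sqsubset\sqin}=(S^\sqsubset)^\sqin$ is a selector (apply the lemma twice) contained in $S$, so minimality of $S$ forces $S^{\sqsubset\sqin}=S$, and likewise $T^{\sqin\sqsubset}=T$. It then remains to see the maps land in the spectra, i.e. that $S^\sqsubset$ is \emph{minimal}. Here I would take a minimal selector $T_0\subseteq S^\sqsubset$ (by \autoref{SelectorsContainMinimalSelectors}); then $T_0^\sqin\subseteq(S^\sqsubset)^\sqin=S$ is a selector, so minimality of $S$ gives $T_0^\sqin=S$ and hence $T_0^{\sqin\sqsubset}=S^\sqsubset$, while the round-trip identity applied to the minimal selector $T_0$ gives $T_0^{\sqin\sqsubset}=T_0$; thus $S^\sqsubset=T_0\in\mathsf{S}\mathbb{Q}$, and symmetrically $T^\sqin\in\mathsf{S}\mathbb{P}$. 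Finally, continuity is a direct subbasis check: $(S\mapsto S^\sqsubset)^{-1}[q^\in]=\bigcup\{p^\in:q\sqsupset p\}$ is open, since $q\in S^\sqsubset$ holds exactly when $S$ meets $\{p:q\sqsupset p\}$, and likewise for $T\mapsto T^\sqin$.

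The one genuinely non-formal point is that the composition hypotheses only deliver $\precsim$-bounds, which by themselves yield just the containments $S^{\sqsubset\sqin}\subseteq S$ and $T^{\sqin\sqsubset}\subseteq T$ and say nothing about equality or about minimality of the images. The leverage that upgrades these to the stated identities, and that forces $S^\sqsubset$ to be a single point rather than merely a selector, is precisely the minimality of the selectors combined with selector-preservation from the refiner condition; keeping straight which of the four relations and two composition hypotheses is invoked at each step is the only real bookkeeping hazard.
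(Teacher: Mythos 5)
Your proof is correct and follows essentially the same route as the paper's: selector preservation from the refiner condition, the containments $S^{\sqsubset\sqin}\subseteq S$ and $T^{\sqin\sqsubset}\subseteq T$ obtained from the composition hypotheses via \eqref{AprecsimB}, minimality upgrading these to equalities, the minimal-sub-selector trick (\autoref{SelectorsContainMinimalSelectors}) to show the images are themselves minimal, and the subbasic preimage computation $\bigcup_{p\sqsubset q}p^\in$ for continuity. The only cosmetic difference is that you argue symmetrically and extract a minimal selector $T_0\subseteq S^\sqsubset$, whereas the paper extracts a minimal $S\subseteq T^{\sqin}$ and transfers it across -- mirror images of the same argument.
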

\begin{proof}
First note $S^\sqsubset$ is a selector in $\mathbb{P}$ whenever $S$ is a selector in $\mathbb{Q}$. Indeed, for any $D\in\mathsf{C}\mathbb{Q}$, we have $C\in\mathsf{C}\mathbb{P}$ with $C\sqsubset D$, as $\sqsupset$ is a refiner.  As $S$ is a selector, we have $c\in S\cap C$.  We then have $d\in D$ with $c\sqsubset d$ and hence $d\in S^\sqsubset\cap D$.

Likewise, any selector $T$ in $\mathbb{Q}$ gives rise to a selector $T^{\sqin}$ in $\mathbb{P}$, which in turn yields another selector $T^{\sqin\sqsubset}=T^{\sqin\circ\sqsubset}\subseteq T^{\precsim_\mathbb{Q}}$ in $\mathbb{Q}$.  If $T$ is a minimal selector then $T^{\precsim_\mathbb{Q}}\subseteq T$, by \eqref{AprecsimB}, and hence $T^{\sqin\sqsubset}=T$.  Moreover, $T^{\sqin}$ contains some minimal selector $S$, by \autoref{MinimalSelectors}.  It follows that $S^\sqsubset\subseteq T^{\sqin\sqsubset}=T$ which implies $S^\sqsubset=T$, by minimality.  This in turn implies $S=S^{\sqsubset\sqin}=T^{\sqin}$, i.e. $T^{\sqin}$ was already minimal.  This shows that $S\mapsto S^\sqsubset$ and $T\mapsto T^{\sqin}$ are mutually inverse bijections.  Lastly, note that the preimage of any subbasic open set $q^\in$ with respect to the map $S\mapsto S^\sqsubset$ is given by $\bigcup_{p\sqsubset q}p^\in$, which is again open, showing that $S\mapsto S^\sqsubset$ is continuous.  Likewise, $T\mapsto T^{\sqin}$ is also continuous, as required.
\end{proof}

We can also obtain a kind of converse to \autoref{BirefinableImpliesHomeomorphic} by noting that
\begin{equation*}
{\sqsupset_\psi\circ\sqsupset_\phi}\ \subseteq\ {\sqsupset_{\psi\circ\phi}},
\end{equation*}
for any $\phi:\mathsf{S}\mathbb{P}\rightarrow\mathsf{S}\mathbb{Q}$ and $\psi:\mathsf{S}\mathbb{Q}\rightarrow\mathsf{S}\mathbb{R}$, as $r\sqsupset_\psi q\sqsupset_\phi p$ means $\psi^{-1}[r^\in]\supseteq q^\in$ so
\[(\psi\circ\phi)^{-1}[r^\in]=\phi^{-1}[\psi^{-1}[r^\in]]\supseteq\phi^{-1}[q^\in]\supseteq p^\in.\]
In particular, if $\mathbb{P}$ and $\mathbb{Q}$ are $\omega$-posets and $\phi:\mathsf{S}\mathbb{P}\rightarrow\mathsf{S}\mathbb{Q}$ is a homeomorphism then $\sqsupset_{\phi^{-1}}\circ\sqsupset_\phi\ \subseteq\ \sqsupset_{\mathrm{id}_{\mathsf{S}\mathbb{P}}}$ and $\sqsupset_\phi\circ\sqsupset_{\phi^{-1}}\ \subseteq\ \sqsupset_{\mathrm{id}_{\mathsf{S}\mathbb{Q}}}$.  But $q\sqsupset_{\mathrm{id}_{\mathsf{S}\mathbb{P}}}p$ just means $q^\in\supseteq p^\in$, which is equivalent to $q\succsim p$, by \eqref{AprecsimB}.  So this shows that
\[{\sqsupset_{\phi^{-1}}\circ\sqsupset_\phi}\,\subseteq\, {\succsim_\mathbb{P}}\qquad\text{and}\qquad{\sqsupset_\phi\circ\sqsupset_{\phi^{-1}}}\,\subseteq\, {\succsim_\mathbb{Q}}.\]

The following corollary of \autoref{CapsRefineLevels} shows that subposets of an $\omega$-poset containing infinitely many of its levels all have homeomorphic spectra.

\begin{cor}\label{HomeomorphicSubposet}
    If $\mathbb{P}$ is an $\omega$-poset, $\mathbb{Q}\subseteq\mathbb{P}$ and $\mathsf{P}\mathbb{Q}\cap\mathsf{B}\mathbb{P}$ is coinitial in $\mathsf{B}\mathbb{P}$ then $S\mapsto S\cap\mathbb{Q}$ and $T\mapsto T^\leq$ are continuous maps between $S\in\mathsf{S}\mathbb{P}$ and $T\in\mathsf{S}\mathbb{Q}$ satisfying
    \[(S\cap\mathbb{Q})^\leq=S\qquad\text{and}\qquad(T^\leq\cap\mathbb{Q})=T.\]
\end{cor}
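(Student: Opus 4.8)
The plan is to deduce the result from \autoref{BirefinableImpliesHomeomorphic}, applied to the two inclusion-induced relations $\sqsupset\ =\ {\geq}\cap(\mathbb{Q}\times\mathbb{P})$ and $\sqni\ =\ {\geq}\cap(\mathbb{P}\times\mathbb{Q})$, whose inverses are $\sqsubset\ =\ {\leq}\cap(\mathbb{P}\times\mathbb{Q})$ and $\sqin\ =\ {\leq}\cap(\mathbb{Q}\times\mathbb{P})$. For any up-set $S\subseteq\mathbb{P}$ we then have $S^\sqsubset=\{q\in\mathbb{Q}:\exists s\in S\ (s\leq q)\}=S\cap\mathbb{Q}$, and for any $T\subseteq\mathbb{Q}$ we have $T^\sqin=T^\leq$. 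Since minimal selectors are up-sets, by \autoref{MinimalSelectors}, the maps $S\mapsto S^\sqsubset$ and $T\mapsto T^\sqin$ furnished by \autoref{BirefinableImpliesHomeomorphic} are exactly $S\mapsto S\cap\mathbb{Q}$ and $T\mapsto T^\leq$; its conclusion then reads $S=S^{\sqsubset\sqin}=(S\cap\mathbb{Q})^\leq$ and $T=T^{\sqin\sqsubset}=T^\leq\cap\mathbb{Q}$, which is precisely what we want. So it remains to check that $\sqsupset$ and $\sqni$ are refiners satisfying the two composition conditions.

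The easy parts I would dispatch first. Any band of $\mathbb{P}$ lying inside $\mathbb{Q}$ is automatically a band of $\mathbb{Q}$, since comparability with it holds for all of $\mathbb{P}$ and hence all of $\mathbb{Q}$. For the refiner condition on $\sqni$, given $C\in\mathsf{C}\mathbb{P}$ refined by a band $B\in\mathsf{B}\mathbb{P}$, the coinitiality hypothesis yields $B'\in\mathsf{P}\mathbb{Q}\cap\mathsf{B}\mathbb{P}$ with $B'\leq B\leq C$; then $B'$ is a cap of $\mathbb{Q}$ with $B'\leq C$, as required. The two composition conditions are likewise routine: unwinding definitions gives $\sqsupset\circ\sqni={\geq}\cap(\mathbb{Q}\times\mathbb{Q})$ and $\sqni\circ\sqsupset=\{(p,p'):\exists q\in\mathbb{Q}\ (p\geq q\geq p')\}$, and since $\precsim$ contains $\leq$ on singletons, \eqref{AprecsimB} gives ${\geq}\subseteq{\succsim_\mathbb{Q}}$ and ${\geq}\subseteq{\succsim_\mathbb{P}}$, so both compositions are contained in the required cap-order relations (for $\sqni\circ\sqsupset$ one factors through the intermediate $q\in\mathbb{Q}$).

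The main obstacle is the refiner condition on $\sqsupset$: that every cap of $\mathbb{Q}$ is refined by a cap of $\mathbb{P}$; equivalently, that the $\mathbb{P}$-bands contained in $\mathbb{Q}$ are coinitial not merely in $\mathsf{B}\mathbb{P}$ but in $\mathsf{B}\mathbb{Q}$ (and hence in $\mathsf{C}\mathbb{Q}$). Here I would argue by rank. Given $B_\mathbb{Q}\in\mathsf{B}\mathbb{Q}$, put $n=\max_{b\in B_\mathbb{Q}}\mathsf{r}(b)$, which is finite as $\mathbb{P}$ is an $\omega$-poset, and use coinitiality to choose a $\mathbb{P}$-band $B'\subseteq\mathbb{Q}$ refining the level $\mathbb{P}_{n+1}$, itself a band by \autoref{CapsRefineLevels}. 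I then claim $B'\leq B_\mathbb{Q}$. Each $b'\in B'$ lies below some element of $\mathbb{P}_{n+1}$, so $b'$ is either an atom of $\mathbb{P}$ or has rank $>n$ (using that any element of a level of rank below that level is already an atom). As $B_\mathbb{Q}$ is a $\mathbb{Q}$-band and $b'\in\mathbb{Q}$, some $b\in B_\mathbb{Q}$ is comparable to $b'$; the inequality $b<b'\Rightarrow\mathsf{r}(b)>\mathsf{r}(b')$ rules out $b\leq b'$ when $\mathsf{r}(b')>n$, while minimality of $b'$ rules it out in the atom case, forcing $b'\leq b$ in every case. Thus $B'$ refines $B_\mathbb{Q}$, so the $\mathbb{P}$-cap $B'$ refines any cap of $\mathbb{Q}$ above $B_\mathbb{Q}$. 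This verifies the last refiner condition and lets us invoke \autoref{BirefinableImpliesHomeomorphic} to conclude.
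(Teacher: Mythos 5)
Your route is the paper's route in every structural respect: the same two restrictions of $\geq$ to $\mathbb{Q}\times\mathbb{P}$ and $\mathbb{P}\times\mathbb{Q}$, the same composition conditions via ${\geq}\subseteq{\succsim}$, the same identifications $S^\sqsubset=S\cap\mathbb{Q}$ and $T^\sqin=T^\leq$ for up-sets, and the same appeal to \autoref{BirefinableImpliesHomeomorphic}, with the only substantive work being to show that bands of $\mathbb{Q}$ are caps of $\mathbb{P}$. But your justification of that step rests on a false claim: it is \emph{not} true in an $\omega$-poset that every element of a level $\mathbb{P}_{n+1}$ whose rank is at most $n$ must be an atom. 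The level $\mathbb{P}_{n+1}$ consists of the minimal elements of the cone $\mathbb{P}^{n+1}$, so an element $p$ with $\mathsf{r}(p)\leq n$ belongs to $\mathbb{P}_{n+1}$ as soon as all of its strict lower bounds have rank at least $n+2$; it need not be minimal in $\mathbb{P}$. Concretely, take $\mathbb{P}=\{a,b_0,b_1,c\}$ with $c<a$, $c<b_1<b_0$ and $a$ incomparable to $b_0,b_1$: then $\mathsf{r}(a)=0$ and $a\in\mathbb{P}_1$, yet $a$ is not an atom. (The paper's remark after the definition of levels only says levels contain all atoms of smaller rank, ``e.g.''; it does not say these are the only smaller-rank elements.) Consequently your dichotomy ``$b'$ is either an atom of $\mathbb{P}$ or has rank $>n$'' can fail -- namely when $b'$ itself lies in $\mathbb{P}_{n+1}$ with small rank -- and your case analysis simply does not cover such $b'$.

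The gap is local and easily repaired, and the repair is exactly what the paper does: argue with the cone rather than with $\mathsf{r}(b')$ or atomicity. Since each $b'\in B'$ satisfies $b'\leq p$ for some $p\in\mathbb{P}_{n+1}$, every strict lower bound of $b'$ is a strict lower bound of $p$ and hence has rank at least $n+2$; as every $b\in B_\mathbb{Q}$ has rank at most $n$, no element of $B_\mathbb{Q}$ can lie strictly below $b'$, so comparability (which holds because $B_\mathbb{Q}$ is a band of $\mathbb{Q}$ and $b'\in\mathbb{Q}$) forces $b'\leq b$ in all cases, with no atom/rank split needed. The paper phrases this same argument by noting that $\mathbb{P}_n$, and hence any band refining it, is disjoint from $B_\mathbb{Q}^<$, just as in the proof of \autoref{CapsRefineLevels}. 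With that one correction your proposal coincides with the paper's proof; everything else in it -- the band-of-$\mathbb{Q}$ observation, the refiner condition on the other side via coinitiality, and the verification of the two composition inclusions -- is accurate.
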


\begin{proof}
    We claim that the caps of $\mathbb{Q}$ are precisely the caps of $\mathbb{P}$ contained in $\mathbb{Q}$, i.e.
    \[\mathsf{C}\mathbb{Q}=\mathsf{C}\mathbb{P}\cap\mathsf{P}\mathbb{Q}=\{C\in\mathsf{C}\mathbb{P}:C\subseteq\mathbb{Q}\}.\]
    Indeed, if $\mathsf{C}\mathbb{P}\ni C\subseteq\mathbb{Q}$ then, as $\mathbb{Q}$ contains a coinitial subset of $\mathsf{B}\mathbb{P}$, we have some $B\in\mathsf{B}\mathbb{P}\cap\mathsf{P}\mathbb{Q}\subseteq\mathsf{B}\mathbb{Q}$ refining $C$ and hence $C\in\mathsf{C}\mathbb{Q}$.  Conversely, take some $B\in\mathsf{B}\mathbb{Q}$.  For sufficiently large $n\in\omega$, the cone $\mathbb{P}^n$ will contain $B$ and hence the level $\mathbb{P}_n$ will be disjoint from $B^<$.  As $\mathbb{Q}$ contains a coinitial subset of $\mathsf{B}\mathbb{P}$, we have $C\in\mathsf{B}\mathbb{P}\cap\mathsf{P}\mathbb{Q}\subseteq\mathsf{B}\mathbb{Q}$ refining $\mathbb{P}_n$ which is thus also disjoint from $B^<$.  But $B$ is a band of $\mathbb{Q}$ so this implies that $C\subseteq B^\geq$, i.e. $C$ refines $B$ and hence $B$ is also a cap of $\mathbb{P}$.  This shows that all bands of $\mathbb{Q}$ are caps of $\mathbb{P}$ and hence the same applies to caps of $\mathbb{Q}$ as well, proving the claim.

    Thus the restrictions $\geq_\PP^\QQ$ and $\geq_\QQ^\PP$ of $\geq_\PP$ to $\QQ \times \PP$ and $\PP \times \QQ$ are refiners satisfying ${\geq}^\QQ_\PP \circ {\geq}^\PP_\QQ \subseteq {\geq_\QQ} \subseteq {\succsim_\QQ}$ and ${\geq}^\PP_\QQ \circ {\geq}^\QQ_\PP \subseteq {\geq_\PP} \subseteq {\succsim_\PP}$.  Noting $S^{\leq^\mathbb{Q}_\mathbb{P}}=S\cap\mathbb{Q}$ and $T^{\leq^\mathbb{P}_\mathbb{Q}}=T^\leq$, for all $S \in \Spec{\PP}$ and $T \in \Spec{\QQ}$, the result now follows from \autoref{BirefinableImpliesHomeomorphic}.
\end{proof}

We can also obtain a similar order theoretic analog of \autoref{PredeterminedSubBasis}.  First, we need the following order theoretic analog of \autoref{PredeterminedLemma}.  Let
\[\mathbb{P}_\emptyset=\{p\in\mathbb{P}:p\precsim\emptyset\}.\]
Also note $D\precsim_\mathbb{P}B$ below is saying that $D$ refines $B$, with respect to the $\precsim_\mathbb{P}$ relation on $\mathbb{P}$ (which is stronger than just saying $D\precsim B$, for the relation $\precsim$ on $\mathsf{P}\mathbb{P}$).

\begin{lemma} \label{PredeterminedLemma2}
    If $\mathbb{P}$ is an $\omega$-poset, $B$ is a cap and $C$ is a finite subset of $\mathbb{P}\setminus\mathbb{P}_\emptyset$ on which $\precsim_\mathbb{P}$ is just $\leq$ then there is a minimal cap $D\precsim_\mathbb{P}B$ and minimal caps $(E_d)_{d\in D}$ with $d\in E_d$ such that, for all $c\in C$ and $d\in D$,
    \begin{equation}\label{dbigcap2}
        c\not\precsim E_d\setminus\{d\}\ \Rightarrow\ d\leq c\qquad\text{and}\qquad c\precsim d\ \Rightarrow\ c=d\text{ and }c^\precsim\in\mathsf{S}\mathbb{P},
    \end{equation}
\end{lemma}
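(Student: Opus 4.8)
The plan is to carry out the proof of \autoref{PredeterminedLemma} inside the spectrum $\mathsf{S}\mathbb{P}$, translating open sets into the sets $p^\in$ and points into minimal selectors, while upgrading the containments that the spectrum only records up to $\precsim$ into genuine order relations in $\mathbb{P}$. First I would fix the dictionary: by \autoref{SpectrumCompactT1} and \autoref{BasisOrderIsomorphism}, $\mathsf{S}\mathbb{P}$ is a $\mathsf{T}_1$ compactum with basis $\{p^\in : p \in \mathbb{P}\}$; the hypothesis $C \cap \mathbb{P}_\emptyset = \emptyset$ says each $c^\in$ is non-empty, and the hypothesis that $\precsim_\mathbb{P}$ is just $\leq$ on $C$, together with \eqref{AprecsimB}, says $c \mapsto c^\in$ is an order-embedding of $(C,\leq)$ into the non-empty basic open sets, so that the finite family $\{c^\in : c \in C\}$ has $|C|$ distinct members with $c^\in \subseteq c'^\in \Leftrightarrow c \leq c'$ for $c,c' \in C$.

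Next I would mimic the trace decomposition from \autoref{PredeterminedLemma}, stratifying $\mathsf{S}\mathbb{P}$ by the classes $X_F = \{S \in \mathsf{S}\mathbb{P} : S \cap C = F\}$ and running the same recursion on the closed sets $L_F = \{S \in \mathsf{S}\mathbb{P} : S \cap C \subseteq F\}$ to obtain closed $K_F \subseteq X_F$ and minimal covers of each $K_F$. The essential change is how cover elements are chosen: instead of picking a basic open set contained in $\bigcap F$ around a witness point, I would use that every minimal selector is a filter (\autoref{MinimalSelectorsAreFilters}). For a witness selector $S \in X_F$ I pick $d \in S$ that is a common lower bound in $\mathbb{P}$ of $F$ together with some $b \in B \cap S$ (possible since $S$ is down-directed and $B$ is a cap); this yields $d \leq c$ for all $c \in F$ and $d \precsim_\mathbb{P} b$, so the resulting $D$ is a minimal cap with $D \precsim_\mathbb{P} B$, and $d \notin \mathbb{P}_\emptyset$ since $S \in d^\in$. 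As in \autoref{PredeterminedLemma}, I would use $\mathsf{T}_1$-ness to shrink $d$ so that $d^\in \subsetneq c^\in$ for each $c \in F$, except when the relevant class reduces to a single isolated point $S = c^\precsim$, in which case I set $d = c$ itself; this is exactly what forces the second clause of \eqref{dbigcap2}, since then $c \precsim d$ can only occur with $c = d$ and $c^\precsim \in \mathsf{S}\mathbb{P}$.

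It then remains to produce the auxiliary minimal caps $E_d$, which replace the defining identity $d = \bigcap\{e : x_d \in e\}$ of \autoref{PredeterminedLemma} by an intrinsic certificate in $\mathbb{P}$. The point is that $c \not\precsim E_d \setminus \{d\}$ holds precisely when some minimal selector lies in $d^\in$, in no other member of $E_d$ (a private witness of $d$), and in $c^\in$; so condition (1) amounts to demanding that every private witness of $d$ in $E_d$ has $C$-trace contained in $F_d = \{c \in C : d \leq c\}$. I would therefore choose $E_d$ fine enough that the private region of $d$ falls inside the trace class $X_{F_d}$: since the witness $S \in X_{F_d}$ is a minimal selector avoiding each of the finitely many $c \in C \setminus F_d$, a compactness argument like the one in \autoref{PredeterminedLemma} (refining below $d$ via \autoref{CapsRefineLevels} to separate $d^\in \cap c^\in$ from $S$) produces such a minimal cap containing $d$; in the singleton case $d = c$ the point $S$ is isolated and $E_d$ is simply $\{d\}$ together with a minimal cover of its clopen complement.

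The main obstacle is precisely the gap between $\precsim$, which is all the spectrum records, and the order $\leq$ that the conclusion demands: applying \autoref{PredeterminedLemma} as a black box would yield only $d \precsim c$. The crux is thus to realize each constructed cover element as a genuine common lower bound through the filter property and, dually, to rule out any ``phantom'' $c \in C$ with $d \precsim c$ but $d \not\leq c$, which would break condition (1); this is achieved by placing each witness selector in the exact trace class $X_{F_d}$ and checking that this placement persists into the private region of $E_d$. Arranging the bookkeeping of these traces to mesh with the recursion — so that the $K_F$ stay disjoint from earlier choices and all the $E_d$ certificates hold simultaneously — is where the genuine care lies.
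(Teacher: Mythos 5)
Your dictionary and your filter trick are sound as far as they go: by \autoref{SpectrumCompactT1} and \autoref{BasisOrderIsomorphism} the spectrum is a $\mathsf{T}_1$ compactum on which caps are exactly the subbasic covers, and choosing $d$ inside a minimal selector $S$ with $S\cap C=F$ as a genuine $\leq$-lower bound of $F\cup\{b\}$ for some $b\in B\cap S$ (legitimate by \autoref{MinimalSelectorsAreFilters}) does give $d\leq f$ for all $f\in F$ and $D\precsim_\mathbb{P}B$, and it even kills phantoms: $d\precsim c$ forces $S\in d^\in\subseteq c^\in$, so $c\in S\cap C=F$ and hence $d\leq c$. Note, though, that the paper never enters the spectrum at all: it runs a purely combinatorial recursion over traces $F\subseteq C$, choosing $D_F\subseteq B^{\succsim_\mathbb{P}}\cap\bigcap_{f\in F}f^\geq$ minimal so that $E_F=\bigcup_{G\subseteq F}D_G\cup(C\setminus F)$ is a cap.

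The genuine gap is in your construction of $E_d$. The first clause of \eqref{dbigcap2} demands that \emph{every} private witness of $d$ in $E_d$ -- not just your designated $S$ -- avoids $c^\in$ for each $c$ with $d\not\leq c$; equivalently, by \eqref{AprecsimB}, that $c^\in\subseteq\bigcup_{e\in E_d\setminus\{d\}}e^\in$ for all such $c$. Your plan, to refine below $d$ so as to separate $d^\in\cap c^\in$ from $S$ and then pass to a minimal cap containing $d$, cannot deliver this: in a minimal cap every member needs its own private witness, so any $e$ with $e^\in\subseteq d^\in$ gets expelled, and the fine elements you refined in to cover $d^\in\cap c^\in$ are precisely of this kind; their removal re-exposes the private region of $d$ to $c^\in$. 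Worse, suitable replacements need not exist. In the poset of \autoref{ArcExample} take $d=[0,\tfrac12)$ and $c=(\tfrac14,\tfrac34)$, so $d\not\leq c$: any minimal cap $E_d\ni d$ is finite and consists of intervals, any member other than $d$ meeting $(\tfrac14,\tfrac12)$ must reach to or past $\tfrac12$ (else it sits inside $d$ and has no private point), and covering all of $(\tfrac14,\tfrac12)$ by finitely many such intervals forces one with infimum at most $\tfrac14$ and supremum at least $\tfrac12$; the only candidates in this poset are $[0,1]$ (which would strip $d$ of its private points) and $c$ itself. So the only way to satisfy the first clause here is to put $c$ \emph{into} $E_d$, which is exactly the paper's mechanism: $E_d=E_F$ contains all of $C\setminus F$ as members, so $c\precsim E_d\setminus\{d\}$ holds trivially for every $c\in C\setminus F$ and no separation argument is needed (accordingly the paper only establishes minimality in the relative sense that $E_F\setminus\{d\}$ is not a cap, which is all that \autoref{PredeterminedSubposet} uses). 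Your deferred ``bookkeeping'' (the $D_G$ shared across the $E_F$'s, needed for the minimality of $D=\bigcup_{F\subseteq C}D_F$) is real work too, but the missing idea is structural rather than clerical: include $C\setminus F$ in $E_d$ instead of trying to separate.
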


\begin{proof}
    For all $F\subseteq C$, we will recursively define $D_F\subseteq B^{\succsim_\mathbb{P}}\cap\bigcap_{f\in F}f^\geq$ such that $E_F=D'_F\cup(C\setminus F)$ is a cap, where $D'_F=\bigcup_{G\subseteq F}D_G$ is minimal with this property (incidentally, $D_F$ can be empty for many $F\subseteq C$).  In particular, $D=D'_C$ is a minimal cap.  Also, if $d\in D_F$ then $C\ni c\not\precsim E_F\setminus\{d\}\supseteq C\setminus F$ implies $c\in F$ and hence $d\in D_F\leq c$, proving \eqref{dbigcap2} when we take $E_d=E_F$.
        
    To perform the recursive construction, first note every $c\in C$ is contained in a minimal cap, as $C\cap\mathbb{P}_\emptyset=\emptyset$.  As $\mathbb{P}$ is an $\omega$-poset, these have a common refinement with $B$ in $\mathsf{C}\mathbb{P}$, which then refines $C\cup(B^\geq\setminus C^{\precsim_\mathbb{P}})$.  So this must also be a cap and we can then let $D_\emptyset$ be any minimal subset of $B^\geq\setminus C^{\precsim_\mathbb{P}}$ such that $C\cup D_\emptyset$ is a cap.
    
    Once $D_G$ has been defined, for $G\subsetneqq F$, note that, for each $f\in F$, we have a cap $E_{F\setminus\{f\}}=D'_{F\setminus\{f\}}\cup\{f\}\cup(C\setminus F)$.  Each $c\in C\setminus F$ is also again contained in a minimal cap.  These have a common refinement with $B$ in $\mathsf{C}\mathbb{P}$, necessarily refining
    \[E'_F=E''_F\cup((B^\geq\cap\bigcap_{f\in F}f^\geq)\setminus(C\setminus F)^{\precsim_\mathbb{P}}),\qquad\text{where}\qquad E''_F=\bigcup_{G\subsetneqq F}D_G\cup(C\setminus F).\]
    Thus $E'_F$ is a cap.  Now say $c^{\precsim_\mathbb{P}}$ is a selector, for some $c\in F$.  If $c\precsim E''_F$ then $E'_F\precsim E''_F$ so $E''_F$ is also a cap and we may set $D_F=\emptyset$.  On the other hand, if $c\not\precsim E''_F$ then, in particular, $c\not\precsim\emptyset$ so $c^{\precsim_\mathbb{P}}\in\mathsf{S}\mathbb{P}$ and $c^{\precsim_\mathbb{P}}\cap B^\geq\cap\bigcap_{f\in F}f^\geq\neq\emptyset$, as $E'_F$ is a cap.  This means $c\in B^{\succsim_\mathbb{P}}\cap\bigcap_{f\in F}f^\geq$, as $\precsim_\mathbb{P}$ is just $\leq$ on $C$, so we may set $D_F=\{c\}$.  Otherwise, $f^{\precsim_\mathbb{P}}$ is not a selector, for all $f\in F$, and hence $E'_F$ has a common refinement in $\mathsf{C}\mathbb{P}$ with each complement $\mathbb{P}\setminus f^{\precsim_\mathbb{P}}$ which, in turn, must refine $E''_F\cup(B^\geq\cap\bigcap_{f\in F}f^\geq\setminus C^{\precsim_\mathbb{P}})$.  So this last set is a cap and we may let $D_F$ be a minimal subset of $(B^\geq\cap\bigcap_{f\in F}f^\geq)\setminus C^{\precsim_\mathbb{P}}$ such that $E''_F\cup D_F$ is a cap.  As $D_F\subseteq\bigcap_{f\in F}f^\geq$ this implies that $D'_F=\bigcup_{G\subseteq F}D_G$ is minimal such that $D'_F\cup(C\setminus F)$ is a cap -- otherwise we would have $d\in D_G$, for some $G\subsetneqq F$, such that $(D'_F\setminus\{d\})\cup(C\setminus F)$ is a cap refining $D'_G\setminus\{d\}\cup(C\setminus G)$, contradicting the minimality of $D'_G$.
\end{proof}

Above, $\precsim_\mathbb{P}$ is again just $\leq$ on $C\cup D$.  Indeed, for any $c\in C$ and $d\in D$, $d\precsim_\mathbb{P}c$ implies $c\not\precsim E_d\setminus\{d\}$ (otherwise $d\precsim E_d\setminus\{d\}$, contradicting the minimality of $E_d$) and hence $d\leq c$.  On the other hand $c\precsim_\mathbb{P}d$ implies $c=d$ and, in particular, $c\leq d$.

This yields the following order theoretic analog of \autoref{PredeterminedSubBasis}.  Essentially it says that, given any $\omega$-poset $\mathbb{P}$, we can always revert to a branching predetermined $\omega$-subposet $\mathbb{Q}$ without significantly affecting caps or the spectrum (although it is worth noting that, even if $\mathbb{P}$ is graded, there is no guarantee $\mathbb{Q}$ will be graded too).

\begin{thm}\label{PredeterminedSubposet}
    Every $\omega$-poset $\mathbb{P}$ contains a predetermined branching $\omega$-poset $\mathbb{Q}$ with $\mathsf{C}\mathbb{Q}=\mathsf{C}\mathbb{P}\cap\mathsf{P}\mathbb{Q}$ such that $\mathsf{S}\mathbb{P}$ is homeomorphic to $\mathsf{S}\mathbb{Q}$ via the maps
    \[S\mapsto S\cap\mathbb{Q}\qquad\text{and}\qquad T\mapsto T^{\precsim_\mathbb{P}}.\]
\end{thm}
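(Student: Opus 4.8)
The plan is to mimic the proof of \autoref{PredeterminedSubBasis}, replacing the appeal to \autoref{PredeterminedLemma} by its order-theoretic counterpart \autoref{PredeterminedLemma2}. First I would enumerate all finite minimal caps $(C_k)_{k\in\omega}$ of $\mathbb{P}$, noting that any minimal cap is an antichain on which $\precsim_\mathbb{P}$ collapses to $\leq$ and which is disjoint from $\mathbb{P}_\emptyset$ (otherwise a nontrivial instance $c\precsim_\mathbb{P}c'$ or $c\in\mathbb{P}_\emptyset$ would let us delete $c$ via \eqref{AprecsimB}, contradicting minimality). Then I would recursively build minimal caps $(B_k)$: having defined $B_0,\ldots,B_{k-1}$, choose a minimal cap $B_k^0$ refining both $C_k$ and $B_{k-1}$ (possible as $\mathbb{P}$ is an $\omega$-poset) and apply \autoref{PredeterminedLemma2} with the cap $B_k^0$ and the finite family $C=\bigcup_{j<k}B_j$ to obtain $B_k=D$. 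The crucial bookkeeping point is that keeping only the previously built levels in the finite family $C$ (and routing the refinement of $C_k$ through $B_k^0$) lets the hypothesis ``$\precsim_\mathbb{P}$ is just $\leq$ on $C$'' propagate by induction from the lemma's own conclusion about $C\cup D$; in particular $\precsim_\mathbb{P}$ is just $\leq$ on all of $\mathbb{Q}=\bigcup_k B_k$.

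Next I would extract the poset structure. Each $B_k$ is a minimal cap of $\mathbb{P}$, and the first clause of \eqref{dbigcap2} applied to $c\in B_{k-1}\subseteq C$ (using that $B_{k-1}$ is a cap, so some $c$ escapes $E_d\setminus\{d\}$) gives genuine refinement $B_k\leq B_{k-1}$; minimality of $B_{k-1}$ then yields corefinement $B_{k-1}\subseteq B_k^\geq$, while the second clause forces $B_{k-1}\cap B_k$ to consist only of persistent elements $c$ with $c^{\precsim_\mathbb{P}}\in\mathsf{S}\mathbb{P}$, which are the atoms of $\mathbb{Q}$. By \autoref{AtomicPosetLevels}, $\mathbb{Q}$ is thus a weakly graded $\omega$-poset with $\mathbb{Q}_k=B_k$, and the argument of \autoref{PredeterminedSubBasis}, rephrased through \eqref{dbigcap2}, shows each non-atomic $p\in\mathbb{Q}_{k-1}$ has a predecessor $d\in B_k$ with $d^<=p^\leq$, so $\mathbb{Q}$ is predetermined (hence level-injective and prime, by \autoref{LevelMinimalCap} and \autoref{PrimeMinimalCover}).

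The heart of the matter is the cap identity $\mathsf{C}\mathbb{Q}=\mathsf{C}\mathbb{P}\cap\mathsf{P}\mathbb{Q}$. For $\subseteq$, every cap of $\mathbb{Q}$ is refined in $\mathbb{Q}$ by some level $B_k$ (\autoref{CapsRefineLevels}), and $B_k\in\mathsf{C}\mathbb{P}$ by construction, so the cap is a cap of $\mathbb{P}$. For $\supseteq$, I would first record $\precsim_\mathbb{P}$-coinitiality: every cap of $\mathbb{P}$ contains a finite minimal subcap $C_k$, and $B_k\precsim_\mathbb{P}B_k^0\leq C_k$ gives $B_k\precsim_\mathbb{P}C_k$, so some level $\precsim_\mathbb{P}$-refines any given cap. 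Given $C\in\mathsf{C}\mathbb{P}$ with $C\subseteq\mathbb{Q}$, pick $B_k\precsim_\mathbb{P}C$; since $\mathsf{C}\mathbb{Q}=\mathsf{C}\mathbb{P}\cap\mathsf{P}\mathbb{Q}$ makes the test sets for $\precsim_\mathbb{Q}$ a subfamily of those for $\precsim_\mathbb{P}$, we have $\precsim_\mathbb{P}\subseteq\precsim_\mathbb{Q}$ on $\mathbb{Q}$, whence $B_k\precsim_\mathbb{Q}C$; as $B_k$ is a cap of $\mathbb{Q}$, \eqref{MaximalCaps} makes $C$ a cap of $\mathbb{Q}$.

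Finally, the homeomorphism follows from \autoref{BirefinableImpliesHomeomorphic} (as in \autoref{HomeomorphicSubposet}) applied to the refiners ${\sqsupset}={\geq_\mathbb{P}}\cap(\mathbb{Q}\times\mathbb{P})$ and ${\sqni}={\succsim_\mathbb{P}}\cap(\mathbb{P}\times\mathbb{Q})$, which induce exactly $S\mapsto S\cap\mathbb{Q}$ and $T\mapsto T^{\precsim_\mathbb{P}}$. Both are refiners: the first because $\mathsf{C}\mathbb{Q}\subseteq\mathsf{C}\mathbb{P}$, the second by $\precsim_\mathbb{P}$-coinitiality. The compositions land correctly, since $\sqni\circ\sqsupset\subseteq\succsim_\mathbb{P}$ is pure transitivity of $\succsim_\mathbb{P}$ (using ${\geq}\subseteq{\succsim_\mathbb{P}}$), while $\sqsupset\circ\sqni$ produces $\succsim_\mathbb{P}$ on $\mathbb{Q}$, which sits inside $\succsim_\mathbb{Q}$ by the inclusion just noted. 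The resulting homeomorphism $S\mapsto S\cap\mathbb{Q}$ carries each subbasic $q^\in$ of $\mathsf{S}\mathbb{P}$ onto the subbasic $q^\in$ of $\mathsf{S}\mathbb{Q}$ for $q\in\mathbb{Q}$, so $\precsim_\mathbb{Q}$ and $\precsim_\mathbb{P}$ agree on $\mathbb{Q}$; as $\precsim_\mathbb{P}$ is just $\leq$ there, $\mathbb{Q}$ is cap-determined and hence branching, by \autoref{prp:branchingcapdet}. The main obstacle is precisely the recursive bookkeeping of the previous paragraph: arranging the inputs to \autoref{PredeterminedLemma2} so that its hypotheses (${\precsim_\mathbb{P}}={\leq}$ and disjointness from $\mathbb{P}_\emptyset$) survive each step while still forcing genuine refinement and corefinement of consecutive levels and pinning down $B_{k-1}\cap B_k$ as atoms, so that \autoref{AtomicPosetLevels} applies.
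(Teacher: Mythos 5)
Your proposal is, in structure, the paper's own proof: the same recursive application of \autoref{PredeterminedLemma2} with $C$ taken to be the union of the previously built levels, the same extraction of refinement, corefinement and ``persistent elements are atoms'' from \eqref{dbigcap2} feeding into \autoref{AtomicPosetLevels}, the same predetermined argument via the caps $E_d$, and the same pair of refiners (up to replacing $\mathrm{id}_\mathbb{Q}$ by ${\geq_\mathbb{P}}\cap(\mathbb{Q}\times\mathbb{P})$, which induces the same map on up-sets) fed into \autoref{BirefinableImpliesHomeomorphic}.  The genuine variations are immaterial: you secure $\precsim_\mathbb{P}$-coinitiality of the new levels by enumerating all finite minimal caps $(C_k)$ and routing through auxiliary minimal caps $B_k^0$, whereas the paper simply takes $B=\mathbb{P}_k$ in the lemma, so that $D_{k+1}\precsim_\mathbb{P}\mathbb{P}_k$ together with \autoref{CapsRefineLevels} gives coinitiality; and your derivation of branching via cap-determinedness and \autoref{prp:branchingcapdet} is a harmless substitute for the paper's appeal to \autoref{BranchingBasis}.

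There is, however, one genuine (if local and repairable) flaw: in the $\supseteq$ half of the cap identity you justify ${\precsim_\mathbb{P}}\subseteq{\precsim_\mathbb{Q}}$ on $\mathbb{Q}$ by citing $\mathsf{C}\mathbb{Q}=\mathsf{C}\mathbb{P}\cap\mathsf{P}\mathbb{Q}$ -- the very identity you are in the middle of proving.  Indeed, to pass from $q\precsim_\mathbb{P}q'$ to $q\precsim_\mathbb{Q}q'$ you must convert $F\cup\{q'\}\in\mathsf{C}\mathbb{P}\cap\mathsf{P}\mathbb{Q}$ into $F\cup\{q'\}\in\mathsf{C}\mathbb{Q}$, which is exactly the inclusion under proof, so the step is circular as written.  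The repair is already sitting in your first paragraph and is precisely the paper's move: having established that $\precsim_\mathbb{P}$ is just $\leq$ on $\mathbb{Q}$, the relation $B_k\precsim_\mathbb{P}C$ with $B_k,C\subseteq\mathbb{Q}$ immediately gives $B_k\leq C$, and since $B_k$ is a level and hence a band of $\mathbb{Q}$, this yields $C\in\mathsf{C}\mathbb{Q}$ directly (no detour through $\precsim_\mathbb{Q}$ is needed).  Once the identity is proved this way, your later use of ${\precsim_\mathbb{P}}\subseteq{\precsim_\mathbb{Q}}$ on $\mathbb{Q}$ when checking ${\sqsupset\circ\sqni}\subseteq{\succsim_\mathbb{Q}}$ becomes legitimate, though there too it is cleaner to use $\precsim_\mathbb{P}={\leq}$ on $\mathbb{Q}$, as the paper does via ${\mathrm{id}_\mathbb{Q}\circ{\sqsupset}}\subseteq{\geq_\mathbb{Q}}\subseteq{\succsim_\mathbb{Q}}$.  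With that one step rerouted, the proposal goes through.
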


\begin{proof}
Recursively define minimal caps $(D_n)_{n\in\omega}$ and $(E_d^n)_{d\in D_n}^{n\in\omega}$ of $\mathbb{P}$ as follows.  First let $D_0$ be any minimal cap and set $E_0^d=D_0\setminus\{d\}$, for all $d\in D_0$.  Once $D_k$ has been defined, use the lemma above to define $D_{k+1}\precsim_\mathbb{P}B_k$ and $(E_d^k)_{d\in D_{k+1}}$ satisfying \eqref{dbigcap2}, where we take $C=C_k=\bigcup_{j\leq k}D_j$ and $B=\mathbb{P}_k$.  Note that then $D_{k+1}$ refines $D_k$ -- for any $d\in D_{k+1}$, $E_d^k\setminus\{d\}$ is not a cap and so we must have some $c\in D_k(\subseteq C_k)$ with $c\not\precsim E_d^k\setminus\{d\}$ and hence $d\leq c$, by the first part of \eqref{dbigcap2}.  As $D_k$ is a minimal cap, it must then also corefine $D_{k+1}$.  Moreover, as noted above, $\precsim_\mathbb{P}$ is just $\leq$ on $\mathbb{Q}=\bigcup_{n\in\omega}D_n$.  This and the second part of \eqref{dbigcap2} imply that $D_{k+1}\cap D_k$ consists only of atoms of $\mathbb{Q}$.  Thus $\mathbb{Q}$ is an $\omega$-poset with levels $\mathbb{Q}_n=D_n$, for all $n\in\omega$, by \autoref{AtomicPosetLevels}.

By \autoref{CapsRefineLevels}, every $C\in\mathsf{C}\mathbb{Q}$ is refined by some $\mathbb{Q}_n=D_n\in\mathsf{C}\mathbb{P}$, implying that $C\in\mathsf{C}\mathbb{P}$.  Conversely, if $C\in\mathsf{C}\mathbb{P}\cap\mathsf{P}\mathbb{Q}$ then, again by \autoref{CapsRefineLevels}, it is refined by some $\mathbb{P}_n$ and hence $D_n\precsim_\mathbb{P}C$.  As $\precsim_\mathbb{P}$ is $\leq$ on $\mathbb{Q}$, it follows that $D_n\leq C$ and hence $C\in\mathsf{C}\mathbb{Q}$.  This shows that $\mathsf{C}\mathbb{Q}=\mathsf{C}\mathbb{P}\cap\mathsf{P}\mathbb{Q}$.

It then follows that $\mathrm{id}_\mathbb{Q}\subseteq\mathbb{Q}\times\mathbb{Q}\subseteq\mathbb{Q}\times\mathbb{P}$ is a refiner.  As $\mathbb{Q}_n\precsim_\mathbb{P}\mathbb{P}_n$, for all $n\in\omega$, we have another refiner ${\sqsupset}={\succsim_\mathbb{P}}\cap\mathbb{P}\times\mathbb{Q}$.  Moreover, ${\mathrm{id}_\mathbb{Q}\circ{\sqsupset}}\subseteq{\geq_\mathbb{Q}}\subseteq{\succsim_\mathbb{Q}}$ and ${{\sqsupset}\circ\mathrm{id}_\mathbb{Q}}={\sqsupset}\subseteq{\succsim_\mathbb{P}}$ so \autoref{BirefinableImpliesHomeomorphic} yields mutually inverse homeomorphisms $S\mapsto S^{\mathrm{id}_\mathbb{Q}}=S\cap Q$ and $T\mapsto T^\sqsubset=T^{\precsim_\mathbb{P}}$ between $\mathsf{S}\mathbb{P}$ and $\mathsf{S}\mathbb{Q}$.

In particular, $(q^\in)_{q\in\mathbb{Q}}$ is a basis of $\mathsf{S}\mathbb{P}$, one which is order isomorphic to $\mathbb{Q}$, as $\precsim_\mathbb{P}$ is just $\leq$ on $\mathbb{Q}$.  Thus $\mathbb{Q}$ is branching, by \autoref{BranchingBasis}.  To see that $\mathbb{Q}$ is also predetermined, say $d\in\mathbb{Q}_n$ is not an atom in $\mathbb{Q}$.  Take any $q\in\mathbb{Q}_{n+1}$ such that $q\not\precsim E_n^d\setminus\{d\}$.  Note $q\leq c$, for some $c\in\mathbb{Q}_n$, necessarily with $c\not\precsim E_n^d\setminus\{d\}$ and hence $d\leq c$, which then implies $c=d$, as $\mathbb{Q}_n$ is an antichain.  Thus $q<d$ because $d$ is not an atom in $\mathbb{Q}$.  Likewise, if $q<c$, for some $c\in\mathbb{Q}_k$, necessarily with $k\geq n$, then $d\leq c$ so $q^<=d^\leq$, showing that $\mathbb{Q}$ is indeed predetermined.
\end{proof}

In order to prove that spectra of regular $\omega$-posets are homeomorphic we can also use a back-and-forth argument analogous to \autoref{BirefinableImpliesHomeomorphic}.

\begin{prp}\label{BackAndForthImpliesBirefinable}
If we have regular $\omega$-posets $\mathbb{P}$ and $\mathbb{Q}$ with coinitial sequences $(C_n)\subseteq\mathsf{C}\mathbb{P}$ and $(D_n)\subseteq\mathsf{C}\mathbb{Q}$ as well as co-$\wedge$-preserving surjective 
$\sqsubset_n\ \subseteq C_n\times D_n$ and $\sqin_n\ \subseteq D_{n+1}\times C_n$ with $\sqin_n\circ\sqsubset_n\ \subseteq\ \leq_\mathbb{Q}$ and $\sqsubset_{n+1}\circ\sqin_n\ \subseteq\ \leq_\mathbb{P}$, for all $n\in\omega$,
\[\sqsubset\ \ =\ \bigcup_{n\in\omega}(\sqsubset_n\circ\vartriangleleft_{D_n})\qquad\text{and}\qquad\sqin\ \ =\ \bigcup_{n\in\omega}(\sqin_n\circ\vartriangleleft_{C_n})\]
define $\wedge$-preserving refiners $\sqsupset$ and $\sqni$ such that $\phi_{\sqni}\circ\phi_\sqsupset=\mathrm{id}_{\mathsf{S}\mathbb{P}}$ and $\phi_\sqsupset\circ\phi_{\sqni}=\mathrm{id}_{\mathsf{S}\mathbb{Q}}$.
\end{prp}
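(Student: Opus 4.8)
The plan is to reduce everything to \autoref{BirefinableImpliesHomeomorphic}. That proposition, applied to a pair of refiners satisfying ${\sqsupset}\circ{\sqni}\subseteq{\succsim_\mathbb{Q}}$ and ${\sqni}\circ{\sqsupset}\subseteq{\succsim_\mathbb{P}}$, already delivers mutually inverse continuous maps $S\mapsto S^\sqsubset$ and $T\mapsto T^{\sqin}$ with $S^{\sqsubset\sqin}=S$ and $T^{\sqin\sqsubset}=T$, and its proof shows that $S^\sqsubset$ and $T^{\sqin}$ are themselves \emph{minimal} selectors. Since minimal selectors in a regular $\omega$-poset are round, by \autoref{RoundLinked}, the outer star-closures in \autoref{RefinerToMap} collapse, giving $\phi_\sqsupset(S)=S^{\sqsubset\vartriangleleft}=S^\sqsubset$ and $\phi_{\sqni}(T)=T^{\sqin\vartriangleleft}=T^{\sqin}$. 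The desired identities $\phi_{\sqni}\circ\phi_\sqsupset=\id_{\mathsf{S}\mathbb{P}}$ and $\phi_\sqsupset\circ\phi_{\sqni}=\id_{\mathsf{S}\mathbb{Q}}$ then read off from $S^{\sqsubset\sqin}=S$ and $T^{\sqin\sqsubset}=T$. So the real work is to check that $\sqsupset$ and $\sqni$ are $\wedge$-preserving refiners obeying the two $\succsim$-containments.

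First I would verify refiner-ness, i.e. $\mathsf{C}\mathbb{Q}\subseteq\mathsf{C}\mathbb{P}^\sqsubset$. Given any cap $E$ of $\mathbb{Q}$, regularity together with \autoref{RegularStarRefinement} and the coinitiality of $(D_n)$ yields an index $n$ with $D_n\vartriangleleft_{D_n}E$, i.e. $D_n$ star-refines $E$. Surjectivity of $\sqsubset_n$ then gives, for each $c\in C_n$, some $d\in D_n$ with $c\sqsubset_n d$, and star-refinement gives $e\in E$ with $d\vartriangleleft_{D_n}e$; hence $c\,({\sqsubset_n}\circ{\vartriangleleft_{D_n}})\,e$, so $c\sqsubset e$ and thus $C_n\sqsubset E$ with $C_n\in\mathsf{C}\mathbb{P}$. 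The same argument with the indices shifted (using $D_{n+1}$, $C_n$ and $\sqin_n$) shows $\sqni$ is a refiner.

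Next come $\wedge$-preservation and the $\succsim$-containments. For $\wedge$-preservation of $\sqsupset$, suppose $p\sqsubset q$ and $p'\sqsubset q'$ with $p\wedge p'$, so $p\sqsubset_n d\vartriangleleft_{D_n}q$ and $p'\sqsubset_{n'}d'\vartriangleleft_{D_{n'}}q'$; the co-$\wedge$-preservation of the $\sqsubset_k$ is what transports the common lower bound of $p,p'$ to $d\wedge d'$, after which two applications of \eqref{WedgeStar} (via $d\wedge d'\vartriangleleft q'$ and then $q'\wedge d\vartriangleleft q$) give $q\wedge q'$. For ${\sqni}\circ{\sqsupset}\subseteq{\succsim_\mathbb{P}}$ (and symmetrically ${\sqsupset}\circ{\sqni}\subseteq{\succsim_\mathbb{Q}}$) I would expand ${\sqsubset}\circ{\sqin}$ as a union over levels and use the compatibility hypotheses ${\sqin_n}\circ{\sqsubset_n}\subseteq{\leq_\mathbb{Q}}$ and ${\sqsubset_{n+1}}\circ{\sqin_n}\subseteq{\leq_\mathbb{P}}$ — which say precisely that one back-and-forth step descends a level — to cancel the round trip, absorbing the intervening stars by \eqref{Transitivity} and \eqref{WedgeStar} and reading off the resulting comparison through \eqref{AprecsimB}.

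The main obstacle is the index bookkeeping in this last step. Because $\sqsubset$ and $\sqin$ are the unions $\bigcup_n({\sqsubset_n}\circ{\vartriangleleft_{D_n}})$ and $\bigcup_n({\sqin_n}\circ{\vartriangleleft_{C_n}})$, a single instance of $p\sqsubset q$ or $q\sqin p$ only records data at one level, yet in both the $\wedge$-preservation and the $\succsim$ arguments the two instances one must combine may live at different indices $n\neq n'$. Reconciling them requires passing to a common refinement of $C_n,C_{n'}$ (available since caps are directed in an $\omega$-poset, by \autoref{CapsRefineLevels}), transporting the level relations along the refinement by surjectivity and the compatibility conditions, and checking that the star-closures $\vartriangleleft_{C_n},\vartriangleleft_{D_n}$ do not obstruct this alignment; carrying this out carefully, rather than any of the individual order-theoretic lemmas, is where the delicacy lies.
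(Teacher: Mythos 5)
Your overall skeleton — verify the hypotheses of \autoref{BirefinableImpliesHomeomorphic} and then collapse the star-closures — is viable, and in fact the two containments ${\sqsupset}\circ{\sqni}\subseteq{\succsim_\mathbb{Q}}$ and ${\sqni}\circ{\sqsupset}\subseteq{\succsim_\mathbb{P}}$ are true under the stated hypotheses (the collapse $\phi_\sqsupset(S)=S^{\sqsubset\vartriangleleft}=S^{\sqsubset}$ also works, though it needs slightly more than \autoref{RoundLinked}: roundness gives $S^{\sqsubset}\subseteq S^{\sqsubset\vartriangleleft}$, while the reverse inclusion uses that minimal selectors are upward closed under $\vartriangleleft$, via \autoref{ClosedContainmentProp} and \eqref{AprecsimB}). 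Your refiner check is correct. But the proof as written has a genuine gap: the two $\succsim$-containments, which carry essentially all the content of the proposition, are only sketched, and the mechanism you propose for reconciling mismatched indices would fail. Passing to a common refinement of $C_m$ and $C_{n}$ inside $\mathsf{C}\mathbb{P}$ buys nothing, because an arbitrary common refinement carries none of the relations $\sqsubset_k,\sqin_k$ — these are defined only between the specific caps of the given interleaved sequence, so there is nothing to ``transport along the refinement by surjectivity''. The same cross-level flaw sits in your $\wedge$-preservation sketch: co-$\wedge$-preservation of a single $\sqsubset_k$ only links pairs in the \emph{same} $C_k$, so it cannot by itself turn a common lower bound of $p\in C_m$ and $p'\in C_{n'}$ with $m\neq n'$ into $d\wedge d'$.

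What actually closes the gap is a chain descent through the \emph{given} sequence, anchored at a suitable element. To prove $p'\sqsubset q\sqin p\Rightarrow p'\precsim p$: unfold the witnesses $p'\sqsubset_m d\vartriangleleft_{D_m}q$ and $q\sqin_n c\vartriangleleft_{C_n}p$ (so $q\in D_{n+1}$), fix $S\in p'^\in$, and pick $s\in S\cap C_k$ for $k=\max(m,n+1)$ — possible simply because each $C_k$ is a cap, with no appeal to coinitiality or directedness. Surjectivity yields a chain $s\sqsubset_k e_k\sqin_{k-1}s_{k-1}\sqsubset_{k-1}e_{k-1}\cdots$, and the telescoping conditions ${\sqin_j}\circ{\sqsubset_j}\subseteq{\leq_\mathbb{Q}}$ and ${\sqsubset_{j+1}}\circ{\sqin_j}\subseteq{\leq_\mathbb{P}}$ give $s\leq s_j$ and $e_k\leq e_j$ as $j$ decreases. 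Since $S$ is a filter, $s_m\wedge p'$, so co-$\wedge$-preservation of $\sqsubset_m$ gives $e_m\wedge d$, whence $e_m\in D_md\leq q$, i.e. $e_m\leq q$; then $e_{n+1}$ and $q$ have a common lower bound ($e_m$ or $e_{n+1}$ itself, according to whether $m\geq n+1$ or $m\leq n$), so co-$\wedge$-preservation of $\sqin_n$ gives $s_n\wedge c$, hence $s_n\in C_nc\leq p$ and $p\geq s_n\geq s\in S$, so $p\in S$; by \eqref{AprecsimB} this is exactly $p'\precsim p$. A similar anchored descent (starting by surjectivity from the finer of the two witnesses, as the paper does) repairs $\wedge$-preservation. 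Note that this filled-in argument is precisely the computation in the paper's own proof that $S\cap\phi_{\sqni}(\phi_\sqsupset(S))$ is a selector, just reorganized; so your reduction is a legitimate repackaging — with the mild bonus that it establishes the stronger pointwise relation containments rather than only equality of the composite maps — but it is not a shortcut, and as submitted it defers rather than performs the decisive step.
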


\begin{proof}
As $\mathbb{Q}$ is regular, $\sqsupset$ is a refiner.  To see that $\sqsupset$ is $\wedge$-preserving, take $a\in C_m$ and $b\in C_n$ with $a\wedge b$.  If $m=n$ then $e\wedge f$ whenever $a\sqsubset_me$ and $b\sqsubset_nf$, by the assumption that $\sqsubset_m\ =\ \sqsubset_n$ is $\wedge$-preserving, and hence the same applies whenever $a\sqsubset_m\circ\vartriangleleft_{C_m}e$ and $b\sqsubset_n\circ\vartriangleleft_{C_n}f$.  Now assume that $m>n$ and take any $e,e',f,f'$ with $a\sqsubset_me\vartriangleleft_{D_m}e'$ and $b\sqsubset_nf\vartriangleleft_{D_n}f'$.  The surjectivity of the given relations then yields $c\in C_n$ and $d\in D_n$ satisfying
\[e\sqin_{m-1}\circ\sqsubset_{m-1}\circ\sqin_{m-2}\ldots\sqsubset_{n+1}\circ\sqin_nc\sqsubset_nd.\]
As $\sqsubset_{n+1}\circ\sqin_n\ \subseteq\ \leq_\mathbb{P}$, for all $n\in\omega$, it follows that $a\leq c$ and hence $b\wedge c$.  As $\sqsubset_n$ is co-$\wedge$-preserving, it follows that $f\wedge d$ and hence $d\leq f'$, as $f\vartriangleleft_{D_n}f'$.  Also $e\leq d$, as $\sqin_n\circ\sqsubset_n\ \subseteq\ \leq_\mathbb{Q}$, for all $n\in\omega$, so $e\leq f'$ and hence $e'\wedge f'$, as $e\vartriangleleft_{D_m}e'$.  A dual argument applies if $m<n$, thus showing that $\sqsupset$ is indeed $\wedge$-preserving.

Likewise, $\sqni$ is $\wedge$-preserving and hence we have continuous maps $\phi_\sqsupset:\mathsf{S}\mathbb{P}\rightarrow\mathsf{S}\mathbb{Q}$ and $\phi_{\sqni}:\mathsf{S}\mathbb{Q}\rightarrow\mathsf{S}\mathbb{P}$ as in \eqref{phisq}.  To see that $\phi_{\sqni}\circ\phi_\sqsupset=\mathrm{id}_{\mathsf{S}\mathbb{P}}$, take any $S\in\mathsf{S}\mathbb{P}$.  For any $A\in\mathsf{C}\mathbb{P}$, we have $B\in\mathsf{C}\mathbb{P}$ and $n\in\omega$ with $C_n\vartriangleleft_{C_n}B\vartriangleleft A$.  We then also have $E\in\mathsf{C}\mathbb{Q}$ and $m>n+1$ with $D_m\vartriangleleft_{D_m}E\vartriangleleft D_{n+1}$.  As $S$ is a selector, we have $s\in S\cap C_m$.  The surjectivity of all the relations involved then yields $a\in A$, $b\in B$, $c\in C_n$, $d\in D_{n+1}$, $e\in E$  and $f\in D_m$ with
\[s\sqsubset_mf\vartriangleleft_{D_m}e\vartriangleleft d\sqin_nc\vartriangleleft_{C_n}b\vartriangleleft a.\]
This means $s\sqsubset e\vartriangleleft d$ and hence $d\in\phi_\sqsupset(S)$.  Likewise, $d\sqin b\vartriangleleft a$ and hence $a\in\phi_{\sqni}(\phi_\sqsupset(S))$.  Now surjectivity again yields $q\in D_n$ and $p\in C_n$ with
\[f\sqin_{m-1}\circ\sqsubset_{m-1}\circ\sqin_{m-2}\ldots\sqsubset_{n+1}q\sqin_np.\]
As $\sqin_n\circ\sqsubset_n\ \subseteq\ \leq_\mathbb{Q}$, for all $n\in\omega$, it follows that $f\leq q$ and hence $d\wedge q$.  As $\sqin_n$ is co-$\wedge$-preserving, this implies $c\wedge p$ and hence $p\leq b\vartriangleleft a$.  Noting $s\leq p$, as $\sqsubset_{n+1}\circ\sqin_n\ \subseteq\ \leq_\mathbb{P}$, for all $n\in\omega$, it follows that $a\in S^{\leq\vartriangleleft}=S$.  We have thus shown that $S\cap\phi_{\sqni}(\phi_\sqsupset(S))\cap A\neq\emptyset$, for all $A\in\mathsf{C}\mathbb{P}$, i.e. $S\cap\phi_{\sqni}(\phi_\sqsupset(S))$ is a selector.  As $S$ and $\phi_{\sqni}(\phi_\sqsupset(S))$ are minimal selectors, this implies $S=\phi_{\sqni}(\phi_\sqsupset(S))$.  This shows that $\phi_{\sqni}\circ\phi_\sqsupset=\mathrm{id}_{\mathsf{S}\mathbb{P}}$ and a dual argument yields $\phi_\sqsupset\circ\phi_{\sqni}=\mathrm{id}_{\mathsf{S}\mathbb{Q}}$.
\end{proof}

As an application of \autoref{BackAndForthImpliesBirefinable}, we can use it to give an alternative proof of \autoref{SpacesFromGradedPosets}, at least in the Hausdorff case, one which gives us more control over the levels of the poset, like in \autoref{WeaklyGradedCapBasis}.

Let the \emph{gradification} $\mathbb{P}_\mathsf{G}$ of an $\omega$-poset $\mathbb{P}$ be the disjoint union of its levels, i.e.
\[\mathbb{P}_\mathsf{G}=\bigsqcup_{n\in\omega}\mathbb{P}_n=\bigcup_{n\in\omega}\mathbb{P}_n\times\{n\}.\]
To define the order on $\mathbb{P}_\mathsf{G}$, we first define the predecessor relation $\lessdot$ by
\[(p,n)\lessdot(q,m)\qquad\Leftrightarrow\qquad p\leq q\text{ and }m\lessdot n.\]
Let $\leq^0$ be the equality relation on $\mathbb{P}_\mathsf{G}$ and recursively define ${\leq^{n+1}}={\leq^n}\circ{\lessdot}={\lessdot}\circ{\leq^n}$, i.e. $\leq^n$ is just the composition of $\lessdot$ on $\mathbb{P}_\mathsf{G}$ with itself $n$ times.  Finally let ${\leq}=\bigcup_{n\in\omega}\leq^n$ on $\mathbb{P}_\mathsf{G}$.  In particular, the strict order $<$ on $\mathbb{P}_\mathsf{G}$ is just the transitive closure of the predecessor relation defined above.

The following result is now immediate from the construction.

\begin{prp}
    If $\mathbb{P}$ is an $\omega$-poset then $\mathbb{P}_\mathsf{G}$ is an atomless graded $\omega$-poset with
    \[\mathbb{P}_{\mathsf{G}n}=\mathbb{P}_n\times\{n\}.\]
\end{prp}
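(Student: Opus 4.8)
The plan is to recognise $\mathbb{P}_\mathsf{G}$ as an instance of \autoref{AtomicPosetLevels}, applied to the family $A_n=\mathbb{P}_n\times\{n\}$, and then to upgrade the resulting weak gradedness to atomlessness and full gradedness by hand. First I would record the basic shape of the order. Since $\lessdot$ drops the second coordinate by exactly one, a short induction on the exponent shows that $(p,n)\leq(q,m)$ forces $m\leq n$, that every witnessing $\lessdot$-chain has length exactly $n-m$, and that a single step $(p',k+1)\lessdot(q,k)$ is equivalent to $p'\leq_\mathbb{P} q$. Reflexivity and transitivity of $\leq$ are immediate from the definition as $\bigcup_n{\leq^n}$ (composition being associative), while antisymmetry follows from the index arithmetic: $(p,n)\leq(q,m)\leq(p,n)$ gives $m\leq n\leq m$, hence $n=m$ and zero-length chains, so $(p,n)=(q,m)$. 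Thus $\mathbb{P}_\mathsf{G}$ is a poset, and distinct elements sharing an index are incomparable, so each $A_n$ is a finite antichain and the $(A_n)$ cover $\mathbb{P}_\mathsf{G}$.

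Next I would verify the two refinement hypotheses of \autoref{AtomicPosetLevels} inside $\mathbb{P}_\mathsf{G}$. That $A_{n+1}$ refines $A_n$ reduces to $\mathbb{P}_{n+1}\leq\mathbb{P}_n$ in $\mathbb{P}$, i.e. the standard fact that finer levels refine coarser ones (as used around \autoref{CapsRefineLevels}): each $p'\in\mathbb{P}_{n+1}$ lies below some $q\in\mathbb{P}_n$, giving $(p',n+1)\lessdot(q,n)$. That $A_n$ corefines $A_{n+1}$ reduces to the claim that every $p\in\mathbb{P}_n$ has some $p'\in\mathbb{P}_{n+1}$ with $p'\leq_\mathbb{P} p$; I would prove this by taking $p'$ to be a minimal element of the finite nonempty set $\mathbb{P}^{n+1}\cap p^\geq$ (it contains $p$), and noting that any $q<p'$ of rank $\leq n+1$ would again lie in this set below $p'$, so $p'$ is in fact minimal in all of $\mathbb{P}^{n+1}$, i.e. $p'\in\mathbb{P}_{n+1}$. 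Finally $A_n\cap A_{n+1}=\emptyset$ since the indices differ, so the ``only atoms'' clause is vacuous. \autoref{AtomicPosetLevels} then delivers that $\mathbb{P}_\mathsf{G}$ is a weakly graded $\omega$-poset with $\mathbb{P}_{\mathsf{G}n}=\mathbb{P}_n\times\{n\}$, which is exactly the level identity claimed.

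It remains to deduce atomlessness and gradedness. Since the levels $\mathbb{P}_n\times\{n\}$ are pairwise disjoint, the list of equivalences for weakly graded $\omega$-posets recorded just before \autoref{AtomicPosetLevels} gives at once that $\mathbb{P}_\mathsf{G}$ is atomless and that $\mathbb{P}_{\mathsf{G}n}=\mathsf{r}^{-1}\{n\}$, so $\mathsf{r}((p,n))=n$. For gradedness I would invoke the criterion (from the Graded Posets subsection) that a finite-rank Noetherian poset is graded precisely when predecessors drop rank by one: if $(p,n)$ is a predecessor of $(q,m)$ in $\mathbb{P}_\mathsf{G}$, then $n-m\geq2$ would force an intermediate element along the length-$(n-m)$ chain witnessing $(p,n)<(q,m)$, contradicting that the pair is covering, so $n=m+1$ and hence $\mathsf{r}((p,n))=n=m+1=\mathsf{r}((q,m))+1$. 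This finishes the argument.

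The step I expect to require the most care is the corefinement claim, that $\mathbb{P}_n$ corefines $\mathbb{P}_{n+1}$: unlike the refinement direction it is not already among the facts used earlier, so it must be extracted directly from the definition of the levels as minimal elements of the cones, via the minimal-element argument above. Everything else is essentially bookkeeping once \autoref{AtomicPosetLevels} and the weakly-graded equivalences are in hand, which is why the statement can reasonably be called immediate from the construction.
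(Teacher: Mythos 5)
Your proof is correct, and since the paper offers no argument here (it simply declares the proposition ``immediate from the construction''), your write-up via \autoref{AtomicPosetLevels} is exactly the natural formalization of what the authors leave implicit: the index arithmetic gives antisymmetry and the antichain/level identification, your minimal-element argument correctly supplies the corefinement $\mathbb{P}_n\geq\mathbb{P}_{n+1}$ (which, as you note, is the one hypothesis not handed to you for free, though it is also implicit in \autoref{PosetLevels}), and disjointness of the $A_n$ plus the covering-pair rank computation yield atomlessness and gradedness via the equivalences and the predecessor--rank criterion stated in the paper. No gaps.
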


Let us call an $\omega$-poset \emph{edge-witnessing} if common lower bounds of elements in any level are always witnessed on the next, i.e. whenever $q,r\in\mathbb{P}_n$ and $q\wedge r$, we have $p\in\mathbb{P}_{n+1}$ with $p\leq q$ and $p\leq r$.  Likewise, we call an $\omega$-poset \emph{star-refining} if each level is star-refined by the next, i.e. $\mathbb{P}_{n+1}\vartriangleleft_{\mathbb{P}_{n+1}}\mathbb{P}_n$, for all $n\in\omega$.

\begin{prp}\label{GradificationSpectrum}
    The spectrum $\mathsf{S}\mathbb{P}$ of any edge-witnessing star-refining $\omega$-poset $\mathbb{P}$ is always homeomorphic to the spectrum of its gradification $\mathsf{S}\mathbb{P}_\mathsf{G}$.
\end{prp}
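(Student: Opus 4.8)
The plan is to apply the back-and-forth criterion \autoref{BackAndForthImpliesBirefinable} with $\mathbb{Q}=\mathbb{P}_\mathsf{G}$, taking the two coinitial cap sequences to be the levels themselves: $C_n=\mathbb{P}_n$ and $D_n=\mathbb{P}_{\mathsf{G}n}=\mathbb{P}_n\times\{n\}$, which are coinitial in $\mathsf{C}\mathbb{P}$ and $\mathsf{C}\mathbb{P}_\mathsf{G}$ by \autoref{CapsRefineLevels}. For the connecting relations I would use the obvious ones matching a level of $\mathbb{P}$ with its copy in $\mathbb{P}_\mathsf{G}$, namely $\sqsubset_n\ \subseteq\mathbb{P}_n\times\mathbb{P}_{\mathsf{G}n}$ given by $p\sqsubset_n(p,n)$ (an honest level-identity, since both levels are antichains), and $\sqin_n\ \subseteq\mathbb{P}_{\mathsf{G},n+1}\times\mathbb{P}_n$ given by $(q,n+1)\sqin_n p\Leftrightarrow q\leq p$. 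Both are surjective (for $\sqin_n$ using that $\mathbb{P}_{n+1}$ refines $\mathbb{P}_n$), and a direct computation of the compositions yields $\sqin_n\circ\sqsubset_n\ \subseteq\ \leq_{\mathbb{P}_\mathsf{G}}$ and $\sqsubset_{n+1}\circ\sqin_n\ \subseteq\ \leq_\mathbb{P}$, the point being that a single predecessor step $(q,n+1)\lessdot(p,n)$ in $\mathbb{P}_\mathsf{G}$ is by definition exactly the relation $q\leq p$ in $\mathbb{P}$.

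Before invoking the criterion I must verify that both $\mathbb{P}$ and $\mathbb{P}_\mathsf{G}$ are regular. For $\mathbb{P}$ this is immediate from star-refining: given any cap $C$, it is refined by some level $\mathbb{P}_m$ by \autoref{CapsRefineLevels}, and since $\mathbb{P}_{m+1}\vartriangleleft_{\mathbb{P}_{m+1}}\mathbb{P}_m\leq C$, compatibility \eqref{Compatibility} promotes this to a star-refinement $\mathbb{P}_{m+1}\vartriangleleft_{\mathbb{P}_{m+1}}C$, so \autoref{RegularStarRefinement} gives regularity. For $\mathbb{P}_\mathsf{G}$ I would show it is star-refining as well: the projection $(p,n)\mapsto p$ carries a common lower bound in $\mathbb{P}_\mathsf{G}$ to one in $\mathbb{P}$, so the star $\mathbb{P}_{\mathsf{G},n+1}(p,n+1)$ projects into $\mathbb{P}_{n+1}p$; as $\mathbb{P}_{n+1}p\leq d$ for some $d\in\mathbb{P}_n$, every $(q,n+1)$ in the star has $q\leq d$, hence $(q,n+1)\lessdot(d,n)$, showing $\mathbb{P}_{\mathsf{G},n+1}$ star-refines $\mathbb{P}_{\mathsf{G}n}$ and so $\mathbb{P}_\mathsf{G}$ is regular by \autoref{RegularStarRefinement}.

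The crux is co-$\wedge$-preservation of the two families of relations, and this is exactly where the two hypotheses are used. For $\sqin_n$ it is routine: if $(q,n+1)\wedge(q',n+1)$ in $\mathbb{P}_\mathsf{G}$, projecting gives $q\wedge q'$ in $\mathbb{P}$, and then $q\leq p$, $q'\leq p'$ forces $p\wedge p'$. The delicate direction is co-$\wedge$-preservation of $\sqsubset_n$, which unwinds to the implication that $p\wedge p'$ in $\mathbb{P}$ (with $p,p'\in\mathbb{P}_n$) gives $(p,n)\wedge(p',n)$ in $\mathbb{P}_\mathsf{G}$, i.e.\ lifting $\wedge$ \emph{upward} along the level embedding — and this is precisely the edge-witnessing hypothesis, since a witness $s\in\mathbb{P}_{n+1}$ with $s\leq p,p'$ yields $(s,n+1)\lessdot(p,n)$ and $(s,n+1)\lessdot(p',n)$, hence $(p,n)\wedge(p',n)$. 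With every hypothesis of \autoref{BackAndForthImpliesBirefinable} checked, the induced refiners $\sqsupset$ and $\sqni$ give mutually inverse homeomorphisms between $\mathsf{S}\mathbb{P}$ and $\mathsf{S}\mathbb{P}_\mathsf{G}$. The hard part is thus conceptual rather than computational: recognising that edge-witnessing is exactly the condition making $\wedge$ lift from $\mathbb{P}$ to its gradification while star-refining supplies the regularity both posets need, with the rest being bookkeeping on compositions of relations between consecutive levels.
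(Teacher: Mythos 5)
Your proposal is correct and follows essentially the same route as the paper's own proof: both invoke \autoref{BackAndForthImpliesBirefinable} with the level-identity relations between $\mathbb{P}_n$ and $\mathbb{P}_{\mathsf{G}n}$ and the $\leq$-induced relations between consecutive levels, with edge-witnessing supplying the nontrivial co-$\wedge$-preservation and star-refining (inherited by $\mathbb{P}_\mathsf{G}$) supplying regularity of both posets. The only difference is that you swap which poset plays the role of $\mathbb{P}$ versus $\mathbb{Q}$ in the back-and-forth criterion, which is immaterial, and you spell out the regularity and surjectivity checks the paper leaves implicit.
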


\begin{proof}
    For all $n\in\omega$, define ${\sqsubset_n}\subseteq\mathbb{P}_{\mathsf{G}n}\times\mathbb{P}_n$ and ${\sqin_n}\subseteq\mathbb{P}_{n+1}\times\mathbb{P}_{\mathsf{G}n}$ by
    \begin{align*}
        (p,n)\sqsubset_nq\qquad&\Leftrightarrow\qquad p=q.\\
        p\sqin_n(q,n)\qquad&\Leftrightarrow\qquad p\leq q.
    \end{align*}
    For each $n\in\omega$, we immediately see that $\sqsubset_n$ and $\sqin_n$ are surjective, $\sqsubset_n$ is co-$\wedge$-preserving, ${\sqin_n\circ\sqsubset_n}\subseteq{\leq_\mathbb{P}}$ and ${\sqsubset_{n+1}\circ\sqin_n}\subseteq{\leq_{\mathbb{P}_\mathsf{G}}}$.  As $\mathbb{P}$ is edge-witnessing, $\sqin_n$ is also co-$\wedge$-preserving.  As $\mathbb{P}$ is star-refining, so is $\mathbb{P}_\mathsf{G}$.  In particular, both $\mathbb{P}$ and $\mathbb{P}_\mathsf{G}$ are regular so $\mathsf{S}\mathbb{P}$ is homeomorphic to $\mathsf{S}\mathbb{P}_\mathsf{G}$, by \autoref{BackAndForthImpliesBirefinable}.
\end{proof}

Let us illustrate the usefulness of the above result with snake-like spaces.  First let us call an open cover $S$ a \emph{snake} if its overlap graph is a path, i.e. if there exists an enumeration $s_1,\ldots,s_n$ of $S$ such that
\[s_m\cap s_n\neq\emptyset\qquad\Leftrightarrow\qquad|m-n|\leq1.\]
We call $X$ \emph{snake-like} if every open cover is refined by a snake (this is a standard notion in continuum theory, also called \emph{chainable} as in \cite[\S12.8]{Nadler1992}).  In particular, every snake-like space is compact because snakes are finite.  Also, if $X=Y\cup Z$ for non-empty clopen $Y$ and $Z$, then any refinement of $\{Y,Z\}$ can not be a snake, i.e. snake-like spaces are necessarily connected as well.

\begin{prp}
    Every metrisable snake-like $X$ has a graded $\omega$-band-basis whose levels are all snakes.
\end{prp}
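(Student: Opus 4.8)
The plan is to realise $X$ as an inverse limit of arcs and to pull back fine straight chains, so that the crookedness which is unavoidable for a general chainable continuum is absorbed entirely into the bonding maps. Since $X$ is snake-like it is compact and connected, hence a metrisable continuum, and every metrisable snake-like continuum is homeomorphic to an inverse limit $X=\varprojlim([0,1],f_k)$ of arcs with surjective bonding maps $f_k\maps[0,1]\to[0,1]$ (see \cite{Nadler1992}); write $\pi_k\maps X\to[0,1]$ for the projections, so that $\pi_k=f_k\circ\pi_{k+1}$. Using the standard weighted metric $d((x_k),(y_k))=\sum_k2^{-k}|x_k-y_k|$, any link $\pi_k^{-1}(J)$ has diameter at most $2^{-k}+o(1)$ as $\mathrm{diam}(J)\to0$, so pulled-back chains have mesh tending to $0$ automatically. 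Once a suitable sequence of snakes is in hand I will simply feed it into \autoref{GradedLemma} and convert the resulting $\omega$-cap-basis into an $\omega$-band-basis via \autoref{BandCapBases}.

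The heart of the argument is an elementary fact about the arc: for every finite open cover $\mathcal{U}$ of $[0,1]$ and every $\eta>0$ there is a straight chain $G$ of mesh $<\eta$ such that $\mathcal{U}$ is consolidated by $G$ (i.e.\ $G$ refines $\mathcal{U}$ and each $U\in\mathcal{U}$ is the union of the links of $G$ contained in it). This holds because each boundary point $t\in\partial U$ lies outside the open set $U$, so one may place a short link abutting $t$ from the correct side to fill the collar of $U$ inside $U$; there are only finitely many such points and the remaining links may be taken as short as desired. With this sublemma I recursively choose straight chains $G_k$ on the $k$-th arc: let $G_0$ be arbitrary, and given $G_{k-1}$ pick $G_k$ of small mesh consolidating the finite open cover $f_{k-1}^{-1}(G_{k-1})=\{f_{k-1}^{-1}(J):J\in G_{k-1}\}$, fine enough that $\mathrm{mesh}(\pi_k^{-1}(G_k))<2^{-k}$. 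Setting $C_k=\pi_k^{-1}(G_k)$, surjectivity of $\pi_k$ shows each $C_k$ is a minimal cover whose overlap graph agrees with that of $G_k$, hence a snake, and $\mathrm{mesh}(C_k)\to0$, so $\mathbb{P}=\bigcup_kC_k$ is a basis of the compactum $X$.

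It then remains to transfer consolidation. Since $\pi_{k+1}$ is surjective and $\pi_k=f_k\circ\pi_{k+1}$, for $J\in G_k$ and $I\in G_{k+1}$ one has $\pi_{k+1}^{-1}(I)\subseteq\pi_k^{-1}(J)$ iff $I\subseteq f_k^{-1}(J)$, and applying $\pi_{k+1}$ to unions shows that $C_k$ consolidates $C_{k+1}$ precisely because $G_{k+1}$ consolidates $f_k^{-1}(G_k)$. After a harmless adjustment (splitting any link shared by two consecutive levels into two overlapping sublinks, which preserves both the chain structure and consolidation) I may assume $C_{k+1}\cap C_k$ consists only of singletons. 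Now \autoref{GradedLemma} applies: $\mathbb{P}$ is a predetermined graded $\omega$-poset with levels $\mathbb{P}_k=C_k$, and being a basis of the compactum $X$ it is an $\omega$-cap-basis by \autoref{GradedLemma}; \autoref{BandCapBases} then upgrades it to a graded $\omega$-band-basis, whose levels $C_k$ are snakes by construction.

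The main obstacle is exactly the tension between consolidation and the snake condition. Consolidation forces each link of $C_k$ to be filled by links of $C_{k+1}$ lying inside it, and the region-based refinement used in \autoref{PredeterminedGradedBasis} achieves this only by introducing branch points in the overlap graph, so it never produces snakes; moreover, for a hereditarily indecomposable chainable continuum such as the pseudoarc there is no straight chain refining a given chain, so crookedness cannot be avoided by working on $X$ directly. The inverse-limit device circumvents both difficulties at once, since on the arcs only straight chains are ever used and there consolidation is trivial, while all the crookedness is shifted into the $f_k$. The points needing genuine care are therefore the arc-consolidation sublemma with controlled mesh and the verification that pulled-back meshes tend to zero; everything else is bookkeeping.
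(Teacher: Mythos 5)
Your route is genuinely different from the paper's. The paper never leaves the abstract setting: it takes a countable coinitial family of minimal snakes, builds from it a level-injective (not graded) $\omega$-cap-basis via \autoref{WeaklyGradedCapBasis}, and then obtains gradedness \emph{formally}, by passing to the gradification $\mathbb{P}_\mathsf{G}$ and invoking the homeomorphism machinery of \autoref{HomeomorphicSubposet}, \autoref{BackAndForthImpliesBirefinable} and \autoref{GradificationSpectrum} -- precisely so as to sidestep the consolidation problem you correctly identify. Your plan instead constructs genuine consolidating snake covers inside $X$ by pulling back straight chains along an inverse limit of arcs, and then feeds them into \autoref{GradedLemma} and \autoref{BandCapBases}; this is a legitimate alternative strategy, and your transfer of consolidation along $\pi_k=f_k\circ\pi_{k+1}$ (using surjectivity of the projections) is correct. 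However, there is a genuine gap in your key sublemma. As stated -- for \emph{every} finite open cover $\mathcal{U}$ of $[0,1]$ -- it is false: a member $U$ of a finite open cover may have infinitely many components (e.g.\ $U=[0,1]\setminus(\{0\}\cup\{1/n:n\geq1\})$ together with $[0,1/10)$), and then the union of the finitely many links of $G$ contained in $U$ has only finitely many components, so it can never equal $U$. This is not a hypothetical worry, because your recursion applies the sublemma exactly to $\mathcal{U}=f_{k-1}^{-1}(G_{k-1})$, and for a general continuous surjection $f_{k-1}\colon[0,1]\to[0,1]$ the preimage of an open interval can indeed have infinitely many components. The repair is to fix the inverse-limit representation so that the bonding maps are piecewise linear (or at least piecewise monotone) -- this refinement is standard for chainable continua and should be invoked alongside \cite{Nadler1992} -- after which preimages of intervals have finitely many components and your boundary-abutting construction of the consolidating chain does go through. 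As written, though, the recursion fails at the first wild bonding map.

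Three smaller points. First, the bound $\mathrm{mesh}(\pi_k^{-1}(G_k))<2^{-k}$ cannot in general be achieved, since the tail coordinates alone can contribute up to $2^{-k}$ to the diameter of a single fibre; but $\mathrm{mesh}(C_k)\leq 2^{-k}+\varepsilon_k\to0$ is all you need for $\mathbb{P}=\bigcup_k C_k$ to be a basis, so this is harmless. Second, your use of ``consolidated by'' inverts the paper's convention (in the paper's terms you need $f_{k-1}^{-1}(G_{k-1})$ to \emph{consolidate} $G_k$, i.e.\ the coarser cover consolidates the finer); your parenthetical gloss states the correct condition, so this is only terminological, but it should be fixed before citing \autoref{GradedLemma}. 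Third, the singleton-splitting adjustment must be performed \emph{inside} the recursion, immediately after $G_{k+1}$ is built and before $G_{k+2}$ is chosen: if you split a shared link of $G_{k+1}$ after later levels exist, the two halves need not be unions of links of $C_{k+2}$, destroying the consolidation hypothesis of \autoref{GradedLemma}; done in order (and with split points chosen generically so the new pieces do not again coincide with level-$k$ links), the adjustment is indeed harmless. With these repairs your argument is sound, at the cost of importing classical continuum theory external to the paper, whereas the paper's gradification argument buys the same conclusion purely order-theoretically.
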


\begin{proof}
    As $X$ is connected, any minimal subcover of a snake is again a snake.  As $X$ is metrisable and snake-like, we thus have a countable collection $\mathcal{C}$ of minimal snakes which are coinitial w.r.t. refinement among all open covers.  By \autoref{WeaklyGradedCapBasis}, we have a subfamily forming the levels of a level-injective $\omega$-cap-basis $\mathbb{P}$.  By \autoref{SpaceRecovery}, the spectrum $\mathsf{S}\mathbb{P}$ is homeomorphic to the original space $X$.  If necessary, we can replace $\mathbb{P}$ with a subposet consisting of infinitely many levels which is also edge-witnessing and star-refining.  By \autoref{HomeomorphicSubposet}, $\mathsf{S}\mathbb{P}$ will still be homeomorphic to $X$, as will $\mathsf{S}\mathbb{P}_\mathsf{G}$, by \autoref{GradificationSpectrum}.  As each level of $\mathbb{P}_\mathsf{G}$ corresponds to a snake in $\mathsf{S}\mathbb{P}_\mathsf{G}$ and hence $X$, we are done.
\end{proof}

\subsection{Star-Composition}\label{StarComposition}

Let us define the \emph{star} $\sqsupset^*$ of any ${\sqsupset}\subseteq\mathbb{Q}\times\mathbb{P}$ by
\[q\sqsupset^*p\qquad\Leftrightarrow\qquad\exists C\in\mathsf{C}\mathbb{P}\ (Cp\sqsubset q).\]
For example, the star-above relation is the star of both $\geq$ and $\mathrm{id}_\mathbb{P}$, i.e.
${\mathrm{id}_\mathbb{P}^*}={\geq^*}={\vartriangleright}$.

If $\sqsupset_\phi$ is defined by containment relative to some $\phi:\mathsf{S}\mathbb{P}\rightarrow\mathsf{S}\mathbb{Q}$, as in \eqref{sqphi}, its star then corresponds to closed containment.

\begin{prp}\label{sqphistar}
If $\mathbb{P}$ is a prime regular $\omega$-poset and $\phi:\mathsf{S}\mathbb{P}\rightarrow\mathsf{S}\mathbb{Q}$ is continuous,
\[q\sqsupset_\phi^*p\qquad\Leftrightarrow\qquad\phi^{-1}[q^\in]\supseteq\mathrm{cl}(p^\in).\]
\end{prp}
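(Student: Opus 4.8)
The plan is to first unwind both sides into statements about the single open set $O=\phi^{-1}[q^\in]$ and the basic open sets $c^\in$. Unravelling the definition of the star $\sqsupset^*$ together with \eqref{sqphi}, the relation $q\sqsupset_\phi^*p$ asserts exactly that there is a cap $C\in\mathsf{C}\mathbb{P}$ with $c\sqsubset_\phi q$, i.e. $c^\in\subseteq O$, for every $c\in Cp=C\cap p^\wedge$. Since $\mathbb{P}$ is a prime $\omega$-poset, $c\wedge p$ is equivalent to $c^\in\cap p^\in\neq\emptyset$ (see the discussion around \autoref{PropPrime}), so the elements of $Cp$ are precisely those $c\in C$ whose basic set $c^\in$ meets $p^\in$. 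Thus the whole equivalence reduces to showing: $\mathrm{cl}(p^\in)\subseteq O$ if and only if there is a cap $C$ such that every member of $C_\mathsf{S}$ meeting $p^\in$ lies inside $O$.

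For the forward implication I would take $S\in\mathrm{cl}(p^\in)$ and use \eqref{Closure} together with \eqref{Upni} to rewrite $\mathrm{cl}(p^\in)=p^{\wedge\supseteq}$, so that $S\subseteq p^\wedge$. As $S$ is a selector it meets the cap $C$ in some $c\in S\cap C$; since $c\in S\subseteq p^\wedge$ we get $c\wedge p$ and hence $c\in Cp$, so $S\in c^\in\subseteq O$. Letting $S$ range over $\mathrm{cl}(p^\in)$ gives $\mathrm{cl}(p^\in)\subseteq O=\phi^{-1}[q^\in]$, as required.

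The step I expect to carry the real content is the backward implication, since it must manufacture a cap out of a purely topological closed-containment hypothesis. Assuming $\mathrm{cl}(p^\in)\subseteq O$, I would form the two open sets $O$ and $W=\mathsf{S}\mathbb{P}\setminus\mathrm{cl}(p^\in)$; these cover $\mathsf{S}\mathbb{P}$ precisely because $\mathrm{cl}(p^\in)\subseteq O$. As $\mathbb{P}$ is an $\omega$-poset, $\mathbb{P}_\mathsf{S}$ is a basis by \autoref{BasisOrderIsomorphism}, so every point of $\mathsf{S}\mathbb{P}$ lies in some $c^\in$ contained in $O$ or in $W$; hence $C=\{c\in\mathbb{P}:c^\in\subseteq O\text{ or }c^\in\subseteq W\}$ has $C_\mathsf{S}$ covering $\mathsf{S}\mathbb{P}$, which makes $C$ a cap by \autoref{SpectrumCompactT1}. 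Finally I would verify the required property of $C$: if $c\in Cp$ then $c\wedge p$ yields $c^\in\cap p^\in\neq\emptyset$ (here primeness is used), which excludes the alternative $c^\in\subseteq W=\mathsf{S}\mathbb{P}\setminus\mathrm{cl}(p^\in)$ and therefore forces $c^\in\subseteq O$. Thus $Cp\sqsubset q$, giving $q\sqsupset_\phi^*p$ and completing the equivalence. The role of regularity here is only to place us in the Hausdorff setting guaranteed by \autoref{RegularImpliesHausdorff}; the argument itself rests on primeness, the basis property, and the cap--cover correspondence.
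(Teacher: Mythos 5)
Your proof is correct and follows essentially the same route as the paper's: the forward direction via $\mathrm{cl}(p^\in)=p^{\wedge\supseteq}$ from \eqref{Closure} and \eqref{Upni} together with the selector-meets-cap argument, and the backward direction by covering $\mathsf{S}\mathbb{P}$ with $\phi^{-1}[q^\in]$ and $\mathsf{S}\mathbb{P}\setminus\mathrm{cl}(p^\in)$, extracting a cap via the basis property and \autoref{SpectrumCompactT1}, and using primeness to force $c^\in\subseteq\phi^{-1}[q^\in]$ for $c\in Cp$. Your closing observation that the argument really rests on primeness and the $\omega$-poset structure rather than regularity matches the paper's proof, which likewise never invokes regularity directly.
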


\begin{proof}
    Say $q\sqsupset_\phi^*p$, so we have $C\in\mathsf{C}\mathbb{P}$ with $q\sqsupset_\phi c$, for all $c\in Cp$, and hence
    \[\phi^{-1}[q^\in]\supseteq(Cp)^\in\supseteq p^{\wedge\supseteq}=\mathrm{cl}(p^\in),\]
    by \eqref{Closure} and \eqref{Upni} (if $S\in p^{\wedge\supseteq}$ then $S\subseteq p^\wedge$ so we have $c\in C\cap S\subseteq Cp$ and hence $S\in(Cp)^\in$).  This proves the $\Rightarrow$ part.

    Conversely, assume $\phi^{-1}[q^\in]\supseteq\mathrm{cl}(p^\in)$.  As $\mathbb{P}_\mathsf{S}$ is a basis for $\mathsf{S}\mathbb{P}$, we have a cover $C_S$ of $\mathsf{S}\mathbb{P}$ such that either $c^\in\subseteq\phi^{-1}[q^\in]$ or $c^\in\subseteq\mathsf{S}\mathbb{P}\setminus\mathrm{cl}(p^\in)$, for all $c\in C$.  Thus $C\in\mathsf{C}\mathbb{P}$, by \autoref{SpectrumCompactT1}, and $c^\in\subseteq\phi^{-1}[q^\in]$, whenever $c\in Cp$.  This means $q\sqsupset_\phi c$, for all $c\in Cp$, so $C$ witnesses $q\sqsupset_\phi^*p$.
\end{proof}

Another thing we can note immediately about stars is the following.

\begin{prp}\label{StarWedgePreserving}
    If ${\sqsupset}\subseteq\mathbb{Q}\times\mathbb{P}$ is $\wedge$-preserving then so is $\sqsupset^*$.
\end{prp}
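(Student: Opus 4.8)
The plan is to unwind the definition of the star and reduce the whole claim to two applications of \autoref{WedgeSplit} together with the $\wedge$-preservation of $\sqsupset$ itself. Notably, no directedness of caps and no $\omega$-poset hypothesis is needed, since \autoref{WedgeSplit} holds for arbitrary posets. So suppose $q\sqsupset^*p$ and $q'\sqsupset^*p'$ with $p\wedge p'$; the goal is to produce a common lower bound for $q$ and $q'$, i.e.\ $q\wedge q'$. Unpacking the definition of $\sqsupset^*$, I would first extract caps $C,C'\in\mathsf{C}\mathbb{P}$ witnessing the two starred relations, so that $Cp\sqsubset q$ and $C'p'\sqsubset q'$; explicitly this means $q\sqsupset c$ for every $c\in Cp=C\cap p^\wedge$, and $q'\sqsupset c'$ for every $c'\in C'p'=C'\cap p'^\wedge$.

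Next I would feed the hypothesis $p\wedge p'$ into \autoref{WedgeSplit} using the cap $C$, obtaining some $c\in C$ with $p\wedge c\wedge p'$. Since $c\in C$ and $c\wedge p$, this $c$ lies in the star $Cp=C\cap p^\wedge$, so the witnessing cap $C$ guarantees $q\sqsupset c$; moreover we retain the relation $c\wedge p'$. I would then apply \autoref{WedgeSplit} a second time, now to $c\wedge p'$ and the cap $C'$, obtaining $c'\in C'$ with $c\wedge c'\wedge p'$. As before, $c'\in C'\cap p'^\wedge=C'p'$, so $q'\sqsupset c'$, and crucially we have extracted the link $c\wedge c'$ between the two chosen elements.

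At this point I would simply invoke the $\wedge$-preservation of $\sqsupset$: from $q\sqsupset c$, $c\wedge c'$ and $q'\sqsupset c'$ it yields $q\wedge q'$, which is exactly what is required. The only point demanding care — and the closest thing to an obstacle — is the bookkeeping that ensures each intermediate element genuinely lands in the correct star ($c\in Cp$ and $c'\in C'p'$), so that the original starred relations deliver $q\sqsupset c$ and $q'\sqsupset c'$, and that the chain of common-lower-bound relations is oriented so the two applications of \autoref{WedgeSplit} and the final appeal to $\wedge$-preservation chain together cleanly. Beyond this orientation check the argument is entirely routine.
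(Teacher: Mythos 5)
Your proposal is correct and is essentially identical to the paper's own proof: both extract witnessing caps $C,C'$, apply \autoref{WedgeSplit} twice to produce $c\in Cp$ with $c\wedge p'$ and then $c'\in C'p'$ with $c\wedge c'$, and conclude $q\wedge q'$ from $q\sqsupset c\wedge c'\sqsubset q'$ via $\wedge$-preservation of $\sqsupset$. Your side remark that no $\omega$-poset hypothesis is needed is also accurate, since \autoref{WedgeSplit} and the proposition itself are stated for arbitrary posets.
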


\begin{proof}
    Say $\sqsupset$ is $\wedge$-preserving.  If $q\sqsupset^*p$ and $q'\sqsupset^*p'$ then we have $C,C'\in\mathsf{C}\mathbb{P}$ with $Cp\sqsubset q$ and $C'p'\sqsubset q'$.  If $p\wedge p'$ then \autoref{WedgeSplit} yields $c\in Cp$ with $c\wedge p'$.  Then \autoref{WedgeSplit} again yields $c'\in C'p'$ with $c\wedge c'$.  Thus $q\wedge q'$, as $q\sqsupset c\wedge c'\sqsubset q'$ and $\sqsupset$ is $\wedge$-preserving, showing that $\sqsupset^*$ is also $\wedge$-preserving.
\end{proof}

Also, stars do not change the up-closures of round star-prime subsets.

\begin{prp}
    For any ${\sqsupset}\subseteq\mathbb{Q}\times\mathbb{P}$ and $S\subseteq\mathbb{P}$,
    \begin{equation}\label{RoundStarPrimeStar}
        S\text{ is round and star-prime}\qquad\Rightarrow\qquad S^\sqsubset=S^{\reflectbox{$\scriptstyle\sqsupset^*$}}.
    \end{equation}
\end{prp}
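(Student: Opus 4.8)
The plan is to prove the set equality $S^\sqsubset=S^{\reflectbox{$\scriptstyle\sqsupset^*$}}$ by two inclusions, after first unwinding the notation. Recall that for the inverse relation ${\sqsubset}={\sqsupset^{-1}}$ we have $S^\sqsubset=\{q\in\mathbb{Q}:\exists s\in S\ (q\sqsupset s)\}$, the set of elements lying $\sqsupset$-above some member of $S$; and $S^{\reflectbox{$\scriptstyle\sqsupset^*$}}$ is the analogous set for the star, namely $\{q\in\mathbb{Q}:\exists s\in S\ (q\sqsupset^*s)\}$. Each inclusion will use exactly one of the two hypotheses on $S$, so the two halves are quite independent.

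For $S^{\reflectbox{$\scriptstyle\sqsupset^*$}}\subseteq S^\sqsubset$ I would invoke star-primeness. Suppose $q\sqsupset^*s$ for some $s\in S$; unfolding the definition of $\sqsupset^*$ produces a cap $C\in\mathsf{C}\mathbb{P}$ with $Cs\sqsubset q$, i.e. $q\sqsupset c$ for every $c\in Cs$. Since $S$ is star-prime we have $S\cap Cs\neq\emptyset$, so choosing any $s'\in S\cap Cs$ gives $q\sqsupset s'$ with $s'\in S$, whence $q\in S^\sqsubset$.

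For the reverse inclusion $S^\sqsubset\subseteq S^{\reflectbox{$\scriptstyle\sqsupset^*$}}$ I would use roundness together with the explicit single-element description of $\vartriangleleft$ recorded earlier, namely $t\vartriangleleft s\Leftrightarrow\exists C\in\mathsf{C}\mathbb{P}\ (Cs=\{s\}$ replaced by $Ct=\{s\})$. Suppose $q\sqsupset s$ with $s\in S$. Roundness ($S\subseteq S^\vartriangleleft$) yields $t\in S$ with $t\vartriangleleft s$, and the explicit description then supplies a cap $C$ with $Ct=\{s\}$. As the only element of $Ct$ is $s$ and $q\sqsupset s$, we get $Ct\sqsubset q$, so $q\sqsupset^*t$ directly from the definition of the star; since $t\in S$, this places $q$ in $S^{\reflectbox{$\scriptstyle\sqsupset^*$}}$.

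The main obstacle — and the reason a naive argument fails — is that $\sqsupset$ is an arbitrary relation with no assumed compatibility with the order on $\mathbb{P}$, so one cannot pass from $q\sqsupset s$ to $q\sqsupset c$ for general $c\leq s$; thus the seemingly natural route through $\vartriangleleft_C$-refinement (giving $Ct\leq s$) is blocked. The trick is to sidestep compatibility entirely: roundness lets us replace $s$ by a star-smaller $t\in S$, and the sharp characterisation $Ct=\{s\}$ forces the entire star of $t$ in $C$ to collapse onto the single element $s$, so the only $\sqsupset$-instance required is the one already in hand. Note in particular that no $\omega$-poset hypothesis is needed, and each hypothesis (star-primeness for one inclusion, roundness for the other) is used exactly once.
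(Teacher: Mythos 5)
Your proof is correct and takes essentially the same route as the paper: the star-prime hypothesis handles $S^{\reflectbox{$\scriptstyle\sqsupset^*$}}\subseteq S^\sqsubset$ exactly as in the paper's proof, and your roundness direction is the paper's one-line step $S^\sqsubset=S^{\vartriangleleft\sqsubset}\subseteq S^{\reflectbox{$\scriptstyle\sqsupset^*$}}$ with the underlying fact ${\sqsupset\circ\vartriangleright}\subseteq{\sqsupset^*}$ made explicit via the characterisation $t\vartriangleleft s\Leftrightarrow\exists C\in\mathsf{C}\mathbb{P}\ (Ct=\{s\})$. Your observation that this sidesteps any compatibility assumption on the arbitrary relation $\sqsupset$ is exactly why the paper's compressed step works too.
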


\begin{proof}
    If $S$ is round then $S^\sqsubset=S^{\vartriangleleft\sqsubset}\subseteq S^{\reflectbox{$\scriptstyle\sqsupset^*$}}$.  On the other hand, if $q\in S^{\reflectbox{$\scriptstyle\sqsupset^*$}}$ then we have $s\in S$ with $q\sqsupset^*s$, which means we have $C\in\mathsf{C}\mathbb{P}$ with $Cs\sqsubset q$.  If $S$ is star-prime then we have $c\in Cs\cap S\sqsubset q$ so $q\in S^\sqsubset$, showing that $S^{\reflectbox{$\scriptstyle\sqsupset^*$}}\subseteq S^\sqsubset$.
\end{proof}

Define the \emph{star-composition} of any ${\sqni}\subseteq\mathbb{R}\times\mathbb{Q}$ and ${\sqsupset}\subseteq\mathbb{Q}\times\mathbb{P}$ by
\[{\sqni*\sqsupset}\ =\ (\sqni\circ\sqsupset)^*.\]
This more accurately reflects composition of continuous functions, as we now show.

\begin{prp}
If $\mathbb{P}$, $\mathbb{Q}$ and $\mathbb{R}$ are prime regular $\omega$-posets then, for any continuous maps $\phi:\mathsf{S}\mathbb{P}\rightarrow\mathsf{S}\mathbb{Q}$ and $\psi:\mathsf{S}\mathbb{Q}\rightarrow\mathsf{S}\mathbb{R}$,
\begin{equation*}
{\sqsupset_\psi^**\sqsupset_\phi^*}={\sqsupset_\psi*\sqsupset_\phi}={\sqsupset_{\psi\circ\phi}^*}.
\end{equation*}
\end{prp}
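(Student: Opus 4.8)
The plan is to prove both equalities at once by establishing the cyclic chain of inclusions
\[
{\sqsupset_\psi^**\sqsupset_\phi^*}\ \subseteq\ {\sqsupset_\psi*\sqsupset_\phi}\ \subseteq\ {\sqsupset_{\psi\circ\phi}^*}\ \subseteq\ {\sqsupset_\psi^**\sqsupset_\phi^*}.
\]
The first two inclusions are routine monotonicity arguments. Both the star operation $\sqsupset\mapsto\sqsupset^*$ and composition $\circ$ are monotone in each relational argument, and ${\sqni*\sqsupset}=(\sqni\circ\sqsupset)^*$. So for the first inclusion it suffices to note $\sqsupset_\phi^*\subseteq\sqsupset_\phi$ and $\sqsupset_\psi^*\subseteq\sqsupset_\psi$, which are immediate from \autoref{sqphistar} since closed containment implies containment; and for the second it suffices to recall $\sqsupset_\psi\circ\sqsupset_\phi\subseteq\sqsupset_{\psi\circ\phi}$, already established earlier in \autoref{Homeomorphisms}.

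The real work is the last inclusion $\sqsupset_{\psi\circ\phi}^*\subseteq\sqsupset_\psi^**\sqsupset_\phi^*$, which I would carry out entirely through the topological characterisation of stars from \autoref{sqphistar}. Suppose $r\sqsupset_{\psi\circ\phi}^*p$; by \autoref{sqphistar} this means $(\psi\circ\phi)^{-1}[r^\in]\supseteq\mathrm{cl}(p^\in)$, equivalently that the compact set $K=\phi[\mathrm{cl}(p^\in)]$ lies inside the open set $W=\psi^{-1}[r^\in]$. Unwinding the definition of the star-composition $(\sqsupset_\psi^*\circ\sqsupset_\phi^*)^*$, the goal is to produce a cap $C\in\mathsf{C}\mathbb{P}$ such that every $c\in Cp$ admits some $q\in\mathbb{Q}$ with $r\sqsupset_\psi^*q\sqsupset_\phi^*c$, which is exactly $r\sqsupset_\psi^**\sqsupset_\phi^*p$.

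To build $C$, I would first cover $K$ upstairs. Since $\mathbb{Q}$ is a prime regular $\omega$-poset, $\mathsf{S}\mathbb{Q}$ is compact Hausdorff and hence regular, by \autoref{RegularImpliesHausdorff} and \autoref{SpectrumCompactT1}, and $\mathbb{Q}_\mathsf{S}$ is a basis, by \autoref{BasisOrderIsomorphism}; thus each point of $K$ has a basic neighbourhood $q^\in$ with $\mathrm{cl}(q^\in)\subseteq W$, i.e.\ $r\sqsupset_\psi^*q$ by \autoref{sqphistar}. Compactness of $K$ then yields finitely many such $q_1,\dots,q_k$ covering $K$, whence $\mathrm{cl}(p^\in)\subseteq\bigcup_i\phi^{-1}[q_i^\in]$. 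Next I would run the same regularity-plus-basis argument downstairs: $\{\phi^{-1}[q_i^\in]\}_i\cup\{\mathsf{S}\mathbb{P}\setminus\mathrm{cl}(p^\in)\}$ is an open cover of $\mathsf{S}\mathbb{P}$, so by regularity of $\mathsf{S}\mathbb{P}$ and the basis $\mathbb{P}_\mathsf{S}$ it is refined by a cover of basic sets $c^\in$, each with $\mathrm{cl}(c^\in)$ contained in a single member of the family, and a finite subcover gives a cap $C\in\mathsf{C}\mathbb{P}$ by \autoref{SpectrumCompactT1}. For $c\in Cp$ we have $c\in p^\wedge$, so $c^\in\cap p^\in\neq\emptyset$, forcing $\mathrm{cl}(c^\in)$ to avoid $\mathsf{S}\mathbb{P}\setminus\mathrm{cl}(p^\in)$ and hence to lie in some $\phi^{-1}[q_i^\in]$, i.e.\ $q_i\sqsupset_\phi^*c$ by \autoref{sqphistar}; combined with $r\sqsupset_\psi^*q_i$ this supplies the required intermediate element.

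The main obstacle is precisely this last inclusion, and within it the only delicate point is choosing $C$ finely enough that each star-piece $c\in Cp$ has its closure $\mathrm{cl}(c^\in)$ trapped inside one of the finitely many sets $\phi^{-1}[q_i^\in]$. This is where regularity of both $\mathbb{P}$ and $\mathbb{Q}$ (providing closed-containment neighbourhoods in both spectra) and compactness (providing the two finite reductions) are indispensable; everything else reduces to bookkeeping with \autoref{sqphistar} and the monotonicity of $*$ and $\circ$.
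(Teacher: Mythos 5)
Your proof is correct, and its overall architecture---the cyclic chain of three inclusions---matches the paper's, but two of the three steps are executed differently. For the middle inclusion ${\sqsupset_\psi*\sqsupset_\phi}\subseteq{\sqsupset_{\psi\circ\phi}^*}$, the paper argues directly: it unwinds the witnessing cap $C$ with intermediates $q_c$ and computes $\mathrm{cl}(p^\in)\subseteq\bigcup_{c\in Cp}c^\in\subseteq\bigcup_{c\in Cp}\phi^{-1}[q_c^\in]\subseteq(\psi\circ\phi)^{-1}[r^\in]$ before applying \autoref{sqphistar}; you instead combine monotonicity of $*$ and $\circ$ with the inclusion ${\sqsupset_\psi\circ\sqsupset_\phi}\subseteq{\sqsupset_{\psi\circ\phi}}$ already noted in \autoref{Homeomorphisms}, which is slicker and sidesteps the closure computation (the paper's version of this step quietly uses primality via $\mathrm{cl}(p^\in)=p^{\wedge\supseteq}$, which your route avoids). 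For the hard inclusion ${\sqsupset_{\psi\circ\phi}^*}\subseteq{\sqsupset_\psi^**\sqsupset_\phi^*}$, the paper makes a single pointwise sweep over $\mathsf{S}\mathbb{P}$: for $S\in\mathrm{cl}(p^\in)$ it selects $q\in\phi(S)$ and then $c\in S$ using continuity of $\psi$ and $\phi$, while for $S\notin\mathrm{cl}(p^\in)$ it selects $c\in S$ with $c^\geq\cap p^\geq=\emptyset$ so that such $c$ can never lie in $Cp$, and then applies compactness once. You instead apply compactness twice---first covering $K=\phi[\mathrm{cl}(p^\in)]$ upstairs by finitely many $q_i^\in$ with $\mathrm{cl}(q_i^\in)\subseteq\psi^{-1}[r^\in]$, then refining the resulting cover downstairs---and where the paper excludes bad elements from $Cp$ by construction, you exclude them a posteriori via primality of $\mathbb{P}$ (if $c\in Cp$ then $c\wedge p$ gives $c^\in\cap p^\in\neq\emptyset$, so $\mathrm{cl}(c^\in)$ cannot sit inside $\mathsf{S}\mathbb{P}\setminus\mathrm{cl}(p^\in)$ and must land in some $\phi^{-1}[q_i^\in]$). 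Both organizations use exactly the same ingredients (regularity of both spectra via \autoref{RegularImpliesHausdorff}, the bases from \autoref{BasisOrderIsomorphism}, \autoref{SpectrumCompactT1} to convert basic covers into caps, and \autoref{sqphistar} throughout); your two-stage version is somewhat more transparently topological, the paper's is marginally more economical.
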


\begin{proof}
    By \autoref{sqphistar}, ${\sqsupset_\psi^*}\subseteq{\sqsupset_\psi}$ and ${\sqsupset_\phi^*}\subseteq{\sqsupset_\phi}$ so ${\sqsupset_\psi^**\sqsupset_\phi^*}\subseteq{\sqsupset_\psi*\sqsupset_\phi}$.
    On the other hand, if $r\sqsupset_\psi*\sqsupset_\phi p$ then we have $C\in\mathsf{C}\mathbb{P}$ such that, for all $c\in Cp$, we have $q_c\in\mathbb{Q}$ with $r\sqsupset_\psi q_c\sqsupset_\phi c$.  This means
    \[\mathrm{cl}(p^\in)\subseteq\bigcup_{c\in Cp}c^\in\subseteq\bigcup_{c\in Cp}\phi^{-1}[q_c^\in]\subseteq\phi^{-1}[\psi^{-1}[r^\in]]=(\psi\circ\phi)^{-1}[r^\in].\]
    By \autoref{sqphistar}, this implies $r\sqsupset_{\psi\circ\phi}^*p$.

    Now say $r\sqsupset_{\psi\circ\phi}^*p$, i.e. $\mathrm{cl}(p^\in)\subseteq\phi^{-1}[\psi^{-1}[r^\in]]$.  For each $S\in\mathrm{cl}(p^\in)$, the continuity of $\psi$ yields $q\in\phi(S)$ with $\mathrm{cl}(q^\in)\subseteq\psi^{-1}[r^\in]$.  The continuity of $\phi$ then yields $c\in S$ with $\mathrm{cl}(c^\in)\subseteq\phi^{-1}[q^\in]$.  On the other hand, for every $S\in\mathsf{S}\mathbb{P}\setminus\mathrm{cl}(p^\in)$, we have $c\in S$ with $c^\geq\cap p^\geq=\emptyset$.  As $\mathsf{S}\mathbb{P}$ is compact, it has a finite cover consisting of $c^\in$ for such $c$.  By \autoref{SpectrumCompactT1}, these form a cap, i.e. we have $C\in\mathsf{C}\mathbb{P}$ such that $r\sqsupset_\psi^*\circ\sqsupset_\phi^*c$, for all $c\in Cp$, showing that $r\sqsupset_\psi^**\sqsupset_\phi^*p$.
\end{proof}

Also, replacing $\circ$ with $*$ in \eqref{StarCircSq} turns $\Rightarrow$ into $\Leftrightarrow$.

\begin{prp}
    If $\mathbb{P}$ and $\mathbb{Q}$ are regular prime $\omega$-posets and ${\sqsupset}\subseteq\mathbb{Q}\times\mathbb{P}$ is a $\wedge$-preserving refiner then, for all $p\in\mathbb{P}$ and $q\in\mathbb{Q}$,
    \begin{equation}\label{StarClosedContainment}
        q\vartriangleright*\sqsupset p\qquad\Leftrightarrow\qquad\phi_\sqsupset^{-1}[q^\in]\supseteq\mathrm{cl}(p^\in).
    \end{equation}
\end{prp}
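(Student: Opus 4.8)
The plan is to unwind both sides and reduce to the pointwise statement \eqref{StarCircSq}. Recall that, by the definition of star-composition, $q\vartriangleright*\sqsupset p$ means precisely that there is a cap $C\in\mathsf{C}\mathbb{P}$ with $q\vartriangleright\circ\sqsupset c$ for every $c\in Cp$. I would also record the identity $\mathrm{cl}(p^\in)=(p^\wedge)^\supseteq$, which holds since $\mathbb{P}$ is a prime $\omega$-poset, combining \eqref{Closure} and \eqref{Upni}.

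For the direction $\Rightarrow$, fix a cap $C$ witnessing $q\vartriangleright*\sqsupset p$. For each $c\in Cp$ we have $q\vartriangleright\circ\sqsupset c$, so \eqref{StarCircSq} gives $\phi_\sqsupset^{-1}[q^\in]\supseteq\mathrm{cl}(c^\in)\supseteq c^\in$; hence $\phi_\sqsupset^{-1}[q^\in]\supseteq\bigcup_{c\in Cp}c^\in$. It then remains to check $\mathrm{cl}(p^\in)\subseteq\bigcup_{c\in Cp}c^\in$: any $S\in\mathrm{cl}(p^\in)=(p^\wedge)^\supseteq$ is a minimal selector with $S\subseteq p^\wedge$, so it meets the cap $C$ in some $c$, and $c\in S\subseteq p^\wedge$ forces $c\in Cp$ together with $S\in c^\in$. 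This yields $\mathrm{cl}(p^\in)\subseteq\phi_\sqsupset^{-1}[q^\in]$.

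For the converse $\Leftarrow$, I would assume $\mathrm{cl}(p^\in)\subseteq\phi_\sqsupset^{-1}[q^\in]$ and manufacture the required cap by a compactness argument analogous to the converse half of \autoref{sqphistar}. For each $S\in\mathrm{cl}(p^\in)$ we have $q\in\phi_\sqsupset(S)=S^{\sqsubset\vartriangleleft}$, so unpacking this superscript gives some $q_S\in S^\sqsubset$ with $q_S\vartriangleleft q$, and in turn some $s_S\in S$ with $q\vartriangleright q_S\sqsupset s_S$; thus $q\vartriangleright\circ\sqsupset s_S$ and $S\in s_S^\in$. For each $S\in\mathsf{S}\mathbb{P}\setminus\mathrm{cl}(p^\in)=\mathsf{S}\mathbb{P}\setminus(p^\wedge)^\supseteq$ we have instead some $c_S\in S$ with $\neg(c_S\wedge p)$, whence $c_S^\in$ is disjoint from $\mathrm{cl}(p^\in)$. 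The sets $s_S^\in$ and $c_S^\in$ cover $\mathsf{S}\mathbb{P}$, so by compactness (\autoref{SpectrumCompactT1}) finitely many of them do, and the corresponding elements form a cap $C\in\mathsf{C}\mathbb{P}$. Since every $c_S$ satisfies $\neg(c_S\wedge p)$, no such $c_S$ lies in $Cp$; hence every $c\in Cp$ is one of the $s_S$ and satisfies $q\vartriangleright\circ\sqsupset c$, so $C$ witnesses $q\vartriangleright*\sqsupset p$.

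I expect the converse to be the only real obstacle, since it is where one must build the witnessing cap rather than read it off. The delicate point is ensuring that the \emph{bad} basic sets $c_S^\in$ (those separating a point outside $\mathrm{cl}(p^\in)$) contribute nothing to $Cp$, which is exactly what the condition $\neg(c_S\wedge p)$ guarantees. All the genericity needed — the membership $q\in S^{\sqsubset\vartriangleleft}$ producing $q_S\vartriangleleft q$, and the fact that $\phi_\sqsupset(S)=S^{\sqsubset\vartriangleleft}$ is a genuine minimal selector — comes from regularity of $\mathbb{Q}$ and \autoref{LinkedSelector}, which are already available under the stated hypotheses.
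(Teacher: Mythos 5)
Your proof is correct, and it splits neatly into a half that matches the paper and a half that does not. The forward implication is essentially the paper's own: both arguments fix a witnessing cap $C$, apply \eqref{StarCircSq} to each $c\in Cp$, and then use $\mathrm{cl}(p^\in)=(p^\wedge)^\supseteq\subseteq\bigcup_{c\in Cp}c^\in$, which follows from \eqref{Closure} and \eqref{Upni} and is where primeness of $\mathbb{P}$ enters. For the converse you take a genuinely different route. The paper argues contrapositively and never leaves the poset: if $q\vartriangleright*\sqsupset p$ fails then $Cp\nsubseteq q^{\vartriangleright\sqsupset}$ for every cap $C$, so $p^\wedge\setminus q^{\vartriangleright\sqsupset}$ is a selector; a minimal selector $S$ inside it (via \autoref{SelectorsContainMinimalSelectors}) lies in $(p^\wedge)^\supseteq=\mathrm{cl}(p^\in)$ while $q\notin S^{\sqsubset\vartriangleleft}=\phi_\sqsupset(S)$, exhibiting a point of $\mathrm{cl}(p^\in)\setminus\phi_\sqsupset^{-1}[q^\in]$ explicitly. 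You instead prove the converse directly by transplanting the compactness argument from the converse half of \autoref{sqphistar}: choose $s_S$ with $q\vartriangleright\circ\sqsupset s_S$ for $S\in\mathrm{cl}(p^\in)$ and $c_S\notin p^\wedge$ for $S\notin\mathrm{cl}(p^\in)$, extract a finite subcover of $\mathsf{S}\mathbb{P}$, and invoke \autoref{SpectrumCompactT1} to conclude the chosen elements form a cap $C$ with $Cp\subseteq q^{\vartriangleright\sqsupset}$; your isolation of the condition $\neg(c_S\wedge p)$ is exactly right, since it rules out any element of $Cp$ being one of the bad witnesses even if a single poset element happens to serve as both some $s_S$ and some $c_{S'}$. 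The trade-off is clear: the paper's contrapositive is lighter, needing only the Kuratowski--Zorn extraction of minimal selectors and yielding the separating point constructively, whereas your direct argument routes through compactness of the spectrum (ultimately the Alexander subbasis lemma) but has the virtue of making this proposition and \autoref{sqphistar} proceed by one uniform template.
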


\begin{proof}
    If $q\vartriangleright*\sqsupset p$ then we have $C\in\mathsf{C}\mathbb{P}$ with $Cp\sqsubset\circ\vartriangleleft q$ so \eqref{Upni} and \eqref{StarCircSq} yield
    \[\mathrm{cl}(p^\in)\subseteq(Cp)^\in\subseteq\mathrm{cl}((Cp)^\in)\subseteq\phi_\sqsupset^{-1}[q^\in].\]

    Conversely, if $q\vartriangleright*\sqsupset p$ fails then $Cp\nsubseteq q^{\vartriangleright\sqsupset}$, for all $C\in\mathsf{C}\mathbb{P}$.  Put another way, $p^\wedge\setminus q^{\vartriangleright\sqsupset}$ is a selector and hence contains a minimal selector $S$.  Then $S\in\mathrm{cl}(p^\in)$, by \eqref{Closure} and \eqref{Upni}, but $q\notin S^{\sqsubset\vartriangleleft}=\phi_\sqsupset(S)$, i.e. $\phi_\sqsupset(S)\notin q^\in$ so $S\in\mathrm{cl}(p^\in)\setminus\phi_\sqsupset^{-1}[q^\in]$.
\end{proof}

Next let us make some simple observations about $*$.  For example,
\begin{equation}\label{CircStar}
    {\sqni*\sqsupset}\ \supseteq\ {\sqni\circ\sqsupset^*}.
\end{equation}
Indeed, if $r\sqni q\sqsupset^*p$ then we have $C\in\mathsf{C}\mathbb{P}$ with $Cp\sqsubset q\sqin r$ so $r\mathrel{\sqni*\sqsupset}p$.  Thus
\begin{equation}\label{StarTriangle}
    {\sqsupset\circ\vartriangleright}\ \subseteq\ {\sqsupset*\geq}\ =\ {\sqsupset^*\circ\geq}\ =\ {\sqsupset^*}.
\end{equation}
Indeed, the first inclusion is just a special case of \eqref{CircStar} where $\sqni$ and $\sqsupset$ are replaced by $\sqsupset$ and $\geq$ respectively.  On the other hand, if $q\sqsupset^*p\geq r$ then we have $C\in\mathsf{C}\mathbb{P}$ with $Cr\subseteq Cp\leq q$ and hence $q\sqsupset^*r$.  This shows that ${\sqsupset^*\circ\geq}={\sqsupset^*}$.  Also certainly ${\sqsupset^*}\subseteq(\sqsupset\circ\geq)^*={\sqsupset*\geq}$.  Conversely, if $q\sqsupset*\geq p$ then we have $C\in\mathsf{C}\mathbb{P}$ such that, for all $c\in Cp$, we have $q_c\in\mathbb{P}$ with $c\leq q_c\sqsubset q$.  Setting $D=(C\setminus Cp)\cup\{q_c:c\in Cp\}$, note $C\leq D\in\mathsf{C}\mathbb{P}$ and $Dp\sqsubset q$, i.e. $D$ witnesses $q\sqsupset^*p$, showing ${\sqsupset*\geq}\subseteq{\sqsupset^*}$ too.

\begin{prp}
    If $\mathbb{P}$ is a regular and ${\sqsupset}\subseteq\mathbb{Q}\times\mathbb{P}$ is a refiner then so is $\sqsupset^*$.
\end{prp}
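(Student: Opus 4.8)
The plan is to unwind the refiner condition for $\sqsupset^*$. By definition, $\sqsupset^*$ is a refiner precisely when each cap of $\mathbb{Q}$ is refined, with respect to $\sqsupset^*$, by some cap of $\mathbb{P}$. So I fix a cap $D\in\mathsf{C}\mathbb{Q}$ and aim to produce a cap $C'\in\mathsf{C}\mathbb{P}$ such that each $c'\in C'$ satisfies $d\sqsupset^*c'$ for some $d\in D$. Everything then reduces to building $C'$ by chaining the two hypotheses and checking the star relation on its elements.

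The construction uses each hypothesis once. First, since $\sqsupset$ is a refiner, $D$ is already refined by some cap $C\in\mathsf{C}\mathbb{P}$, i.e.\ each $c\in C$ has a $d\in D$ with $d\sqsupset c$. Second, since $\mathbb{P}$ is regular, the defining inclusion $\mathsf{C}\mathbb{P}\subseteq\mathsf{C}\mathbb{P}^\vartriangleleft$ furnishes a cap $C'\in\mathsf{C}\mathbb{P}$ that $\vartriangleleft$-refines $C$, so each $c'\in C'$ has some $c\in C$ with $c'\vartriangleleft c$. I then claim this $C'$ is the desired $\sqsupset^*$-refinement of $D$: given $c'\in C'$, pick $c\in C$ with $c'\vartriangleleft c$ and then $d\in D$ with $d\sqsupset c$.

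The final step — passing from $d\sqsupset c\vartriangleright c'$ to $d\sqsupset^*c'$ — is exactly where the previously established inclusion \eqref{StarTriangle}, namely ${\sqsupset\circ\vartriangleright}\subseteq{\sqsupset^*}$, does the work: since $d\sqsupset c$ and $c\vartriangleright c'$, we have $d\mathbin{(\sqsupset\circ\vartriangleright)}c'$ and hence $d\sqsupset^*c'$. Thus $C'$ is a $\sqsupset^*$-refinement of $D$ lying in $\mathsf{C}\mathbb{P}$, and as $D$ was arbitrary, $\sqsupset^*$ is a refiner. I expect the only genuine subtlety to be the temptation to verify $d\sqsupset^*c'$ directly from the witnessing cap $C'$ (say by taking $C'$ itself as the star-cap in the definition of $\sqsupset^*$); this naive route stalls because $\sqsupset$ need not be compatible with $\leq$, so one cannot propagate $d\sqsupset c$ down to the elements of the star $C'c'$ even though they lie below $c$. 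Routing instead through \eqref{StarTriangle}, which already packages the interaction of $\sqsupset$ with $\vartriangleright$, sidesteps this issue and keeps the argument to a short chase.
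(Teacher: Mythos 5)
Your proof is correct and takes essentially the same route as the paper: the paper observes that regularity makes $\vartriangleright$ a refiner on $\mathbb{P}$, hence the composition $\sqsupset\circ\vartriangleright$ is a refiner, and then invokes ${\sqsupset\circ\vartriangleright}\subseteq{\sqsupset^*}$ from \eqref{StarTriangle}, which is precisely your elementwise chase $d\sqsupset c\vartriangleright c'$ unwound at the level of relations. Your closing remark about why the naive direct verification of $d\sqsupset^*c'$ stalls (since $\sqsubset$ need not interact with $\leq$) correctly identifies the point that \eqref{StarTriangle} is there to handle.
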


\begin{proof}
    As $\mathbb{P}$ is regular, $\vartriangleright$ is a refiner.  As $\sqsupset$ is a refiner too, so is $\sqsupset\circ\vartriangleright$ and hence so too is ${\sqsupset^*}\supseteq{\sqsupset\circ\vartriangleright}$, by \eqref{StarTriangle}.
\end{proof}

Combined with \autoref{StarWedgePreserving}, this means ${\sqsupset^*}\in\mathbf{P}^\mathbb{Q}_\mathbb{P}$ whenever ${\sqsupset}\in\mathbf{P}^\mathbb{Q}_\mathbb{P}$.  Moreover, $\phi_\sqsupset=\phi_{\sqsupset^*}$, by \eqref{RoundStarPrimeStar}.  We can further characterise when $\phi_\sqsupset=\phi_{\sqni}$ as follows.

\begin{cor}\label{StarEquivalence}
    For any ${\sqsupset}\in\mathbf{P}_\mathbb{P}^\mathbb{Q}$,
    \[\phi_\sqsupset=\phi_{\sqni}\qquad\Leftrightarrow\qquad{\vartriangleright*\sqsupset}={\vartriangleright*\sqni}.\]
\end{cor}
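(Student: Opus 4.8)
The plan is to read the entire statement off the characterisation \eqref{StarClosedContainment}, which says precisely that $q\vartriangleright*\sqsupset p$ holds if and only if $\phi_\sqsupset^{-1}[q^\in]\supseteq\mathrm{cl}(p^\in)$, and likewise with $\sqni$ in place of $\sqsupset$. Since both $\sqsupset$ and $\sqni$ lie in $\mathbf{P}_\mathbb{P}^\mathbb{Q}$, they are $\wedge$-preserving refiners between regular prime $\omega$-posets, so that proposition applies to each of them.

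The forward implication will be immediate. If $\phi_\sqsupset=\phi_{\sqni}$ then $\phi_\sqsupset^{-1}=\phi_{\sqni}^{-1}$ as operations on subsets of $\mathsf{S}\mathbb{Q}$, so for all $p\in\mathbb{P}$ and $q\in\mathbb{Q}$ I get the chain $q\vartriangleright*\sqsupset p\Leftrightarrow\phi_\sqsupset^{-1}[q^\in]\supseteq\mathrm{cl}(p^\in)\Leftrightarrow\phi_{\sqni}^{-1}[q^\in]\supseteq\mathrm{cl}(p^\in)\Leftrightarrow q\vartriangleright*\sqni p$ directly from \eqref{StarClosedContainment}, whence $\vartriangleright*\sqsupset=\vartriangleright*\sqni$.

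For the converse, assume $\vartriangleright*\sqsupset=\vartriangleright*\sqni$. I would show $\phi_\sqsupset(S)\subseteq\phi_{\sqni}(S)$ for every $S\in\mathsf{S}\mathbb{P}$; since both images are minimal selectors, the inclusion forces equality, and by symmetry $\phi_\sqsupset=\phi_{\sqni}$. So fix $S$ and $q\in\phi_\sqsupset(S)$, i.e. $\phi_\sqsupset(S)\in q^\in$, which means $S\in\phi_\sqsupset^{-1}[q^\in]$, an open set by continuity of $\phi_\sqsupset$. The key step is to trap a basic open set around $S$ with closure inside this preimage: since $\mathbb{P}$ is regular, $\mathsf{S}\mathbb{P}$ is a regular space and $S$ is round (\autoref{RegularImpliesHausdorff}, \autoref{RoundLinked}), while $\mathbb{P}_\mathsf{S}$ is a basis (\autoref{BasisOrderIsomorphism}); hence I can first pick $s\in S$ with $S\in s^\in\subseteq\phi_\sqsupset^{-1}[q^\in]$ and then, by roundness, $p\in S$ with $p\vartriangleleft s$, so that $\mathrm{cl}(p^\in)\subseteq s^\in\subseteq\phi_\sqsupset^{-1}[q^\in]$ by \eqref{ClosedContainment}. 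Then \eqref{StarClosedContainment} gives $q\vartriangleright*\sqsupset p$, the hypothesis gives $q\vartriangleright*\sqni p$, and \eqref{StarClosedContainment} again gives $\mathrm{cl}(p^\in)\subseteq\phi_{\sqni}^{-1}[q^\in]$; as $S\in p^\in\subseteq\mathrm{cl}(p^\in)$, this yields $q\in\phi_{\sqni}(S)$, as required.

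The only real obstacle is this geometric extraction in the converse — producing a $p\in S$ whose closed neighbourhood $\mathrm{cl}(p^\in)$ sits inside the open preimage $\phi_\sqsupset^{-1}[q^\in]$ — but that is exactly what regularity of $\mathsf{S}\mathbb{P}$ together with roundness of minimal selectors provides, so no ingredient beyond \eqref{StarClosedContainment} is needed.
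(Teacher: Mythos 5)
Your proof is correct, and while your forward direction coincides with the paper's (both read it straight off \eqref{StarClosedContainment}), your converse takes a genuinely different route. The paper's converse never re-enters the topology of the spectrum: it invokes \eqref{RoundStarPrimeStar}, the fact that a round star-prime set has the same image under a relation as under its star, to get the one-line relational chain $\phi_\sqsupset(S)=S^{\sqsubset\vartriangleleft}=S^{\reflectbox{$\scriptstyle(\vartriangleright\circ\sqsupset)^*$}}=S^{\reflectbox{$\scriptstyle(\vartriangleright\circ\sqni)^*$}}=\phi_{\sqni}(S)$, where the middle equality is exactly the hypothesis ${\vartriangleright*\sqsupset}={\vartriangleright*\sqni}$. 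You instead argue pointwise and topologically: given $q\in\phi_\sqsupset(S)$, you use the basis property (\autoref{BasisOrderIsomorphism}), roundness of minimal selectors in regular $\omega$-posets (\autoref{RoundLinked}) and \eqref{ClosedContainment} to manufacture $p\in S$ with $\mathrm{cl}(p^\in)\subseteq\phi_\sqsupset^{-1}[q^\in]$, then pass through \eqref{StarClosedContainment} twice and finish by minimality of selectors — all steps check out, and in effect you re-derive the relevant instance of \eqref{RoundStarPrimeStar} by topological means. What each approach buys: yours makes the whole corollary a consequence of the single proposition \eqref{StarClosedContainment}, with the geometric content (trapping a closed basic neighbourhood inside the open preimage) fully transparent; the paper's converse is shorter and stays purely combinatorial, isolating roundness plus star-primeness as precisely the property making stars invisible to images. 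One small economy you could note: once $\phi_\sqsupset(S)\subseteq\phi_{\sqni}(S)$ is established, minimality of $\phi_{\sqni}(S)$ already forces equality, so the appeal to symmetry is not needed.
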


\begin{proof}
    If $\phi_\sqsupset=\phi_{\sqni}$ then \eqref{StarClosedContainment} yields 
    \[q\vartriangleright*\sqsupset p\quad\Leftrightarrow\quad\phi_\sqsupset^{-1}[q^\in]\supseteq\mathrm{cl}(p^\in)\quad\Leftrightarrow\quad\phi_{\sqni}^{-1}[q^\in]\supseteq\mathrm{cl}(p^\in)\quad\Leftrightarrow\quad q\vartriangleright*\sqni p.\]
    Conversely, if ${\vartriangleright*\sqsupset}={\vartriangleright*\sqni}$ then \eqref{RoundStarPrimeStar} yields
    \[\phi_\sqsupset(S)=S^{\sqsubset\vartriangleleft}=S^{\sqsubset\circ\vartriangleleft}=S^{\reflectbox{$\scriptstyle(\vartriangleright\circ\sqsupset)^*$}}=S^{\reflectbox{$\scriptstyle(\vartriangleright\circ\sqni)^*$}}=S^{\sqin\circ\vartriangleleft}=S^{\sqin\vartriangleleft}=\phi_{\sqni}(S).\qedhere\]
\end{proof}

Here are some further simple combinatorial properties of star-composition.

\begin{prp}
    If $\mathbb{P}$ is a regular $\omega$-poset, ${\sqni}\subseteq\mathbb{R}\times\mathbb{Q}$ and ${\sqsupset}\subseteq\mathbb{Q}\times\mathbb{P}$ then
    \begin{equation}\label{StarEquivalents}
        {\sqni*\sqsupset}={\sqni*\sqsupset^*}=(\sqni*\sqsupset)^*.
    \end{equation}
\end{prp}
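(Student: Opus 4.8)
The plan is to rewrite all three relations as stars of compositions. Setting $T=\sqni\circ\sqsupset$, the identity \eqref{StarEquivalents} reads $T^*=(\sqni\circ\sqsupset^*)^*=T^{**}$, and I would establish it as a cycle of three inclusions $T^*\subseteq(\sqni\circ\sqsupset^*)^*\subseteq T^{**}\subseteq T^*$. Throughout I would use the (immediate) monotonicity of the star operation: if $W\subseteq W'$ then $W^*\subseteq W'^*$, directly from the definition of $\cdot^*$.

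The inclusion $(\sqni\circ\sqsupset^*)^*\subseteq T^{**}$ is the routine one. By \eqref{CircStar} we have $\sqni\circ\sqsupset^*\subseteq(\sqni\circ\sqsupset)^*=T^*$, and applying monotonicity of the star gives $(\sqni\circ\sqsupset^*)^*\subseteq(T^*)^*=T^{**}$. This step needs neither regularity nor anything beyond what \eqref{CircStar} itself requires.

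Next I would prove the idempotency $T^{**}\subseteq T^*$, which is in fact regularity-free. Suppose $rT^{**}p$, witnessed by a cap $C$ with $rT^*c$ for every $c\in Cp$; after replacing $C$ by a band refining it (legitimate since shrinking $p$-stars preserves the $T^*$-relation, by \eqref{StarTriangle}) we may take $Cp$ finite, so for each $c\in Cp$ we may fix a cap $D_c$ with $rTd$ for all $d\in D_cc$. The crux is the explicit single witness $E=\bigcup_{c\in Cp}D_cc\ \cup\ (C\setminus Cp)$. A level $B$ refining $C$ and all the finitely many $D_c$ exists by \autoref{CapsRefineLevels}, and a short check with common lower bounds shows $B\leq E$, so $E$ is a cap. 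Since every element of $C\setminus Cp$ lies outside $p^\wedge$, we get $Ep\subseteq\bigcup_{c\in Cp}D_cc$, whence $rTe$ for all $e\in Ep$, i.e. $rT^*p$.

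Finally, the inclusion $T^*\subseteq(\sqni\circ\sqsupset^*)^*$ is where regularity enters, and I expect it to be the main obstacle. Given $rT^*p$ witnessed by a cap $C$ and elements $q_c$ with $r\sqni q_c\sqsupset c$ for each $c\in Cp$, regularity lets me star-refine $C$ to a cap $E$ with $E\vartriangleleft_EC$, by \autoref{RegularStarRefinement}. For $e\in Ep$ the star $Ee$ lies below a single $c_e\in C$; since $e\in Ee$ we get $e\leq c_e$, and a common-lower-bound argument gives $c_e\in Cp$ and $e\vartriangleleft_Ec_e$. Replacing $E$ by $F=(E\setminus c_e^\geq)\cup\{c_e\}$ produces a cap with $Fe=\{c_e\}$, so from $q_{c_e}\sqsupset c_e$ I obtain $q_{c_e}\sqsupset^*e$, hence $r(\sqni\circ\sqsupset^*)e$; thus $E$ witnesses $r(\sqni\circ\sqsupset^*)^*p$. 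The delicate point throughout is that arbitrary relations do not transfer along $\leq$, so one cannot naively refine a witnessing cap; regularity circumvents this precisely by allowing a star-refinement in which each refined element is star-below a \emph{single} element of the original cap, thereby upgrading a plain $\sqsupset$-link into a $\sqsupset^*$-link through the modified cap $F$.
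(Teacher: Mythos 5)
Your proof is correct, but it is organised rather differently from the paper's, so a comparison is worthwhile. The paper first isolates the single-relation identity \eqref{DoubleStar}, ${\sqsupset^{**}}={\sqsupset^*}={\sqsupset*\vartriangleright}$ --- proving ${\sqsupset^{**}}\subseteq{\sqsupset^*}$ via a common refinement of the finitely many witnessing caps together with \autoref{WedgeSplit}, and ${\sqsupset^*}\subseteq{\sqsupset^{**}}$ via regularity --- and then deduces \eqref{StarEquivalents} by purely formal manipulation with \eqref{StarTriangle} and \eqref{CircStar}. You instead run a cyclic chain of three inclusions directly on $T={\sqni}\circ{\sqsupset}$, which delivers all the equalities at once. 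Your idempotency step $T^{**}\subseteq T^*$ replaces the paper's \autoref{WedgeSplit} argument with the explicit union cap $E=\bigcup_{c\in Cp}D_cc\cup(C\setminus Cp)$, shown to be a cap via a single level refining $C$ and the finitely many $D_c$ (\autoref{CapsRefineLevels}); this checks out, including your appeal to ${T^*\circ\geq}={T^*}$ from \eqref{StarTriangle} to justify shrinking $C$ --- though simply passing to a finite subcap of $C$, as the paper does, would have been marginally simpler than passing to a refining band. Your regularity step $T^*\subseteq({\sqni}\circ{\sqsupset^*})^*$ inlines what the paper obtains from ${T^*}=(T\circ\vartriangleright)^*$: once you have $e\vartriangleleft_E c_e$ with $c_e\in Cp$ (your common-lower-bound checks here are right), your singleton-star modification $F=(E\setminus c_e^\geq)\cup\{c_e\}$ with $Fe=\{c_e\}$ is precisely the proof of the first inclusion ${\sqsupset\circ\vartriangleright}\subseteq{\sqsupset^*}$ of \eqref{StarTriangle}, which you could have cited instead of rederiving. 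What your route buys is self-containedness and a transparent accounting of hypotheses: only the $\omega$-poset structure (directedness of caps via levels) is needed for $({\sqni}\circ{\sqsupset^*})^*\subseteq T^{**}\subseteq T^*$, with regularity entering solely in $T^*\subseteq({\sqni}\circ{\sqsupset^*})^*$ --- exactly matching where the paper uses it. What the paper's factorisation buys is the reusable identity \eqref{DoubleStar} itself, which it invokes again later, e.g.\ in the associativity proposition and in showing strong refiners are star-invariant.
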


\begin{proof}    
    First we claim that
    \begin{equation}\label{DoubleStar}
        {\sqsupset^{**}}={\sqsupset^*}={\sqsupset*\vartriangleright}.
    \end{equation}
    Indeed, if $q\sqsupset^{**}p$ then we have $C\in\mathsf{C}\mathbb{P}$ such that, for all $c\in Cp$, $q\sqsupset^*c$ and hence $B_cc\sqsubset q$, for some $B_c\in\mathsf{C}\mathbb{P}$.  Replacing $C$ with a finite subcap if necessary, we can then take $A\in\mathsf{C}\mathbb{P}$ refining $B_c$, for all $c\in Cp$, as $\mathbb{P}$ is an $\omega$-poset.  We claim that $Ap\leq\circ\sqsubset q$.  Indeed, if $a\in Ap$ then \autoref{WedgeSplit} yields $c\in Cp$ with $a\wedge c$.  As $A\leq B_c$, we then have $b\in B_c$ with $c\wedge a\leq b$.  Thus $b\in B_cc\sqsubset q$ so $a\leq b\sqsubset q$, proving the claim.  In particular, $A$ witnesses $q\sqsupset*\geq p$, showing that ${\sqsupset^{**}}\subseteq{\sqsupset*\geq}={\sqsupset^*}$.

    Conversely, if $q\sqsupset^*p$ then we have $C\in\mathsf{C}\mathbb{P}$ with $Cp\sqsubset q$.  Regularity then yields $B\in\mathsf{C}\mathbb{P}$ with $B\vartriangleleft C$ so $Bp\vartriangleleft Cp\sqsubset q$ and hence $q\mathrel{\sqsupset\circ\vartriangleright}b$, for all $b\in Bp$.  Thus $B$ witnesses $q\mathrel{(\sqsupset\circ\vartriangleright)^*}p$, showing that ${\sqsupset^*}\subseteq(\sqsupset\circ\vartriangleright)^*\subseteq{\sqsupset^{**}}$, by \eqref{StarTriangle}, completing the proof of \eqref{DoubleStar}.

    In particular, ${(\sqni\circ\sqsupset)^{**}}={(\sqni\circ\sqsupset)^*}=(\sqni\circ\sqsupset\circ\vartriangleright)^*\subseteq(\sqni\circ\sqsupset^*)^*$,
    by \eqref{StarTriangle}.  In terms of star-composition, this means that $(\sqni*\sqsupset)^*={\sqni*\sqsupset}\subseteq{\sqni*\sqsupset^*}$.  But ${\sqni*\sqsupset^*}=(\sqni\circ\sqsupset^*)^*\subseteq(\sqni*\sqsupset)^*$, by \eqref{CircStar}, completing the proof of \eqref{StarEquivalents}.
\end{proof}

\begin{prp}\label{StarCircSub}
    For any ${\sqni}\subseteq\mathbb{R}\times\mathbb{Q}$ and $\wedge$-preserving refiner ${\sqsupset}\subseteq\mathbb{Q}\times\mathbb{P}$,
    \[{\sqni^*\circ\sqsupset}\subseteq{\sqni*\sqsupset}.\]
\end{prp}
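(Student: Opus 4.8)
The plan is to unwind both stars and the star-composition and reduce everything to exhibiting a single witnessing cap in $\mathbb{P}$. Recall that $r\mathrel{\sqni^*\circ\sqsupset}p$ means there is some $q\in\mathbb{Q}$ with $r\sqni^*q$ and $q\sqsupset p$, whereas the target relation $r\mathrel{\sqni*\sqsupset}p=r\mathrel{(\sqni\circ\sqsupset)^*}p$ asks for a cap $C\in\mathsf{C}\mathbb{P}$ such that every $c\in Cp$ satisfies $r\mathrel{\sqni\circ\sqsupset}c$, i.e.\ admits some $d\in\mathbb{Q}$ with $r\sqni d\sqsupset c$. So the whole content is to manufacture, from the data $r\sqni^*q$ and $q\sqsupset p$, a single cap $C$ of $\mathbb{P}$ that works uniformly over its star $Cp$ at $p$.

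First I would expand $r\sqni^*q$: by the definition of the star there is a cap $D\in\mathsf{C}\mathbb{Q}$ with $Dq\sqin r$, that is, $r\sqni d$ for every $d\in Dq=D\cap q^\wedge$. Since $\sqsupset$ is a refiner and $D$ is a cap of $\mathbb{Q}$, the refiner condition $\mathsf{C}\mathbb{Q}\subseteq\mathsf{C}\mathbb{P}^\sqsubset$ yields a cap $C\in\mathsf{C}\mathbb{P}$ refining $D$, i.e.\ $C\sqsubset D$; concretely, each $c\in C$ has some $d\in D$ with $d\sqsupset c$. This $C$ is my candidate witness, and it remains only to check it over $Cp$.

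The decisive step is the following. Fix $c\in Cp=C\cap p^\wedge$, so $c\wedge p$, and pick $d\in D$ with $d\sqsupset c$ as above. The only thing that could go wrong is that $d$ need not a priori lie in the star $Dq$, so I cannot yet invoke $r\sqni d$. This is exactly where $\wedge$-preservation of $\sqsupset$ enters: from $d\sqsupset c$, $c\wedge p$ and $q\sqsupset p$ (equivalently $p\sqsubset q$), the $\wedge$-preservation property $d\sqsupset c\wedge p\sqsubset q\Rightarrow d\wedge q$ gives $d\wedge q$, hence $d\in D\cap q^\wedge=Dq$ and therefore $r\sqni d$. Combined with $d\sqsupset c$ this gives $r\mathrel{\sqni\circ\sqsupset}c$, and since $c\in Cp$ was arbitrary, $C$ witnesses $r\mathrel{(\sqni\circ\sqsupset)^*}p$, as required. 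I expect this $\wedge$-preservation manoeuvre --- routing through the common lower bound $c$ of the pulled-back element and $p$ in order to land $d$ in the correct star $Dq$ --- to be the main (and essentially the only) obstacle; the remainder is bookkeeping with the definitions, and notably no regularity or primeness of the posets is needed.
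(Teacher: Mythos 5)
Your proposal is correct and follows essentially the same argument as the paper: unwind $r\sqni^*q$ to a cap $D\in\mathsf{C}\mathbb{Q}$ with $Dq\sqin r$, use the refiner property to pull it back to a cap $C\in\mathsf{C}\mathbb{P}$ with $C\sqsubset D$, and then use $\wedge$-preservation (via $d\sqsupset c\wedge p\sqsubset q$) to show each $c\in Cp$ is $\sqsubset$-below an element of $Dq$, which is precisely the paper's step $Bp\sqsubset Cq\sqin r$ up to renaming. Your observation that neither regularity nor primeness is needed also matches the paper's hypotheses.
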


\begin{proof}
    If $r\sqni^*q\sqsupset p$ then we have $C\in\mathsf{C}\mathbb{Q}$ with $Cq\sqni r$.  As $\sqsupset$ is a $\wedge$-preserving refiner, we then have $B\in\mathsf{C}\mathbb{P}$ with $B\sqsubset C$ and hence $Bp\sqsubset Cq\sqin r$. So $B$ witnesses $r\sqni*\sqsupset p$, showing that ${\sqni^*\circ\sqsupset}\subseteq{\sqni*\sqsupset}$.
\end{proof}

Under suitable conditions, we can now show that star-composition is associative.

\begin{prp}
    If $\mathbb{P}$ is a regular $\omega$-poset, ${\sqsupset}\subseteq\mathbb{Q}\times\mathbb{P}$ is a $\wedge$-preserving refiner, ${\sqni}\subseteq\mathbb{R}\times\mathbb{Q}$ satisfies ${\sqni}\subseteq{\sqni^*}$ and ${\sqnii}\subseteq\mathbb{S}\times\mathbb{R}$ then
    \[{{\sqnii}*(\sqni*\sqsupset)}\ =\ (\sqnii\circ\sqni\circ\sqsupset)^*\ =\ {(\sqnii*\sqni)*{\sqsupset}}.\]
\end{prp}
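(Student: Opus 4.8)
The plan is to prove the two equalities separately, reducing both to the single starred relation $(\sqnii\circ\sqni\circ\sqsupset)^*$ in the middle. Throughout I would use that $\circ$ is monotone in each argument and that the star operation $\sqsupset\mapsto\sqsupset^*$ is monotone (if $q\sqsupset^*p$ is witnessed by a cap $C$ and $\sqsupset\subseteq\sqsupset'$, the same $C$ witnesses $q\sqsupset'^*p$), both of which are immediate from the definitions.

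First I would handle the left equality ${\sqnii}*(\sqni*\sqsupset)=(\sqnii\circ\sqni\circ\sqsupset)^*$. Unwinding star-composition, ${\sqnii}*(\sqni*\sqsupset)=(\sqnii\circ(\sqni\circ\sqsupset)^*)^*$, which is exactly $\sqnii*(\sqni\circ\sqsupset)^*$, while the middle term is $\sqnii*(\sqni\circ\sqsupset)=(\sqnii\circ\sqni\circ\sqsupset)^*$. Thus the left equality is precisely the instance ${\sqni*\sqsupset}={\sqni*\sqsupset^*}$ of \eqref{StarEquivalents}, applied with $\sqnii$ in the role of $\sqni$ and $\sqni\circ\sqsupset$ in the role of $\sqsupset$. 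Since the rightmost domain remains $\mathbb{P}$, a regular $\omega$-poset, \eqref{StarEquivalents} applies and this step is immediate.

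For the right equality, I would set $\gamma=\sqnii\circ\sqni$, so the middle term is $(\gamma\circ\sqsupset)^*$ and $(\sqnii*\sqni)*\sqsupset=(\gamma^*\circ\sqsupset)^*$; the goal becomes $(\gamma\circ\sqsupset)^*=(\gamma^*\circ\sqsupset)^*$. The inclusion $(\gamma^*\circ\sqsupset)^*\subseteq(\gamma\circ\sqsupset)^*$ follows by applying \autoref{StarCircSub} with $\gamma$ as the left factor (legitimate, as $\sqsupset$ is a $\wedge$-preserving refiner), giving $\gamma^*\circ\sqsupset\subseteq\gamma*\sqsupset=(\gamma\circ\sqsupset)^*$; starring both sides and collapsing the double star via \eqref{DoubleStar} (using $\mathbb{P}$ regular) yields $(\gamma^*\circ\sqsupset)^*\subseteq(\gamma\circ\sqsupset)^{**}=(\gamma\circ\sqsupset)^*$.

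The reverse inclusion $(\gamma\circ\sqsupset)^*\subseteq(\gamma^*\circ\sqsupset)^*$ is where the hypothesis ${\sqni}\subseteq{\sqni^*}$ enters, and this is the step I expect to be the main obstacle. By monotonicity it suffices to prove $\gamma\subseteq\gamma^*$, and the key to this is the purely combinatorial fact that $\sqnii\circ\sqni^*\subseteq(\sqnii\circ\sqni)^*$ holds for \emph{arbitrary} relations: given $s\sqnii r\sqni^* q$, a cap $C\in\mathsf{C}\mathbb{Q}$ witnessing $r\sqni^* q$ has $r\sqni c$ for every $c\in Cq$, so composing with $s\sqnii r$ gives $s(\sqnii\circ\sqni)c$ for every such $c$, and hence the same $C$ witnesses $s(\sqnii\circ\sqni)^* q$. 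Feeding in ${\sqni}\subseteq{\sqni^*}$ then gives $\gamma=\sqnii\circ\sqni\subseteq\sqnii\circ\sqni^*\subseteq(\sqnii\circ\sqni)^*=\gamma^*$, completing the argument. I anticipate that the only delicate points are proving this little lemma and keeping the right/left-hand conventions of $\circ$ and of the star straight; everything else is monotonicity together with the already-established identities \eqref{StarEquivalents}, \eqref{DoubleStar} and \autoref{StarCircSub}.
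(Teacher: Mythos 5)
Your proof is correct and takes essentially the same route as the paper's: the left equality via the instance ${\sqni*\sqsupset}={\sqni*\sqsupset^*}$ of \eqref{StarEquivalents}, one inclusion of the right equality via \autoref{StarCircSub} together with collapsing the double star, and the reverse inclusion from ${\sqni}\subseteq{\sqni^*}$ by monotone composition and starring. Your ``little lemma'' ${\sqnii\circ\sqni^*}\subseteq(\sqnii\circ\sqni)^*$ is exactly the paper's \eqref{CircStar} (with the same one-line proof), which the paper simply cites at the corresponding step.
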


\begin{proof}
    First note \eqref{StarEquivalents} immediately yields
    \[{{\sqnii}*(\sqni*\sqsupset)}={{\sqnii}*(\sqni\circ\sqsupset)^*}={{\sqnii}*(\sqni\circ\sqsupset)}=(\sqnii\circ\sqni\circ\sqsupset)^*.\]
    Likewise, \eqref{StarEquivalents} and \autoref{StarCircSub} yield
    \[{(\sqnii*\sqni)*{\sqsupset}}=((\sqnii\circ\sqni)^*\circ{\sqsupset})^*\subseteq((\sqnii\circ\sqni)*{\sqsupset})^*={(\sqnii\circ\sqni)*{\sqsupset}}=(\sqnii\circ\sqni\circ\sqsupset)^*.\]
    Conversely, as ${\sqni}\subseteq{\sqni^*}$, \eqref{CircStar} yields
    \[(\sqnii\circ\sqni\circ\sqsupset)^*\subseteq(\sqnii\circ\sqni^*\circ\sqsupset)^*\subseteq((\sqnii*\sqni)\circ{\sqsupset})^*=(\sqnii*\sqni)*{\sqsupset}.\qedhere\]
\end{proof}

Let us call ${\sqsupset}\in\mathbf{P}_\mathbb{P}^\mathbb{Q}$ a \emph{strong refiner} if
\[\tag{Strong Refiner}{\sqsupset}={\vartriangleright*\sqsupset}.\]
For any other ${\sqni}\in\mathbf{P}_\mathbb{R}^\mathbb{P}$, we immediately see that ${\sqsupset*\sqni}={\vartriangleright*\sqsupset*\sqni}$.  In particular, strong refiners are closed under star-composition.  They are also star-invariant, as
\[{\sqsupset^*}=(\vartriangleright*\sqsupset)^*=(\vartriangleright\circ\sqsupset)^{**}=(\vartriangleright\circ\sqsupset)^*={\sqsupset}.\]
Moreover, ${\vartriangleright*\vartriangleright}={\geq^**\geq^*}={\geq*\geq}={(\geq\circ\geq)^*}={\geq^*}={\vartriangleright}$, showing that $\vartriangleright$ is also a strong refiner on any $\mathbb{P}\in\mathbf{P}$.  Furthermore, ${\vartriangleright*\sqsupset}={\sqsupset}={\sqsupset^*}={\sqsupset*\vartriangleright}$ showing that each $\vartriangleright_\mathbb{P}$ is an identity with respect to star-composition.  In other words, we have a category $\mathbf{S}$ with the same objects as $\mathbf{P}$ (prime regular $\omega$-posets) but with strong refiners as morphisms under star-composition.

In fact, $\mathbf{S}$ is equivalent to $\mathbf{K}$, as witnessed by the map $\mathsf{S}$ from \autoref{SFunctor}.
    
\begin{thm}
    $\mathsf{S}|_\mathbf{S}:\mathbf{S}\rightarrow\mathbf{K}$ is a fully faithful essentially surjective functor such that $\mathsf{S}=\mathsf{S}|_\mathbf{S}\circ\mathsf{Q}$, where $\mathsf{Q}:\mathbf{P}\rightarrow\mathbf{S}$ is the functor defined by $\mathsf{Q}(\sqsupset)={\vartriangleright*\sqsupset}$.
\end{thm}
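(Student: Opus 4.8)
The plan is to verify the functoriality of $\mathsf{S}|_\mathbf{S}$ and $\mathsf{Q}$ and the factorisation purely by assembling the combinatorial identities already established for $*$ and for the map $\sqsupset\mapsto\phi_\sqsupset$. The central observation, which I would record first, is that passing to $\mathsf{Q}$ does not change the induced map, i.e.
\[
\phi_{\vartriangleright*\mu}=\phi_\mu\qquad\text{for every }\mu\in\mathbf{P}^\mathbb{Q}_\mathbb{P}.
\]
This follows by unwinding $\vartriangleright*\mu=(\vartriangleright\circ\mu)^*$, then using $\phi_{\tau}=\phi_{\tau^*}$ (from \eqref{RoundStarPrimeStar}) to get $\phi_{(\vartriangleright\circ\mu)^*}=\phi_{\vartriangleright\circ\mu}$, then \eqref{CompositionPreservation} to get $\phi_{\vartriangleright\circ\mu}=\phi_\vartriangleright\circ\phi_\mu$, and finally $\phi_\vartriangleright=\phi_{\mathrm{id}_\mathbb{P}^*}=\phi_{\mathrm{id}_\mathbb{P}}=\mathrm{id}_{\mathsf{S}\mathbb{P}}$ (using ${\mathrm{id}_\mathbb{P}^*}={\vartriangleright}$ and \autoref{SFunctor}). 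The same chain shows $\mathsf{S}|_\mathbf{S}$ is a functor: star-composition is sent to ordinary composition, since $\phi_{\sqni*\sqsupset}=\phi_{(\sqni\circ\sqsupset)^*}=\phi_{\sqni\circ\sqsupset}=\phi_\sqni\circ\phi_\sqsupset$, and the $\mathbf{S}$-identity $\vartriangleright_\mathbb{P}$ is sent to $\phi_\vartriangleright=\mathrm{id}_{\mathsf{S}\mathbb{P}}$.

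For faithfulness I would use \autoref{StarEquivalence}: if $\sigma,\tau$ are strong refiners with $\phi_\sigma=\phi_\tau$ then $\vartriangleright*\sigma=\vartriangleright*\tau$, and strongness ($\sigma=\vartriangleright*\sigma$, $\tau=\vartriangleright*\tau$) forces $\sigma=\tau$; thus $\sigma\mapsto\phi_\sigma$ is injective on each hom-set of $\mathbf{S}$. Fullness then drops out of the fullness of $\mathsf{S}$ (\autoref{SFunctor}): given a continuous $\phi\colon\mathsf{S}\mathbb{P}\to\mathsf{S}\mathbb{Q}$ we have $\phi=\phi_{\sqsupset_\phi}$, and $\sigma=\vartriangleright*\sqsupset_\phi$ is a strong refiner with $\phi_\sigma=\phi_{\sqsupset_\phi}=\phi$ by the central observation. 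Essential surjectivity is inherited verbatim from $\mathsf{S}$, as $\mathbf{S}$ and $\mathbf{P}$ have the same objects.

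It remains to check that $\mathsf{Q}$ is a functor and that the factorisation holds. I would first confirm $\mathsf{Q}$ lands among strong refiners: by the associativity of $*$ established above (applicable since $\mu$ is a $\wedge$-preserving refiner on a regular $\omega$-poset and $\vartriangleright\subseteq\vartriangleright^*$ by \eqref{DoubleStar}) together with $\vartriangleright*\vartriangleright=\vartriangleright$, one gets $\vartriangleright*(\vartriangleright*\mu)=(\vartriangleright*\vartriangleright)*\mu=\vartriangleright*\mu$, so $\vartriangleright*\mu$ is strong. Preservation of identities is $\vartriangleright*\mathrm{id}_\mathbb{P}=(\vartriangleright\circ\mathrm{id}_\mathbb{P})^*=\vartriangleright^*=\vartriangleright$. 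For composition I would avoid re-deriving $*$-identities and instead invoke faithfulness: both $\mathsf{Q}(\sqni\circ\sqsupset)$ and $\mathsf{Q}(\sqni)*\mathsf{Q}(\sqsupset)$ are strong refiners, and applying $\mathsf{S}|_\mathbf{S}$ to each yields $\phi_{\sqni\circ\sqsupset}=\phi_\sqni\circ\phi_\sqsupset$ (using the central observation and that $\mathsf{S}|_\mathbf{S}$ respects $*$), whence they coincide. Finally, $\mathsf{S}=\mathsf{S}|_\mathbf{S}\circ\mathsf{Q}$ is immediate on objects and, on a morphism $\mu$, reads $\mathsf{S}|_\mathbf{S}(\vartriangleright*\mu)=\phi_{\vartriangleright*\mu}=\phi_\mu=\mathsf{S}(\mu)$, again by the central observation.

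The genuinely delicate point is the composition step for $\mathsf{Q}$: one must be careful not to argue circularly, which is why I route it through faithfulness of $\mathsf{S}|_\mathbf{S}$ rather than through a direct manipulation of $\vartriangleright*(\sqni\circ\sqsupset)$ against $(\vartriangleright*\sqni)*(\vartriangleright*\sqsupset)$; the latter would require the full strength of associativity together with a hypothesis of the form $\sqni\subseteq\sqni^*$, which can fail for a general refiner. The remaining burden is bookkeeping: verifying that each invocation of associativity and of $\phi_\tau=\phi_{\tau^*}$ has its standing hypotheses (regular prime $\omega$-posets, $\wedge$-preservation) satisfied, all of which hold automatically for the objects and morphisms of $\mathbf{P}$.
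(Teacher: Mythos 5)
Your proposal is correct, and on most components it coincides with the paper's own proof: the functoriality of $\mathsf{S}|_\mathbf{S}$ via $\phi_{\sqni*\sqsupset}=\phi_{(\sqni\circ\sqsupset)^*}=\phi_{\sqni\circ\sqsupset}=\phi_{\sqni}\circ\phi_{\sqsupset}$ together with $\phi_{\vartriangleright}=\mathrm{id}_{\mathsf{S}\mathbb{P}}$, faithfulness from \autoref{StarEquivalence} plus the fixed-point property of strong refiners, fullness and essential surjectivity inherited from $\mathsf{S}$ (\autoref{SFunctor}), preservation of identities via ${\vartriangleright*\mathrm{id}_\mathbb{P}}={\vartriangleright^*}={\vartriangleright}$, and the factorisation via your central observation $\phi_{\vartriangleright*\mu}=\phi_\mu$ are all exactly the paper's steps. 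The one genuine divergence is the composition law for $\mathsf{Q}$. The paper stays on the combinatorial side: it first shows ${\vartriangleright*\sqsupset*\vartriangleright}={\vartriangleright*\sqsupset}$ from \eqref{DoubleStar} and then computes ${\vartriangleright*\sqsupset*\vartriangleright*\sqni}={\vartriangleright*\sqsupset*\sqni}=(\vartriangleright\circ\sqsupset\circ\sqni)^*={{\vartriangleright}*(\sqsupset\circ\sqni)}$, an identity of relations established without reference to the spectrum. You instead argue semantically: both ${\vartriangleright*(\sqni\circ\sqsupset)}$ and $(\vartriangleright*\sqni)*(\vartriangleright*\sqsupset)$ are strong refiners --- the former by your computation ${\vartriangleright*(\vartriangleright*\mu)}=({\vartriangleright*\vartriangleright})*\mu={\vartriangleright*\mu}$, which is a legitimate use of the associativity proposition since the middle slot $\vartriangleright$ does satisfy ${\vartriangleright}\subseteq{\vartriangleright^*}$ (indeed ${\vartriangleright^*}={\geq^{**}}={\geq^*}={\vartriangleright}$), the latter because strong refiners are closed under star-composition, as noted before the theorem, where the middle-slot hypothesis holds since strong refiners are star-invariant --- and both map under $\mathsf{S}|_\mathbf{S}$ to $\phi_{\sqni}\circ\phi_{\sqsupset}$, so the already-established faithfulness identifies them. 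This is non-circular, as your faithfulness argument uses only \autoref{StarEquivalence} and ${\sigma}={\vartriangleright*\sigma}$, never the functoriality of $\mathsf{Q}$. Your cautionary remark is also substantively right: feeding a general morphism of $\mathbf{P}$ into the middle slot of the associativity proposition would require ${\sqsupset}\subseteq{\sqsupset^*}$, which can genuinely fail --- already ${\mathrm{id}_\mathbb{P}}\nsubseteq{\mathrm{id}_\mathbb{P}^*}={\vartriangleright}$ whenever $\vartriangleleft$ is not reflexive --- so the paper's display must be unwound with some care, e.g.\ via \eqref{StarTriangle}, \eqref{StarEquivalents} and \autoref{StarCircSub} rather than a blind associativity application. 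What the paper's route buys is a purely order-theoretic proof that $\mathsf{Q}$ is a functor, independent of any topological input and yielding the explicit relational formula $\mathsf{Q}(\sqni)*\mathsf{Q}(\sqsupset)=(\vartriangleright\circ\sqni\circ\sqsupset)^*$; what yours buys is economy and safety, replacing the delicate star-calculus by a single appeal to faithfulness, at the cost of making the combinatorial identity a consequence of the spectrum rather than a fact about posets in their own right.
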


\begin{proof}
	For any ${\sqsupset}\in\mathbf{P}_\mathbb{P}^\mathbb{Q}$, \eqref{DoubleStar} yields
	\[{\vartriangleright*\sqsupset*\vartriangleright}={(\vartriangleright*\sqsupset)^*}={(\vartriangleright\circ\sqsupset)^{**}}={(\vartriangleright\circ\sqsupset)^{*}}={\vartriangleright*\sqsupset}.\]
	For any other ${\sqni}\in\mathbf{P}_\mathbb{Q}^\mathbb{R}$, it follows that
	\[{\vartriangleright*\sqsupset*\vartriangleright*\sqni}={\vartriangleright*\sqsupset*\sqni}=(\vartriangleright\circ\sqsupset\circ\sqni)^*={{\vartriangleright}*(\sqsupset\circ\sqni)}.\]
	This shows that $\mathsf{Q}$ defined by $\mathsf{Q}(\sqsupset)={\vartriangleright*\sqsupset}$ preserves the product.  Moreover,
	\[{{\vartriangleright_\mathbb{P}}*\mathrm{id}_\mathbb{P}}=({\vartriangleright_\mathbb{P}}\circ\mathrm{id}_\mathbb{P})^*={\vartriangleright_\mathbb{P}^*}={\geq_\mathbb{P}^{**}}={\geq_\mathbb{P}^*}={\vartriangleright_\mathbb{P}}.\]
	As each ${\vartriangleright_\mathbb{P}}$ is an identity in $\mathbf{S}$, this shows that $\mathsf{Q}$ is a functor.  Also
	\[\phi_{\sqni*\sqsupset}=\phi_{(\sqni\circ\sqsupset)^*}=\phi_{\sqni\circ\sqsupset}=\phi_{\sqni}\circ\phi_\sqsupset\]
	and $\phi_{\vartriangleright_\mathbb{P}}=\mathrm{id}_{\mathsf{S}\mathbb{P}}$ (because $S=S^\vartriangleleft=S^{\vartriangleleft\vartriangleleft}$, for all $S\in\mathsf{S}\mathbb{P}$), so $\mathsf{S}|_\mathbf{S}$ is a functor too.  In particular, this also yields $\phi_{\vartriangleright*\sqsupset}=\phi_{\vartriangleright}\circ\phi_{\sqsupset}=\phi_{\sqsupset}$, showing that $\mathsf{S}=\mathsf{S}|_\mathbf{S}\circ\mathsf{Q}$.  As $\mathsf{S}$ is full and essentially surjective, so is $\mathsf{S}|_\mathbf{S}$.  By \autoref{StarEquivalence}, $\mathsf{S}|_\mathbf{S}$ is also faithful.
\end{proof}

The functor $\mathsf{Q}$ thus replaces any ${\sqsupset}\in\mathbf{P}_\mathbb{P}^\mathbb{Q}$ with a canonical representative in the same equivalence class defined by $\mathsf{S}$, namely the unique representative which corresponds exactly to closed containment, by \eqref{StarClosedContainment}.  The natural topology on strong refiners thus corresponds exactly to the compact-open/uniform convergence topology.  More precisely, the functor $\mathsf{S}|_\mathbf{S}$ is a homeomorphism from each hom-set $\mathbf{S}_\mathbb{P}^\mathbb{Q}$, considered as a subspace of the power-space $\mathsf{P}(\mathbb{Q}\times\mathbb{P})$ (i.e. with the topology generated by sets of the form $\{{\sqsupset}\in\mathsf{S}_\mathbb{P}^\mathbb{Q}:q\sqsupset p\}$, for $p\in\mathbb{P}$ and $q\in\mathbb{Q}$), to the hom-set $\mathbf{K}_{\mathsf{S}\mathbb{P}}^{\mathsf{S}\mathbb{Q}}$ with its compact-open/uniform convergence topology.  We plan to make use of this in future work on dynamical systems constructed from posets and refiners.

\begin{rmk}
    One could also make other choices of representative morphisms.  For example, for any ${\sqsupset}\in\mathbf{P}_\mathbb{P}^\mathbb{Q}$, we could define ${\sqsupseteq}\subseteq\mathbb{Q}\times\mathbb{P}$ by
    \[q\sqsupseteq p\qquad\Leftrightarrow\qquad q^\sqsupset\supseteq p^\vartriangleright.\]
    Then ${\sqsupset}\mapsto\underline{\vartriangleright*\sqsupset}$ again defines a functor selecting a representative in the equivalence class defined by $\mathsf{S}$, this time corresponding to mere containment, i.e.
    \[q\mathrel{\underline{\vartriangleright*\sqsupset}}p\qquad\Leftrightarrow\qquad\phi_\sqsupset^{-1}[q^\in]\supseteq p^\in.\]
    However, the natural topology on such refiners will be different and thus less useful when it comes to considering dynamical systems.
\end{rmk}

\bibliography{maths}{}
\bibliographystyle{abbrvurl}
\end{document}